\definecolor{lightBlue}{RGB}{136, 247, 244}
\def\pathIn#1#2#3{P^{#1}(#2, #3)}
\def\blueTree{\mathcal B_b}
\def\redForest{\mathcal R}
\def\barT{\bar{\mathcal T}}
\def\pathInBlueTree#1#2#3{\pathIn{\blueTree(#1)}{#2}{#3}}
\author{Sebastian Mies\thanks{Institute of Computer Science, Johannes Gutenberg University Mainz, email: smies@students.uni-mainz.de}   \, and Benjamin Moore\thanks{Institute of Science and Technology Austria, email: Benjamin.Moore@ist.ac.at. Benjamin Moore is supported by ERC Starting Grant “RANDSTRUCT” No. 101076777 and appreciates the gracious support.}}
\title{The Strong Nine Dragon Tree Conjecture is True for $d \leq 2(k + 1)$ }
\DeclarePairedDelimiter\ceil{\lceil}{\rceil}
\DeclarePairedDelimiter\floor{\lfloor}{\rfloor}
\newcommand\fracArb{\gamma}
\newcommand\explSG{H_{\mathcal T^*}}
\newcommand\density{\frac{d}{d+k+1}}
\DeclareRobustCommand\caseOne{\overset{(1, b)}{\longrightarrow}}
\DeclareRobustCommand\caseTwo{\overset{(2, b)}{\longrightarrow}}
\newcommand\inOneToK{\in \{1, \ldots, k\}}
\declaretheoremstyle[
        spaceabove=-2em,
        spacebelow=6pt,
        headfont=\normalfont\itshape,
        postheadspace=1em,
        qed=\qedsymbol
    ]{proofStyle}
\declaretheoremstyle[
        spaceabove=1em,
        spacebelow=6pt,
        headfont=\normalfont\itshape,
        postheadspace=1em,
        qed=\qedsymbol
    ]{proofOfFinalLemmaStyle}
\declaretheoremstyle[
        spaceabove=-2em,
        spacebelow=6pt,
        headfont=\normalfont\itshape,
        postheadspace=1em,
        qed=\hfill \textit{\color{gray}(End of proof of the claim) }$\;\blacksquare$
    ]{proofInProofStyle}
\declaretheorem[name={Proof},style=proofInProofStyle,unnumbered,
]{proofInProof}
\declaretheorem[name={\textbf{Proof of Lemma \ref{lemma:densityOfKC}}},style=proofOfFinalLemmaStyle,unnumbered,
]{proofOfFinalLemma}
\newtheorem{thm}{Theorem}[section] 
\newtheorem{lemma}[thm]{Lemma}
\newtheorem{conj}[thm]{Conjecture}
\newtheorem{definition}[thm]{Definition}
\newtheorem{obs}[thm]{Observation}
\newtheorem{corollary}[thm]{Corollary}
\newtheorem*{claimUnnumbered}{Claim}
\newtheorem{claim}[thm]{Claim}
\newtheorem{notation}[thm]{Notation}
\newtheorem*{ack}{Acknowledgements}
\date{}
\def\lowerMadBoundWithAlpha{\frac{\ell(k+1) + \alpha}{(\ell + 1)(k+1) + \alpha}}
\def\densNDT{\frac{d}{d+k+1}}
\def\equivModKPlusOne#1#2{#1 \equiv #2 \mod (k+1)}
\begin{document}
\definecolor{lightBlue}{RGB}{136, 247, 244}

\def\scale{0.46}
\def\lengthEdge{1.5}
\def\lengthP{8*\lengthEdge}
\def\offsetNames{-1.6em}
\def\xOfTPred{0.25*\lengthP}
\def\yOfZPred{-\lengthP + \xOfTPred}
\def\bendS{10}
\def\bendNyS{70}
\def\bendxNx{45}
\def\bendUXLong{20}
\def\bendUXShort{40}
\def\lengthKxCaseTwo{(\lengthP - \lengthEdge) / 2}
\def\lengthKxCaseOne{(\lengthP - \lengthMiddle - 2*\lengthEdge)/2}
\def\uxPrime{(u) edge[bend right=\bendUXLong] (x')}
\def\xPrimeU{(x') edge[bend left=\bendUXLong] (u)}

\def\vyPrime{(v) edge[bend left=\bendUXLong] (y')}
\def\yPrimeV{(y') edge[bend right=\bendUXLong] (v)}

\def\vyShort{(v) edge[bend left=\bendUXShort] (y)}
\def\yvShort{(y) edge[bend right=\bendUXShort] (v)}

\def\vyLong{(v) edge[bend left=\bendUXLong] (y)}
\def\yvLong{(y) edge[bend right=\bendUXLong] (v)}

\def\vyBelowPShort{(v) edge[bend right=\bendUXShort] (y)}
\def\yvBelowPShort{(y) edge[bend left=\bendUXShort] (v)}

\def\vyBelowPLong{(v) edge[bend right=\bendUXLong] (y)}
\def\yvBelowPLong{(y) edge[bend left=\bendUXLong] (v)}

\def\bendInNyPrimeS{115}
\def\bendOutNyPrimeS{75}
\def\nyPrimeS{(ny') edge[bend left, in=\bendInNyPrimeS, out=\bendOutNyPrimeS] (s)}
\def\sNyPrime{(s) edge[bend right, in=180+\bendOutNyPrimeS, out=180+\bendInNyPrimeS] (ny')}

\def\sNxPrime{(s) edge[bend left=\bendS] (nx')}
\def\nxPrimeS{(nx') edge[bend right=\bendS] (s)}

\def\xNxPrime{(x) edge[bend left=\bendxNx] (nx')}
\def\nxPrimeX{(nx') edge[bend right=\bendxNx] (x)}

\def\yNyPrime{(y) edge[bend right=\bendxNx] (ny')}
\def\nyPrimeY{(ny') edge[bend left=\bendxNx] (y)}

\def\bendXPrimePrimeYOut{30}
\def\bendXPrimePrimeYIn{150}
\def\xPrimePrimeY{(x'') edge[bend left, in=\bendXPrimePrimeYIn, out=\bendXPrimePrimeYOut] (y)}
\def\yXPrimePrime{(y) edge[bend left, in=180+\bendXPrimePrimeYOut, out=180+\bendXPrimePrimeYIn] (x'')}

\def\xs{(x) edge[bend left=\bendS] (s)}

\def\uPrimeYPrime{(u') edge[bend left=\bendUXLong] (y')}

\def\bendUPrimeWYPrime{30}
\def\bendWYPrimeOne{10}
\def\bendWYPrimeTwo{10}
\def\drawWYPrime{
	\draw[blueDottedUndir]
		(w) edge[bend left=\bendWYPrimeOne] (wy')
	;
	\draw[blueDotted]
		(wy') edge[bend left=\bendWYPrimeTwo] (y')
	;
}
\def\drawYPrimeW{
	\draw[blueDottedUndir]
		(y') edge[bend right=\bendWYPrimeOne] (wy')
	;
	\draw[blueDotted]
		(wy') edge[bend right=\bendWYPrimeTwo] (w)
	;
}
\def\transpBelowXToS{(transpBelowX) edge[bend left=30] (s)}
\def\sToTranspBelowX{(s) edge[bend right=30] (transpBelowX)}

\def\nzPrimeTranspBelowX{(nz') edge[bend left=50] (transpBelowX)}
\def\transpBelowXToNzPrime{(transpBelowX) edge[bend right=50] (nz')}

\def\nyXPrime{(ny) edge[bend right=10] (x')}
\def\xPrimeNy{(x') edge[bend left=10] (ny)}

\def\vz{(v) edge[bend right=30] (z)}
\def\zv{(z) edge[bend left=30] (v)}

\def\us{(u) edge[bend left=40] (s)}
\def\su{(s) edge[bend right=40] (u)}

\def\vPrimeYPrime{(v') edge[bend left=\bendUXLong] (y')}

\def\angleOne{310}
\def\angleTwo{345}
\def\angleThree{30}

\def\myTikZ#1{
	\begin{tikzpicture}[scale=\scale, baseline=-1mm, -,]
		#1
	\end{tikzpicture}
}

\def\doubleFigure#1#2#3#4{
    \begin{figure}[!htb]
        \minipage{0.49\textwidth}
        \raggedright
        #1
        \endminipage\hfill
        \minipage{0.49\textwidth}
        \raggedleft
        #2
        \endminipage
        \caption{#3}
        #4
    \end{figure}
}

\def\singleFigure#1#2#3{
    \begin{figure}[!htb]
        \center
        #1
        \caption{#2}
        #3
    \end{figure}
}
	
\tikzset{
    string/.style={},
    redDot/.style={circle, draw, red, fill=red, inner sep=1.5pt},
    blackDot/.style={circle, draw, fill, inner sep=1.5pt},
    transpBetweenUndir/.style={inner sep=0.1pt},
    transp/.style={inner sep=1.5pt},
    blueEdge/.style={line width = 0.5mm, cyan, -stealth},
    redEdge/.style={red, line width=0.5mm},
    redDotted/.style={red, densely dotted, line width=0.5mm},
    blueDotted/.style={cyan, loosely dotted, line width=0.5mm, -stealth},
    blueDottedUndir/.style={cyan, loosely dotted, line width=0.5mm},
}

\def\nodeY{
	\node[redDot, label={[label distance=\offsetNames]90:$y$}] (y) at (\lengthP, 0) {};
}

\def\nodeS{
	\node[transp] (s) at (-0.8*\lengthEdge, 0.2*\lengthEdge) {};
}

\def\nodePrime#1#2#3{ 
	\node[redDot, label={[label distance=1.4*\offsetNames]90 + #2*225:$#3'$}] (#3') at ({#2*#1}, \lengthEdge) {};
}
\def\nodeAndHalfEdgeTranspAroundInteresting#1#2#3{ 
	\node[transpBetweenUndir] (#3AroundInteresting) at ({#1 + #2*2.1*\lengthEdge}, 1.5*\lengthEdge) {};
	\draw[blueDottedUndir]
		(#3'') edge[bend left=#2*90] (#3AroundInteresting)
	;
}

\def\edgesInterestingXInTxWithoutS{
	\draw[redEdge]
		(x) edge (x')
	;
	\draw[blueEdge]
		(x') edge (x'')
		(nx') edge (x')	
	;
	\draw[blueDotted]
		\uxPrime
		\xNxPrime
	;
}

\def\edgesInterestingXInTxWithS{
	\draw[redEdge]
		(x) edge (x')
	;
	\draw[blueEdge]
		(x') edge (x'')
		(nx') edge (x')	
	;
	\draw[blueDotted]
		\xs
		\sNxPrime
	;
}

\def\edgesTxWithS{
	\edgesInterestingXInTxWithS
	\draw[redEdge]
		(y') edge (ny')
	;
	\draw[blueEdge]
		(y) edge (y')
		(y') edge (y'')
	;
	\draw[blueDotted]
		\nyPrimeS
		\uxPrime
	;
}

\def\interestingNeighbourBasic#1#2#3{ 
	\node[redDot, label={[label distance=\offsetNames]90:$#3$}] (#3) at ({#2*#1}, 0) {};
	\nodePrime{#1}{#2}{#3}

	\node[redDot, label={[label distance=\offsetNames]270:$n_{#3'}$}] (n#3') at ({#2*#1 + #2*\lengthEdge}, \lengthEdge) {};
	\node[redDot, label={[label distance=1.2*\offsetNames]90 + #2*90:$#3''$}] (#3'') at ({#2*#1}, 2*\lengthEdge) {};
	
	\node[transp] (c#31) at ({#2*#1 + #2*\lengthEdge + #2*cos(\angleOne)*\lengthEdge}, {\lengthEdge + sin(\angleOne)*\lengthEdge}) {};
	\node[transp] (c#32) at ({#2*#1 + #2*\lengthEdge + #2*cos(\angleTwo)*\lengthEdge}, {\lengthEdge + sin(\angleTwo)*\lengthEdge}) {};
	\node[transp] (c#33) at ({#2*#1 + #2*\lengthEdge + #2*cos(\angleThree)*\lengthEdge}, {\lengthEdge + sin(\angleThree)*\lengthEdge}) {};
	
	\draw[redDotted]
	(n#3') edge (c#31)
	(n#3') edge (c#32)
	(n#3') edge (c#33)
	;
}

\def\xWithInterestingNeighbourBasic{
	\interestingNeighbourBasic{0}{-1}{x}
}

\def\yWithInterestingNeighbourBasic{
	\interestingNeighbourBasic{\lengthP}{1}{y}
}

\def\caseTwoBasic#1#2{

	\xWithInterestingNeighbourBasic

	\node[redDot, label={[label distance=\offsetNames]90:$#1$}] (u) at ({\lengthKxCaseTwo}, 0) {};
	\node[redDot, label={[label distance=\offsetNames]90:$#2$}] (v) at ({\lengthKxCaseTwo + \lengthEdge}, 0) {};
	
	\draw[redDotted]
		(x) edge (u)
		(v) edge (y)
	;
}

\def\caseTwoBasicSwapLemma{
	\yWithInterestingNeighbourBasic
	\nodeS
	
	\caseTwoBasic{u}{v}
	\draw[blueEdge]
		(y') edge (y'')
	;
}

\def\caseTwoCaseDef{\myTikZ{
	\nodeY
	\nodePrime{\lengthP}{1}{y}
	\caseTwoBasic{x_i}{x_{i+1}}
	\edgesInterestingXInTxWithoutS
	\draw[redEdge]
		(u) edge (v)
	;
	\draw[blueEdge]
		(y) edge (y')
	;
	\draw[blueDotted]
		\vyLong
	;
}}

\def\caseTwoStart{\myTikZ{
	\caseTwoBasicSwapLemma
	\edgesTxWithS
	\draw[redEdge]
		(u) edge (v)
	;
	\draw[blueEdge]
	;
	\draw[blueDotted]
		\xPrimePrimeY
		\vyBelowPLong
	;
}}

\def\caseTwoEnd{\myTikZ{
	\caseTwoBasicSwapLemma
	\draw[redEdge]
		(x) edge (x')
		(y) edge (y')
		(x') edge (x'')
	;
	\draw[blueEdge]
		(v) edge (u)
		(x') edge (nx')
		(ny') edge (y')
	;
	\draw[blueDotted]
		\uxPrime
		\yvBelowPLong
		\xPrimePrimeY
		\xs
		\nxPrimeS
		\sNyPrime
	;
}}

\def\caseOneBasic#1#2#3#4{
	\def\lengthMiddle{1.5*\lengthEdge}

	\xWithInterestingNeighbourBasic
	
	\node[redDot, label={[label distance=\offsetNames]90:$#1$}] (u) at ({\lengthKxCaseOne}, 0) {};
	\node[redDot, label={[label distance=\offsetNames]90:$#2$}] (u') at ({\lengthKxCaseOne + \lengthEdge}, 0) {};
	
	\node[redDot, label={[label distance=\offsetNames]90:$#3$}] (v') at ({\lengthKxCaseOne + \lengthEdge + \lengthMiddle}, 0) {};
	\node[redDot, label={[label distance=\offsetNames]90:$#4$}] (v) at ({\lengthKxCaseOne + 2*\lengthEdge + \lengthMiddle}, 0) {};
	
	\draw[redDotted]
		(x) edge (u)
		(v) edge (y)
		(u') edge (v')
	;
}

\def\caseOneBasicSwapLemmas{
	\yWithInterestingNeighbourBasic
	\caseOneBasic{u}{u'}{v'}{v}
}

\def\caseOneCaseDef{\myTikZ{
	\nodeY
	\nodePrime{\lengthP}{1}{y}
	\caseOneBasic{x_i}{x_{i+1}}{x_{j-1}}{x_j}
	\edgesInterestingXInTxWithoutS
	\draw[redEdge]
		(u) edge (u')
		(v) edge (v')
	;
	\draw[blueEdge]
		(y) edge (y')
	;
	\draw[blueDotted]
		\vyShort
	;
}}

\def\caseOneClarificationForVVPrime{\myTikZ{
	\caseOneBasicSwapLemmas
	\edgesInterestingXInTxWithoutS
	\draw[redEdge]
		(ny') edge (y')
		(u) edge (u')
		(v) edge (v')
	;
	\draw[blueEdge]
		(y) edge (y')
		(y') edge (y'')
	;
	\draw[blueDotted]
		\vyShort
		\nyPrimeY
	;
}}

\def\caseOneSimpleOneStart{\myTikZ{
	\caseOneBasicSwapLemmas
	\edgesInterestingXInTxWithoutS
	\draw[redEdge]
		(ny') edge (y')
		(u) edge (u')
		(v) edge (v')
	;
	\draw[blueEdge]
		(y) edge (y')
		(y') edge (y'')
	;
	\draw[blueDotted]
		\uPrimeYPrime
		\vyShort
		\nyPrimeY
	;
}}

\def\caseOneSimpleOneEnd{\myTikZ{
	\caseOneBasicSwapLemmas
	\draw[redEdge]
		(x) edge (x')
		(x') edge (x'')
		(v) edge (v')
		(y) edge (y')
	;
	\draw[blueEdge]
		(ny') edge (y')
		(u) edge (u')
		(y') edge (y'')
		(nx') edge (x')
	;
	\draw[blueDotted]
		\xNxPrime
		\uPrimeYPrime
		\vyShort
		\yNyPrime
		\xPrimeU
	;
}}

\def\caseOneSimpleTwoStart{\myTikZ{
	\yWithInterestingNeighbourBasic
	\caseOneBasic{u}{u'}{v_{i-1}}{v_i}
	\edgesInterestingXInTxWithoutS
	\draw[redEdge]
		(ny') edge (y')
		(u) edge (u')
		(v) edge (v')
	;
	\draw[blueEdge]
		(y) edge (y')
		(y') edge (y'')
	;
	\draw[blueDotted]
		\vyPrime
		\nyPrimeY
	;
}}

\def\caseOneSimpleTwoEnd{\myTikZ{
	\yWithInterestingNeighbourBasic
	\caseOneBasic{u}{u'}{v_{i-1}}{v_i}
	\draw[redEdge]
		(u) edge (u')
		(y) edge (y')
		(y') edge (y'')
		(x) edge (x')
	;
	\draw[blueEdge]
		(ny') edge (y')
		(nx') edge (x')
		(x') edge (x'')
		(v) edge (v')
	;
	\draw[blueDotted]
		\uxPrime
		\yPrimeV
		\yNyPrime
		\xNxPrime
	;
}}

\def\caseOneBasicUgliestCase{
	\def\lengthMiddle{\lengthEdge}
	\def\lengthKxUgliestCase{(\lengthP - 3*\lengthEdge - 2*\lengthMiddle) / 2}
	
	\xWithInterestingNeighbourBasic
	\nodeS
	\yWithInterestingNeighbourBasic
	
	\node[redDot, label={[label distance=\offsetNames]90:$u$}] (u) at ({\lengthKxUgliestCase}, 0) {};
	\node[redDot, label={[label distance=\offsetNames]90:$u'$}] (u') at ({\lengthKxUgliestCase + \lengthEdge}, 0) {};
	
	\node[redDot, label={[label distance=\offsetNames]90:$w$}] (w) at ({\lengthKxUgliestCase + \lengthEdge + \lengthMiddle}, 0) {};
	\node[redDot, label={[label distance=\offsetNames]90:$w'$}] (w') at ({\lengthKxUgliestCase + 2*\lengthEdge + \lengthMiddle}, 0) {};
	
	\node[redDot, label={[label distance=\offsetNames]90:$v'$}] (v') at ({\lengthP - \lengthKxUgliestCase - \lengthEdge}, 0) {};
	\node[redDot, label={[label distance=\offsetNames]90:$v$}] (v) at ({\lengthP - \lengthKxUgliestCase}, 0) {};
	
	\node[transp] (wy') at ({\lengthP - \lengthKxUgliestCase - \lengthEdge}, 0.8*\lengthEdge) {};
	
	\draw[redDotted]
		(x) edge (u)
		(u') edge (w)
		(w') edge (v')
		(v) edge (y)
	;
	\draw[redEdge]
		(u) edge (u')
		(v) edge (v')
	;
	\draw[blueDotted]
		(u') edge[bend left=\bendUPrimeWYPrime] (wy')
		\xPrimePrimeY
	;
}

\def\caseOneUgliestCaseStart{\myTikZ{
	\caseOneBasicUgliestCase
	\edgesTxWithS
	\drawWYPrime
	\draw[redEdge]
		(w) edge (w')
	;
	\draw[blueDotted]
		\vyBelowPShort
	;
}}

\def\caseOneUgliestCaseEnd{\myTikZ{
	\caseOneBasicUgliestCase
	\drawYPrimeW
	\draw[redEdge]
		(x) edge (x')
		(x') edge (x'')
		(y') edge (y'')
	;
	\draw[blueEdge]
		(y) edge (y')
		(x') edge (nx')
		(ny') edge (y')
		(w) edge (w')
	;
	\draw[blueDotted]
		\uxPrime
		\sNyPrime
		\xs
		\nxPrimeS
		\vyBelowPShort
	;
}}

\def\basicIsolatedY#1{ 
	\def\xOfT{\lengthP / 3}
	\xWithInterestingNeighbourBasic
	\node[transp] (t) at (#1, 0) {};
	\node[redDot, label={[label distance=\offsetNames]90:$n_y$}] (ny) at (\lengthP - \lengthEdge, 0) {};
	\draw[redDotted]
		(x) edge (ny)
	;
}

\def\basicThreeChildren{
	\def\yOfZ{-\lengthP / 2}
	\basicIsolatedY{\xOfT}
	\yWithInterestingNeighbourBasic
	\nodeS
	\node[redDot, label={[label distance=\offsetNames]90:$z$}] (z) at (\xOfT, \yOfZ) {};
	\node[redDot, label={[label distance=1.2*\offsetNames]0:$n_z$}] (nz) at (\xOfT, \yOfZ + \lengthEdge) {};
	\node[redDot, label={[label distance=1.35*\offsetNames]140:$z'$}] (z') at (\xOfT + \lengthEdge, \yOfZ) {};
	\node[redDot, label={[label distance=1.2*\offsetNames]180:$z''$}] (z'') at (\xOfT + 2*\lengthEdge, \yOfZ) {};
	\node[redDot, label={[label distance=1.5*\offsetNames]180:$n_{z'}$}] (nz') at (\xOfT + \lengthEdge, \yOfZ - \lengthEdge) {};
	\node[transp] (cz1) at ({\xOfT + \lengthEdge + sin(\angleOne)*\lengthEdge}, {\yOfZ - \lengthEdge - cos(\angleOne)*\lengthEdge}) {};
	\node[transp] (cz2) at ({\xOfT + \lengthEdge + sin(\angleTwo)*\lengthEdge}, {\yOfZ - \lengthEdge - cos(\angleTwo)*\lengthEdge}) {};
	\node[transp] (cz3) at ({\xOfT + \lengthEdge + sin(\angleThree)*\lengthEdge}, {\yOfZ - \lengthEdge - cos(\angleThree)*\lengthEdge}) {};
	\node[transpBetweenUndir] (transpNzZPrime) at (\xOfT - 0.5*\lengthEdge, \yOfZ - 0.5*\lengthEdge) {};
	\node[transp] (transpBelowX) at (0, -1.5*\lengthEdge) {};
	\nodeAndHalfEdgeTranspAroundInteresting{\lengthP}{1}{y}
	\draw[redDotted]
		(t) edge (nz)
		(nz') edge (cz1)
		(nz') edge (cz2)
		(nz') edge (cz3)
	;
	\draw[blueEdge]
		(z') edge (z'')
	;
	\draw[blueDotted]
		\xPrimePrimeY
		\nyXPrime
		(ny') edge[bend left, in=140, out=60] (transpBelowX)
		(transpNzZPrime) edge[bend right=45] (z')
		(yAroundInteresting) edge[out=-90, in=20] (z)
	;
	\draw[blueDottedUndir]
		(nz) edge[bend right=45] (transpNzZPrime)
	;
}

\def\threeChildrenStart{\myTikZ{
	\basicThreeChildren
	\edgesInterestingXInTxWithS
	\draw[redEdge]
		(ny) edge (y)
		(y') edge (ny')
		(nz) edge (z)
		(z') edge (nz')
	;
	\draw[blueEdge]
		(y) edge (y')
		(y') edge (y'')
		(z) edge (z')
	;
	\draw[blueDotted]
		(transpBelowX) edge[bend left=30] (s)
		(nz') edge[bend left=50] (transpBelowX)
	;
}}

\def\threeChildrenEnd{\myTikZ{
	\basicThreeChildren
	\draw[redEdge]
		(x) edge (x')
		(x') edge (x'')
		(y) edge (y')
		(y') edge (y'')
		(z) edge (z')
	;
	\draw[blueEdge]
		(x') edge (nx')
		(y) edge (ny)
		(y') edge (ny')
		(z) edge (nz)
		(nz') edge (z')
	;
	\draw[blueDotted]
		\xs
		\nxPrimeS
		\sToTranspBelowX
		\transpBelowXToNzPrime
	;
}}

\def\predBasic{
	\basicIsolatedY{\xOfTPred}
	\nodeY
	\nodeS
	\nodePrime{\lengthP}{1}{y}
	\nodeAndHalfEdgeTranspAroundInteresting{0}{-1}{x}
	\node[redDot, label={[label distance=\offsetNames]90:$z$}] (z) at (\xOfTPred, \yOfZPred) {};
	\draw[redEdge]
		(x) edge (x')
	;
	\draw[blueEdge]
		(y) edge (y')
	;
	\draw[blueDotted]
		(xAroundInteresting) edge[bend right=30] (z)
	;
	\draw[redDotted]
		(t) edge (u)
		(v) edge (z)
	;
}

\def\predOneBasic{
	\def\middlePred{1.5*\lengthEdge}
	\def\yOfU{(\yOfZPred + \middlePred + 2*\lengthEdge) / 2}
	
	\node[redDot, label={[label distance=\offsetNames]180:$u$}] (u) at (\xOfTPred, {\yOfU}) {};
	\node[redDot, label={[label distance=\offsetNames]180:$u'$}] (u') at (\xOfTPred, {\yOfU - \lengthEdge}) {};
	\node[redDot, label={[label distance=\offsetNames]180:$v'$}] (v') at (\xOfTPred, {\yOfZPred - \yOfU + \lengthEdge}) {};
	\node[redDot, label={[label distance=\offsetNames]180:$v$}] (v) at (\xOfTPred, {\yOfZPred - \yOfU}) {};
	
	\predBasic
	\draw[redDotted]
		(u') edge (v')
	;
}

\def\predOneStart{\myTikZ{
	\predOneBasic
	\edgesInterestingXInTxWithS
	\draw[redEdge]
		(ny) edge (y)
		(u) edge (u')
		(v') edge (v)
	;
	\draw[blueEdge]
		(z) edge (y)
	;
	\draw[blueDotted]
		\nyXPrime
		\vz
		\us
	;
}}

\def\predOneEndOne{\myTikZ{
	\predOneBasic
	\draw[redEdge]
		(x') edge (x'')
		(ny) edge (y)
		(v') edge (v)
	;
	\draw[blueEdge]
		(u) edge (u')
		(x') edge (nx')
		(z) edge (y)
	;
	\draw[blueDotted]
		\xs
		\nxPrimeS
		\nyXPrime
		\vz
		\su
	;
}}

\def\predOneEndTwo{\myTikZ{
	\predOneBasic
	\draw[redEdge]
		(u) edge (u')
		(z) edge (y)
		(x') edge (x'')
	;
	\draw[blueEdge]
		(nx') edge (x')
		(ny) edge (y)
		(v) edge (v')
	;
	\draw[blueDotted]
		\xs
		\sNxPrime
		\xPrimeNy
		\zv
		\us
	;
}}

\def\predTwoBasic{
	\def\yOfU{(\yOfZPred + \lengthEdge) / 2}
	\node[redDot, label={[label distance=\offsetNames]180:$u$}] (u) at (\xOfTPred, {\yOfU}) {};
	\node[redDot, label={[label distance=\offsetNames]180:$v$}] (v) at (\xOfTPred, {\yOfZPred - \yOfU}) {};
	\predBasic
}

\def\predTwoStart{\myTikZ{
	\predTwoBasic
	\edgesInterestingXInTxWithS
	\draw[redEdge]
		(ny) edge (y)
		(u) edge (v)
	;
	\draw[blueEdge]
		(z) edge (y)
	;
	\draw[blueDotted]
		\us
		\vz
		\nyXPrime
	;
}}

\def\predTwoEnd{\myTikZ{
	\predTwoBasic
	\draw[redEdge]
		(z) edge (y)
		(x') edge (x'')
	;
	\draw[blueEdge]
		(v) edge (u)
		(ny) edge (y)
		(nx') edge (x')
	;
	\draw[blueDotted]
		\xs
		\sNxPrime
		\us
		\zv
		\xPrimeNy
	;
}}
\def\uglyBasic{
	\caseOneBasicSwapLemmas
	\nodeS
	\draw[blueDotted]
		\xPrimePrimeY
	;
}
\def\uglyOneStart{\myTikZ{
	\uglyBasic
	\edgesTxWithS
	\draw[redEdge]
		(u) edge (u')
		(v) edge (v')
	;
	\draw[blueDotted]
		\vPrimeYPrime
		\vyBelowPShort
	;
}}

\def\uglyOneEnd{\myTikZ{
	\uglyBasic
	\draw[redEdge]
		(x) edge (x')
		(y) edge (y')
		(u) edge (u')
		(x') edge (x'')
	;
	\draw[blueEdge]
		(y') edge (y'')
		(v) edge (v')
		(ny') edge (y')
		(x') edge (nx')
	;
	\draw[blueDotted]
		\vPrimeYPrime
		\yvBelowPShort
		\xs
		\nxPrimeS
		\uxPrime
		\sNyPrime
	;
}}

\def\uglyTwoStart{\myTikZ{
	\uglyBasic
	\edgesTxWithS
	\draw[redEdge]
		(u) edge (u')
		(v) edge (v')
	;
	\draw[blueDotted]
		\vyBelowPShort
	;
}}

\def\uglyTwoEnd{\myTikZ{
	\uglyBasic
	\draw[redEdge]
		(x) edge (x')
		(x') edge (x'')
		(y') edge (y'')
		(v') edge (v)
	;
	\draw[blueEdge]
		(y) edge (y')
		(y') edge (ny')
		(nx') edge (x')
		(u) edge (u')
	;
	\draw[blueDotted]
		\vyBelowPShort
		\xs
		\sNxPrime
		\xPrimeU
		\nyPrimeS
	;
}}

\def\justInterestingNeighbourBasic{
	\interestingNeighbourBasic{0}{-1}{x}
	\node[transp] (tx1) at ({cos(\angleOne)*\lengthEdge}, {sin(\angleOne)*\lengthEdge}) {};
	\node[transp] (tx2) at ({cos(\angleTwo)*\lengthEdge}, {sin(\angleTwo)*\lengthEdge}) {};
	\node[transp] (tx3) at ({cos(\angleThree)*\lengthEdge}, {sin(\angleThree)*\lengthEdge}) {};
	
	\draw[redDotted]
		(x) edge (tx1)
		(x) edge (tx2)
		(x) edge (tx3)
	;
}

\def\interestingNeighbourInTStar{\myTikZ{
	\justInterestingNeighbourBasic
	\draw[redEdge]
		(nx') edge (x')
	;
	\draw[blueEdge]
		(x) edge (x')
		(x') edge (x'')
	;
	\draw[blueDotted]
		\nxPrimeX
	;
}}
\def\interestingNeighbourInTx{\myTikZ{
	\justInterestingNeighbourBasic
	\draw[redEdge]
		(x) edge (x')	
	;
	\draw[blueEdge]
		(nx') edge (x')
		(x') edge (x'')
	;
	\draw[blueDotted]
		\xNxPrime
	;
}}

\def\offsetNamesBig{-1.9em}
\def\angleSwapDef{-60}
\def\xOnABranch#1{#1*cos(\angleSwapDef)*\lengthEdge}
\def\yOnABranch#1{#1*sin(\angleSwapDef)*\lengthEdge}
\def\posOnABranch#1{({\xOnABranch{#1}}, {\yOnABranch{#1}})}

\def\xOnBBranch#1{\xOnABranch{0} + #1*cos(180-\angleSwapDef)*\lengthEdge}
\def\yOnBBranch#1{\yOnABranch{0} + #1*sin(180-\angleSwapDef)*\lengthEdge}
\def\posOnBBranch#1{({\xOnBBranch{#1}}, {\yOnBBranch{#1}})}

\def\xOnCBranch#1{\xOnBBranch{2} + #1*cos(\angleSwapDef)*\lengthEdge}
\def\yOnCBranch#1{\yOnBBranch{2} + #1*sin(\angleSwapDef)*\lengthEdge}
\def\posOnCBranch#1{({\xOnCBranch{#1}}, {\yOnCBranch{#1}})}

\def\swapDefBasic{
	\node[blackDot, label={[label distance=\offsetNames]270:$r$}] (r) at (0, 0) {};
	\node[blackDot] (a1) at \posOnABranch{1} {};
	\node[blackDot, label={[label distance=\offsetNamesBig]0:$v'_1$}] (v'1) at \posOnABranch{2} {};
	\node[blackDot, label={[label distance=\offsetNamesBig]0:$u'_1$}] (u'1) at \posOnABranch{3} {};
	\node[blackDot, label={[label distance=\offsetNamesBig]0:$u_1$}] (u1) at \posOnABranch{4} {};
	\node[blackDot] (a2) at \posOnABranch{5} {};
	\node[blackDot, label={[label distance=\offsetNamesBig]0:$v_1$}] (v1) at \posOnABranch{6} {};
	\node[blackDot] (b1) at \posOnBBranch{1} {};
	\node[blackDot, label={[label distance=\offsetNamesBig]0:$u'_2$}] (u'2) at \posOnBBranch{2} {};
	\node[blackDot, label={[label distance=\offsetNamesBig]0:$u_2$}] (u2) at \posOnBBranch{3} {};
	\node[blackDot, label={[label distance=\offsetNamesBig]0:$v_2$}] (v2) at \posOnBBranch{4} {};
	\node[blackDot] (b2) at \posOnBBranch{5} {};
	\node[blackDot, label={[label distance=\offsetNamesBig]180:$v'_2$}] (v'2) at \posOnCBranch{1} {};
	\node[blackDot] (c1) at \posOnCBranch{2} {};
	\draw[blueEdge]
		(a1) edge (r)
		(v'1) edge (a1)
		(u'1) edge (v'1)
		(b1) edge (r)
		(u'2) edge (b1)
		(b2) edge (v2)
		(v'2) edge (u'2)
		(c1) edge (v'2)
	;
}
\def\swapDefBefore{\myTikZ{
	\swapDefBasic
	\draw[blueEdge]
		(u1) edge (u'1)
		(u2) edge (u'2)
		(a2) edge (u1)
		(v1) edge (a2)
		(v2) edge (u2)
	;
	\draw[redEdge]
		(v1) edge[bend right=40] (v'1)
		(v2) edge (v'2)
	;
}}
\def\swapDefAfter{\myTikZ{
	\swapDefBasic
	\draw[blueEdge]
		(v1) edge[bend right=40] (v'1)
		(v2) edge (v'2)
		(u1) edge (a2)
		(a2) edge (v1)
		(u2) edge (v2)
	;
	\draw[redEdge]
		(u1) edge (u'1)
		(u2) edge (u'2)
	;
}}

\def\xOfR{\lengthKxCaseTwo + 2.5*\lengthEdge}

\def\doubleFigureCentered#1#2#3#4{
	\begin{figure}[!htb]
		\minipage{0.49\textwidth}
		\centering
		#1
		\endminipage\hfill
		\minipage{0.49\textwidth}
		\centering
		#2
		\endminipage
		\caption{#3}
		#4
	\end{figure}
}

\def\fourFiguresCentered#1#2#3#4#5#6{
	\begin{figure}[!htb]
		\minipage{0.49\textwidth}
		\centering
		#1
		\endminipage\hfill
		\minipage{0.49\textwidth}
		\centering
		#2
		\endminipage\\[2em]
		\minipage{0.49\textwidth}
		\centering
		#3
		\endminipage\hfill
		\minipage{0.49\textwidth}
		\centering
		#4
		\endminipage
		\caption{#5}
		#6
	\end{figure}
}

\def\rootSwapBasic{
	\node[redDot, label={[label distance=\offsetNames]90:$x_{i-1}$}] (u) at ({\lengthKxCaseTwo}, 0) {};
	\node[redDot, label={[label distance=\offsetNames]90:$x_i$}] (u') at ({\lengthKxCaseTwo + \lengthEdge}, 0) {};
	\node[redDot, label={[label distance=\offsetNames]90:$r$}] (r) at ({\xOfR}, 0) {};
	\node[transp] (tx1) at ({\xOfR + cos(\angleOne)*\lengthEdge}, {sin(\angleOne)*\lengthEdge}) {};
	\node[transp] (tx2) at ({\xOfR + cos(\angleTwo)*\lengthEdge}, {sin(\angleTwo)*\lengthEdge}) {};
	\node[transp] (tx3) at ({\xOfR + cos(\angleThree)*\lengthEdge}, {sin(\angleThree)*\lengthEdge}) {};
	\draw[redDotted]
		(r) edge (tx1)
		(r) edge (tx2)
		(r) edge (tx3)
		(x) edge (u)
		(u') edge (r)
	;
	\draw[blueDotted]
		(u') edge[bend right=30] (r)
	;
}

\def\rootSwapInterestingBasic{
	\xWithInterestingNeighbourBasic
	\interestingNeighbourBasic{0}{-1}{x}
	\rootSwapBasic
	\draw[blueDotted]
		(x'') edge[bend left=30] (r)
	;
}

\def\rootSwapInterestingTx{\myTikZ{
	\rootSwapInterestingBasic
	\edgesInterestingXInTxWithoutS
	\draw[redEdge]
		(u) edge (u')
	;
}}

\def\rootSwapInterestingTPrime{\myTikZ{
	\rootSwapInterestingBasic
	\draw[redEdge]
		(x) edge (x')
		(x') edge (x'')
	;
	\draw[blueEdge]
		(u) edge (u')
		(nx') edge (x')
	;
	\draw[blueDotted]
		\xNxPrime
		\xPrimeU
	;
}}

\def\rootSwapSmallBasic{
	\node[transp] (tx2) at ({-\lengthEdge - cos(\angleTwo)*\lengthEdge}, {\lengthEdge + sin(\angleTwo)*\lengthEdge}) {};
	\node[redDot, label={[label distance=\offsetNames]90:$x$}] (x) at (0, 0) {};
	\node[redDot, label={[label distance=\offsetNames]0:$x'$}] (x') at (0, \lengthEdge) {};
	\node[redDot] (x'') at (0, 2*\lengthEdge) {};
	\rootSwapBasic
	\draw[redEdge]
		(x') edge (x'')
	;
	\draw[blueDotted]
		(x') edge[bend left=30] (r)
	;
}

\def\rootSwapSmallTStar{\myTikZ{
	\rootSwapSmallBasic
	\draw[blueEdge]
		(x) edge (x')
	;
	\draw[blueDotted]
		(u) edge[bend right=20] (x)
	;
	\draw[redEdge]
		(u) edge (u')
	;
}}
\def\rootSwapSmallTPrime{\myTikZ{
	\rootSwapSmallBasic
	\draw[blueEdge]
		(u) edge (u')		
	;
	\draw[blueDotted]
		(x) edge[bend left=20] (u)
	;
	\draw[redEdge]
		(x) edge (x')
	;
}}

\maketitle

\begin{abstract} 
    The arboricity $\Gamma(G)$ of an undirected graph $G = (V,E)$ is the minimal number $k$ such that $E$ can be partitioned into $k$ forests on $V$. Nash-Williams' formula states that $k = \ceil{ \fracArb(G) }$, where $\fracArb(G)$ is the maximum of $|E_H|/(|V_H| -1)$ over all subgraphs $(V_H, E_H)$ of $G$ with $|V_H| \geq 2$.
    
    The Strong Nine Dragon Tree Conjecture states that if $\fracArb(G) \leq k + \frac{d}{d+k+1}$ for $k, d \in \mathbb N_0$, then there is a partition of the edge set of $G$ into $k+1$ forests on $V$ such that one forest has at most $d$ edges in each connected component.

    Here we prove the Strong Nine Dragon Tree Conjecture when $d \leq 2(k+1)$, which is a new result for all $(k,d)$ such that $d > k+1$. In fact, we prove a stronger theorem. We prove that a weaker sparsity notion, called $(k,d)$-sparseness, suffices to give the decomposition, under the assumption that the graph decomposes into $k+1$ forests. This is a new result for all $(k,d)$ where $d >1$, and improves upon the recent resolution of the Overfull Nine Dragon Tree Theorem for all $(k,d)$ when $d \leq 2(k+1)$.
    As a corollary, we obtain that planar graphs of girth five decompose into a forest and a forest where every component has at most four edges, and by duality, we obtain that $5$-edge-connected planar graphs have a $\frac{4}{5}$-thin tree, improving a result of the authors that $5$-edge-connected planar graphs have a $\frac{5}{6}$-thin tree.
\end{abstract}

\section{Introduction}

Graphs are assumed throughout to have no loops but possibly contain parallel edges. Throughout we use $e(G) = |E(G)|$ and $v(G) = |V(G)|$. This paper deals with \textit{graph decompositions}, which are a partitioning of the edge set of a graph into subgraphs (normally assumed to be disjoint). Graph decompositions become interesting when we add constraints - the typical constraints are restricting the number of subgraphs in the partitioning, and also the types of subgraphs allowed in the partitioning. If we ask for each part to be a specific graph $H$,  then under the mild assumptions that $H$ is connected and has at least three edges, this is an NP-complete problem \cite{Holyer}. To get past the NP-completeness barrier, it is natural to allow the subgraphs in our graph decomposition to come from a family of graphs. One of the simplest families is the class of forests, and thus, it is natural to try to decompose graphs into forests. Of course, every graph can be easily decomposed into forests, so one needs to put a constraint on the number of forests we require. Surprisingly, this question has a complete answer. Nash-Williams' Theorem \cite{nash} states that a graph $G$ has a decomposition into $k$ forests if and only if  $\fracArb(G) \leq k$ where 
$\fracArb(G) = \max_{H \subseteq G, v(H) \geq 2} \frac{e(H)}{v(H) - 1}$. We call $\fracArb$ the \textit{fractional arboricity of $G$}. Thus, the only obstruction to having a decomposition into $k$ forests is the obvious one - having a subgraph that is too dense. This is a very pretty and pleasing result. It almost sounds like the end of the story, we have a complete characterization, and while not immediately clear, we even have a polynomial time algorithm to decide if such a decomposition exists  (see for example, \cite{Edmonds1965}).

However, Xuding Zhu found a rather surprising connection with forest decompositions of graphs where one of the forests has bounded degree, and the game chromatic number. To explain it, we need some definitions. Let $G$ be a graph, and consider the following game played by Alice and Bob. Starting with Alice, the players take turns colouring uncoloured vertices such that at each step, no adjacent vertices have the same colour. Alice wins if the entire graph is coloured, and Bob wins otherwise. The \textit{game chromatic number of $G$} is the minimum integer $k$ such that Alice has a strategy to always win the game. Zhu proved:

\begin{thm}[\cite{gamecolnumber}]
If $G$ decomposes into two forests $T,F$ such that $F$ has maximum degree $d$, then the game chromatic number of $G$ is at most $4+d$. 
\end{thm}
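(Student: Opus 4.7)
The plan is to bound the game chromatic number via the game coloring number $\mathrm{col}_g(G)$, defined as the smallest integer $k$ such that Alice has a strategy in the marking game (Alice and Bob alternately mark unmarked vertices, Alice moving first) ensuring that every vertex has fewer than $k$ neighbours already marked at the moment it gets marked. The standard greedy argument, colouring vertices in the order they were marked, shows $\chi_g(G) \leq \mathrm{col}_g(G)$, so it suffices to prove $\mathrm{col}_g(G) \leq 4+d$.

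My strategy for Alice is to play the marking game on $G$ while pretending that only the forest $T$ exists. Here I would invoke the theorem of Faigle, Kern, Kierstead and Trotter that on any forest there is a marking-game strategy for Alice guaranteeing every vertex has at most three forest-neighbours marked before it; equivalently $\mathrm{col}_g(T) \leq 4$. That strategy depends only on the current marked set and on the tree structure of $T$, so Alice can legally execute the same choices in the marking game on $G$, simply ignoring the edges of $F$ when deciding her move.

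To conclude, fix any vertex $v$ and partition its neighbours in $G$ into $T$-neighbours and $F$-neighbours. The strategy above guarantees at most three $T$-neighbours of $v$ are marked before $v$. Since $F$ has maximum degree at most $d$, vertex $v$ has at most $d$ $F$-neighbours in total, and hence at most $d$ of them are marked before $v$. Summing yields at most $3+d$ earlier-marked neighbours at every vertex, so $\mathrm{col}_g(G) \leq 4+d$ and therefore $\chi_g(G) \leq 4+d$.

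The main obstacle in a fully self-contained write-up would be the forest case itself, which I take as a black box here. Its proof is a clever activation strategy in which Alice responds to each move of Bob by marking an ancestor further up $T$, preventing any vertex from ever accumulating a fourth previously marked forest-neighbour. Once that ingredient is granted, the rest of the argument is just the observation that Alice's tree strategy only interacts with $F$ through the crude degree bound.
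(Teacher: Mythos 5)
The paper merely cites this theorem from \cite{gamecolnumber} and gives no proof, so there is nothing internal to compare against. Your argument is correct and is the standard route: bound $\chi_g$ by the game colouring number, have Alice run the Faigle--Kern--Kierstead--Trotter marking strategy on $T$ (which is oblivious to $F$, so it transfers verbatim to the marking game on $G$), and observe that every vertex has at most $3$ marked $T$-neighbours plus trivially at most $d$ marked $F$-neighbours when it is marked, giving $\chi_g(G)\le\mathrm{col}_g(G)\le 4+d$.
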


Thus, we have a natural question: When does a graph decompose into $k+1$ forests where one of the forests has bounded degree?  Nash-Williams' Theorem does not handle this situation: all it ensures is that if the fractional arboricity is  at most $k$, we can decompose into $k$ forests. For special graph classes, such as planar graphs, Gonçalves showed in \cite{planar24} that planar graphs can be decomposed into three forests such that one forest has maximum degree at most 4. Therefore one might wonder if it is possible to give a Nash-Williams type theorem for the setting where we decompose into $k+1$ forests, and one of the forests has bounded degree. At first glance, it does not seem possible - if the fractional arboricity is exactly $k$, for integral $k$ we do not have much choice in our selection of forest decomposition (in particular, if $k=1$, there is no choice, and no further strengthening of Nash-Williams Theorem is possible). But one can observe that the fractional arboricity need not be integral - if, for example, the fractional arboricity of a graph $G$ is $k + \varepsilon$ for $k \in \mathbb{N}$ and $\varepsilon >0$ small, then intuitively this means that $G$ decomposes into $k$ forests, plus there is a few extra edges left over, which also form a forest. Thus, one might hope that if $\varepsilon$ is small enough, additional structure can be put onto at least one of the forests. This is the content of the Nine Dragon Tree Theorem\footnote{The name comes from a famous
tree in Kaohsiung, Taiwan, which is far from acyclic.}:

\begin{thm}[Nine Dragon Tree Theorem \cite{ndtt}]	\label{thm:ndtt}
    Let $G$ be a graph and $k$ and $d$ be positive integers. If $\fracArb(G) \leq k + \frac{d}{d + k + 1}$, then there is a decomposition into $k + 1$ forests, where one of the forests has maximum degree at most $d$.
\end{thm}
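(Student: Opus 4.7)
The plan is to proceed by contradiction, choosing $G$ to be a counterexample that minimizes the pair $(v(G), e(G))$ in lexicographic order. Since $\fracArb(G) \le k + \frac{d}{d+k+1} < k+1$, Nash--Williams' Theorem supplies a decomposition of $E(G)$ into $k+1$ forests $F_1, \ldots, F_k, T$, where $T$ is the candidate bounded-degree forest. Among all such decompositions, fix one that minimizes a carefully chosen potential measuring how badly $T$ violates the degree constraint --- for example, the lexicographically minimum vector of $T$-degrees listed in nonincreasing order, or the sum $\sum_{v} \max(0, \deg_T(v) - d)$. The minimality of $G$ also lets us assume $G$ is connected and that every proper subgraph of $G$ admits the desired decomposition, so any local structural improvement we find must actually produce a contradiction.

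Suppose for contradiction that some vertex $x$ satisfies $\deg_T(x) > d$. For each $T$-neighbor $y$ of $x$, I would try to swap the edge $xy$ out of $T$ into some $F_i$ with $i \le k$. Such a swap is forbidden only when $F_i + xy$ contains a cycle, i.e.\ when there is a unique $x$--$y$ path $P_i^y$ in $F_i$. For each edge $uv$ on such a blocking path, iterate: try to move $uv$ out of $F_i$, obtaining either a valid swap, which when chained back to $xy$ yields a strictly better decomposition, or yet another blocking path in a different forest. This generates a growing set $U$ of vertices reachable from $x$ by these forced-path extensions, together with edges of $F_1, \ldots, F_k, T$ that must appear in $G[U]$. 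The process terminates only when no improving exchange exists, at which point the $F_i$ restricted to $U$ are spanning and $T$ still carries many edges inside $U$ witnessing the surplus degree at $x$.

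The last step is to show that $U$ is dense enough to contradict the hypothesis. Since each $F_i$ restricted to $U$ is a spanning forest of $G[U]$, the $k$ non-special forests contribute exactly $k(v(U)-1)$ edges to $G[U]$, and the task reduces to proving that the $T$-edges trapped in $G[U]$ exceed $\frac{d}{d+k+1}(v(U)-1)$, since then
\[
    \fracArb(G) \;\ge\; \frac{e(G[U])}{v(U)-1} \;>\; k + \frac{d}{d+k+1},
\]
a contradiction. The main obstacle, in my view, lies precisely in this accounting: one must show that every new vertex added to $U$ during the exchange process brings with it enough $T$-mass to maintain the density lower bound. The charge per new vertex is fragile, so the potential function, the rule for choosing which blocked path to follow, and the order in which blocked swaps are iterated must all be compatible; otherwise the ratio of newly captured $T$-edges to newly captured vertices dips below $d/(d+k+1)$, and the argument collapses. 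Handling the boundary behavior --- in particular, those situations where the exchange cascade terminates at a leaf of $T$ whose $T$-degree is already small --- is where the bulk of the technical work sits, and is the step I expect to be hardest to execute cleanly.
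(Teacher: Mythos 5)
Your outline captures the right general flavour --- minimal counterexample, exchange arguments blocked by paths, and a final density contradiction on the reached vertex set --- but there are two genuine gaps that the actual Jiang--Yang proof closes with specific, non-obvious structure, and your proposal does not.

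First, you assert that once the exchange cascade terminates, ``the $F_i$ restricted to $U$ are spanning,'' and your whole density accounting ($k(v(U)-1)$ edges from the $F_i$) hinges on this. That is not a consequence of the process as you describe it: following blocking paths only tells you that $F_i$ connects the two endpoints of each specific edge you tried to move, not that $F_i|_U$ is a spanning tree of $G[U]$. In the real proof this is handled \emph{before} any exchange argument: one shows by vertex minimality that a minimal counterexample decomposes into $k$ \emph{spanning trees} $T_1,\ldots,T_k$ plus one extra forest $F$ (this is Lemma~2.1 of \cite{ndtt}, stated here as Lemma~\ref{lemma:minimalCounterExample}). The reduction is its own argument (it essentially re-proves Nash--Williams under minimality) and cannot be absorbed into ``the cascade terminates''; without it the $k(v(U)-1)$ lower bound on blue edges is unjustified.

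Second, your exchange mechanism is too weakly specified to support the final count, and you yourself flag this as ``the step I expect to be hardest.'' The actual proof does not chase arbitrary blocking paths out of a high-degree vertex $x$; it fixes a root $r$ in an offending component of $F$, orients every $T_i$ towards $r$, defines the exploration subgraph $H$ of vertices reachable from $r$ by mixed directed-blue/undirected-red walks, and then minimizes lexicographically \emph{two} objects: a global residue function on $F$ and a ``legal order'' of the components of $F|_H$. The improving moves are ``special paths,'' defined relative to this order, and their minimality is exactly what guarantees the exchange makes progress. The density contradiction is then obtained not by a per-vertex charging argument but by counting how many \emph{small} components of $F$ can sit in $H$: special-path and exchange lemmas forbid small components near small components (and bound small children per parent), so any surviving $H$ has too few red edges relative to $v(H)$ only if it were sparse, contradicting $\fracArb(G) \le k + \tfrac{d}{d+k+1}$. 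Your per-vertex ``$T$-mass'' charging and your potential (degree surplus or sorted degree vector) are plausible candidates, but they are not the ones used, and making them compatible with the path-following rule so that the ratio never dips is precisely where the unsupplied work lives; as written, the argument has a real hole rather than a routine gap.
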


This theorem was the culmination of a large line of research (for example, \cite{ndtk2},  \cite{Kostochkaetal},  \cite{sndtck1d2}, \cite{Yangmatching}, as a non-exhaustive list) and spawned the now ubiquitous potential method technique \cite{sndtck1d2} (see for example \cite{danandmatt}, \cite{oresconjecture}, \cite{trianglefree}, \cite{NASERASR202281}) before the beautiful proof of Jiang and Yang was found.\\  Note that the choice of bound on the fractional arboricity is best possible:

\begin{thm}[\cite{sndtck1d2}]
\label{tightexamplethm}
    For any positive integers $k$ and $d$ there are arbitrarily large graphs $G$ and a set $S \subseteq E(G)$ of $d+1$ edges such that $\fracArb(G-S) = k + \frac{d}{k+d+1}$ and $G$ does not decompose into $k+1$ forests where one of the forests has maximum degree $d$. 
\end{thm}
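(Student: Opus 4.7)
The plan is to produce an explicit infinite family of extremal examples by combining a tight density gadget with a short obstruction, and to verify the claim by a counting-plus-rigidity argument.

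First I would build a base gadget $H$ on $n_0 = k+d+2$ vertices as the union of $k$ edge-disjoint spanning trees of $K_{n_0}$ together with $d$ additional edges, all chosen to share a common endpoint $v_0 \in V(H)$. Then $|E(H)| = k(k+d+1) + d$ and a direct density check gives $\fracArb(H) = k + \frac{d}{k+d+1}$, with the supremum attained by $H$ itself. The desired $G$ is formed by identifying $N$ disjoint copies $H^{(1)},\dots,H^{(N)}$ of $H$ at their vertices $v_0$, producing a graph $G_0$ of fractional arboricity still equal to $k + \frac{d}{k+d+1}$ (gluing at a single vertex cannot produce denser subgraphs). Finally, I let $S$ consist of $d+1$ additional edges incident to $v_0$, placed inside one of the gadgets as chords whose endpoints are chosen so that each of them closes a cycle with every spanning tree of that copy. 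Then $\fracArb(G-S)=\fracArb(G_0)=k+\frac{d}{k+d+1}$ and $|V(G)| \to \infty$ with $N$.

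For the non-decomposition claim, suppose for contradiction that $G$ admits a decomposition into forests $F_0,F_1,\dots,F_k$ with $\Delta(F_0)\le d$. Inside each copy $H^{(j)}$ the trivial bound $\sum_{i} |E(F_i)\cap E(H^{(j)})|\le(k+1)(n_0-1)$ combined with $|E(H^{(j)})|=k(n_0-1)+d$ leaves a slack of only $k+1-d$, so each $F_i \cap H^{(j)}$ is forced to be near-spanning. A rigidity analysis then shows that $v_0$ must carry at least $\lceil d/(k+1) \rceil$ incident edges of $F_0$ inside each $H^{(j)}$, since the $d$ extra edges all share $v_0$ and any attempt to route them through $F_1,\dots,F_k$ would close a cycle in one of those forests. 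Taking $N$ large enough, the total $F_0$-degree at $v_0$ from the gadgets alone already exhausts the budget $d$. The $d+1$ chord edges of $S$ must therefore be placed in $F_1,\dots,F_k$; but each such $F_i$ already induces a near-spanning forest in the chosen copy, so any additional chord at $v_0$ closes a cycle, contradicting the forest property.

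The main obstacle is the rigidity step: proving that inside each gadget, $v_0$'s $F_0$-degree cannot be reduced to zero by reshuffling edges among the $k+1$ forests. This requires an edge-exchange argument which uses both the near-tight density of $H$ and the fact that the $d$ extra edges were placed at a common vertex. Once rigidity is verified, the rest is pure bookkeeping, and the contradiction at $v_0$ is immediate. The delicate point is to calibrate the placement of $S$ and the parameter $N$ so that the obstruction appears uniformly for all pairs $(k,d)$ rather than only in the easy range $d > k$ where a direct density argument inside a single gadget already suffices.
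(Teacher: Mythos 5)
The paper never proves this theorem; it is stated and cited as a known result from \cite{sndtck1d2}, so there is no in-paper proof to compare against. Judged on its own terms, your proposal has a genuine gap, and the gap is precisely the step you flag as ``the main obstacle'': the rigidity claim is asserted, never argued, and the specific quantitative form you give for it appears to be false.

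Concretely, three problems. First, the slack bookkeeping is off: with $n_0 = k+d+2$ and $e(H) = k(k+d+1)+d$, the gap between $(k+1)(n_0-1)$ and $e(H)$ is $k+1$, not $k+1-d$, so the forests restricted to a gadget are not as tightly constrained as you claim. Second, the assertion that $v_0$ must carry at least $\lceil d/(k+1)\rceil$ edges of $F_0$ inside each gadget does not follow from the fact that the $d$ extra edges of $H$ are incident to $v_0$: a decomposition is free to route all $v_0$-incident edges of a gadget into $F_1,\dots,F_k$ and place $F_0$-edges far from $v_0$. For instance with $k=d=1$ your gadget $H$ is a $4$-cycle $v_0 a b c v_0$; in every unmodified copy one can put the edge $bc$ alone into $F_0$, so $F_0$ touches $v_0$ in none of those copies and the ``total $F_0$-degree at $v_0$ from the gadgets alone'' is $0$, not something that grows with $N$. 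Third, after adding the $d+1$ edges of $S$ into one gadget, that gadget has $(k+1)(k+d+1) + (d-k)$ edges on $k+d+2$ vertices. For $d>k$ it is $(k+1)$-overfull and the conclusion is immediate; but for $d\le k$, which you yourself identify as the hard range, the subgraph still decomposes into $k+1$ forests, and nothing in the proposal forces the bounded-degree forest to exceed degree $d$ at $v_0$ or anywhere else. Even the weaker pigeonhole observation (some forest has large degree at $v_0$) does not identify the forest $F_0$ as the offender, since $F_1,\dots,F_k$ carry no degree constraint.

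There is also a minor well-definedness issue: the instruction that each edge of $S$ ``closes a cycle with every spanning tree of that copy'' is vacuously true of any added edge (every non-tree edge has a fundamental cycle with respect to every spanning tree), so it imposes no condition; and the constraint that $F_i$ may not contain a cycle only bites if $F_i$ restricted to the gadget happens to contain that particular fundamental cycle minus the chord, which a decomposition need not do. In short, the construction skeleton is plausible, but the step that makes the whole thing work is missing, and the evidence you give for it does not withstand even small examples.
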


Thus again, it appears as if the story is finished, we have a best possible theorem showing that graphs which are sufficiently sparse decompose into $k+1$ forests where one of the forests has maximum degree $d$. It is even the case that the Nine Dragon Tree Theorem, combined with the theorem of Zhu on the game chromatic number, gives best possible bounds on the game chromatic number of high girth planar graphs \cite{CHARPENTIER}, really suggesting that this was the right statement to prove. However, very curiously, the tight examples in Theorem \ref{tightexamplethm} do not rule out the possibility of a strengthening of the Nine Dragon Tree Theorem, in particular, the Strong Nine Dragon Tree Conjecture:

\begin{conj}[Strong Nine Dragon Tree Conjecture \cite{sndtck1d2}]
    Let $G$ be a graph and let $d$ and $k$ be positive integers. If $\fracArb(G) \leq k + \frac{d}{d + k + 1}$, then there is a partition into $k + 1$ forests, where in one forest every connected component has at most $d$ edges.
\end{conj}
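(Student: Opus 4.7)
The plan is to attempt the conjecture via the potential method, extending the framework used for the Nine Dragon Tree Theorem and for the previously known cases of the strong conjecture. Starting with a minimum counterexample $G$ (minimizing $v(G) + e(G)$), Nash-Williams' Theorem together with $\fracArb(G) \leq k + \frac{d}{d+k+1}$ guarantees a decomposition of $G$ into $k+1$ forests. Distinguish one forest $F$ as the \emph{special} forest, the one we want to have every component of size at most $d$. Over all decompositions $(T_1,\ldots,T_k,F)$ of $E(G)$ into $k+1$ forests, select one minimizing a potential $\Phi$ that measures how far $F$ is from satisfying the component-size bound. A natural choice is a lexicographically ordered tuple recording the sizes of those components of $F$ that exceed $d$, refined by secondary invariants that record how these components sit with respect to $T_1,\ldots,T_k$ (for example, root-to-leaf distances after rooting each large component carefully).

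Assume for contradiction that some component $C$ of $F$ satisfies $e(C) > d$. Root $C$ at some vertex $r$ chosen so that the potential invariants interact cleanly with ancestor/descendant relations. The workhorse is a local swap: remove an edge $e \in E(C)$ from $F$ and add some edge $e' \in E(T_i)$ in its place, while moving $e$ into $T_i$, producing a new decomposition into $k+1$ forests. If some swap strictly decreases $\Phi$, say by splitting $C$ into two pieces each of size at most $d$, then minimality of $\Phi$ is contradicted. Therefore every swap available at $C$ must fail to decrease $\Phi$, and this failure imposes rigid constraints on the structure of each $T_i$ near $V(C)$: certain paths in the $T_i$ must exist, their endpoints must themselves lie in large components of $F$, and those components must again satisfy the same rigidity.

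Iterating these constraints, one builds a subset $U \subseteq V(G)$ and an induced subgraph $H = G[U]$ consisting of $C$ together with all vertices reachable through the forced paths in $T_1,\ldots,T_k$ and through the further large components of $F$ that get dragged in. The target is a counting inequality showing $\frac{e(H)}{v(H)-1} > k + \frac{d}{d+k+1}$, contradicting the hypothesis on $\fracArb(G)$. The numerator is boosted because each forced structural constraint pins additional edges onto $U$; the denominator is controlled because the forced paths only leave $C$ through a small number of specified attachment points.

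The main obstacle is precisely the design of a potential $\Phi$ whose minimum extracts enough rigid structure in \emph{all} regimes of $d$ relative to $k$. For $d \leq 2(k+1)$ the known arguments exploit that any large component $C$ can be refined into at most a constant number of substructures (paths through $C$ between attachment points), which is what makes the forced-path bookkeeping terminate with a subgraph dense enough to contradict $\fracArb(G)$. Once $d$ is much larger than $k$, however, a single component can absorb many swaps while leaving the natural ``size of largest bad component'' invariant unchanged, and the chain of forced paths can wind back through $C$ itself, so the extracted $H$ need not be dense enough. Pushing past this barrier will very likely require a genuinely new potential function (perhaps one that globally tracks how large components are distributed across the forests, rather than locally recording their sizes), or a conceptually different framework beyond the iterative swap paradigm; this is the essential open obstacle between the current theorem and the full conjecture.
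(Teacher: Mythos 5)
There is a genuine gap, and in fact your proposal concedes it: what you have written is a plan for the general strategy, not a proof of the stated conjecture. The statement here is the full Strong Nine Dragon Tree Conjecture for all positive $k,d$, which the paper itself does not prove either — it establishes only the range $d \leq 2(k+1)$ (via the stronger Theorem \ref{maintheorem}), and the conjecture remains open beyond that. Your final paragraph explicitly acknowledges that for $d$ much larger than $k$ you do not know how to make the extracted subgraph dense enough, so the proposal cannot be accepted as a proof of the statement as given.

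Even restricted to $d \leq 2(k+1)$, the decisive step in your outline — ``the numerator is boosted \ldots the denominator is controlled \ldots yielding $\frac{e(H)}{v(H)-1} > k + \frac{d}{d+k+1}$'' — is exactly where the work lies, and it is asserted rather than argued. In the paper this requires much more than local swap rigidity around a single oversized component: one must fix a decomposition into $k$ spanning trees plus a forest, minimize a residue function and then a legal order, use the special-path augmentation to forbid small components near small components, bound the number of \emph{relevant} neighbours (small children or ``interesting'' neighbours) per blue tree by two (Sections \ref{sec:caseTwo}--\ref{sec:threeChildren}), and — crucially — handle the bad configuration where a component has both a one-edge child and a zero-edge child. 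That last case cannot be resolved by any local density count around the component; the paper resolves it with a genuinely non-local device (sink sequences, Lemma \ref{lemma:uniquenessOfSinkSequences}) that reassigns the one-edge child to a possibly distant component via the function $f$ of Lemma \ref{lemma:assignment}, and only then does the averaging in Lemma \ref{lemma:densityOfKC} and the sparsity contradiction in Lemma \ref{lemma:betaLeq0} go through. Your proposal contains neither the case analysis bounding relevant neighbours nor any substitute for this non-local reassignment, so the counting inequality you need is not established even in the regime the paper covers.
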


The examples did not even rule out the possibility that the last forest was a \textit{star forest} - a forest where every component is isomorphic to a star. Unfortunately, that is false in a rather strong sense. Not only can you not enforce the last component to be a star forest, you can not even obtain one with small diameter, even if one considers \textit{pseudoforest} decompositions instead of forest decompositions. Recall that a pseudoforest is a graph where every connected component contains at most one cycle. More precisely, the following was shown:

\begin{thm}[\cite{mies2023pseudoforest}]
    Let $k, \ell, D \in \mathbb N$, $\varepsilon > 0$, $k \geq 1$ and $\alpha \in \{0, 1\}$.
    There are simple graphs $G$ with\footnote{Note that for $\ell = \floor{\frac{d-1}{k+1}}$ we have $\frac{(k+1)\ell}{(k+1)(\ell + 1)} < \frac{d}{d+k+1}$.}
    $\fracArb(G) < k + \lowerMadBoundWithAlpha + \varepsilon$ that do not have a decomposition into $k+1$ pseudoforests where one of the pseudoforests has maximum degree at most $D$ and the diameter of every component is less than $2\ell + 1 + \alpha$.
\end{thm}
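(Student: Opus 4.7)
The plan is to build a family of simple graphs $G_N$, indexed by a scaling parameter $N$, whose fractional arboricity approaches $k + \lowerMadBoundWithAlpha$ from below while no admissible pseudoforest decomposition exists. The basic building block is a graph $H$ on $(\ell+1)(k+1)+\alpha$ vertices with $k\bigl((\ell+1)(k+1)+\alpha\bigr) + \ell(k+1)+\alpha$ edges, so its internal density is precisely $k + \lowerMadBoundWithAlpha$. The natural design is to take $H$ as the edge-disjoint union of $k$ ``tree-like'' layers together with one extra sparse layer $P$ of $\ell(k+1)+\alpha$ edges arranged along a designated backbone that must realize a long geodesic in any valid decomposition. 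The graph $G_N$ is then assembled by identifying many copies of $H$ along small boundary sets; any parallel edges produced this way are replaced by small simple gadgets preserving the density.

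First I would verify $\fracArb(G_N) < k + \lowerMadBoundWithAlpha + \varepsilon$. Because blocks are glued along bounded-size boundary sets, the densest subgraphs of $G_N$ are essentially disjoint unions of complete blocks plus lower-order corrections, so the maximum of $|E_H|/(|V_H|-1)$ is within $O(1/N)$ of the per-block density. Taking $N$ sufficiently large absorbs this slack into $\varepsilon$. This step is a routine counting argument.

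The heart of the proof, and the step I expect to be the main obstacle, is ruling out any decomposition into $k+1$ pseudoforests $F_0,\dots,F_k$ in which $F_0$ has maximum degree at most $D$ and every component of $F_0$ has diameter strictly less than $2\ell + 1 + \alpha$. A global double-counting argument, summed over the $N$ blocks, forces $F_0$ to contain at least $\ell(k+1)+\alpha$ backbone edges inside some single copy of $H$, up to a bounded boundary correction. The combinatorial core is then a local claim inside $H$: any pseudoforest containing that many edges of $P$ must realize a path of length at least $2\ell + \alpha + 1$, contradicting the diameter cap. The parity split $\alpha \in \{0,1\}$ reflects whether the extremal local component is a tree (a path of $2\ell$ edges accommodates exactly $\ell(k+1)$ backbone edges after blow-up) or a unicyclic pseudoforest whose single cycle permits one additional backbone edge without lengthening the diameter; this is exactly the refinement captured by $\alpha$ in the density expression. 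The max-degree hypothesis $D$ enters only to ensure each component of $F_0$ is a bounded-size object, so that the local-to-global contradiction closes cleanly without the budget of $P$-edges dispersing across many shallow branches of $F_0$.
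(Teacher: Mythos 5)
Your building block $H$ is too dense to be a subgraph of any admissible $G$, which breaks the construction at step one. Write $P := \ell(k+1)+\alpha$, so that $v(H) = P + (k+1)$ and your edge count is $e(H) = k\,v(H) + P = (k+1)(P+k)$. Then
\[
\frac{e(H)}{v(H)-1} \;=\; \frac{(k+1)(P+k)}{P+k} \;=\; k+1,
\]
so $\fracArb(H) \geq k+1$. But the theorem requires $\fracArb(G) < k + \lowerMadBoundWithAlpha + \varepsilon$, and $\lowerMadBoundWithAlpha < 1$, so for small $\varepsilon$ we need $\fracArb(G)$ strictly below $k+1$. Since fractional arboricity is monotone under taking subgraphs, any $G$ containing a copy of $H$ already violates the hypothesis, no matter how many blocks you glue on or how large $N$ is. Your density bookkeeping computed $e(H)/v(H)$ rather than $e(H)/(v(H)-1)$; the two notions coincide only asymptotically as $v(H)\to\infty$, and your block has fixed, bounded size. (For $k=1,\ell=1,\alpha=0$ your $H$ is $K_4$, which has $\fracArb = 2$, while the target is roughly $1.5$.) The block has to be sparser than you designed it, or the whole notion of ``block'' has to be abandoned in favor of a gadget whose fractional arboricity stays strictly below the threshold at every scale.

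Beyond that arithmetic problem, the combinatorial core is not actually proved. The ``global double-counting argument'' that forces $\ell(k+1)+\alpha$ edges of $F_0$ into a single block is asserted, not carried out, and it is exactly the kind of claim that fails naively: a $(k+1)$-pseudoforest decomposition of $G_N$ is under no obligation to respect block boundaries, so the restriction of the decomposition to one block need not be a decomposition of that block, and edges of $F_0$ can spread across many blocks without accumulating anywhere. Your ``local claim'' — that a pseudoforest carrying enough backbone edges of $P$ must contain a path of length at least $2\ell+\alpha+1$ — is likewise stated as the desired conclusion rather than derived from a specified structure of $H$ and $P$. Finally, the role of $D$ is dismissed in one sentence, but bounded degree is doing essential work (a bounded-diameter pseudoforest with unbounded degree can absorb arbitrarily many edges in a shallow star), and the interaction between the degree cap and the parity refinement $\alpha$ needs an actual argument. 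As written, this is a plan for a proof with a density error in its foundation and every nontrivial step left as an unproved claim.
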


Thus, the Strong Nine Dragon Tree Conjecture seems to be close to the correct level of generality for the structure demanded of the last forest. Unfortunately, it is still wide open. Of course, when $d =1$, the Nine Dragon Tree Theorem when $d=1$, implies the Strong Nine Dragon Tree Theorem, and this was first proven in \cite{Yangmatching}. The $k=1$, $d=2$ case was shown in \cite{Kostochkaetal}. Prior to this paper, the state of the art was:

\begin{thm}[\cite{sndtcDLeqKPlusOne}]
\label{thm:sndtell1}
    The Strong Nine Dragon Tree Conjecture is true when $d \leq k+1$.
\end{thm}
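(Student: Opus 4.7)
The plan is to adapt the potential method introduced in \cite{sndtck1d2}. I would assume for contradiction that a counterexample exists, and take one, say $G$, minimizing $|V(G)| + |E(G)|$. By Nash-Williams' theorem, the hypothesis $\fracArb(G) \leq k + \densNDT < k+1$ guarantees a decomposition of $G$ into $k+1$ forests $F_1, \ldots, F_k, T$, where $T$ is the designated ``special'' forest in which we want every component to contain at most $d$ edges. Among all such decompositions I would choose one minimizing a potential of the form
\[
\Phi(F_1, \ldots, F_k, T) = \sum_{C} \phi(|V(C)|),
\]
summed over components $C$ of $T$, where $\phi$ is strictly convex and chosen so that shrinking a component of $T$ of size exceeding $d+1$ strictly decreases $\Phi$; ties are broken by further structural quantities that favor a clean configuration near offending components.

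Since $G$ is a counterexample, some component $C$ of $T$ has more than $d$ edges. Root $C$ and pick a leaf $x$ at maximum depth, with parent $x'$. For each $i \inOneToK$ and each non-$T$ neighbor $u$ of $x$ with $xu \in F_i$, I would attempt the swap that removes $x'x$ from $T$, adds $xu$ to $T$, and moves $xu$ out of $F_i$. This swap is either legal and strictly decreases $\Phi$ (contradicting minimality), or it is obstructed; the only obstruction is that $u$ already has an $F_i$-path to a vertex whose $T$-component has size at least $|V(C)|$. This yields a so-called blocking path in $F_i$ emanating from $u$.

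Iterating, I would build a closure $\KC$ of vertices reachable via chains of blocking paths, together with the $T$-edges and $F_i$-edges that these chains commit. The aim is to derive
\[
\frac{|E(G[\KC])|}{|V(\KC)| - 1} > k + \densNDT,
\]
contradicting the fractional arboricity hypothesis. The inequality $d \leq k+1$ powers the final count: each bad $T$-component has at most $d+1 \leq k+2$ vertices, so each such vertex can be made to support an essentially disjoint blocking path in each of the $k$ forests $F_i$. Since each blocking path contributes the same number of edges as vertices, a tight accounting of the $|E(C)|$ committed $T$-edges against the vertices they add pushes the density just barely past $k + \densNDT$.

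The main obstacle is engineering $\phi$ and the tie-breakers precisely enough to actually force the clean blocking-path structure, and then bookkeeping the density count without double-counting vertices shared between blocking paths for different forests $F_i$. When $d > k+1$, the bad $T$-components are larger than the number of other forests available, so the blocking paths across different $F_i$ must share vertices or detour, and local swaps no longer suffice; this is precisely the regime that demands the more global swap operations and subtler potential machinery developed in the body of the present paper.
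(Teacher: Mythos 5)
The statement you are proving is cited from the authors' earlier paper \cite{sndtcDLeqKPlusOne}; this paper does not re-prove it, but Section~\ref{defcounterexample} onward exhibits the framework that proof uses (and then extends). Your proposal takes a genuinely different route, and it has a concrete gap.

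The approach actually used for $d \leq k+1$ (and for $d \leq 2(k+1)$ here) does \emph{not} work with a generic decomposition into $k+1$ forests with a convex potential over component sizes. It starts from Lemma~\ref{lemma:minimalCounterExample}: a minimal counterexample decomposes into $k$ \emph{spanning trees} $T_1,\dots,T_k$ plus a forest $F$. That spanning-tree structure is what makes the density bookkeeping close. Once the blue edges are oriented toward a carefully chosen root $r \in R^*$, every non-root vertex of the exploration subgraph $H$ contributes exactly $k$ outgoing blue arcs, giving the clean identity $e(H) = k(v(H)-1) + e_r(H)$ that is the spine of the computation in Lemma~\ref{lemma:betaLeq0}. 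With your $F_1,\dots,F_k$ being arbitrary forests, a vertex in your closure $\KC$ may have fewer than $k$ incident forest edges inside $\KC$ (indeed, $u$ may be isolated in some $F_i$, in which case there is no ``blocking path'' in $F_i$ at all), so the claim that each vertex ``supports an essentially disjoint blocking path in each of the $k$ forests'' does not have a footing. The step from Nash-Williams to $k$ spanning trees is not cosmetic; it is the ingredient that forces the edge count to be large.

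Two further issues. First, the global optimization in the actual argument is the residue function $\rho(F)$ ordered lexicographically (Definition of $\rho$, $\rho^*$) together with a lexicographically minimal legal order $\sigma^*$, not a convex sum $\sum_C \phi(|V(C)|)$. The lexicographic ordering is what licenses statements like ``we strictly reduced the largest offending component,'' which Lemma~\ref{lemma:specialPaths}(5) needs; a convex potential does not by itself tell you which component shrank. Second, the swap you describe (remove $x'x$ from $T$, add $xu$, move $xu$ out of $F_i$) is not the augmentation used; the mechanism is the iterated special-path exchange of Lemma~\ref{lemma:specialPaths}, which traverses a blue directed path across many red components and exchanges a red edge near $R^*$ for a blue edge far from it. Your sketch also contains an arithmetic slip: a bad component of $T$ is a tree with more than $d$ edges, hence at least $d+2$ vertices, not ``at most $d+1$.'' You correctly flag that the bookkeeping is the hard part; the honest assessment is that this bookkeeping is exactly where the spanning-tree decomposition and the special-path machinery are irreplaceable, and a convex potential on component sizes does not supply them.
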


The main contribution of this paper is to prove the Strong Nine Dragon Tree Conjecture when $k+1 < d \leq 2(k+1)$.

\begin{thm}
\label{thm:sndtcDLeq2kPlus2}
The Strong Nine Dragon Tree Conjecture is true when $d \leq 2(k+1)$.
\end{thm}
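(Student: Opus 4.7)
The plan is to establish the stronger $(k,d)$-sparse form of the Strong Nine Dragon Tree Conjecture by contradiction, using the potential method that now dominates this area. Take $G$ to be a minimum counterexample, ordered lexicographically by $v(G)$ and then $e(G)$. Since $G$ decomposes into $k+1$ forests, I would fix such a decomposition, designate one part as the red forest $\redForest$ and the remaining $k$ parts as rooted blue trees $F_1, \ldots, F_k$, and among all such rooted decompositions select one minimizing a potential $\Phi$ that penalizes components of $\redForest$ of size greater than $d$ (weighted so that any swap producing a strictly smaller component-size-excess strictly decreases $\Phi$).

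Next I would extract strong local obstructions at a ``bad'' red component $T$ with $|E(T)| > d$. For any leaf $x$ of $T$ with red parent-edge $xx'$, and any blue edge $xu$ incident to $x$, there is a candidate exchange: delete $xx'$ from $\redForest$, insert $xu$ in its place, and return $xx'$ to the blue tree that previously contained $xu$. Because the decomposition minimizes $\Phi$, none of these exchanges can succeed. Enumerating the reasons for failure — $u$ lies in a bad component, or the repair edge would create a blue cycle, or the swap would produce a larger bad component — pins down a list of forbidden configurations. This identifies a set of ``interesting neighbours'' of $x$ in each $F_i$ and tells us precisely in which red components those neighbours must sit.

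The argument then iterates: each interesting neighbour $u$ of $x$ is itself a leaf of (or lies in) some other bad component, and the same leaf analysis applies at $u$. Tracing out the closure yields a set $\KC$ of bad red components together with the blue edges witnessing their mutual obstruction. I would then argue that $\KC$ must have high edge density, and in particular that $e(\KC)/(v(\KC)-1)$ exceeds $k + \density$. This will be the content of the final density lemma (Lemma \ref{lemma:densityOfKC} in the notation of the paper), and it directly contradicts $(k,d)$-sparseness, completing the reduction.

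The crux, and the place where the argument genuinely surpasses Theorem \ref{thm:sndtell1}, is classifying the failed exchanges. When $d \leq k+1$ the only obstruction to absorb is a single red edge $uv$ lying adjacent to the leaf on the blue path from $x$ to $u$ (``case two''). When $k+1 < d \leq 2(k+1)$, the path can host a second red edge $u'v'$ separated from both endpoints (``case one''), and additionally predecessor configurations and a three-children configuration must be handled because the larger value of $d$ allows multiple simultaneous obstructions on the same blue tree. The main technical effort is to design multi-step swap lemmas that simultaneously rewire two obstructing red edges, verify acyclicity of each modified blue tree, and check that $\Phi$ strictly decreases; the ``ugly'' subcases in which the two obstructions interact nontrivially will be the hardest step and will require a careful accounting of how many blue trees are affected by each rewiring.
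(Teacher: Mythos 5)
Your outline correctly identifies the skeleton — minimum counterexample, exchange operations obstructed at a too-large component, classification of the obstructions into ``case one'' / ``case two'' / predecessor / three-children configurations, and a final density contradiction — and that matches the skeleton of the paper. But the pieces that actually make the argument close are either missing or misstated, and they are not cosmetic. First, you minimize a single potential $\Phi$; the paper instead performs a two-level lexicographic minimization: first the residue tuple $\rho$ counting components by size, and then, among decompositions achieving $\rho^*$, a \emph{legal order} $\sigma$ on the red components of a carefully defined \emph{exploration subgraph} $H_{\mathcal T}$ rooted at a vertex $r$ chosen inside the too-large component $R^*$. This two-level ordering, not a single potential, is what lets the special-path augmentation of \cite{ndtt} (a chain of exchanges, not the single local swap you describe) yield a strict improvement whenever a small component sits near another. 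Your ``trace out the closure of bad components to get $\KC$'' replaces the exploration subgraph, but the two are not the same object: $H_{\mathcal T}$ is defined by reachability via blue arcs oriented toward $r$ together with red edges, and its density is what is ultimately bounded, not the density of a closure of bad components.

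Second, and more importantly, you do not address the genuinely new ingredient that extends the result from $d \le k+1$ to $d \le 2(k+1)$. When $k+1 < d \le 2(k+1)$ a small component can have one edge rather than zero, so a big component can have two small children (one with an edge, one without) in the same blue tree in a way that cannot be directly repaired and that drags the local edge density of that cluster below $d/(d+k+1)$. The paper handles this with \emph{sink sequences}: a chain $(K_1,x_1),\ldots,(K_n,x_n)$ through $b$-bad components, each an interesting neighbour of the next, terminating at a component $K_n$ that is not $b$-bad; the one-edge child of $K_1$ is then reassigned to $K_n$ via the function $f$ of Lemma~\ref{lemma:densityOfKC}. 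Proving that this chain terminates, is unique, and that the receiving component $K_n$ has enough slack to absorb the extra small children is the technical heart, and it is not local — the pairing can reach arbitrarily far across the exploration subgraph. Your proposal stops at ``argue $\KC$ has high edge density,'' which at that level of detail is exactly the step that fails without the sink-sequence reassignment. As written, the proposal would carry through for $d \le k+1$ (where small means $e(C)=0$ and no reassignment is needed) but does not contain the machinery needed to go beyond it.
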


Note that this solves the conjecture when $\density \in [0, \frac{2}{3})$.
In fact, we prove a stronger theorem which requires some definitions to state. 

\begin{definition}
For positive integers $k$ and $d$, we say a graph $G$ is \textit{$(k,d)$-sparse} if for every subgraph $H$ of $G$ we have
\[\beta(H) := (k+1)(k+d)v(H) - (k+d+1)e(H) -k^{2} \geq 0\]
\end{definition}

This definition may look strange but to build some intuition, compare it to a graph having fractional arboricity at most $k + \frac{d}{k+d+1}$. A graph $G$ has  fractional arboricity at most  $k + \frac{d}{k+d+1}$ if and only if for every subgraph $H$ of $G$ we have 
\[(k+1)(k+d)v(H) - (k+d+1)e(H) -(k+1)(k+d) \geq 0.\]

Thus, being $(k,d)$-sparse is equivalent to relaxing the additive term in the above equations. Of course, this leaves open the possibility that a graph is $(k,d)$-sparse but does not even decompose into $k+1$ forests by Nash-Williams Theorem. This motivates the following definition:

\begin{definition}
Fix a positive integer $k$. For a graph $G$ a subgraph $H$ is $n$-overfull if \[e(H) > n(v(H)-1).\]
\end{definition}

We prove:

\begin{thm}
\label{maintheorem}
Let $k$ and $d$ be positive integers such that $d \leq 2(k+1)$. Every graph $G$ which is $(k,d)$-sparse and has no $(k+1)$-overfull subgraph decomposes into $k+1$ forests such that one of the forests has every component containing at most $d$ edges. 
\end{thm}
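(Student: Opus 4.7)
The plan is to follow the potential method framework that has driven recent Nine Dragon Tree progress, now tuned to allow bad components of size up to $2(k+1)$. First I would take a vertex-minimum counterexample $G$. Since $G$ has no $(k+1)$-overfull subgraph, Nash-Williams' theorem gives a decomposition of $E(G)$ into $k+1$ forests; designate one as the \emph{special} forest $T$, whose components we wish to shrink to at most $d$ edges, and call a component of $T$ with more than $d$ edges \emph{bad}. Among all $(k+1)$-forest decompositions of $G$, choose one minimizing a lexicographic potential whose primary coordinate is a weighted sum of the sizes of bad components of $T$, with secondary coordinates that propagate structural information outward (roughly: for each non-$T$ edge, a measure of how close in $T$ its endpoints sit to a bad component).

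Fix such a minimum and a bad component $K$. The heart of the argument is a local analysis: for each vertex $x \in V(K)$ and each non-$T$ edge $xx'$ incident to $x$, a swap exchanging $xx'$ for a suitable $T$-edge on the unique $K$-path joining $x$ to an appropriately chosen endpoint must fail to decrease the potential. Unwinding the failure through the secondary coordinates yields a forced chain of additional swaps anchored to $K$. The figures in the preamble foreshadow the taxonomy of swap primitives needed: different primitives apply depending on whether the obstructing edge $xx'$ lies on the same branch of $K$ as $x$, on a sibling branch, reaches the root of $K$, or is accessed through a predecessor in a longer forced chain, together with a distinguished ``ugly'' subcase when two long branches of $K$ interact simultaneously. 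From these obstructions I would grow a subgraph $H \supseteq K$ together with a large set of edges forced to lie in $H$ by the non-swappability conclusions, and finish by verifying $\beta(H) < 0$, contradicting $(k,d)$-sparseness. The weaker $-k^2$ slack (compared with the $-(k+1)(k+d)$ slack one would have in the fractional-arboricity formulation) is what upgrades the conclusion to $(k,d)$-sparseness; it must be absorbed by the extra edge surplus extracted from the chain of forced swaps around $K$, rather than by $K$ alone.

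The main obstacle is precisely that when $|E(K)|$ can exceed $k+1$, two edges of $K$ can be far apart within $K$, so no single swap can reach both simultaneously; the proof must chain together multi-edge swap sequences along a branch of $K$ while preserving the forest property in every non-special forest $F_i$. This is exactly where the bound $d \leq 2(k+1)$ enters: a branch of $K$ has length at most $d \leq 2(k+1)$, which is short enough that the $k$ non-special forests can accommodate the intermediate rerouting demands that the swap sequences impose; the threshold $2(k+1)$ is what lets a ``two-layer'' swap suffice, while any bound beyond $2(k+1)$ would force a genuinely new swap primitive. Once the swap primitives are set up and shown compatible (the swap lemma referenced by the setup figures), the remainder is an extensive but essentially mechanical case analysis followed by the discharging-style edge count that certifies $\beta(H) < 0$.
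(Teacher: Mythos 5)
Your high-level framework matches the paper's in outline --- minimal counterexample, potential-minimal decomposition, exchange arguments around a too-large component, then a density count against $\beta(H) < 0$. But two specific points are incorrect or missing, and one of them is the paper's main new idea.

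First, your explanation of why $d \le 2(k+1)$ is the right threshold is wrong. You argue that ``a branch of $K$ has length at most $d \le 2(k+1)$,'' but a \emph{bad} component $K$ by definition has \emph{more} than $d$ edges, so neither its size nor its branch lengths are bounded by $d$; the swap sequences do not benefit from any such bound. The actual role of $d \le 2(k+1)$ in the paper is entirely different: it guarantees that a red component $C$ is ``small'' (i.e.\ $e(C)/v(C) < d/(d+k+1)$) if and only if $e(C) \le 1$. When $d \le k+1$ small means isolated vertex; once $d > k+1$ small components can be single edges, which is what breaks the naive local density count and forces the new ideas; and once $d > 2(k+1)$ small components can have $\ge 2$ edges, which is out of scope for this paper's case analysis.

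Second, you cast the decomposition as ``$k+1$ forests'' chosen by Nash-Williams and pick one as special. The paper instead proves (via a minor variant of Lemma~2.1 of the Nine Dragon Tree paper) that a vertex-minimal counterexample decomposes into $k$ \emph{spanning trees} plus one leftover forest. This is not cosmetic: the $k$ spanning trees are rooted at a vertex $r$ inside the too-large component $R^*$ and all their edges are oriented toward $r$, which is what makes the exploration subgraph, legal orders, and the special-path machinery work at all. Starting from an arbitrary $(k+1)$-forest decomposition, none of these tools are available.

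Finally, and most importantly, you do not identify the genuinely new ingredient needed to go past $d \le k+1$: the ``interesting neighbor'' and sink-sequence argument. When $d > k+1$, a component $K$ can have two relevant neighbors per blue tree, one a single edge and one an isolated vertex, and then the local group $K$ together with its small children already fails the density bound $d/(d+k+1)$. The paper's fix is non-local: it shows such a ``bad'' $K$ is necessarily an interesting neighbor of some other component $L$, builds a chain (sink sequence) of such reassignments, proves this chain terminates at a component with surplus density, and then defines the assignment map $f:\mathcal C \to \mathcal K$ along these chains. Your proposal hints at ``a forced chain of additional swaps anchored to $K$'' but frames it as local analysis around a single $K$, whereas the paper must move density between far-apart components. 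Without this mechanism, the discharging step you describe would not close.
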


When $d \leq 2(k+1)$, this result strengthens the Overfull Nine Dragon Tree Theorem for these values:

\begin{thm}[\cite{Benswebsite}]
Every graph which is $(k,d)$-sparse and has no $(k+1)$-overfull subgraph decomposes into $k+1$ forests such that one of the forests has maximum degree $d$. 
\end{thm}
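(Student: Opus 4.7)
The plan is to deploy the potential method that underpins the existing progress on the Strong Nine Dragon Tree Conjecture, refined to handle bad components of size up to $2d+1$. Let $G$ be a counterexample to the theorem with the minimum number of edges, so every proper subgraph of $G$ admits the desired decomposition. Since $G$ has no $(k+1)$-overfull subgraph, Nash-Williams' theorem guarantees that $E(G)$ decomposes into $k+1$ forests. Among all such decompositions $(F, T_1, \ldots, T_k)$, I would fix one minimizing, lexicographically, a potential $\phi$ whose first coordinate is the multiset of component sizes of $F$ exceeding $d$, arranged in decreasing order. Additional tie-breakers, typical in this area, control the placement of the bad component's root and of its offending edges; these are chosen so that any local perturbation of the decomposition that improves $\phi$ yields a direct contradiction.

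By hypothesis $F$ has a bad component $T^*$ with $|E(T^*)| > d$. After fixing a root $r$ of $T^*$ and an interesting leaf, the heart of the proof is an iterative exchange construction: starting from $T^*$, I would grow a set $K$ of vertices by analyzing every candidate swap that exchanges an edge of some $T_i$ for a parent edge on the root path of $T^*$. If such a swap strictly improves $\phi$ we are done; if not, the configuration blocking the swap certifies the presence of further vertices and edges that must be added to $K$. The closure rules for this growth are tuned so that every newly added vertex is incident to a controlled number of edges lying in $F \cup T_1 \cup \cdots \cup T_k$, which forces a lower bound on $e(G[K])$ in terms of $v(K)$.

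The main technical obstacle, and the reason the previous results stop at $d \leq k+1$, is that when $k+1 < d \leq 2(k+1)$ a bad component can have up to $2k+3$ edges, and the root-to-leaf path on which we exchange may traverse several distinct ``interesting'' vertices whose local neighborhoods each spawn their own exchange subtree. Designing the closure rules so that interactions between multiple branches are accounted for without double-counting, and so that the blocked-swap configurations remain consistent under simultaneous exchanges on different branches, is where I expect most of the case analysis to live. I would expect this to mirror the $d \leq k+1$ argument of Theorem~\ref{thm:sndtell1}, but with a richer list of topological cases for the local structure of $T^*$ near the root, together with a more delicate ``predecessor swap'' lemma that moves exchanges up along the bad component without disturbing siblings.

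Finally, once the exchange-closed set $K$ has been constructed and every potential-improving swap has been ruled out, I would close the argument with a density inequality. The aim is to sum the per-vertex edge contributions across $K$ to derive $\beta(G[K]) = (k+1)(k+d) v(G[K]) - (k+d+1) e(G[K]) - k^{2} < 0$, directly contradicting $(k,d)$-sparseness. I expect the sharp threshold $d = 2(k+1)$ to appear precisely in this inequality: it is the value at which each branch of $K$ contributes just enough edges to push $\beta(G[K])$ negative, and beyond which a substantially richer exchange structure, or a new idea entirely, would be required.
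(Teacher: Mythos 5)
Your sketch actually targets the paper's main result, Theorem~\ref{maintheorem}, rather than the statement you were asked to prove. The statement in question is the Overfull Nine Dragon Tree Theorem, which is cited from~\cite{Benswebsite} and not proved in this paper at all; its conclusion is that one forest has \emph{maximum degree} at most $d$ (not that every component has at most $d$ edges), and it holds for \emph{all} positive integers $k,d$ with no upper bound on $d$. Your proposal instead builds a bad \emph{component} $T^*$ with $|E(T^*)|>d$, attributes the limitation of prior results to the range $d\leq k+1$, and anticipates that the threshold $d=2(k+1)$ will drop out of the density inequality. All three of those are features of the Strong (component-size) statement, i.e.\ of Theorem~\ref{maintheorem}. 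The degree statement has no such threshold, so a closure argument engineered to produce $d\leq 2(k+1)$ is not proving the cited theorem; it is proving a different, incomparable one.

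Concretely, to prove the degree version one should begin from a vertex $v$ with $\deg_F(v)>d$ and run the exchange machinery of Jiang and Yang~\cite{ndtt}, adapted only in that the final counting is against the weaker $(k,d)$-sparseness bound rather than the fractional arboricity bound: root the $k$ spanning trees toward $v$, explore blue-reachable vertices, exchange blue arcs into $F$ when this lowers the degree of $v$ without creating a new overfull vertex, and derive $\beta(G[K])<0$ on the exchange-closed set for arbitrary $k,d$. There is no need to control component sizes, handle interesting neighbours, or pair components across the exploration graph. The machinery you describe is broadly the right blueprint for Theorem~\ref{maintheorem}, but it is aimed at the wrong conclusion here, and as written it would not yield the cited theorem for $d>2(k+1)$.
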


Naturally we conjecture the following:

\begin{conj}
    Let $k$ and $d$ be positive integers. Every graph which is $(k,d)$-sparse and has no $(k+1)$-overfull subgraph decomposes into $k+1$ forests such that one of the forests has every component containing at most $d$ edges. 
\end{conj}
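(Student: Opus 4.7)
The plan is to follow the potential-method framework behind Theorem \ref{maintheorem}, attempting to remove the numerical restriction $d \leq 2(k+1)$. Let $G$ be a vertex-minimal counterexample: $G$ is $(k,d)$-sparse and has no $(k+1)$-overfull subgraph, yet no decomposition of the desired kind exists. The absence of a $(k+1)$-overfull subgraph gives $\fracArb(G) \leq k+1$, so Nash-Williams' Theorem yields a decomposition of $E(G)$ into $k+1$ forests. Among all such decompositions $(T, F_1, \ldots, F_k)$, fix one that is lexicographically optimal with respect to, first, the number of components of $T$ that contain more than $d$ edges, and, second, a refined potential-style measure (total excess $\sum_{C} \max(0, |E(C)| - d)$ or, more robustly, the potential used in Theorem \ref{maintheorem} and its precursor).

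First I would pick a \emph{bad} component $C$ of $T$ with $|E(C)| > d$ and a leaf edge $xx'$ of $C$. The aim is to construct an exchange that reassigns $xx'$ (or a twig containing it) to some $F_i$, possibly preceded by a cascade of swaps that keep all $k+1$ classes forests while reducing either the number of bad components or the secondary measure. The exchange is guided by an exploration procedure analogous to building the subgraph $\explSG$ in the paper: starting from $C$, one successively adds vertices reached by legal augmenting moves between forests, and every successful closure of an augmenting sequence produces a strictly better decomposition, contradicting minimality. If no such closure exists, the explored subgraph $H$ accumulates enough edges that one can compute $\beta(H) < 0$ directly, contradicting $(k,d)$-sparseness of $G$.

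The main obstacle is the case analysis on the structure of $C$ and on the paths used in the exchange. When $d \leq 2(k+1)$, every bad component fits into a small catalogue of shapes — essentially a long path with at most one branching vertex — and the accounting needed to force $\beta(H) < 0$ can be done by tracking a single path together with a bounded number of nearby vertices. For $d > 2(k+1)$, a bad component can have many internal branch vertices and long subpaths, so the exploration must recurse on each branch. Each recursion contributes both edges and vertices, and the fixed additive slack $k^{2}$ in the definition of $\beta$ becomes difficult to overrun: one needs every branch and every vertex reached by the exploration to pay for itself in the inequality $\beta(H) \geq 0$.

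To overcome this I would try to replace the essentially linear path exploration by a tree-wide exploration, amortising excess edges against branch vertices — most naturally, by assigning fractional weights to vertices of $C$ according to how many bad edges they witness, in the spirit of an LP-style charging scheme, and then summing locally. A secondary route is induction on $d$: one would hope to peel off a leaf twig of $C$ and verify that, up to a local exchange with one $F_i$, the resulting graph is still $(k, d-1)$-sparse with no $(k+1)$-overfull subgraph, reducing to a smaller case. Neither route is routine; this is precisely the difficulty that leaves both the Strong Nine Dragon Tree Conjecture and the present strengthening open, and I expect that the first genuinely new idea required is a potential function that is subadditive across branches of $C$ rather than path-linear.
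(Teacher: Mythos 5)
There is a genuine gap, and in fact no proof at all: the statement you are addressing is stated in the paper only as a conjecture, and the paper proves it solely under the restriction $d \leq 2(k+1)$ (Theorem \ref{maintheorem}). Your proposal is a research plan rather than an argument: after setting up the minimal counterexample, the exploration subgraph and the exchange framework (which is indeed the paper's framework), you explicitly concede that neither of your two routes for $d > 2(k+1)$ -- a branch-subadditive potential or induction on $d$ -- is carried out. Nothing in the sketch closes the case analysis or establishes $\beta(H) < 0$ for general $d$, so the conjecture is not proved.

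It is worth naming exactly where the restriction $d \leq 2(k+1)$ enters the paper's proof, because your sketch locates the difficulty in the wrong place (the ``shape'' of an oversized component plays no role in the paper; what matters is the neighbourhood structure of components in the exploration subgraph). First, the paper defines a component to be small when $e(K) \leq 1$, and this coincides with having density below $\frac{d}{d+k+1}$ only because $d \leq 2(k+1)$; for larger $d$, components with two or more edges can also be sparse, and all the structural lemmas about relevant neighbours, the bound of at most two such neighbours per blue spanning tree (Lemma \ref{lemma:numRelevantNeighbours}), and the sink-sequence pairing of Section \ref{sec:defineF} would have to be redone for this richer class. Second, the final averaging step in the proof of Lemma \ref{lemma:densityOfKC} rests on the inequality
\[
\frac{e(K) + 2q_1}{e(K) + 1 + 4q_1 + q_0} \geq \frac{d + 2q_1}{d + k + 1 + 3q_1} \geq \frac{d}{d+k+1},
\]
and the right-hand inequality, after clearing denominators, is equivalent (for $q_1 > 0$) to $2(k+1) \geq d$; so the very charging scheme you propose to imitate fails numerically beyond this threshold. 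A proof of the full conjecture would need a genuinely new grouping or potential argument that compensates for sparse neighbours carrying several edges, which neither the paper nor your proposal supplies.
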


We pause to point out how the same story has played out for pseudoforests, to hint at possible further extensions. Similarly to Nash-Williams' Theorem, Hakimi's Theorem describes when a graph decomposes into $k$ pseudoforests:

\begin{thm}[\cite{hakimi}]
    A graph $G$ decomposes into $k$ pseudoforests if and only if 
    \[\max_{H \subseteq G} \frac{2e(H)}{v(H)} \leq 2k.\]
\end{thm}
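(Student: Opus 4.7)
The plan is to reduce Hakimi's theorem to a question about orientations with bounded out-degree, and then resolve the orientation question by an application of Hall's matching theorem. I would prove necessity by a direct counting argument, and sufficiency by constructing a matching in a carefully chosen auxiliary bipartite graph, from which the desired pseudoforest decomposition is read off.

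For necessity: any subgraph of a pseudoforest is itself a pseudoforest, and any pseudoforest $P$ satisfies $e(P) \leq v(P)$, since each connected component has at most one cycle and therefore at most as many edges as vertices. Thus, if $G$ decomposes as $F_1 \cup \cdots \cup F_k$ with each $F_i$ a pseudoforest, then for any $H \subseteq G$ the subgraph $F_i \cap H$ on vertex set $V(H)$ is still a pseudoforest and so $e(F_i \cap H) \leq v(H)$. Summing over $i$ yields $e(H) \leq k \cdot v(H)$, i.e.\ $2e(H)/v(H) \leq 2k$, as required.

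For sufficiency, I would first prove the equivalent statement that $G$ admits an orientation in which every vertex has out-degree at most $k$, and then convert such an orientation into the desired decomposition. The conversion is immediate: at each vertex $v$ label its (at most $k$) outgoing edges with distinct colors in $\{1, \ldots, k\}$, and let $F_i$ be the set of edges receiving color $i$. Since every vertex has out-degree at most one in $F_i$, each connected component of $F_i$ contains no more edges than vertices, hence at most one cycle, so $F_i$ is a pseudoforest. To produce the orientation, I would build an auxiliary bipartite graph $B$ with parts $E(G)$ and $V(G) \times \{1, \ldots, k\}$, joining $e$ to $(v,i)$ whenever $v$ is an endpoint of $e$. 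A matching of $B$ saturating $E(G)$ corresponds exactly to an assignment of each edge to one of its endpoints such that each vertex is chosen by at most $k$ edges, which is precisely the desired orientation. By Hall's theorem such a matching exists iff $|N_B(S)| \geq |S|$ for every $S \subseteq E(G)$; taking $H$ to be the subgraph with edge set $S$ and vertex set $V(S)$ (the endpoints of edges in $S$), the neighbourhood has size $k \cdot |V(S)|$, so the Hall condition reads exactly $e(H) \leq k \cdot v(H)$, which is the density hypothesis.

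The main obstacle is really just the careful setup of the bipartite reduction and the identification of the right correspondence between orientations, bipartite matchings, and pseudoforest decompositions; once this framework is in place, no structural graph-theoretic argument is required beyond Hall's theorem itself. A small amount of technical care is needed to handle parallel edges (which the paper allows) — these cause no difficulty in $B$ because each parallel edge appears as its own vertex on the $E(G)$ side — and isolated vertices of $G$, which may be discarded since they affect neither side of the equivalence.
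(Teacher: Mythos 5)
Your proposal is correct. Note, however, that the paper does not prove this statement at all: it is quoted as Hakimi's theorem with a citation, purely as background for the pseudoforest analogues of the Nine Dragon Tree results, so there is no in-paper argument to compare against. Your route is the classical one: necessity follows from the fact that every pseudoforest $P$ (and every subgraph of one) satisfies $e(P) \leq v(P)$, and sufficiency reduces the density condition $e(H) \leq k\, v(H)$ to the existence of an orientation with all out-degrees at most $k$, obtained from a matching saturating $E(G)$ in the bipartite graph between $E(G)$ and $V(G) \times \{1,\ldots,k\}$ via Hall's theorem; splitting the oriented edges by their colour class gives subgraphs with out-degree at most one at every vertex, i.e.\ pseudoforests. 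All steps check out, including the Hall condition computation $|N_B(S)| = k\,|V(S)| \geq |S|$ and the handling of parallel edges, so this is a complete and valid proof of the cited theorem.
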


Here the parameter $\max_{H \subseteq G} \frac{2e(H)}{v(H)}$ is called the \textit{maximum average degree} of $G$. One can ask for Nine Dragon Tree type theorems for pseudoforests, and this has resulted in the following three theorems:

\begin{thm}[Pseudoforest Nine Dragon Tree Theorem \cite{ndttPsfs}]
Let $k$ and $d$ be integers. Every graph with maximum average degree at most $2(k + \densNDT)$ decomposes into $k+1$ pseudoforests where one of the pseudoforests has maximum degree $d$. 
\end{thm}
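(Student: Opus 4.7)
The plan is to apply the potential method, the now-standard tool for Nine Dragon Tree type theorems. Suppose for contradiction that the statement fails, and let $G$ be a vertex-minimum counterexample. Since the maximum average degree hypothesis is hereditary under taking subgraphs, Hakimi's Theorem guarantees that $G$ itself decomposes into $k+1$ pseudoforests. Among all such decompositions $(F_1, \dots, F_{k+1})$, I would choose one minimising the potential
\[
\Phi(F_1, \dots, F_{k+1}) \;=\; \sum_{v \in V(G)} \max\!\bigl(0,\, \deg_{F_{k+1}}(v) - d\bigr),
\]
with ties broken lexicographically by the sorted multiset of $F_{k+1}$-degrees that exceed $d$. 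Since $G$ is a counterexample, $\Phi > 0$, so we may fix a vertex $v_0$ with $\deg_{F_{k+1}}(v_0) > d$, and any local swap that strictly decreases $\Phi$ will contradict our choice.

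The next step is to attempt such a reduction by iterating alternating exchanges. Given an edge $e = v_0 w \in F_{k+1}$, moving $e$ into some $F_i$ with $i \leq k$ is blocked precisely when $F_i + e$ contains a component with two cycles; the edges on any such extra cycle then become candidates for being swapped out of $F_i$ and into $F_{k+1}$, which in turn may require further relocations. I would organise this process as a rooted breadth-first exchange tree $T$, whose vertex set $S \subseteq V(G)$ records the vertices reachable by legal swap sequences from $v_0$. A distinctive feature of the pseudoforest setting, simpler than the forest case, is that each component of every $F_i$ contains at most one cycle, so the set of obstructing edges at each expansion step is locally finite and explicit. If some $u \in S$ satisfies $\deg_{F_{k+1}}(u) < d$, then executing the alternating sequence of swaps along the $v_0 \to u$ path in $T$ yields a decomposition with strictly smaller $\Phi$, contradicting minimality.

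The hard part, as always in these arguments, is ruling out the case where every vertex of $S$ already has $F_{k+1}$-degree at least $d$. Here I would examine the subgraph $H$ whose vertex set is $S$ and whose edge set consists of all edges of $F_1 \cup \dots \cup F_{k+1}$ uncovered during the construction of $T$. By the stopping condition, each vertex of $S$ contributes close to its full $F_{k+1}$-degree (at least $d$, with one deficient slot used at the root), and essentially $k$ edges to $F_1 \cup \dots \cup F_k$ inside $H$, because the cycles that blocked further expansion are contained in $H$. After careful bookkeeping one should obtain
\[
\frac{2 e(H)}{v(H)} \;>\; 2\!\left(k + \densNDT\right),
\]
contradicting the global hypothesis on $G$ and finishing the proof. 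The main obstacle is making this counting tight: one must track, for each pseudoforest $F_i$, the cycles that land inside $H$, ensure that boundary edges at the root and at leaves of $T$ are attributed to exactly one forest, and match the delicate constant $\frac{d}{d+k+1}$. This is the step that parallels the $\beta$-style sparsity counts used throughout the present paper, and is where essentially all of the real work lives.
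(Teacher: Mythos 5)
This theorem is not proved in the present paper at all; it is quoted from \cite{ndttPsfs} purely as context for the discussion of pseudoforest analogues, so there is no ``paper's own proof'' to compare against. Evaluated on its own terms, your sketch is in the right family of ideas (alternating exchanges plus a density count on the subgraph reached), but it has a concrete gap at its core: you track the pseudoforest property of the $F_i$ you swap \emph{into} (looking for components with two cycles), but you never verify that $F_{k+1}$ stays a pseudoforest when you swap obstructing edges back into it. Removing $e$ from $F_{k+1}$ and inserting a different edge $e'$ can perfectly well create a component of $F_{k+1}$ with two cycles, and your potential $\Phi$ is blind to this — it only sees degrees. This is not a cosmetic omission: the whole reason the pseudoforest NDT arguments in the literature are phrased through orientations (a graph is a pseudoforest iff it admits an orientation with out-degree at most one, so a decomposition into $k+1$ pseudoforests is an orientation with out-degree at most $k+1$) is precisely that reorienting edges preserves the structure automatically, whereas naïve edge swaps between pseudoforests do not. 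You would need either to pass to the orientation model, or to add an explicit structural invariant to your exchange tree.

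Two secondary points. First, you invoke a vertex-minimal counterexample but never use minimality anywhere — the argument as written acts on a single graph $G$ and a single decomposition, so the induction is vacuous and should be dropped or actually used. Second, the closing density estimate is left entirely as ``after careful bookkeeping,'' which you honestly flag as the real work, but note that the rough calculation only goes through because the cycles blocking the exchanges force each $F_i$ restricted to $H$ to be close to a \emph{maximal} pseudoforest on $V(H)$ (about $v(H)$ edges each) rather than merely a pseudoforest (at most $v(H)$ edges); establishing that lower bound, and handling the boundary terms at the root, is exactly where the exchange tree's structure has to interact with the count, and it is not yet clear from the sketch that your breadth-first $T$ records enough information to do it.
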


\begin{thm}[Pseudoforest Strong Nine Dragon Tree Theorem \cite{sndtcPsfs}]
    Let $k$ and $d$ be integers. Every graph with maximum average degree at most $2(k + \densNDT)$ decomposes into $k+1$ pseudoforests where one of the pseudoforests has  every component containing at most $d$ edges. 
\end{thm}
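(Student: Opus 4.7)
The plan is to adapt the potential method that drives the recent resolutions of Nine Dragon Tree–type conjectures. First, invoke Hakimi's theorem: since $2(k+\density) < 2(k+1)$, the hypothesis forces $\mathrm{mad}(G) \le 2(k+1)$, so $G$ admits a decomposition into $k+1$ pseudoforests $F_0, F_1, \ldots, F_k$, with $F_0$ the distinguished pseudoforest whose components we want to constrain. Orient each $F_i$ so that every vertex has out-degree at most one: root each acyclic component and, in each unicyclic component, direct all non-cycle edges toward the unique cycle (the cycle itself becomes a directed cycle). Define a lexicographically ordered potential on such oriented decompositions: first maximize a depth-based weight on edges of $F_1, \ldots, F_k$; next minimize the number of edges in a largest \emph{bad} component of $F_0$ (where bad means strictly more than $d$ edges); then break further ties, e.g.\ by minimizing its vertex set. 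Fix a decomposition maximizing this potential, and suppose for contradiction that $F_0$ still contains a bad component $C$.

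The heart of the proof is an exchange argument. Starting from a vertex of $C$, follow out-arcs in some $F_i$ ($i \ge 1$) to attempt a swap: remove an edge $e$ of $C$ from $F_0$, add a corresponding edge $f$ of $F_i$ to $F_0$, and place $e$ into $F_i$. A swap is \emph{legal} when $F_i + e - f$ remains a pseudoforest (no second cycle appears in any component) and the resulting $F_0$ has no larger bad component. By the choice of potential, every legal swap would strictly decrease $|E(C)|$ or strictly increase the top-level depth sum, contradicting maximality, so no such swap is available. This forces the argument to propagate: the oriented structure of each $F_i$ guides a walk that either reaches a sink outside $C$ (producing an available swap, contradiction) or revisits the pseudoforest cycle of its component (which requires special treatment, since the ``extra'' cycle edge enlarges the set of impossible swaps compared to the pure forest setting).

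The hardest step, as always in these arguments, is extracting a contradiction when every candidate swap is blocked. One assembles a subgraph $H$ whose vertex set is $C$ together with all ``captured'' vertices reached along the forbidden walks, and shows
\[
\frac{2\,e(H)}{v(H)} \;>\; 2\bigl(k + \density\bigr),
\]
contradicting the maximum-average-degree hypothesis. The counting must exploit that $|E(C)| \ge d+1$, which supplies the $d$ in the numerator of $\density$, and that every captured vertex carries at least $k$ edges inside $H$ outside $F_0$, which supplies the $d+k+1$ in the denominator. The delicate point, absent in the forest version, is handling the single cycle edge per component of each $F_i$: these edges do not contribute to the outgoing-arc arithmetic in the same way and must be compensated for in the density calculation. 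Showing that the local surplus from $C$ combines with the captured vertices to beat the density bound in the presence of up to $k$ pseudoforest cycles per component is where the argument is most likely to break and require the most careful bookkeeping.
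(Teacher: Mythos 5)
This theorem is not proven in the paper you are looking at; it is quoted verbatim as prior work and attributed to \cite{sndtcPsfs}. So there is no ``paper's own proof'' to compare against here. Judged on its own, your sketch is a plausible plan but is not yet a proof, and it is missing a structural lemma that the entire line of research relies on.

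The gap is concrete. You invoke Hakimi's theorem only to conclude that some decomposition into $k+1$ pseudoforests exists, and you orient each $F_i$ so that out-degree is at most one. But for the density count at the end you then assert that ``every captured vertex carries at least $k$ edges inside $H$ outside $F_0$.'' That does not follow from out-degree $\le 1$: a vertex could easily have out-degree zero in several of the $F_i$. What is actually needed is the pseudoforest analogue of this paper's Lemma~\ref{lemma:minimalCounterExample}: a vertex-minimal counterexample decomposes into $k$ \emph{maximal} pseudoforests $F_1,\dots,F_k$ plus one leftover pseudoforest $F_0$, where maximality forces out-degree exactly one in each $F_i$ at every vertex. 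Without first proving (or at least stating and invoking) that reduction, the denominator $d+k+1$ in the density bound has no justification, and the whole ``captured vertices'' bookkeeping collapses. Beyond that, you yourself flag the crucial step --- the density calculation in the presence of pseudoforest cycles --- as the place ``where the argument is most likely to break,'' which is an admission that the proof is unfinished rather than a proof. A complete argument would need, at minimum, a precise definition of the potential ordering, a precise statement of which exchanges are legal (the analogue of this paper's special-path machinery from Section~\ref{augmentingspecialpaths} and Lemma~\ref{lemma:exchange}), and an actually carried-out counting argument that handles the at-most-one cycle per component of each $F_i$. As written this is a proof \emph{outline} that faithfully reflects the method of the area, not a proof of the theorem.
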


\begin{thm} [\cite{mies2023pseudoforest}]
\label{newestpseudo}
Let $k, d \in \mathbb N$, where $k \geq 1$. Let $\ell = \floor{\frac{d-1}{k+1}}$. Then every graph $G$ with maximum average degree at most $2(k + \frac{d}{d+k+1})$ decomposes into  $k + 1$ pseudoforests where one of the pseudoforests $F$ satisfies the following:
\begin{itemize}
\item $F$ is acyclic,
\item every component $K$ of $F$ has $e(K) \leq d$,
\item $diam(K) \leq 2\ell + 2$, and if $\equivModKPlusOne{d}{1}$, then $diam(K) \leq 2\ell+1$,
\item for every component $K$ of $F$ satisfying $e(K) \geq d -z(k-1) +1$, we have $diam(K) \leq 2z$ for any $z \in \mathbb{N}$.
\end{itemize}
\end{thm}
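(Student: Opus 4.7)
The plan is to extend the potential method that yields the Pseudoforest Strong Nine Dragon Tree Theorem cited just above, strengthening the potential so that it controls both component size and diameter simultaneously. The hypothesis on maximum average degree, together with Hakimi's Theorem, guarantees that $G$ admits at least one decomposition into $k+1$ pseudoforests; the work is to pick the right one. I would take a decomposition $(T_1,\dots,T_k,F)$ minimizing a lexicographic potential $\Phi(F)=(\Phi_1,\Phi_2)$, where $\Phi_1$ is the sorted multiset of component sizes of $F$ (with a unicyclic component assigned a weight larger than any acyclic one, so that any minimizer has $F$ acyclic), and $\Phi_2=\sum_{K} \Psi(e(K),\operatorname{diam}(K))$ is a weight on components of $F$ that strictly decreases whenever the diameter of a component $K$ with $e(K)\ge d-z(k-1)+1$ drops to at most $2z$, and likewise at the thresholds $2\ell+1$ and $2\ell+2$ named in the third bullet.

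First I would dispatch bullets one and two by the standard SNDT swap argument: if $F$ has a cycle or some component with $e(K)>d$, I would exhibit an edge $e$ on the violating component whose removal from $F$ and placement into some $T_i$ is legal (keeps $T_i$ a pseudoforest) and strictly reduces $\Phi_1$. If no such swap exists for any edge of the violating component, then the blocking edges inside each $T_i$ form a forced chain, and tracing these chains outward produces a subgraph $H \subseteq G$ whose average degree is provably greater than $2(k+\frac{d}{d+k+1})$, contradicting the hypothesis. For bullets three and four, the same framework applies to an edge $e$ lying on a longest geodesic of a diameter-violating component of $F$: either the swap succeeds and $\Phi_2$ strictly decreases while $\Phi_1$ is unchanged, or the saturation analysis yields the same too-dense subgraph contradiction.

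The main obstacle I expect is designing $\Psi$ so that all four bullets are controlled by a single monotone potential. A diameter-reducing swap can rearrange $F$ nonlocally — merging small components, shortening one geodesic while creating another, or moving the edge count of a neighboring component across a threshold — and the weight $\Psi$ has to be engineered so that every minimizer satisfies \emph{every} bullet rather than trading one violation for another. The $z$-indexed family of bounds in the fourth bullet is especially delicate because it is a family of thresholds, not a single one, and a swap which helps at threshold $z$ could in principle hurt at threshold $z+1$. I would handle this by writing $\Psi$ as a nonnegative sum of indicator terms, one per named threshold, carefully weighted so that any edge-on-geodesic swap is monotone in every summand. The parity case (the sharper bound $\operatorname{diam}(K)\le 2\ell+1$ when $d\equiv 1\pmod{k+1}$) would then be treated by a separate combinatorial rigidity argument at the extremal value $e(K)=d$, exploiting that in this residue class a would-be counterexample path of length $2\ell+2$ forces an additional edge whose existence contradicts minimality of $\Phi_1$.
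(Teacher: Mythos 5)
This theorem is \emph{cited} in the present paper (reference [mies2023pseudoforest]) and is not proven here, so there is no ``paper's own proof'' to compare against. I will instead assess your sketch against what the authors' framework actually does, as described in the introduction and developed in Sections~2--7 for the forest analogue.

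Your proposed route is genuinely different from the one the authors use, and I do not think it works as outlined. The authors do not minimize a lexicographic potential that bundles component size and diameter into a single number $\Phi_2$; they minimize a \emph{residue function} $\rho$ (the tuple $(\rho_{v-1},\ldots,\rho_{d+1})$ counting components of each size), then, subject to that, minimize a \emph{legal order} of red components in an exploration subgraph rooted at a chosen vertex $r$ of a too-large component $R^*$. Diameter bounds are not enforced by a weight $\Psi$ at all: they emerge as a consequence of the forced structure of the exploration subgraph — small components must be far from one another (special path augmentations), large components cannot have many small or ``interesting'' neighbours, and a component is pieced together from short blue arcs between pieces of bounded size. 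The bounds in bullets~3 and~4 are read off from that structure rather than pushed down by a potential term.

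The hand-waved steps in your plan are exactly where the real work lives. ``Blocking edges form a forced chain, tracing these chains outward produces a too-dense subgraph $H$'' compresses the entire machinery of exploration subgraphs, legal orders, minimal special paths, and the exchange operation $(u,u')\leftrightarrow e$ into a single sentence; in the actual proofs those steps require reorienting blue spanning trees, tracking parent--child relations in an auxiliary tree, and a careful accounting argument (Lemma~\ref{lemma:densityOfKC} and its analogue) to produce the density contradiction — none of which is a ``chain.'' Similarly, your claim that assigning a large weight to unicyclic components forces every minimizer of $\Phi_1$ to have $F$ acyclic is an assertion, not an argument: you still have to show that from a decomposition with a unicyclic $F$-component you can always perform a swap that strictly lowers $\Phi_1$, and that is itself a lemma requiring the density hypothesis. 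Most seriously, your treatment of bullet~4 — engineering $\Psi$ as ``a nonnegative sum of indicator terms\ldots carefully weighted so that any edge-on-geodesic swap is monotone in every summand'' — names the obstacle you correctly identify and then declares it solved. A geodesic swap in a pseudoforest decomposition can merge components, move edge counts across several of the $z$-indexed thresholds at once, and create new long geodesics elsewhere; there is no evident reason a fixed weighting is simultaneously monotone at every threshold, and the cited paper does not attempt this. Your ``separate combinatorial rigidity argument'' for the parity case $d\equiv 1\pmod{k+1}$ is likewise unsubstantiated. As written, the proposal identifies the right difficulties but does not resolve them, and it diverges structurally from the exploration-subgraph method that the source actually uses to prove the theorem.
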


Thus, for pseudoforests, the picture is much clearer. Theorem \ref{newestpseudo} suggests that perhaps one can enforce that the final forest has bounded diameter as a strengthening of the forest version of the Strong Nine Dragon Tree Conjecture. This, plus the fact that there is a matroidal version of the Nine Dragon Tree Theorem \cite{matroidndt}, as well as a digraph version \cite{digraphndt} suggests that we still do not have a great picture of how much structure we can ask in Nine Dragon Tree type theorems. Even more recent papers such as the resolution of the $0$-statement of the Kohayakawa-Kreuter Conjecture \cite{christoph2024resolution} suggest there is a more general theory lurking that we do not quite understand yet.

Now we turn to an application of our theorem. We need a definition first. 

\begin{definition}
    Let $\varepsilon \in (0,1)$ be a real number. Let $G$ be a connected graph, and $T$ a spanning tree of $G$. We say that $T$ is an $\varepsilon$-thin tree if for every cut-set $S \subseteq E(G)$, we have 
    \[ \frac{|E(T) \cap S|}{|S|} \leq \varepsilon.\]
\end{definition}

 In \cite{sndtcDLeqKPlusOne}, we used partial progress towards the Strong Nine Dragon Tree Conjecture when $d=4$ and $k=1$ to show the existence of thin trees in $5$-edge-connected planar graphs.

 \begin{thm}[\cite{sndtcDLeqKPlusOne}]
    Every $5$-edge-connected planar graph admits a $\frac{5}{6}$-thin tree. 
\end{thm}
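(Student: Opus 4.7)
The plan is to move to the planar dual $G^*$, apply the Strong Nine Dragon Tree result just established in Theorem~\ref{thm:sndtcDLeq2kPlus2}, and then turn the resulting forest decomposition of $G^*$ into a thin spanning tree in $G$. Since $G$ is $5$-edge-connected, $G^*$ is a connected simple planar graph of girth at least $5$. Under the standard edge bijection, spanning trees of $G$ correspond to edge-complements of spanning trees of $G^*$, bonds of $G$ correspond to simple cycles of $G^*$, and arbitrary edge cuts of $G$ decompose into disjoint unions of bonds. So it is equivalent to produce a spanning tree $\hat{T}$ of $G^*$ such that $|E(C)\setminus \hat{T}| \leq \frac{5}{6}|E(C)|$ for every simple cycle $C$ of $G^*$.

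For the decomposition step, Euler's formula gives $e(H) \leq \frac{5}{3}(v(H)-2)$ for every subgraph $H$ of $G^*$, so $\gamma(G^*) \leq \frac{5}{3} = 1 + \frac{4}{4+1+1}$. Thus Theorem~\ref{thm:sndtcDLeq2kPlus2} applies with $k=1$ and $d=4$ (note $4 \leq 2(1+1)$), yielding a decomposition $E(G^*) = T_1 \sqcup T_2$ into two forests such that every component of $T_2$ has at most four edges. Because $T_1$ is a forest in the connected graph $G^*$, a greedy matroid augmentation extends $T_1$ to a spanning tree $\hat{T}$ of $G^*$ using only edges of $T_2$; hence $E(G^*)\setminus \hat{T} \subseteq T_2$.

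To finish, analyze a simple cycle $C$ of $G^*$ by partitioning its edges into maximal runs of $T_1$-edges and $T_2$-edges. Each $T_2$-run is a path inside a single component of $T_2$, so it has at most four edges; meanwhile, between any two cyclically consecutive $T_2$-runs there must lie at least one $T_1$-edge, since otherwise the two runs would merge. If there are $q$ many $T_2$-runs, this gives $|E(C) \cap T_2| \leq 4q$ and $|E(C) \cap T_1| \geq q$, hence $|E(C) \cap T_2| \leq 4|E(C) \cap T_1|$ and therefore $|E(C) \cap T_2|/|E(C)| \leq \frac{4}{5}$. Combining with $E(C) \setminus \hat{T} \subseteq T_2$ yields the required thinness $\frac{4}{5} \leq \frac{5}{6}$ (so one in fact obtains the stronger $\frac{4}{5}$-thin tree).

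The main obstacle is really packaged inside the decomposition step: with only the previously known range $d \leq k+1$ of the Strong Nine Dragon Tree Theorem (Theorem~\ref{thm:sndtell1}), one cannot hit the girth-$5$ density $\frac{5}{3}$ with $k=1$, since the largest admissible value $d=2$ there requires $\gamma \leq \frac{3}{2}$, which girth-$5$ planar graphs routinely exceed. The extension to $d \leq 2(k+1)$ is exactly what opens up the pair $(k,d)=(1,4)$ used above. Everything else — the cut/cycle duality, the matroid augmentation, and the alternating-run counting on $C$ — is routine once the right decomposition is in hand.
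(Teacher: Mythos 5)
Your argument is correct and follows exactly the blueprint the paper attributes to \cite{sndtcDLeqKPlusOne}: dualize, decompose, extend $T_1$ to a spanning tree, and count alternating runs on each dual cycle. Plugging in the improved decomposition from Theorem~\ref{thm:sndtcDLeq2kPlus2} with $(k,d)=(1,4)$---which is new to this paper---you actually establish the strengthened $\frac{4}{5}$-thin tree corollary, which subsumes the cited $\frac{5}{6}$ statement; the paper invokes this ``word for word'' substitution without spelling out the details, and your write-up supplies them correctly (the bond/cycle correspondence, the reduction from arbitrary cuts to bonds, and the run-counting bound $|E(C)\cap T_2| \leq 4\,|E(C)\cap T_1|$).
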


Following the proof given in \cite{sndtcDLeqKPlusOne} word for word but using the fact that the Strong Nine Dragon Tree Conjecture is true when $d=4$ and $k=1$, which implies that planar graphs of girth at least $5$ decompose into a forest and a forest where every component has at most four edges, we have:

\begin{thm}
    Every $5$-edge-connected planar graph admits a $\frac{4}{5}$-thin tree. 
\end{thm}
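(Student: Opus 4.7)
The plan is to follow the authors' earlier argument for the $\tfrac{5}{6}$-thin tree theorem verbatim, replacing the partial progress on the $(k,d)=(1,4)$ case of the Strong Nine Dragon Tree Conjecture used there with the full resolution of that case, which is a direct corollary of Theorem~\ref{thm:sndtcDLeq2kPlus2}. Given a $5$-edge-connected planar graph $G$, first I pass to the planar dual $G^{*}$. Because minimal edge cuts of $G$ are in bijection with cycles of $G^{*}$, the girth of $G^{*}$ is at least $5$, and then the Euler bound $e(H)\leq \tfrac{5}{3}(v(H)-2)$ for planar graphs of girth at least $5$, applied componentwise to any subgraph $H \subseteq G^{*}$, yields $\fracArb(G^{*})\leq \tfrac{5}{3} = 1 + \tfrac{4}{1+4+1}$: exactly the hypothesis of the Strong Nine Dragon Tree Conjecture with $k=1$ and $d=4$.

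Next, Theorem~\ref{thm:sndtcDLeq2kPlus2} applied to $G^{*}$ produces a decomposition $E(G^{*}) = F_1 \sqcup F_2$ with $F_1$ a forest and $F_2$ a forest every component of which has at most four edges. Since $G^{*}$ is connected and $F_1$ is acyclic, I extend $F_1$ to a spanning tree $T^{*}$ of $G^{*}$ by greedily adding edges of $F_2$, and then let $T$ be the spanning tree of $G$ dual to the cotree $E(G^{*})\setminus T^{*}$. To prove $T$ is $\tfrac{4}{5}$-thin, pick any minimal edge cut $S \subseteq E(G)$; by duality $S$ corresponds to a cycle $C$ in $G^{*}$, and $|E(T) \cap S|/|S| = |C \setminus T^{*}|/|C|$, so it suffices to show $|C \setminus T^{*}| \leq \tfrac{4}{5}|C|$.

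Walk around $C$ and partition its edges into maximal $F_2$-runs and the $F_1$-blocks separating them; since any two consecutive edges of $C$ lying in $F_2$ share a vertex, every $F_2$-run is contained in a single component of $F_2$ and hence has length at most $4$. Let $m$ denote the number of $F_2$-runs. Because $F_1$ is a forest and no single component of $F_2$ (being a tree) can contain the cycle $C$, the $F_2$-runs and $F_1$-blocks strictly alternate around $C$, so there are at least $m$ edges of $F_1$ on $C$. Using $F_1 \subseteq T^{*}$ and $E(G^{*})\setminus T^{*} \subseteq F_2$, these bounds combine to give
\[|C \setminus T^{*}| \leq |C \cap F_2| \leq 4m \leq 4\,|C \cap F_1| \leq 4\,|C \cap T^{*}|,\]
which rearranges to $|C \setminus T^{*}| \leq \tfrac{4}{5}|C|$, as required. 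The only substantive input is Theorem~\ref{thm:sndtcDLeq2kPlus2} itself; the rest is standard planar duality and the elementary counting above, so no further obstacle is expected beyond the main theorem already proved.
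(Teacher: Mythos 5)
Your argument is correct and follows the same route the paper takes, which is simply to invoke the dual-girth computation and the spanning-tree cotree construction from the authors' earlier $\tfrac{5}{6}$-thin-tree proof with the improved $(k,d)=(1,4)$ decomposition from Theorem~\ref{thm:sndtcDLeq2kPlus2}; you have merely written out the run-counting around a dual cycle that the paper leaves implicit by reference. Nothing differs in substance, and the key estimate $|C\setminus T^{*}| \leq 4\,|C\cap T^{*}|$ via the alternation of $F_2$-runs (each of length at most~$4$) with nonempty $F_1$-blocks is exactly how the bound $\tfrac{4}{5}$ arises.
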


Note that $5$-edge-connectivity in the above theorem cannot be replaced with $4$-edge-connectivity even if we replace $\frac{4}{5}$ with any other positive number less than $1$, as shown by Thomassen (see \cite{boundeddiameter} and \cite{Alghasi}). On the other hand, it is likely that $\varepsilon = \frac{4}{5}$ is not optimal, but a new approach would be needed to find thinner trees in $5$-edge-connected planar graphs. On the other hand, with some modifications to the techniques in this paper, perhaps it is possible to prove the following:

\begin{conj}[\cite{boundeddiameter}]
Every planar graph of girth at least $5$ decomposes into a forest and a star forest. 
\end{conj}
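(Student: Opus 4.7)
The plan is to apply Theorem \ref{maintheorem} with $k = 1$ and $d = 4$, and then strengthen the resulting decomposition. First, I would check the hypotheses: for any subgraph $H$ of a planar graph $G$ of girth at least $5$, treating components separately and using Euler's formula, one has $e(H) \leq \frac{5}{3}(v(H) - 1)$. This gives $10\,v(H) - 6\,e(H) - 1 \geq 0$, so $G$ is $(1,4)$-sparse, and $e(H) < 2(v(H)-1)$, so $G$ has no $2$-overfull subgraph. Hence Theorem \ref{maintheorem} yields a decomposition $E(G) = T \cup F$ with $T$ a forest and every component of $F$ a tree with at most four edges.

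Next, I would convert $F$ into a star forest by a further round of exchange arguments. Among all valid decompositions of this form, I would pick one minimizing the lexicographic tuple $(n_4, n_3, n_2, n_1, D)$, where $n_i$ counts components of $F$ with exactly $i$ edges and $D$ counts components of $F$ of diameter at least $3$. The non-star components of $F$ are the path $P_4$, the path $P_5$, and the ``subdivided claw'' obtained from $K_{1,3}$ by subdividing one edge. For each of these three shapes, the goal is a local swap exchanging an interior edge of the offending component with a $T$-edge at one of its endpoints, producing either a star component, a star plus a smaller component, or two stars, while keeping $T$ a forest and not enlarging another component of $F$.

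The principal obstacle is showing that such a swap always exists without enlarging another component of $F$ past four edges, and this is where girth at least $5$ ought to play a decisive role. The girth condition forbids short $T$-cycles created by a swap, and in tandem with planarity sharply restricts the local structure around a non-star component. I expect the argument to reduce, as in the proof of Theorem \ref{maintheorem}, to an ``interesting neighborhood'' case analysis in which the obstruction to every available swap would force a local subgraph denser than girth $5$ allows. An alternative route is a direct discharging argument on a minimum counterexample, first reducing low-degree vertices and then applying a discharging scheme tailored to girth $5$; either way, tracking how a swap propagates through adjacent $F$-components and verifying that no forbidden $5$-edge component is created appears to be the most delicate piece of case work.
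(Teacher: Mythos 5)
This is not a theorem of the paper but an open conjecture attributed to \cite{boundeddiameter}; the paper explicitly remarks that it ``is currently out of reach for our technique,'' so there is no paper proof against which to compare your argument. What the paper actually establishes, as a corollary of Theorem~\ref{maintheorem} with $k=1$, $d=4$, is precisely the weaker statement that planar graphs of girth at least $5$ decompose into a forest and a forest whose components have at most four edges. Your first paragraph recovers this correctly: the Euler-formula computation gives $e(H)\leq \tfrac{5}{3}(v(H)-1)$, which implies $10\,v(H)-6\,e(H)-1\geq 0$ (so $(1,4)$-sparse) and $e(H)<2(v(H)-1)$ (no $2$-overfull subgraph), and Theorem~\ref{maintheorem} applies.

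The difficulty is that your second step is not a proof but a plan. You acknowledge this yourself: you say you \emph{expect} the argument to reduce to an interesting-neighborhood case analysis, that the case of a swap creating a forbidden five-edge component ``appears to be the most delicate piece of case work,'' and you offer a discharging argument as an untried alternative. None of these case analyses is carried out, and the central claim --- that a minimum decomposition under the ordering $(n_4,n_3,n_2,n_1,D)$ always admits a profitable local swap --- is exactly the point the paper's techniques fail to establish. Passing from ``all components have at most four edges'' to ``all components are stars'' is a qualitatively stronger structural demand, and the known lower-bound construction cited in the paper (the result of \cite{mies2023pseudoforest} on diameter of the last forest in SNDT-type decompositions) shows that imposing diameter constraints is genuinely delicate; planarity and girth $5$ must be used in an essential way beyond the sparseness they imply, and your sketch does not identify what that extra use would be. As written, the proposal does not contain a proof of the conjecture; it reformulates the open problem and gestures at the same swap-based machinery that the authors state is insufficient.
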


This conjecture would imply that every $5$-edge-connected planar graph has two edge-disjoint $\varepsilon$-thin trees, with $\varepsilon = \frac{18}{19}$ \cite{boundeddiameter} (and in fact a refinement gives $\varepsilon = \frac{14}{15}$ \cite{Ramin2018}), which is currently out of reach for our technique. We give some words of motivation for why someone should care about finding thin trees. A famous conjecture of Goddyn\footnote{Goddyn has never published this conjecture, however a discussion of the conjecture can be found in \cite{Gharan2009TheAT}.} states that highly edge-connected graphs always contain thin spanning trees.

\begin{conj}[Goddyn's Thin Tree Conjecture]
For every $\varepsilon > 0$, there exists an integer $c$ depending on $\varepsilon$ such that every $c$-edge-connected graph contains an $\varepsilon$-thin tree.
\end{conj}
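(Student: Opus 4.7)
Since Goddyn's Thin Tree Conjecture is a longstanding open problem, any proof plan is necessarily speculative; I sketch the approach that seems most natural given the tools developed in this paper. The plan is to extend the planar corollary of Theorem \ref{maintheorem} by leveraging Strong Nine Dragon Tree type decompositions in a much more general setting, and then combining such decompositions probabilistically to exhibit a single thin spanning tree.

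First, I would reduce to the case that $G$ is edge-minimally $c$-edge-connected, so that by a theorem of Mader, $G$ contains many vertices of degree exactly $c$, giving control on the fractional arboricity of localised subgraphs. Second, I would seek a strong Nash-Williams type decomposition generalising Theorem \ref{maintheorem} beyond the forest setting, for instance a matroidal or sum-of-forests variant, so that $G$ decomposes into a ``dense'' part and a ``sparse'' part consisting of a forest every component of which has a bounded number of edges compared to $c$. Third, I would choose a candidate spanning tree by selecting one suitably chosen forest of the decomposition together with a uniformly random spanning subforest of each small component of the sparse part, and argue via linearity of expectation that every cut of $G$ is crossed by at most an $\varepsilon$ fraction of its edges; the bounded-component condition is intended to provide the bounded-dependence structure needed to concentrate around this expectation.

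The chief obstacle, and the reason the conjecture has resisted attack for decades, is the concentration step. A $c$-edge-connected graph can have exponentially many near-minimum cuts, so a naive union bound over all cuts will destroy any thinness bound the linearity calculation gives. The planar corollary of this paper evades this difficulty because planar duality translates small cuts into short cycles, which the Strong Nine Dragon Tree hypothesis controls directly through the girth of $G^*$. In a general $c$-edge-connected graph there is no such translation, and one presumably needs a genuine substitute for planar duality, such as a matroidal Strong Nine Dragon Tree theorem extending the matroid Nine Dragon Tree result cited in the introduction and directly encoding the cut structure of the cycle matroid of $G$, or a spectral rounding in the style of Oveis Gharan and collaborators that uses expansion of $G$ to smooth edge weights before decomposing. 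A concrete intermediate target, much more tractable than the full conjecture, would be to establish the statement for graphs excluding a fixed minor, where tree-decomposition structure provides a partial replacement for planar duality and where the decomposition techniques of this paper may be applicable bag-by-bag.
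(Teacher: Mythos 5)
This statement is Goddyn's Thin Tree Conjecture, which the paper records precisely as an open conjecture: it offers no proof, and the only thin-tree results in the paper are the planar corollaries obtained from Theorem \ref{maintheorem} via planar duality. So there is no argument of the paper to compare yours against, and your proposal, as you yourself acknowledge, is a research programme rather than a proof; it cannot be accepted as establishing the statement.

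Beyond the honestly flagged concentration issue, the plan has concrete gaps earlier on. The decomposition step is not available: a $c$-edge-connected graph can have arbitrarily large fractional arboricity (and arbitrarily dense subgraphs), so no hypothesis of the form $\fracArb(G)\leq k+\frac{d}{d+k+1}$, nor $(k,d)$-sparseness, is implied by edge-connectivity, and there is no known ``dense part plus bounded-component forest'' decomposition of general $c$-edge-connected graphs to invoke. The planar argument works only because duality converts $5$-edge-connectivity of $G$ into girth at least $5$ of $G^*$, hence into the sparsity that the Strong Nine Dragon Tree hypothesis needs, and converts components of bounded size in the sparse forest of $G^*$ into thinness of the complementary tree across every cut of $G$; in a general graph both translations are missing, which is exactly why you would need a matroidal analogue that nobody has proved. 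The probabilistic selection step is also incomplete: one chosen forest together with random subforests of the small components need not even be connected or spanning, and linearity of expectation only bounds the expected intersection with a single cut, which says nothing about all cuts simultaneously --- handling the exponentially many near-minimum cuts is the entire difficulty of the conjecture, not a technical afterthought. Your suggested intermediate target (minor-closed classes) is a reasonable research direction, but none of this constitutes a proof of the statement.
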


There is not much progress towards the Thin Tree Conjecture. For a fixed positive integer $g$, the Thin Tree Conjecture is true for the set of all graphs with bounded orientable genus at most $g$ (where the thinness depends on the integer $g$). It has also been shown that one can find a tree which is ``almost" a thin spanning tree, showing in some sense the conjecture is ``almost" true. We do not even know if the following is true:

\begin{conj}[Goemans, see \cite{Alghasi}]
There exists an $\varepsilon >0$ such that every graph with three edge-disjoint spanning trees contains an $\varepsilon$-thin tree. 
\end{conj}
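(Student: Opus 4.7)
The plan is to follow the convex-combination and maximum-entropy rounding framework developed by Asadpour, Goemans, Madry, Oveis Gharan and Saberi for the asymmetric TSP, and to try to strip the logarithmic slack it leaves behind by exploiting the fact that our hypothesis is not merely low cut density but the existence of three actual edge-disjoint spanning trees. Given $G$ with edge-disjoint spanning trees $T_{1},T_{2},T_{3}$, set $x = \frac{1}{3}(\chi_{T_{1}} + \chi_{T_{2}} + \chi_{T_{3}})$. Since the trees are edge-disjoint, $x_{e}\in\{0,1/3\}$ for every $e$, so $x$ lies in the spanning-tree polytope and satisfies $x(S)\leq |S|/3$ for every cut $S$. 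Thus $x$ is a $\frac{1}{3}$-thin \emph{fractional} tree for free; the whole problem is to integrally round it.

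First, I would draw $T$ from the maximum-entropy distribution $\mu$ on spanning trees with marginals $x$. This distribution is strongly Rayleigh, so for each fixed cut $S$ the random variable $|E(T)\cap S|$ is a sum of negatively associated Bernoullis with mean $x(S)\leq|S|/3$. Chernoff-style bounds plus a union bound over the (at most $n^{O(\log n)}$) near-minimum cuts of $G$ give, with positive probability, $|E(T)\cap S| \leq C\cdot \log n / \log\log n\cdot x(S)$ simultaneously for all cuts $S$. This already produces an $O(\log n / \log\log n)$-thin tree in every $3$-tree-packable graph, recovering the best general bound known.

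To push this to a constant, which is the content of the conjecture, I would try to combine the sampling step with a deterministic local-swap step in the spirit of the potential-method arguments driving the Nine Dragon Tree proofs in this paper. The structural input is that, by Nash-Williams/Tutte, every cut $S$ admits three disjoint bases crossing it, so once we have a candidate tree $T$ drawn from $\mu$ and a ``bad'' cut $S$ with $|E(T)\cap S|$ larger than some absolute constant times $|S|$, we have a large supply of replacement edges from $T_{2}\cup T_{3}$ lying across $S$ that could be swapped in for edges of $T\cap S$ while maintaining tree-ness. I would set up a potential function that simultaneously tracks (i) thinness violations across all cuts and (ii) the obstruction to performing swaps (e.g.\ edges whose fundamental cycles in $T$ are contained entirely on one side of $S$), and argue that a minimal counterexample to constant thinness must concentrate its bad cuts in a small subgraph, which one then rules out directly using the three-base decomposition.

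The main obstacle will be exactly that last step: converting $O(\log n/\log\log n)$ to $O(1)$. This is the heart of Goemans' conjecture and is known to be closely entangled with the constant-factor asymmetric TSP question; it is very unlikely that strong-Rayleigh concentration alone suffices, and the swap argument sketched above will need a genuinely new global invariant, because local swaps on their own can create new bad cuts elsewhere. As a first, much more modest milestone I would aim to prove constant thinness under the stronger hypothesis that $G$ packs $k$ edge-disjoint spanning trees for some large absolute constant $k$, since then each fundamental cut of the chosen tree has $k-1$ independent ``backup'' bases and a purely combinatorial swap/potential argument in the style of Theorem \ref{maintheorem} looks plausible without any random rounding.
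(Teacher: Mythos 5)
This statement is not a result of the paper at all: it is Goemans' conjecture, quoted only to motivate the thin-tree discussion, and the authors explicitly state that it is open (``We do not even know if the following is true''). So there is no proof in the paper to compare against, and your proposal does not supply one either. What you have written is a research plan whose only complete step is the known one: taking $x=\frac{1}{3}(\chi_{T_1}+\chi_{T_2}+\chi_{T_3})$ as a fractional $\frac{1}{3}$-thin point of the spanning-tree polytope and sampling from the maximum-entropy (strongly Rayleigh) distribution with these marginals. Even granting the concentration-plus-union-bound step (which itself needs Karger-type counting of approximately minimum cuts to be carried out correctly, and only controls cuts relative to their size when the mean $x(S)$ is large enough), this yields thinness $O(\log n/\log\log n)$, i.e.\ exactly the state of the art from the Asadpour--Goemans--Madry--Oveis Gharan--Saberi line of work, and not a constant $\varepsilon$ independent of $n$.

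The genuine gap is the step you yourself flag: converting the polylogarithmic bound into an absolute constant. Your proposed mechanism --- a potential function tracking thinness violations together with local tree swaps using replacement edges from $T_2\cup T_3$ --- is not an argument but a hope: no potential is defined, no lemma is stated showing a swap strictly decreases it, and there is no control over the new cuts that a swap can overload elsewhere (you concede this ``will need a genuinely new global invariant''). The counterexample-minimality and exchange machinery of this paper (special paths, legal orders, the residue function) operates on forest decompositions certified by sparsity and does not transfer to simultaneous control of exponentially many cuts, so invoking it ``in the spirit of'' Theorem \ref{maintheorem} does not close the gap. As written, the proposal proves nothing beyond what is already known and leaves the conjecture exactly as open as the paper states it to be; the weaker milestone you suggest (constant thinness assuming $k$ edge-disjoint trees for some large absolute constant $k$) is also an open problem, not a fallback you can currently establish.
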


It is known that a resolution to the Thin Tree Conjecture would imply both a constant factor approximation\footnote{assuming the thin tree can be found in polynomial time} to the asymmetric travelling salesman problem (see \cite{Gharan2009TheAT}), as well as the weak $3$-flow conjecture (again see \cite{Gharan2009TheAT}) (both of which are now known using drastically different techniques - see \cite{ATSP} and \cite{weak3flow}).  We do not delve into the details of these conjectures and problems, as they are not relevant for this paper. We simply note that even though the constant factor approximation algorithm for the asymmetric travelling salesman problem does not use thin trees, when restricted to planar instances, to the authors' knowledge the thin tree approach still gives the best approximation ratio (see \cite{Gharan2009TheAT}). Thus, finding thin trees in planar graphs is still of importance and in particular, optimizing the $\varepsilon$ parameter is a relevant problem. 

We also point out why thin trees and the Strong Nine Dragon Tree Conjecture should be related at all. However, it becomes clearer once we observe that the dual problem to decomposing a graph into $k$ forests is asking when does a graph contain $k$ edge-disjoint spanning trees. Similar to Nash-Williams' Theorem above, Nash-Williams gave the following beautiful characterization:

\begin{thm}[\cite{edgedisjointspanningtrees}]
\label{edgedisjointspanningtrees}
  A graph $G$ contains $k$ edge-disjoint spanning trees if and only if for every partition $\mathcal{P}$ of $V(G)$, the number of edges which have an end in two distinct sets of $\mathcal{P}$ is at least $k(|\mathcal{P}| -1)$.
\end{thm}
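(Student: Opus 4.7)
\begin{proofSketch}
The plan is to prove this via matroid union. The forward direction is immediate: contracting each part of any partition $\mathcal{P}$ to a single vertex keeps each of the $k$ spanning trees connected in the resulting multigraph, so each tree contributes at least $|\mathcal{P}|-1$ cross-edges, and summing over the $k$ trees yields the inequality.

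For the converse, I would let $M(G)$ be the cycle matroid of $G$ and consider the union of $k$ isomorphic copies of $M(G)$ on the common ground set $E(G)$. A common independent set of size $k(v(G)-1)$ in this union is precisely a packing of $k$ edge-disjoint spanning trees. By Edmonds' matroid union formula, the maximum size of an independent set in this union equals
\[\min_{F \subseteq E(G)} \bigl(|E(G) \setminus F| + k \cdot r_{M(G)}(F)\bigr),\]
where $r_{M(G)}(F) = v(G) - c(F)$ and $c(F)$ is the number of connected components of the spanning subgraph $(V(G), F)$. The task then reduces to showing that for every $F \subseteq E(G)$,
\[|E(G) \setminus F| \geq k\bigl(c(F) - 1\bigr).\]

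To verify this inequality, I would take $\mathcal{P}$ to be the partition of $V(G)$ into the connected components of $(V(G), F)$. Every edge with endpoints in two distinct parts of $\mathcal{P}$ lies in $E(G) \setminus F$, so by the hypothesis the number of such edges is at least $k(|\mathcal{P}|-1) = k(c(F)-1)$, exactly as needed.

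The main obstacle is the appeal to Edmonds' matroid union formula, which is itself a nontrivial result. To give a self-contained argument, I would instead run a direct augmenting argument on $k$-tuples of edge-disjoint forests: start with a $k$-tuple $(T_1, \ldots, T_k)$ of disjoint forests of maximum total size; if some $T_i$ is not spanning, pick a cut separating a component of $T_i$ from the rest, apply the partition hypothesis to the two sides of this cut together with the component structure of the remaining forests, and use a sequence of edge swaps between $T_1, \ldots, T_k$ along an ``augmenting path'' in the exchange graph to increase the total size, contradicting maximality. This is closer to Nash-Williams' original approach and avoids the full matroid union machinery at the cost of a more intricate case analysis to certify that the required exchange sequence exists.
\end{proofSketch}
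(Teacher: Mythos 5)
The paper does not prove Theorem~\ref{edgedisjointspanningtrees}; it is cited from Nash-Williams (and is also due independently to Tutte) and used as background motivation, so there is no internal proof to compare against. Your argument is a correct and standard one: the forward direction by contracting parts of $\mathcal{P}$ is fine, and the converse via Edmonds' matroid union formula applied to $k$ copies of the cycle matroid correctly reduces to checking that $|E(G)\setminus F| \geq k(c(F)-1)$ for every $F\subseteq E(G)$, which follows from the hypothesis applied to the component partition of $(V(G),F)$. One point worth making explicit is the final step from a maximum common independent set to $k$ spanning trees: an independent set of size $k(v(G)-1)$ in the union partitions into $k$ edge-disjoint forests, and since each forest has at most $v(G)-1$ edges the bound forces equality, so each is spanning. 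Your alternative augmenting-path sketch is indeed closer in spirit to Nash-Williams' original argument and avoids invoking matroid union as a black box, but as you note it requires a nontrivial exchange argument; either route is acceptable, and since the paper relies on the result only for context, citing it (as the paper does) would also have been appropriate.
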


Thus, just as the Strong Nine Dragon Tree Conjecture is a refinement of Nash-Williams' Theorem, the Thin Tree Conjecture can be viewed as an extension of Theorem \ref{edgedisjointspanningtrees} by observing that by Theorem \ref{edgedisjointspanningtrees}, every $2k$-edge-connected graph contains $k$ edge-disjoint spanning trees, and thus, by the pigeon-hole principle, for every cut one of the trees is $\frac{1}{k}$-thin with respect to the cut. Thus, the idea is: if the edge-connectivity is significantly larger than $2k$, one should be able to ensure a tree which is thin on all of the cuts.

Now we outline how we prove Theorem \ref{maintheorem} and the new technique in this paper. As a start, we follow the approach that was used and developed in the following papers \cite{ndttPsfs}, \cite{ndtt}, \cite{sndtcDLeqKPlusOne}, \cite{mies2023pseudoforest},  \cite{Yangmatching}. 

The proof proceeds by vertex-minimal counterexample. Step 1 of the proof is to argue that a vertex-minimal counterexample decomposes into $k+1$ forests where $k$ of the forests are spanning trees, and another forest $F$ is left over. We call edges of the spanning trees blue and the forest $F$ red. Effectively this is proven in \cite{ndtt}, and is interesting in its own right as a modification of the proof gives a proof of Nash-Williams' Theorem.

Step 2 of the proof is to pick a decomposition into $k+1$ forests where $k$ are spanning trees and a forest $F$ carefully. Ideally $F$ could be chosen such that all components have size at most $d$ - in which case we would be done immediately. As we cannot, we will attempt to pick such a decomposition that is as ``close" to optimal as possible. We do this in multiple steps. The first step is intuitive, we minimize the number of large components, where we consider many ``small" big components better than one very big component. The second choice is less obvious. As we have to have a component of $F$ that is too large, we pick one and call it $R^*$, and pick a vertex $r$ with largest degree in $R^*$, roughly in the center of the component. Then we orient all edges towards $r$, and consider the induced subgraph of all vertices which are reachable from $r$ by paths which contain possibly both blue and red edges, such that we can select an orientation of the red edges to obtain a directed path. We call this the exploration subgraph. We order the components by size and how ``close" they are to $R^*$ (with $R^*$ being the first component in the ordering), using the directed edges as a way of describing of close. We call this a legal order, and we pick our decomposition such that first the number of large components is small, and second the legal order we find is minimized lexicographically. The idea of the legal order is as follows: if $R^*$ has many components near it small, then one anticipates it will be easy to modify the decomposition to break up $R^*$ without creating any other too big components (or at least smaller too big components). Once we have this, we observe that the exploration graph must have small fractional arboricity by assumption, and this puts some structure on what the components of $F$ look like. In particular, there must be many components of $F$ with few edges (here, and for the remainder of the proof overview few  means at most one edge, and a component is small if it has at most one edge, and big otherwise). This is the content of Section \ref{defcounterexample}.

Step three is to attempt to modify the decomposition. Our first tool comes from \cite{ndtt} and that is the so called special path augmentation. The idea is as follows: imagine I have two components $K$ and $C$ of $F$, and there is a directed edge  between $K$ and $C$, and further $K$ and $C$ are small, so adding the directed blue edge to $F$ does not create a component that is too large. We would love to add this edge to $F$, and find an edge closer to $R^*$ that we can remove, in such a way that either we break up a component too large, or can find a better legal order. It is not at all obvious how to do this, and in general it is quite delicate and complicated. However, the special path argumentation developed in \cite{ndtt} allows us to do precisely this, through possibly many exchanges of edges. As it requires multiple definitions to properly define, and as this idea has now been used in many papers, we simply refer the reader to Section \ref{augmentingspecialpaths} for more details on how the special path augmentation works. The key advantage of this augmentation is that it allows us to say that small components are not near any other small component. We also use them later on for more complicated exchanges, but at a high level one should focus on the fact that the special paths augmentation force no small components to be near other small components. This is the content of Section \ref{augmentingspecialpaths}, and again for experts, this section can be skipped as we add nothing new over the tool developed in \cite{ndtt}.

With this in hand, as mentioned, we just need to show that the exploration graph has few small components.  We just argued in the above paragraph that we cannot have small components close to small components. If we could prove that big components have at most one small component generated by a single spanning tree near them, we would be done. Unfortunately, we will not be able to do this, so we will need to move in steps.

Step 4 is to examine what can happen if we try to exchange edges by a big component near to a small component. A special situation occurs when the big component is $R^*$. Here we will be able to show that we have no small components around it, else we have enough flexibility to find a blue edge which we can exchange with some edge in $R^*$. The reason is again because all blue edges are directed towards $r$, so if I have a blue directed edge from $R^*$ to a small component, walking along the red path from the endpoint of this blue edge to $r$ through the red edges, one of these edges has to be exchangable, and this breaks up $R^*$, and as the component was small, and all new components are smaller than $R^{*}$ (because we picked $r$ to have max degree in $R^{*}$ and roughly in the center of the component).
In general, we do not have the vertex $r$ to use  so we try to say that if we have two small components close to a big component via blue arcs from the same blue spanning tree, then we can exchange some edge in this big component with one of the two blue arcs to a small component to break up the big component or improve the legal order. In \cite{sndtcPsfs} this was shown to always be doable for pseudoforests, but in the case of forests it is not always possible since one of the new components we create might be larger than the old one. However, we will be able to argue that there are at most two small components with blue directed arcs from a single blue spanning tree near any big component. This is proven in Section \ref{sec:threeChildren}, and requires alot of set up. We first consider the structure of how we can possibly have two small components generated by a single blue spanning tree near a big component, and in Section \ref{sec:caseTwo} and Section \ref{sec:caseOne} we argue that if we cannot modify the decomposition when given two small components generated by a single spanning tree near a big component, then we gain a significant amount of structure. Enough structure to argue that there are not three small components near a big component generated by a single blue spanning tree. This is similar to the arguments in \cite{sndtcDLeqKPlusOne}. In fact, we can prove even a little more - we can argue that if two small components are around a big component generated by a single blue tree, then one of these small components has an edge.

At this point, we know we have very few small components, and in the event we have a lot of them around a single component, some of them need to contain at least one edge, and so we are close to concluding the theorem. But the fractional arboricity may still be too low, and naively we cannot modify the structure anymore. As a concrete example, imagine $d=3$ and $k=1$. If a component with three edges (which is considered large), has two children, one which is an edge and one which is an isolated vertex, then if we group these together we have seven vertices, and four edges, given a density of $\frac{4}{7}$, compared to the desired $\frac{3}{5}$. Thus we need to do something to increase the density. Step 5, and the main new idea is to modify big components with certain special big components in a certain way. The high level idea is to show that if we have a big component with two small components as neighbours, then we can find a different big component, which has at most one small component generated by a single blue spanning tree beside it, and this small component contains at least one edge. In this case, this component with its small component is adding more to the fractional arboricity than we need, and it can compensate for the two small components generated by a single blue tree from the one big component. To get this to work is quite delicate. We need to show there is a unique way to pair up components so we don't ``overload" one of the components we using to average out the fractional arboricity, and further that such components even exist. This is extremely complicated, and we do it in tandem with step four in Sections \ref{sec:caseTwo} and \ref{sec:caseOne}. We call these components ``interesting neighbours".

The best way to see how we should do this is to go back to the $d=3$, $k=1$ example. If we really could not somehow modify this component with three edges to break up the component and create a smaller legal order, we have a very specific structure. In particular, we must have a blue directed path from the small component containing an edge, to the big component, and then through the isolated component. Now consider the component right before the big component along this path (naively, its not clear this is not the big component itself - we will argue that is not the case). Then for this new component, the big component will be the interesting neighbour.

Now our goal in Step 5 is to argue that not only does every component have at most two small components generated by a single spanning tree, but actually has at most two ``relevant components" where a relevant component is either an interesting neighbour, or a small component. If we can do that, then the pairing procedure and uniqueness will be easy to prove.

Once we have done all this, Step 6 is to note that if we could not improve our decomposition with the above modifications, by the pairing procedure, we will be able to prove that the fractional arboricity of the exploration graph is too large, a contradiction. This is at this point routine exercise for those comfortable with the arguments in \cite{mies2023pseudoforest,sndtcPsfs,mies2024strong}, and is the content of Section \ref{sec:defineF}.

We structure the paper as follows. In Section \ref{defcounterexample}, we set up our counterexample and explain all the basic definitions. In Section \ref{augmentingspecialpaths}, we review the special paths augmentation from \cite{ndtt} and deduce that small components do not have small child components. In Section \ref{exchangeedgesec} we describe an exchange operation and deduce that the root component of the exploration subgraph has no small or interesting neighbours, which we want to call relevant neighbours. In Sections \ref{sec:caseTwo}, \ref{sec:caseOne} and \ref{sec:threeChildren} we prove some structure of relevant neighbours of components in order to bound the number of these neighbours. In Sections \ref{sec:badIsInteresting} and \ref{sec:defineF} we show how we find components with larger edge density to which we can assign small child components of components that have too many small children.

\section{Defining the counterexample}
\label{defcounterexample}

The goal of this section is to set up everything we need to define  a minimal counterexample to Theorem \ref{maintheorem}. First we pin down some basic notation. For a path $P$ with $k$ vertices, we will write $P = [v_{1},\ldots,v_{k}]$ where $v_{i}v_{i+1}$ is an edge for all $i \in \{1,\ldots,k-1\}$. As we will also consider digraphs, we will use the notation $(u,v)$ to be a directed edge from $u$ to $v$, and we extend the above notation for paths to directed paths if directions are used. For a tree $T$ which has its edges directed towards a root vertex $r$, and $x, y \in V(T)$ such that $x$ is a descendant of $y$ in $T$ we let $\pathIn{T}{x}{y}$ be the unique directed path from $x$ to $y$ in $T$.

For the rest of the paper we fix integers $k,d \in \mathbb{N}$, where $k \geq 1$ and $k + 1 <  d \leq 2(k+1)$ . We always assume that we have a graph $G$ which is a counterexample to Theorem \ref{maintheorem} with minimum number of vertices. Naturally, this implies that $G$ is connected, otherwise each connected component has the desired decomposition, which implies the entire graph has the desired decomposition. Even stronger, we show that $G$ decomposes into $k$ spanning trees and another forest. This fact follows from a minor tweak to the proof of Lemma~2.1 of \cite{ndtt} and the fact that $G$ has no $(k+1)$-overfull set and hence decomposes into $k+1$ forests. We omit the proof.

\begin{lemma}[\cite{ndtt}] \label{lemma:minimalCounterExample}
    Every graph $G$ that is a vertex-minimal counterexample to Theorem \ref{maintheorem} admits a decomposition into forests $T_1, \dots, T_k, F$ such that $T_1, \dots, T_k$ are spanning trees.
\end{lemma}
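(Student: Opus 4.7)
The plan is to follow the proof of Lemma~2.1 of \cite{ndtt} essentially verbatim; the only substantive change is that we derive the initial $(k+1)$-forest decomposition from the hypothesis that $G$ has no $(k+1)$-overfull subgraph instead of from a fractional-arboricity bound. By Nash-Williams' theorem this hypothesis already suffices to produce such a decomposition, so I would fix one, say $T_1, \dots, T_k, F$, that minimizes $|E(F)|$, and subject to this lex-minimizes the sorted-decreasing vector of component counts $(c(T_1), \dots, c(T_k))$, where each $T_i$ is viewed as a spanning subgraph of $G$.

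The claim is that under this choice each $T_i$ is a spanning tree. Suppose not; after relabeling assume $T_1$ has the most components, so $c(T_1) \ge 2$. By connectivity of $G$ there is an edge $e \in E(G)$ with endpoints in two distinct components of $T_1$, so $T_1 + e$ is still a forest. If $e \in F$ I move $e$ from $F$ to $T_1$, producing a decomposition with strictly smaller $|E(F)|$ and contradicting the primary optimality choice. Otherwise $e$ lies in some $T_j$ with $j \ne 1$, and the swap $T_1 \gets T_1 + e$, $T_j \gets T_j - e$ is legal (both remain forests) and strictly decreases $c(T_1)$ while increasing $c(T_j)$ by exactly one.

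If this single swap already lex-improves the sorted component-count vector we have our contradiction immediately. Otherwise, I would chain the argument: the newly updated $T_j$ has a component split in two, so search for an edge between the two new pieces and repeat the case analysis. This is the classical matroid augmenting-path argument applied to $k$ copies of the graphic matroid of $G$, and by finiteness of $G$ it terminates in one of two ways, either by absorbing some edge of $F$ into $\bigcup_i T_i$ (contradicting the minimality of $|E(F)|$) or by realizing a global exchange that strictly improves the component-count vector (contradicting the tie-break). In either case we reach a contradiction, so every $T_i$ must be a spanning tree. The main obstacle is the careful bookkeeping of the chain of exchanges, but this is entirely borrowed from the proof of Lemma~2.1 in \cite{ndtt}, which is why only a ``minor tweak'' is needed here.
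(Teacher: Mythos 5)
There is a genuine gap. Your argument uses only that $G$ admits a $(k+1)$-forest decomposition; it never invokes the hypothesis that $G$ is a vertex-minimal \emph{counterexample}. But the conclusion requires in particular that $G$ have $k$ edge-disjoint spanning trees, and by Nash-Williams' tree-packing theorem (Theorem~\ref{edgedisjointspanningtrees}) this can fail for graphs satisfying all the hypotheses of Theorem~\ref{maintheorem}: a tree, for instance, is $(k,d)$-sparse with no $(k+1)$-overfull subgraph, yet for $k\geq 2$ has only $v(G)-1 < k(v(G)-1)$ edges and so cannot carry $k$ edge-disjoint spanning trees. Consequently the chain of exchanges you describe does not ``terminate in one of two ways.'' There is a silent third outcome: the alternating search saturates a tight vertex set without ever reaching an edge of $F$ or producing a lexicographic improvement, and this happens exactly when $G$ fails to pack $k$ spanning trees. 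You also already run into this locally: if two trees are tied for the maximum component count and the only available edge between components of one lies in the other, your single swap strictly worsens the sorted vector, and nothing in the sketch rules out chasing such swaps forever without progress. If the sketch were complete it would prove the stronger, false, statement that every connected graph with a $(k+1)$-forest decomposition has $k$ edge-disjoint spanning trees.

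What is missing, and what the proof of Lemma~2.1 in \cite{ndtt} actually supplies, is the use of vertex-minimality when the search stalls. One extracts a partition $\mathcal{P}$ of $V(G)$ with $|\mathcal{P}|\geq 2$ crossed by fewer than $k(|\mathcal{P}|-1)$ edges, chosen (minimizing $e_{\mathcal{P}}/(|\mathcal{P}|-1)$) so that the contracted multigraph $G/\mathcal{P}$ has fractional arboricity strictly below $k$. Each $G[P_i]$ has fewer vertices than $G$, is still $(k,d)$-sparse with no $(k+1)$-overfull subgraph, and hence by minimality decomposes into $k+1$ forests in which the last has all components of at most $d$ edges. One then decomposes the crossing edges into $k$ forests of $G/\mathcal{P}$ and splices them into the respective $T_j$'s, yielding a decomposition of $G$ with the required property and thereby contradicting that $G$ was a counterexample. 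This extraction-and-gluing step, driven by minimality, is the actual content of the lemma and is entirely absent from your sketch.
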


Note that if $G$ decomposes into $k$ spanning trees and a forest $F$, it follows that $F$ is disconnected. Otherwise, $\fracArb(G) = k+1$, a contradiction.\\
Given a decomposition of $G$, we will want to measure how close it is to satisfying Theorem  \ref{maintheorem}. This is captured in the next definition:

\begin{definition}
    The \textit{residue function $\rho(F)$} of a forest $F$ is defined as the tuple $(\rho_{v(G)-1}(F), \\  \rho_{v(G)-2}(F), \ldots, \rho_{d+1}(F))$, where $\rho_i(F)$ is the number of components of $F$ having $i$ edges.
\end{definition}

We will want to compare residue function values of different forests using lexicographic ordering and are interested in the decomposition with one forest minimizing the residue function. For the next bit of notation, recall that $d \geq 3$ as if $d \leq 2$, then the result follows from Theorem \ref{thm:sndtell1}.

\begin{notation}
    Over all decompositions into $k$ spanning trees and a forest $F$ we choose one where $F$ minimizes $\rho$ with respect to lexicographic order. We call this minimum tuple $\rho^*$. This forest $F$ has a component $R^*$ containing at least $d + 1 \geq 4$ edges.
    We choose a vertex $r \in V(R^*)$ of degree at least $3$ or if this is not possible, we choose $r$ such that there are two edge-disjoint paths in $R^*$  of length at least $2$ starting at $r$. We fix $R^*$ and $r$ for the rest of the paper.
\end{notation}

\begin{definition}
    We define $\mathcal F$ to be the set of decompositions into forests $(T_1, \dots, T_k, F)$ of $G$ such that $T_1, \dots, T_k$ are directed spanning trees of $G$; $R^*$ is a connected component of the undirected forest $F$ and the arcs of $T_1, \ldots T_k$ are directed towards $r$. We let $\mathcal F^* \subseteq \mathcal F$ be the set of decompositions $(T_{1},\ldots,T_{k},F) \in \mathcal F$ such that $\rho(F) = \rho^*$.
\end{definition}

The next definition is simply to make it easier to talk about decompositions in $\mathcal F$.

\begin{definition}
    Let $\mathcal{T} = (T_1, \dots, T_k, F) \in \mathcal F$. We say that the (directed) edges of $T_1, \dots, T_k$ are \textit{blue edges} and the (undirected) edges of $F$ are \textit{red edges}. We define $E(\mathcal{T}) := E(T_1) \cup \dots \cup E(T_k) \cup E(F)$. 
    Furthermore, we let $\redForest(\mathcal T) := F$ and for any $b \inOneToK$ we let $\blueTree(\mathcal T) := T_b$. We also call the connected components of $\redForest(\mathcal T)$ red components. 
    Further, a \textit{blue (directed) path in $\mathcal T$} is a directed path where all edges are from  $\bigcup_{b \inOneToK} E(\blueTree(\mathcal T))$.
\end{definition}

Finally, we can define the critical subgraph which we will focus on for the rest of the paper:

\begin{definition} \label{def:explSubgraph}
    Let $\mathcal{T} \in \mathcal F$.
    The \textit{exploration subgraph} $H_{\mathcal T}$ of $\mathcal{T}$ is the subgraph of the mixed graph $(V, \; E(\mathcal{T}))$ that is induced by the vertex set  consisting of all vertices $v$ for which there is a sequence of vertices $r=x_1, \dots, x_l = v$ such that for all $1 \leq i < l$ it holds: $(x_i, x_{i+1}) \in \bigcup_{b \inOneToK} E(\blueTree(\mathcal T))$ or 
    $x_i x_{i+1} \in E(\redForest(\mathcal T))$.
\end{definition}

We will want to focus on red components of $\mathcal H_{\mathcal T}$ with low edge density:

\begin{definition}
    A red component $K$ is \textit{small} if $e(K) \leq 1$.
\end{definition}

Note that as $d \leq 2(k+1)$, it follows that $K$ is small if and only if $\frac{e(K)}{v(K)} < \density$.\\
Now we turn our focus to the notion of legal orders, which is an ordering of the red components of the exploration subgraph that loosely tells us in what order we should augment the decomposition.

\begin{definition} \label{def:legalOrder}
    Let $\mathcal{T} \in \mathcal F$ and let $\sigma = (R_{1},\ldots,R_{t})$ be a sequence of all red components in $H_{\mathcal T}$. We say $\sigma$ is a \textit{legal order} for $\mathcal T$ if $R_1 = R^*$, and further, for each $1 < j \leq t$, there is an $i_j < j$ such that there is a blue directed edge $(x_j, y_j)$ with $x_j \in V(R_{i_j})$ and $y_j \in V(R_j)$. 
\end{definition}

Observe trivially from the definition of exploration subgraph, a legal order always exists. It will be useful to compare legal orders, and we will again do so using the lexicographic ordering.

\begin{definition} 
    Let $\mathcal T, \mathcal T' \in \mathcal F$ and suppose that $\sigma = (R_1, \dots, R_t)$ and $\sigma' = (R'_1, \dots, R'_{t'})$ are legal orders for $\mathcal T$ and $\mathcal T'$, respectively. We say $\sigma$ is \textit{smaller than} $\sigma'$, denoted $\sigma < \sigma'$ if $(e(R_1), \dots, e(R_t))$ is lexicographically smaller than $(e(R'_1), \dots, e(R'_{t'}))$. If $t \neq t'$, we extend the shorter sequence with zeros  to make the orders comparable.
\end{definition}

To make it easier to discuss legal orders, we introduce some more vocabulary:

\begin{definition}
    Suppose $\sigma = (R_1, \dots, R_t)$ is a legal order for $\mathcal{T} \in \mathcal F$. We write $i_\sigma(v) := j$ for $v \in V(H_{\mathcal T})$ if $v \in V(R_j)$.
    For $U \subseteq V(H_{ \mathcal T})$ we define $i_\sigma(U) := \min \{i_\sigma(v) | v \in  U\}$ (in particular, $i_\sigma(\varnothing) = \infty$) and for any subgraph $H \subseteq H_{\mathcal T}$ we let $i_\sigma(H) := i_\sigma(V(H))$. 
    Moreover, for $\mathcal T, \mathcal T' \in \mathcal F$ let $\Delta(\mathcal T, \mathcal T')$ denote the set of vertices $v$ for which there is a blue arc $(v, u) \in E(\blueTree(\mathcal T))$ for some $b \inOneToK$, but this arc is not contained in $E(\blueTree(\mathcal T'))$.
\end{definition}

 For the purposes of tiebreaking how we pick legal orders, we introduce the next graph:

\begin{definition}
    Let $\mathcal{T} \in \mathcal F$ and let $\sigma = (R_{1},\ldots,R_{t})$ be a legal order for $\mathcal T$. 
    Compliant to Definition \ref{def:legalOrder} we choose a blue arc $(x_j, y_j)$ for all $1 < j \leq t$. There might be multiple possibilities for this, but we simply fix one choice for $\sigma$. 
    By removing all the blue edges from $H_{\mathcal T}$ that are not in $\{(x_j, y_j) \: | \: 1 < j \leq t\}$, we obtain
    the \textit{auxiliary tree of $\mathcal T$ and $\sigma$} and denote it by $Aux(\mathcal T, \sigma)$.
    We always consider $Aux(\mathcal T, \sigma)$ to be rooted at $r$. Furthermore, let $w_j(\sigma) :=  y_j$.\\
    Furthermore, for all $1 < j \leq t$, we say that $R_{i_j}$ is a parent of $R_j$ with respect to $\mathcal T$ and $\sigma$. On the other hand, we call $R_j$ a child of $R_{i_j}$ with respect to $\mathcal T$ and $\sigma$ (that is generated by $(x_j, y_j)$).
\end{definition}

Note that in an auxiliary tree blue arcs are directed away from the root $r$ while in the blue spanning trees of decompositions of $\mathcal F$ they are directed towards $r$.\\
With this, we are in position to define our counterexample. As already outlined, $G$ is a vertex-minimal counterexample to Theorem \ref{maintheorem}. Further, we pick a legal order $\sigma^* = (R^*_1, \ldots, R^*_{t^*})$ for a decomposition $\mathcal T^* \in \mathcal F^*$ such that there is no legal order $\sigma$ with $\sigma < \sigma^*$ for any $\mathcal T \in \mathcal F^*$. We will use these notations for this minimal legal order and decomposition throughout the rest of the paper.

We now outline how we will show that the counterexample graph is not $(k, d)$-sparse.
Let $\mathcal C$ be the set of small red components of $\explSG$ and $\mathcal K$ be the set of red components of $\explSG$ that are not $R^*$ and not small.
In the course of the following sections we will characterize the structure of $\explSG$ and $\sigma^*$ and aim to show that the large density of the components of $\mathcal K$ compensates for the small density of the components of $\mathcal C$. The paper will end with the proof of the following lemma:

\begin{lemma} \label{lemma:densityOfKC}
    There is a function $f: \mathcal C \longrightarrow \mathcal K$ such that for all $K \in \mathcal{K}$, we have:
    \[\frac{e(K) + \sum_{C \in f^{-1}(K)} e(C)}{v(K) + \sum_{C \in f^{-1}(K)} v(C)} \geq \frac{d}{d+k+1}.\]
\end{lemma}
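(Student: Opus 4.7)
The plan is to construct $f$ using the auxiliary tree $Aux(\mathcal{T}^*, \sigma^*)$: the primary assignment sends each $C \in \mathcal{C}$ to its parent in this tree, with a redirection step for large components that are overloaded. The structural results accumulated in Sections~\ref{augmentingspecialpaths}--\ref{sec:defineF} should provide exactly what is needed. First, the special paths augmentation guarantees no small component has a small parent, so every $C \in \mathcal{C}$ has a parent that is either $R^*$ or lies in $\mathcal{K}$. The exchange lemma of Section~\ref{exchangeedgesec} rules out $R^*$ having small neighbours, so the parent is always some $K_C \in \mathcal{K}$. This gives a candidate function $C \mapsto K_C$.

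Next, I would measure the algebraic cost of the candidate assignment. Since every $K \in \mathcal{K}$ is a tree with $v(K) = e(K)+1$ and $e(K) \geq 2$, the target inequality for $K$ is equivalent to
\[
(k+1)e(K) - d \;\geq\; \sum_{C \in f^{-1}(K)} \bigl(d\, v(C) - (d{+}k{+}1)\,e(C)\bigr).
\]
The right-hand summand equals $d$ when $C$ is an isolated vertex and $d - k - 1$ when $C$ is a single edge. The left-hand "excess" is $(k+1)e(K) - d \geq 0$ (with equality possible exactly when $e(K) = 2$ and $d = 2(k+1)$), and it grows by $k+1$ for each additional edge of $K$. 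Thus a $K$ with $e(K) \geq 3$ has excess at least $k+1$ and can absorb one single-edge small child; a $K$ with $e(K) \geq 4$ can absorb an isolated vertex, and so on. This local bookkeeping explains why the candidate assignment succeeds for any $K$ that is not too tight.

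Third, I would use the interesting-neighbour structure to handle the overloaded components. Sections~\ref{sec:caseTwo}--\ref{sec:threeChildren} show each $K \in \mathcal{K}$ has at most two relevant neighbours per blue spanning tree, and when two small children share a blue tree then one of them carries an edge; Section~\ref{sec:badIsInteresting} identifies, for each such problematic $K$, a distinguished interesting neighbour $K^{\mathrm{int}} \in \mathcal{K}$ whose structure forces extra excess. I would define $f$ by sending small children of $K$ to $K^{\mathrm{int}}$ precisely when the deficit created by those children exceeds the excess of $K$. The uniqueness built into the interesting-neighbour construction (the ``at most two relevant components'' property) ensures that each $K^{\mathrm{int}}$ receives redirections from at most one source, so its own excess is not double-counted. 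A case analysis on whether $f^{-1}(K)$ consists only of $K$'s own children, only of redirected children from some $K'$ with $K = (K')^{\mathrm{int}}$, or a combination, reduces the density inequality to the excess-versus-deficit arithmetic above.

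The hardest step will be the third one. The tight case is $d = 2(k+1)$ with $e(K) = 2$: here $K$ has zero excess, so \emph{every} small child must be redirected, yet $K$ itself may generate up to $2k$ small children; we must then show that the interesting neighbour $K^{\mathrm{int}}$ of $K$ has enough additional excess, beyond what it needs for its own children, to absorb these. This is exactly where the structure-lemmas from Sections~\ref{sec:caseTwo}--\ref{sec:defineF} pay off: they constrain not only the number of small children per spanning tree but also the precise edge content of those children when the parent has minimum excess, which is what powers the inequality. Packaging this into a clean verification — essentially checking a small finite set of configurations for each pair $(K, K^{\mathrm{int}})$ — will complete the proof.
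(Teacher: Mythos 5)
Your overall architecture matches the paper's: assign each small component to its parent, use the structure lemmas of Sections~\ref{sec:caseTwo}--\ref{sec:threeChildren} to bound how many relevant neighbours a component can have, and redirect a single-edge child to an interesting neighbour when its parent is overloaded. The excess-versus-deficit rearrangement you write down is a valid (and clean) way to organize the arithmetic, and your observation that a tree $K$ with $e(K) \geq 3$ has excess $\geq k+1$ is correct. That much is sound.

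However, there is a genuine gap in the redirection step. You propose a \emph{one-step} redirect: a small child of an overloaded $K$ goes to $K^{\mathrm{int}}$, the component of which $K$ is an interesting neighbour. But $K^{\mathrm{int}}$ can itself be $b$-bad (it may have both a zero-edge child and a single-edge child from the same blue tree). In that case $K^{\mathrm{int}}$ already has a deficit of $d + (d-k-1)$ from tree $b$ alone, and piling a further single-edge redirect on top of that breaks the bound --- concretely, at $d = 2(k+1)$ and $e(K^{\mathrm{int}}) = d$ the total excess $kd = 2k^2+2k$ is exceeded as soon as all $k$ trees each contribute a zero-edge child, a kept single-edge child, and an incoming redirect, giving $k(3k+3)$ of deficit. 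The paper avoids this by \emph{chaining}: Section~\ref{sec:badIsInteresting} (Lemma~\ref{lemma:predNotinK}) shows the zero-edge child of a $b$-bad $K$ forces $K$ itself to be an interesting neighbour of another component, and Section~\ref{sec:defineF} packages this into sink sequences $(K_1, x_1), \ldots, (K_n, x_n)$; the single-edge child of $K_1$ is sent all the way to $K_n$, the \emph{first component along the chain that is not $b$-bad}, skipping over every intermediate $b$-bad $K_i$ (which instead redirects its own single-edge child further along \emph{its} chain). Lemma~\ref{lemma:uniquenessOfSinkSequences} then controls how many sink sequences can terminate at a given component, which gives the $q_0 + q_1 \leq k$ budget in Lemma~\ref{lemma:assignment}. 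Your claim that ``each $K^{\mathrm{int}}$ receives redirections from at most one source'' is also too optimistic: a terminal component can receive one redirect per interesting-neighbour relationship per blue tree, so the count must be done per-tree and matched against the per-tree budget.

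A smaller issue: the tight configuration you flag ($e(K) = 2$ with $d = 2(k+1)$, forcing zero excess) does not actually arise, because Lemma~\ref{lemma:sizeOfKIfHasRelevantChild} forces $e(K) \geq d - 1 \geq 2k+1 \geq 3$ for any component that has a relevant neighbour, and Lemma~\ref{lemma:assignment} correspondingly sets $f^{-1}(K) = \varnothing$ whenever $e(K) < d-1$. The genuinely tight configuration is $e(K) = d$ receiving $k$ zero-edge children (one per blue tree), where the bound holds with equality. Once the chaining mechanism and the per-tree counting are in place, the rest of your outline goes through.
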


Note that Lemma \ref{lemma:densityOfKC} implies that the fractional arboricity of $G$ is larger than $ k +  d/(d+k+1)$. However, it even implies that $G$ is not $(k, d)$-sparse:

\begin{lemma} \label{lemma:betaLeq0}
    Assuming Lemma \ref{lemma:densityOfKC}, the graph $H := H_{\mathcal{T}^{*}}$ satisfies
    \[\beta(H) = (k+1)(k+d)v(H) - (k+d+1)e(H) - k^{2} < 0,\]
    and thus, $G$ is not $(k,d)$-sparse. Hence, Theorem \ref{maintheorem} is true for $k + 1 < d \leq 2(k+1)$.
\end{lemma}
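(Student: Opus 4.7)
The plan is a direct computation: express $e(H)$ in closed form via the forest decomposition restricted to $V(H)$, plug into the definition of $\beta(H)$, and finish using the density bound from Lemma \ref{lemma:densityOfKC} combined with $v(R^*) \geq d+2$. I do not foresee a significant obstacle here — the argument is just a chain of substitutions once the edge-count formula is in place.

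First I would establish that for each $b \in \{1, \dots, k\}$, the restriction $T_b[V(H)]$ is a spanning tree of $H$. The key is a closure property of Definition~\ref{def:explSubgraph}: if $u \in V(H)$ is witnessed by a sequence $r = x_1, \dots, x_l = u$ and $(u, u')$ is the unique blue arc in $T_b$ leaving $u$ (toward $r$), then appending $u'$ extends the witnessing sequence, so $u' \in V(H)$. Iterating along the $T_b$-path from $u$ to $r$ places that entire path in $V(H)$, so $T_b[V(H)]$ is connected and contributes $v(H)-1$ blue edges. The analogous closure under red edges places every red component that meets $V(H)$ fully inside $V(H)$; each such component is a tree, so the red components partition $V(H)$ into $1 + |\mathcal{K}| + |\mathcal{C}|$ trees. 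Together this yields
\[
e(H) = k(v(H) - 1) + v(H) - (1 + |\mathcal{K}| + |\mathcal{C}|) = (k+1)v(H) - (k + 1 + |\mathcal{K}| + |\mathcal{C}|).
\]
Substituting into $\beta(H)$ and using the identity $(k+1)(k+d) - (k+d+1)(k+1) = -(k+1)$ gives
\[
\beta(H) = -(k+1)v(H) + (k+d+1)(k + 1 + |\mathcal{K}| + |\mathcal{C}|) - k^2.
\]

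Next I would lower-bound $(k+1)v(H)$ via Lemma \ref{lemma:densityOfKC}. Since every component of $F$ is a tree, $e = v - 1$, so the density inequality for each $K \in \mathcal{K}$ rearranges to
\[
(k+1)\Bigl(v(K) + \sum_{C \in f^{-1}(K)} v(C)\Bigr) \geq (k+d+1)(1 + |f^{-1}(K)|).
\]
Summing over $K \in \mathcal{K}$ and observing that $\{V(R^*)\} \cup \{V(K)\}_{K \in \mathcal{K}} \cup \{V(C)\}_{C \in \mathcal{C}}$ partitions $V(H)$, we obtain
\[
(k+1)(v(H) - v(R^*)) \geq (k+d+1)(|\mathcal{K}| + |\mathcal{C}|).
\]

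Finally I would plug this lower bound into the expression for $\beta(H)$. The terms containing $|\mathcal{K}| + |\mathcal{C}|$ cancel and the desired inequality $\beta(H) < 0$ reduces to $(k+1)v(R^*) > (k+d+1)(k+1) - k^2$, equivalently $v(R^*) > d + 2 - \tfrac{1}{k+1}$. Since $R^*$ is a connected component of the forest $F$ with $e(R^*) \geq d+1$, we have $v(R^*) = e(R^*) + 1 \geq d+2 > d+2 - \tfrac{1}{k+1}$, so the strict inequality holds. This gives $\beta(H) < 0$, so the induced subgraph $H$ witnesses that $G$ is not $(k,d)$-sparse, which contradicts the hypothesis of Theorem \ref{maintheorem} and completes its proof for $k+1 < d \leq 2(k+1)$.
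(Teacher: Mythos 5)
Your proof is correct and rests on the same three ingredients as the paper's: the count $e(H) = k(v(H)-1) + e_r(H)$ coming from the fact that $V(H)$ is closed under following blue arcs toward $r$ and under red edges, the density bound from Lemma~\ref{lemma:densityOfKC}, and the observation that $v(R^*) \geq d+2$. Where you deviate is in the algebraic packaging: the paper stays in fractional form, manipulates the inequality $\beta(H) \geq 0$ into a ratio bound on $e_r(H)/v(H)$, and then uses a somewhat delicate sequence of mediant-style substitutions (decreasing numerator and denominator by the same amount $e(R^*)-(d+1)$, then peeling off the $\mathcal{K}$-sums) to reach $\frac{d}{d+k+1} > \frac{d}{d+k+1}$. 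You instead exploit that every red component is a tree, so $e(C) = v(C) - 1$, which linearizes the density condition into $(k+1)\bigl(v(K) + \sum_{C \in f^{-1}(K)} v(C)\bigr) \geq (k+d+1)(1 + |f^{-1}(K)|)$; after summing and substituting, everything cancels except the clean scalar condition $v(R^*) > d + 2 - \tfrac{1}{k+1}$. This removes the fraction-comparison subtleties entirely and makes it transparent exactly what is being used about $R^*$. Both proofs are sound; yours is arguably the more elementary write-up of the same argument.
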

\begin{proof}
    Suppose to the contrary that $\beta(H) \geq 0$ and let $e_r(H)$ denote the number of red edges of $H$.
    Note that every vertex of $V(H) - r$ has exactly $k$ outgoing blue arcs, and the heads of these arcs are again in $H$ by its definition. $r$ on the other hand does not have any outgoing blue arc. Thus, $e(H) = k(v(H)-1) + e_{r}(H)$ and hence, $\beta(H) = d v(H) + (d+k+1) (k - e_r(H)) - k^2 \geq 0$, which we can rearrange to
    \[
        \frac{d}{d+k+1} \geq \frac{e_r(H) - k + \frac{k^2}{d+k+1}} {v(H)}.
    \]
    Using the function $f$ of Lemma \ref{lemma:densityOfKC} we can rearrange this further to
    \begin{align*}
        \frac{d}{d+k+1}
        &\geq \frac{
            e(R^*) - k(1 - \frac{k}{d+k+1}) + \sum_{K \in \mathcal K \cup \mathcal C} e(K)
        }{
            v(R^*) + \sum_{K \in \mathcal K \cup \mathcal C} v(K)
        }\\
        &\geq \frac{
            d + 1 - k \frac{d + 1}{d+k+1} + \sum_{K \in \mathcal K} \big(e(K) + \sum_{C \in f^{-1}(K)} e(C)\big)
        }{
            d + 2 + \sum_{K \in \mathcal K} \big(v(K) + \sum_{C \in f^{-1}(K)} v(C)\big)
        }\\
    \intertext{By Lemma \ref{lemma:densityOfKC} we have that}
        \frac{d}{d+k+1}
        &\geq \frac{d + 1 - k \frac{d + 1}{d+k+1}} {d + 2}\\
        &= \frac{(d + 1) \frac{d + 1}{d+k+1}} {d+2}\\
        &> \frac{(d+1) \frac{d}{d+1}} {d+k+1}\\
        &= \frac{d}{d+k+1},
    \end{align*}
    which is a contradiction. Thus, $\beta(H) < 0$.
\end{proof}

    Note that Lemma \ref{lemma:densityOfKC} was proven for $d \leq k + 1$ in \cite{sndtcDLeqKPlusOne} where $f^{-1}(K)$ only contains small children of $K$. Thus, the Overfull Strong Nine Dragon Conjecture is true when $d \leq k+1$ by Lemma \ref{lemma:betaLeq0}:

\begin{corollary}
    Theorem \ref{maintheorem} is true for $d \leq k + 1$.
\end{corollary}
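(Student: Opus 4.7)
The plan is to derive the corollary by combining Lemma \ref{lemma:betaLeq0} with prior work. First, I would note that although Lemma \ref{lemma:betaLeq0} is stated for $k + 1 < d \leq 2(k+1)$, its proof in fact depends only on (i) the identity $e(H) = k(v(H) - 1) + e_r(H)$ coming from the definition of the exploration subgraph, (ii) the conclusion of Lemma \ref{lemma:densityOfKC}, and (iii) the arithmetic inequality $(d+1)^2 > d(d+2)$. None of these ingredients require the hypothesis $d > k + 1$, so exactly the same argument shows: whenever Lemma \ref{lemma:densityOfKC} holds for a given pair $(k,d)$, the minimum counterexample $G$ fails to be $(k,d)$-sparse.

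Second, I would invoke the main technical contribution of \cite{sndtcDLeqKPlusOne}, which (as the paragraph immediately preceding the corollary points out) established Lemma \ref{lemma:densityOfKC} throughout the range $d \leq k + 1$. In that regime the function $f : \mathcal C \to \mathcal K$ is defined by sending each small component $C$ to its parent in the auxiliary tree $Aux(\mathcal T^*, \sigma^*)$. The parent must lie in $\mathcal K$ rather than in $\mathcal C$ because, by the special path augmentation, no small component can have a small parent; and the density calculation is comparatively easy in this range because small components are edgeless when $d \leq k + 1$.

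Combining these two observations: a vertex-minimal counterexample $G$ to Theorem \ref{maintheorem} with $d \leq k+1$ would admit a decomposition $\mathcal T^* \in \mathcal F^*$ and legal order $\sigma^*$ whose exploration subgraph $\explSG$ satisfies Lemma \ref{lemma:densityOfKC} (by \cite{sndtcDLeqKPlusOne}), hence by the $d > k+1$-independent content of Lemma \ref{lemma:betaLeq0} we obtain $\beta(\explSG) < 0$, contradicting the assumption that $G$ is $(k,d)$-sparse. The corollary follows.

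I do not anticipate any real obstacle. The only point to double-check is that the setup of Section \ref{defcounterexample} (counterexample $G$, decompositions $\mathcal F$, residue $\rho$, legal orders, exploration subgraph) agrees with the setup used in \cite{sndtcDLeqKPlusOne}, so that the function $f$ produced there indeed maps into the object $\mathcal K$ as defined here. Given that both papers share framework and notation, this is a matter of bookkeeping rather than a new argument.
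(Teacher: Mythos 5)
Your proposal follows exactly the paper's route: the paper's remark immediately preceding the corollary states that Lemma \ref{lemma:densityOfKC} was established for $d \leq k+1$ in \cite{sndtcDLeqKPlusOne} (with $f^{-1}(K)$ consisting only of small children of $K$), and then cites Lemma \ref{lemma:betaLeq0} to conclude; your added observation that the proof of Lemma \ref{lemma:betaLeq0} never uses $d > k+1$ is correct and makes explicit what the paper leaves implicit. Correct and essentially the same approach.
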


In our approach $f^{-1}(K)$ also mainly consists of small children of $K$. However, if $K$  has too many small children, we will find another component of $\mathcal K$  that does not have many small children and that will compensate for the low edge density around $K$. We will define $f$ formally in Section \ref{sec:defineF}.

\section{Augmenting Special Paths}
\label{augmentingspecialpaths}
In this section we consider the first method to find a smaller legal order or shrink a component with more than $d$ edges. This method from \cite{ndtt} roughly works the following way:
if a blue edge $e$ connecting two red components can be coloured red without increasing the residue function, then in certain cases we can find a red edge $e'$ that can be coloured blue in exchange. In order to find this edge we need to look for a certain blue directed path that ends at $e$ and starts at $e'$ and $e'$ has to be closer to $R^*$ with respect to the legal order than $e$ (here we are viewing $e$ as a subgraph with two endpoints to make sense of the term close). First, we formalize the requirements for such a blue path:

\begin{definition}
    Let $\sigma = (R_1, \dots, R_t)$ be a legal order for $\mathcal{T} \in \mathcal F$. We call a blue directed path $P=[v_0, v_1, \dots, v_{l}]$  in $\mathcal T$ that is contained in $H_{\mathcal T}$ \textit{special with respect to $\mathcal T$, $\sigma$ and $(v_{l-1}, v_l)$} if $v_{l-1}$ and $v_l$ are in different components of $\redForest(\mathcal T)$, $i_\sigma(v_l) > i_\sigma(v_0)$ and furthermore, $v_0$ needs to be an ancestor of $v_{l-1}$ in $Aux(\mathcal T, \sigma)$ if both of them are in the same component of $\redForest(\mathcal T)$. \\
    For two special paths $P=[v_0, v_1, \dots, v_{l}]$ and $P'=[v'_0, v'_1, \dots, v'_{l'}]$ with respect to $\mathcal T$, $\sigma$ and $(v_{l-1}, v_l)$ we write $P \leq P'$ if $i_\sigma(v_0) < i_\sigma(v'_0)$, or if $i_\sigma(v_0) = i_\sigma(v'_0)$ and $v_0$ in $Aux(\mathcal T, \sigma)$ is an ancestor  (with respect to the root $r$) of $v'_0$. We call a special path $P$ with respect to $\mathcal T$, $\sigma$ and $(x, y)$ \textit{minimal} if there is no special path $P' \neq P$ with respect to $\mathcal T$, $\sigma$ and $(x, y)$ with $P' \leq P$.
\end{definition}
Note that for every special path $P'$ with respect to $\mathcal T$, $\sigma$ and $(x, y)$ there exists a minimal special path $P$ with respect to $\mathcal T$, $\sigma$ and $(x, y)$ such that $P \leq P'$. Furthermore, note that if we have a minimal special path $P=[v_0, v_1, \dots, v_l]$ with respect to $\mathcal T$, $\sigma$ and $(v_{l-1}, v_l)$, we have $v_0 \neq r$ because $r$ has no outgoing blue edge by construction. Therefore, $v_0$ has a parent vertex in $Aux(\mathcal T, \sigma)$, which we denote by $v_{-1}$. Note that the edge $v_{-1} v_0$ is red because of the minimality of $P$, and since all blue edges in $Aux(\mathcal T, \sigma)$ are directed away from $r$ in the auxiliary tree.\\
The following lemma describes which modifications to the decomposition can be made if a minimal special path exists and how they change the legal order.

\begin{lemma}[cf.\ Lemma 2.4 and Corollary 2.5 in \cite{ndtt}]	\label{lemma:specialPaths}
    Let $\sigma = (R_1, \dots, R_t)$ be a legal order for $\mathcal T = (T_1, \ldots, T_k, F) \in \mathcal F$.\\
    Furthermore, let $P = [v_0, v_1, \dots, v_l]$ be a minimal special path with respect to $\mathcal T$, $\sigma$ and $(v_{l-1},v_{l})$, let $i_0 := i_\sigma(v_0)$.\\
    Then there is a partition into forests $\mathcal{T}' = (T'_1, \dots, T'_k, F')$ of $G$ such that $T'_1, \dots, T'_k$ are spanning trees rooted at $r$ whose edges are directed to the respective parent vertex, the forest $F'$ exclusively consists of undirected edges and if $R'$ is the component containing $r$ in $F'$, we have that:
    \begin{enumerate}
        \item $F' = \big( F + v_{l-1} v_l \big) - v_{-1}v_0$.
        \item $(v_0, v_{-1}) \in \bigcup_{b=1}^k E(T'_b)$.
        \item $\big\{(x, y) \in E(T'_b) \: | \: i_\sigma(x) < i_0\big\} 
        = \big\{(x, y) \in E(T_b) \: | \: i_\sigma(x) < i_0\big\}$ for all $b \inOneToK$.
        \item $\begin{aligned}[t]
            \bigcup_{b=1}^k \big\{uv \: | \: (u, v) \in E(T'_b)\big\} 
            = \bigg(\Big(\bigcup_{b=1}^k \big\{uv \: | \: (u, v) \in E(T_b) \big\}\Big) - v_{l-1}v_l \bigg) + v_0v_{-1}.
        \end{aligned}$
        \item If $i_0 > 1$, then $R^* = R'$, $\mathcal{T}' \in \mathcal F$, $i_{\sigma^*}(\Delta(\mathcal T, \mathcal T')) = i_0$ and there exists a legal order $\sigma' = (R'_1, \dots, R'_{t'})$ for $\mathcal{T}'$ with $R'_j = R_j$ for all $j < i_0$ and $e(R'_{i_0}) < e(R_{i_0})$, where $R'_{i_0}$ is the component of $v_{-1}$ in $F'$. Thus, $\sigma' < \sigma$.
    \end{enumerate}
\end{lemma}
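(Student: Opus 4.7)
\begin{proofSketch}
We follow the edge-exchange approach developed by Jiang and Yang in \cite{ndtt}, and only sketch the adaptation to the present setting.

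The plan is to construct $\mathcal T'$ from $\mathcal T$ by a cascade of single-arc swaps along $P$. Walking backward from $v_l$ to $v_0$, for each $i = l, l-1, \ldots, 1$, let $b_i \inOneToK$ denote the index of the tree containing $(v_{i-1}, v_i)$; we delete this arc from $T_{b_i}$, breaking $T_{b_i}$ into two subtrees, and reconnect them by rerouting the next arc of $P$ into $T_{b_i}$ (for $i > 1$), or by inserting $(v_0, v_{-1})$ (for $i = 1$), possibly reassigning arcs between trees along the way. By construction, the only change to the undirected blue edge set is the removal of $v_{l-1} v_l$ and the insertion of $v_{-1} v_0$; every arc that is newly reoriented or reassigned has its tail at some vertex of $P$ and therefore in a red component of index at least $i_0$ under $\sigma$; and every final orientation points toward $r$. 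This gives properties (1)--(4) of the lemma directly.

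The crucial point is that each $T'_b$ remains a spanning tree after every swap in the cascade, and this is precisely where the minimality of $P$ under $\leq$ is needed: if a required replacement arc failed to bridge the cut created by deleting some $(v_{i-1}, v_i)$, one could combine a prefix of $P$ with that offending arc to produce a special path strictly smaller than $P$ under $\leq$, contradicting minimality. The ancestor condition on $v_0$ in $Aux(\mathcal T, \sigma)$, invoked when $v_0$ and $v_{l-1}$ lie in a common red component, rules out the degenerate case in which adding $v_{l-1} v_l$ and removing $v_{-1} v_0$ would reconnect $R_{i_0}$ in $F'$ through a chord, which would invalidate the decrease in $e(R'_{i_0})$ asserted below. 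I expect this spanning-tree verification to be the main obstacle, since it requires a careful bookkeeping of how cuts move as the swaps propagate; this is exactly the place where the argument of \cite{ndtt} is used essentially unchanged.

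For property (5), observe that minimality forces $v_{-1} v_0$ to be red and hence to lie in $R_{i_0}$, so that $F' = F + v_{l-1} v_l - v_{-1} v_0$ splits $R_{i_0}$ into two pieces while not merging it with $R^*$ (because $v_l$ and $v_{l-1}$ already lay in distinct components of $F$ and $i_\sigma(v_l) > i_0 \geq 1$). Hence $R^* = R'$ and $\mathcal T' \in \mathcal F$. A legal order $\sigma'$ for $\mathcal T'$ is obtained by keeping the prefix $R_1, \ldots, R_{i_0-1}$ of $\sigma$, placing at position $i_0$ the component of $v_{-1}$ in $F'$ (whose required blue incoming arc from an earlier component is already present in $Aux(\mathcal T, \sigma)$ and unaffected by the cascade), and extending compatibly to the rest of $H_{\mathcal T'}$. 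Since $v_{-1} v_0$ is removed from $R_{i_0}$, the component $R'_{i_0}$ containing $v_{-1}$ is a strict subforest of $R_{i_0}$, so $e(R'_{i_0}) < e(R_{i_0})$ and therefore $\sigma' < \sigma$.
\end{proofSketch}
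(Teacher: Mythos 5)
The paper does not give a proof of this lemma; it states it with the label ``cf.\ Lemma 2.4 and Corollary 2.5 in \cite{ndtt}'' and moves on, so your sketch is essentially a reconstruction of Jiang and Yang's argument, which you also defer to \cite{ndtt} for the core verification. At that level the plan is in the right spirit, but there are two concrete problems in how you state it and one genuine gap.

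First, the ``walking backward from $v_l$'' cascade does not work as described. At step $i=l$ you delete $(v_{l-1},v_l)$ from $T_{b_l}$ and propose to reconnect the two pieces with $(v_{l-2},v_{l-1})$; but if $b_{l-1}=b_l$ then $v_{l-2}$ is a child of $v_{l-1}$ in $T_{b_l}$, so $v_{l-2}v_{l-1}$ lies entirely on the $v_{l-1}$ side of the cut and reconnects nothing. Jiang and Yang's construction proceeds \emph{forward} from $v_0$: one colours $v_{-1}v_0$ blue, places it in the tree containing $(v_0,v_1)$, deletes $(v_0,v_1)$ from the resulting cycle, and pushes that arc forward along $P$. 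That is not a cosmetic difference; the direction of propagation is what allows the minimality of $P$ (which only constrains the \emph{start} of the path) to do any work.

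Second, the invocation of minimality --- ``if a required replacement arc failed to bridge the cut \ldots\ one could combine a prefix of $P$ with that offending arc to produce a special path strictly smaller than $P$ under $\leq$'' --- cannot be carried out at a generic step. The order $\leq$ compares special paths only by the location of their initial vertex $v_0$ (index, then ancestor relation in $Aux(\mathcal T,\sigma)$). A special path obtained by splicing something into the \emph{middle} of $P$ still starts at $v_0$ and is therefore \emph{not} strictly smaller than $P$. Minimality straightforwardly rules out only the bad case at the \emph{first} step (if $v_{-1}$ were a descendant of $v_0$ in the relevant blue tree, one could prepend the blue path from $v_{-1}$ to $v_0$ and start the special path at $v_{-1}$, which is earlier); the subsequent steps need the more delicate bookkeeping that you correctly flag as ``the main obstacle'' and hand to \cite{ndtt}. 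So you should not present this as a uniform minimality argument.

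Finally, in the verification of $R^*=R'$ under $i_0>1$, your parenthetical ``because $v_l$ and $v_{l-1}$ already lay in distinct components of $F$ and $i_\sigma(v_l)>i_0\geq 1$'' does not rule out $v_{l-1}\in R^*$. If $v_{l-1}\in R^*$, then $F'=F+v_{l-1}v_l-v_{-1}v_0$ would merge $R^*$ with $R_{i_\sigma(v_l)}$ and $R'\neq R^*$. This case is in fact excluded by minimality of $P$: if $i_\sigma(v_{l-1})<i_0$, the one-arc path $[v_{l-1},v_l]$ is a special path with respect to $\mathcal T$, $\sigma$ and $(v_{l-1},v_l)$ starting strictly earlier than $P$, a contradiction. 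The same observation gives $i_\sigma(v_j)\geq i_0$ for \emph{every} $j<l$ whenever $v_j$ and $v_{l-1}$ lie in distinct red components, which you also need (but do not state) to justify that reassigned or reoriented arcs only have tails at indices $\geq i_0$, i.e.\ item (3).
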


Since we want to find smaller orders than $\sigma^*$ in our proofs to arrive at a contradiction, it is desirable that the fifth point holds in an application of Lemma~\ref{lemma:specialPaths}. In the event this is not the case, we can still gain more structure. We want to show this more formally with the next lemma:

\begin{lemma}		\label{lemma:v0NotInR}
    If Lemma \ref{lemma:specialPaths} is applicable such that $\rho(F + v_{l-1} v_l) = \rho^*$ holds, then $e(R_{i_0}) \leq d$ and therefore $i_0 > 1$ such that 5.\ can be applied. Moreover, in this case we have $\mathcal{T}' \in \mathcal F^*$ for the partition obtained.
\end{lemma}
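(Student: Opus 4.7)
The plan is to pin down a lex bound on $\rho(F+v_{l-1}v_l)$, show $e(R_{i_0})\le d$ by cases on where $v_0$ sits, and then read off the remaining conclusions from Lemma~\ref{lemma:specialPaths}.

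The first observation is that adding a single edge to a forest can never lex-decrease $\rho$. If $A$ and $B$ denote the components of $v_l$ and $v_{l-1}$ in $F$, then merging them into a component of size $e(A)+e(B)+1$ changes $\rho$ by decrementing at the positions $e(A)$ and $e(B)$ (when those are in the tuple) and incrementing at the strictly larger position $e(A)+e(B)+1$. Since the tuple is read from the largest index downwards, the increment lies strictly to the left of the decrements and the net change is lex-positive whenever the merged component is big. Combined with the minimality of $\rho^*$, the hypothesis $\rho(F+v_{l-1}v_l)=\rho^*$ therefore forces both $\rho(F)=\rho^*$ and $e(A)+e(B)+1\le d$.

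Now I case on the position of $v_0$. The special-path condition $i_\sigma(v_l)>i_\sigma(v_0)$ excludes $v_0\in A$, so either $v_0\in B$, in which case $R_{i_0}=B$ and $e(R_{i_0})\le d-1$ by the merge bound, or $v_0\notin A\cup B$, in which case $R_{i_0}$ is unaffected by adding $v_{l-1}v_l$ and the hypothesis gives no direct control over $e(R_{i_0})$. The latter is the main obstacle; I dispose of it by leveraging the edge deletion inside $F'$ against the minimality of $\rho^*$. Assuming $e(R_{i_0})>d$ for contradiction, the edge $v_{-1}v_0$ is red by the minimality of $P$ and is therefore a bridge in the tree $R_{i_0}$; deleting it from $F+v_{l-1}v_l$ splits $R_{i_0}$ into two trees with combined $e(R_{i_0})-1$ edges. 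The change to $\rho$ consists of a decrement at position $e(R_{i_0})$ plus at most two increments at strictly smaller positions, which is lex-strictly smaller. Since Lemma~\ref{lemma:specialPaths} guarantees that $\mathcal T'$ is a valid decomposition into $k$ spanning trees plus a forest, this yields $\rho(F')<\rho^*$, contradicting the definition of $\rho^*$.

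With $e(R_{i_0})\le d$ in hand, the fact $e(R^*)\ge d+1$ forces $i_0>1$, so Lemma~\ref{lemma:specialPaths}(5) applies and places $\mathcal T'$ in $\mathcal F$ with $R'=R^*$. Finally, $\mathcal T'\in\mathcal F^*$ reduces to $\rho(F')=\rho^*$: in both cases above, the component of $F+v_{l-1}v_l$ containing $v_{-1}v_0$ has at most $d$ edges (either the merged $A\cup B+v_{l-1}v_l$, or the intact $R_{i_0}$), so removing the bridge $v_{-1}v_0$ leaves only two trees of sizes at most $d-1$, neither counted by $\rho$. Hence $\rho(F')=\rho(F+v_{l-1}v_l)=\rho^*$.
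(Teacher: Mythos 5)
The proposal is correct and takes essentially the same approach as the paper: if $e(R_{i_0})>d$, then deleting the red bridge $v_{-1}v_0$ from $F+v_{l-1}v_l$ (whose component containing that edge has at least $e(R_{i_0})$ edges) would strictly lower $\rho$, contradicting the minimality of $\rho^*$. Your case split on whether $v_0\in B$ simply unfolds the paper's one-line unified argument into more detail, and your explicit verification that $\rho(F')=\rho^*$ for the ``moreover'' claim is a step the paper leaves implicit.
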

\begin{proof}
    We use the notation of Lemma~\ref{lemma:specialPaths}. We have that $R_{i_0}$ is split into two strictly smaller components in $F'$. If $\rho(F + xy) = \rho^*$ and $e(R_{i_0}) > d$, then we had $\rho(F') < \rho^*$, a contradiction.
\end{proof}

\begin{corollary} \label{cor:specialPathInStandardCase}
    Let $\sigma = (R_1, \ldots, R_t)$ be a legal order for $\mathcal T \in \mathcal F$.
    If there is a special path $P = [v_0, \ldots, v_l]$ with respect to $\mathcal T$, $\sigma$ and $(v_{l-1}, v_l)$ such that $e(R_i) \leq e(R^*_i)$ for all $i \in \{1, \ldots, i_\sigma(v_0)\}$, then $\rho(\redForest(\mathcal T) + v_{l-1} v_l) > \rho^*$.
\end{corollary}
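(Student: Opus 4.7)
The plan is to argue by contradiction, assuming $\rho(\redForest(\mathcal T) + v_{l-1}v_l) \leq \rho^*$. The goal is to produce a legal order for some decomposition in $\mathcal F^*$ that is lexicographically strictly smaller than $\sigma^*$, which contradicts the minimality of $\sigma^*$.

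First I would replace $P$ by a minimal special path with respect to $\mathcal T$, $\sigma$ and $(v_{l-1},v_l)$ if it is not already minimal; this only decreases $i_\sigma(v_0)$, so the hypothesis $e(R_i) \leq e(R^*_i)$ on $\{1,\ldots,i_\sigma(v_0)\}$ is preserved. Then I would invoke Lemma \ref{lemma:specialPaths} to obtain a decomposition $\mathcal T' = (T'_1, \ldots, T'_k, F')$ with $F' = (\redForest(\mathcal T) + v_{l-1}v_l) - v_{-1}v_0$. The key observation is that deleting the single edge $v_{-1}v_0$ from the forest $\redForest(\mathcal T) + v_{l-1}v_l$ splits exactly one component $C$ into two smaller ones, and a short case analysis shows that this operation can only leave $\rho$ unchanged (when $e(C) \leq d$) or decrease it strictly in lex order (when $e(C) > d$, since the decrement of $\rho_{e(C)}$ sits at a higher index than any increments at positions $e(C_a), e(C_b) < e(C)$). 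Combining this with $\rho(F') \geq \rho^*$, which holds because $\rho^*$ is the global minimum over all decompositions into $k$ spanning trees plus a forest, yields equality throughout: $\rho(F') = \rho(\redForest(\mathcal T) + v_{l-1}v_l) = \rho^*$ and $e(C) \leq d$, hence $e(R_{i_0}) \leq d$ and so $i_0 > 1$ since $e(R_1) = e(R^*) > d$.

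This is exactly the setup needed to invoke Lemma \ref{lemma:v0NotInR}, which gives $\mathcal T' \in \mathcal F^*$, and then point 5 of Lemma \ref{lemma:specialPaths} supplies a legal order $\sigma' = (R'_1, \ldots, R'_{t'})$ for $\mathcal T'$ with $R'_j = R_j$ for $j < i_0$ and $e(R'_{i_0}) < e(R_{i_0})$. To finish, I would compare $\sigma'$ and $\sigma^*$ coordinatewise using $e(R_i) \leq e(R^*_i)$ on $\{1,\ldots,i_0\}$. If the sequences $(e(R_i))$ and $(e(R^*_i))$ already differ at some index below $i_0$, then $\sigma'$ inherits that strictly smaller entry from $\sigma$; otherwise they agree on all $i < i_0$, and the strict inequality $e(R'_{i_0}) < e(R_{i_0}) \leq e(R^*_{i_0})$ places $\sigma' < \sigma^*$. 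Either way, $\sigma' < \sigma^*$ contradicts the minimality of $\sigma^*$. The only subtle point here is the bookkeeping that governs how deleting a forest edge affects the lex-ordered residue function; once that is stated cleanly, the argument is a direct assembly of the preceding two lemmas and the hypothesis.
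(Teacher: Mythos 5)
The proposal is correct. The paper gives no explicit proof of this corollary, but the route you take — pass to a minimal special path $P' \leq P$ (which only lowers $i_\sigma(v_0)$, so the hypothesis on $\{1,\ldots,i_\sigma(v_0)\}$ survives), apply Lemma \ref{lemma:specialPaths} to build $\mathcal T'$, combine $\rho(F') \geq \rho^*$ with the lex behaviour of deleting the forest edge $v_{-1}v_0$ to obtain the equality hypothesis of Lemma \ref{lemma:v0NotInR}, land in $\mathcal F^*$ with $i_0 > 1$ and a strictly smaller component at position $i_0$, and finally compare the resulting $\sigma'$ with $\sigma^*$ using $e(R_i) \leq e(R^*_i)$ — is exactly what the surrounding lemmas are set up to deliver. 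The lexicographic comparison at the end is handled cleanly in both cases (whether $(e(R_i))$ and $(e(R^*_i))$ first differ strictly before $i_0$, or agree there and the strict drop at $i_0$ decides). One minor streamlining: the case analysis you sketch for how deleting $v_{-1}v_0$ affects $\rho$ duplicates what Lemma \ref{lemma:v0NotInR} already encapsulates; it suffices to observe $\rho^* \leq \rho(F') \leq \rho(\redForest(\mathcal T) + v_{l-1}v_l) \leq \rho^*$ to get the equality hypothesis and then let that lemma do the rest.
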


Corollary \ref{cor:specialPathInStandardCase} shows how we will make use of special paths later. After doing some exchanges between the forests of $\mathcal T^*$ we will obtain a decomposition $\mathcal T$ and a special path as described in the corollary, but it will also hold $\rho(\redForest(\mathcal T) + v_{l-1} v_l) = \rho^*$.
Next we want to look at what consequences Lemma \ref{lemma:specialPaths} has on the relation of children and parents:

\begin{corollary}[Corollary 2.5 from \cite{ndtt}] \label{cor:sumOfChildRelation} \phantom{bla} \\ 
    Let $C$ be a child of $K$ with respect to $\mathcal T^*$ and $\sigma^*$ that is generated by $(x, y)$. Then $e(K) + e(C) \geq d$. 
\end{corollary}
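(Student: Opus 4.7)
My plan is to argue by contradiction: I would suppose $e(K) + e(C) \leq d - 1$ and show this forces a decomposition in $\mathcal F^*$ whose legal order is strictly smaller than $\sigma^*$, contradicting the choice of $\sigma^*$. If $K = R^*$ then $e(K) \geq d + 1 > d$ already, so the only case actually requiring work is $K \neq R^*$, in which the assumption is consistent with $e(K) \leq d - 1$.

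The first step is to observe that the generating blue arc $(x, y)$ by itself constitutes a special path $P = [x, y]$ with respect to $\mathcal T^*$, $\sigma^*$, and $(x, y)$: its endpoints lie in the distinct red components $K$ and $C$; the inequality $i_{\sigma^*}(y) > i_{\sigma^*}(x)$ holds because $C$ is a child of $K$ in $\sigma^*$; and the ancestor condition is vacuous since $v_0 = v_{l-1} = x$. I then pass to a minimal special path $P' \leq P$ with the same final arc $(x, y)$, which is guaranteed to exist.

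The crucial check is that recolouring $xy$ red preserves $\rho^*$. Under the assumption $e(K), e(C) \leq d - 1$, neither $K$ nor $C$ contributes to $\rho(\redForest(\mathcal T^*)) = \rho^*$; and the merged component formed by $K$, $C$, and $xy$ has $e(K) + e(C) + 1 \leq d$ edges, so it also fails to contribute. All other red components are untouched. Hence $\rho(\redForest(\mathcal T^*) + xy) = \rho^*$, which allows me to invoke Lemma \ref{lemma:v0NotInR} on $P'$: it forces $i_0 > 1$ and produces a decomposition $\mathcal T' \in \mathcal F^*$. Clause 5 of Lemma \ref{lemma:specialPaths} then supplies a legal order $\sigma' < \sigma^*$, the desired contradiction.

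The only delicacy is verifying the special-path hypotheses together with the residue bookkeeping, both of which are immediate under the contradictory assumption. There is no substantive obstacle here; the corollary is essentially a one-step application of the special-paths machinery already imported from \cite{ndtt}, and it expresses the fundamental fact that a parent and child in the auxiliary tree must jointly contain at least $d$ edges.
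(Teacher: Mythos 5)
Your proposal is correct and follows essentially the same route as the paper: both observe that $[x,y]$ is a special path, verify $\rho(\redForest(\mathcal T^*) + xy) = \rho^*$ under the contradictory assumption, and derive a smaller legal order. The only cosmetic difference is that the paper invokes the prepackaged Corollary~\ref{cor:specialPathInStandardCase} while you unpack it into its constituents, Lemma~\ref{lemma:v0NotInR} and clause~5 of Lemma~\ref{lemma:specialPaths}.
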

\begin{proof}
    Suppose to the contrary that $e(K) + e(C) < d$. Since $i_{\sigma^*}(y) > i_{\sigma^*}(x)$, we have that $[x, y]$ is a special path with respect to $\mathcal T^*$, $\sigma^*$ and $(x, y)$. Furthermore, $\rho(\redForest(\mathcal T^*) + xy) = \rho^*$, but this contradicts Corollary \ref{cor:specialPathInStandardCase}.
\end{proof}

\section{Exchanging Edges}
\label{exchangeedgesec}
In this section, we define a useful exchange operation and show how to reorient edges after the exchange to maintain the proper structure of the decomposition. We then use this exchange operation to show that $R^*$ does not have small children. After that we prove a lemma which shows two useful cases which can occur when trying to exchange edges. For this section we define $\mathcal T \in \mathcal F$ together with a legal order $\sigma$ of $\mathcal T$.

\begin{definition}
    Let $e \in E(\blueTree(\mathcal T))$ for some $b \inOneToK$ and $e' \in E(\redForest(\mathcal T))$. If $(\blueTree(\mathcal T) - e) + e'$ is a spanning tree and $(F - e') + e$ is a forest (ignoring orientations), we say that $e'$ can be exchanged with $e$, and say that $e \leftrightarrow e'$ holds in $\mathcal T$.
\end{definition}

The next lemma explains when $e \leftrightarrow e'$ holds and how the two edges can be exchanged in order to obtain a new decomposition in $\mathcal F$. We omit the proof since it is easy. However, we refer the reader to Figure \ref{fig:swapDef} for an illustration.

\begin{lemma} \label{lemma:exchange}
    Let $u \in V(G) - r$, $u'$ be the parent vertex of $u$ in $\blueTree(\mathcal T)$ for some $b \inOneToK$ and $e = vv' \in E(\redForest(\mathcal T))$ such that $(\redForest(\mathcal T) + uu') - e$ does not contain a cycle.\\
    Then, the following are equivalent:
    \begin{enumerate}[(a)]
        \item $(u, u') \leftrightarrow e$ holds in $\mathcal T$.
        \item The edge $(u,u')$ lies in the unique cycle (ignoring orientations) of $\blueTree(\mathcal T) + e$.
        \item Up to relabelling $v$ as $v'$, $v$ is a descendant of $u$ in $\blueTree(\mathcal T)$ and $v'$ is not.
    \end{enumerate}
    Furthermore, if these conditions are met, then after exchanging $(u, u')$ and $e$ between $\blueTree(\mathcal T)$ and $\redForest(\mathcal T)$, orienting $e$ towards $v'$, removing the orientation of $e$ and reorienting the path $\pathInBlueTree{\mathcal T}{v}{u}$, the resulting decomposition is again in $\mathcal F$, and we say we obtain the resulting decomposition from $\mathcal T$ by performing $(u, u') \leftrightarrow e$.
\end{lemma}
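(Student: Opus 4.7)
The plan is to establish the three equivalences via the unique cycle in $\blueTree(\mathcal T) + e$ and then verify the reorientation step directly. The matroidal core lies in the one-cycle structure of a spanning tree plus a chord; the main obstacle is purely the bookkeeping of orientations.

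For $(\mathrm{a}) \Leftrightarrow (\mathrm{b})$, since $\blueTree(\mathcal T)$ is a spanning tree of $G$, the graph $\blueTree(\mathcal T) + e$ has $v(G)$ edges, is connected, and contains a unique cycle $C$ passing through $e$. Deleting an edge of $\blueTree(\mathcal T) + e$ leaves a spanning tree precisely when that edge lies on $C$; otherwise the deletion disconnects the graph. The condition $(u, u') \leftrightarrow e$ in $\mathcal T$ demands that $(\blueTree(\mathcal T) - (u, u')) + e$ be a spanning tree, since the red requirement that $(F - e) + uu'$ is a forest is the standing hypothesis. Hence (a) and (b) are equivalent.

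For $(\mathrm{b}) \Leftrightarrow (\mathrm{c})$, the cycle $C$ is the concatenation of $e = vv'$ with the unique $v$-to-$v'$ path $P$ in $\blueTree(\mathcal T)$. Since $u \neq r$ and $u'$ is its parent, removing $(u, u')$ from $\blueTree(\mathcal T)$ splits it into the subtree $T_u$ of descendants of $u$ and its complement containing $u'$ and $r$. The path $P$ uses $(u, u')$ if and only if $v$ and $v'$ lie on opposite sides of this separation, i.e., exactly one of them is a descendant of $u$.

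For the \emph{furthermore} part, assume the equivalent conditions hold and, after relabelling, that $v$ is the descendant of $u$ and $v'$ is not. The new blue edge set $(E(T_b) \setminus \{(u, u')\}) \cup \{e\}$ is a spanning tree by (a), while the new red forest $F' = (F - e) + uu'$ is a forest by hypothesis. We orient $e$ as $(v, v')$ and reverse every edge on the old directed path $\pathInBlueTree{\mathcal T}{v}{u}$. Vertices outside $T_u$ are untouched, so their directed blue paths to $r$ are preserved. Inside $T_u$, the reversed path now directs $u$ toward $v$, and any branch off this path still points toward an ancestor on the (now reversed) path and thus reaches $v$ via blue arcs; composing with $(v, v')$ and the unchanged structure outside $T_u$ gives every vertex a monotone blue path to $r$. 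The remaining check, that $R^*$ is still a connected component of $F'$, is not automatic from (a)--(c) alone; in applications it is guaranteed by the choice of $e$ and $(u, u')$ (typically either $e \notin E(R^*)$ with $\{u, u'\} \not\subseteq V(R^*)$, or $e \in E(R^*)$ and $uu'$ reconnects the two pieces of $R^* - e$), so this point reduces to verifying at the call site that $R^*$'s status is unaffected.
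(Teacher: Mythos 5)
The paper omits this proof as routine, so there is no in-paper argument to compare against; your fundamental-cycle argument for (a) $\Leftrightarrow$ (b) $\Leftrightarrow$ (c) and your orientation check for the ``furthermore'' clause (descendants of $u$ reach the reversed path via unchanged arcs, follow it to $v$, cross $(v,v')$, and continue along the untouched remainder of the tree to $r$; vertices outside the subtree of $u$ are unaffected) are both correct and are the natural way to prove this. Your remark about the $R^*$-clause is well-taken and in fact exposes a mild imprecision in the lemma as stated: conditions (a)--(c) do not by themselves guarantee that $R^*$ survives as a component of $(F - e) + uu'$. For instance, if $e \notin E(R^*)$ but $uu'$ has exactly one endpoint in $R^*$, the component of $r$ strictly grows; this actually occurs in the definition of $\mathcal T_x$ when $K = R^*$ and $C_x$ is interesting, which is the situation in Lemma~\ref{lemma:rootNoChildren}. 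The robust part of the conclusion --- and the part the applications really need --- is that the result is again a decomposition into $k$ spanning trees rooted at and directed towards $r$ together with a forest; whether $R^*$ is preserved must be verified at each call site, and in the places where it is deliberately broken the paper derives a contradiction from a drop in the residue function rather than invoking membership in $\mathcal F$. So your proof is correct, and your instinct to flag the $R^*$ condition as a call-site responsibility is exactly right.
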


\doubleFigureCentered{
    \swapDefBefore
}{
    \swapDefAfter
}{
    An example where we have $(u_1, u'_1) \leftrightarrow v_1 v'_1$ and $(u_2, u'_2) \leftrightarrow v_2 v'_2$.
}{
    \label{fig:swapDef}
}

\begin{notation}
    Let $K$ be a red component of $\mathcal T^*$ and let $x \in V(K)$. If $x$ has degree $1$ in $K$, then let $n_x$ be its only neighbour in $K$. If $(x, x') \in E(\blueTree(\mathcal T^*))$ and $x' \neq r$, then let $x''$ denote the parent of $x'$ in $\blueTree(\mathcal T^*)$. 
    If there are two arcs from $x$ to $x'$ (in different blue spanning trees), then $x''$ denotes the parent of the tree whose arc $(x, x')$ we are considering in the respective context.
\end{notation}

We will now define what an interesting neighbour component of $K$ is. If $K$ has such a neighbour, then in certain situations we are able to exchange edges and improve the legal order. Note that the situation in the following definition is depicted on the left side of Figure \ref{fig:interestingNeighbour}.

\begin{definition}
    Let $K$ be a red component of $\explSG$ and $x \in V(K)$. Let $b \inOneToK$ such that $(x, x') \in E(\blueTree(\mathcal T^*))$, where $x' \neq r$ has exactly one incident red edge $x' n_{x'}$ within its red component $L \neq K$ in $\explSG$. Furthermore, let the red component in $\explSG$ of $x''$ not contain an edge and be a child of $L$ with respect to $\mathcal T^*$ and $\sigma^*$. Moreover,  suppose there is a directed path from $n_{x'}$ to $x$ in $\blueTree(\mathcal T^*)$.\\
    Then we say \textit{$L$ is an interesting neighbour of $K$ generated by $(x, x')$} (or generated by $\blueTree(\mathcal T^*)$ ).\\
    Furthermore, we call a red component a \textit{relevant neighbour of $K$} if it is a small child of $K$ with respect to $\mathcal T^*$ and $\sigma^*$ or if it is an interesting neighbour of $K$.
\end{definition}

\doubleFigure{
    \center
    \interestingNeighbourInTStar
}{
    \center
    \interestingNeighbourInTx
}{
    An interesting neighbour generated by $(x, x')$ in $\mathcal T^*$ and in $\mathcal T_x$.
}{
    \label{fig:interestingNeighbour}
}

\begin{obs} \label{obs:interestingHasGeqDEdges}
    Let $C$ be an interesting neighbour of $K$ generated by $(x, x')$. Then $e(C) \geq d$ by Corollary \ref{cor:sumOfChildRelation} (as $x''$ has no incident red edges) and thus, a relevant neighbour cannot be both interesting and small.
\end{obs}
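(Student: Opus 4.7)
The plan is to unpack the definition of an interesting neighbour and then apply Corollary \ref{cor:sumOfChildRelation} directly. Let $C$ be an interesting neighbour of $K$ generated by $(x,x')$. By the definition, $x'' \neq r$ exists and lies in some red component $D$ of $\explSG$ which (i) contains no edge, so $e(D)=0$, and (ii) is a child of $C$ with respect to $\mathcal T^*$ and $\sigma^*$, where the generating blue arc is $(x',x'')$ (this is the arc of $\blueTree(\mathcal T^*)$ going from $x' \in V(C)$ to $x'' \in V(D)$).

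With these observations in hand, Corollary \ref{cor:sumOfChildRelation} applied to the parent--child pair $(C,D)$ gives $e(C) + e(D) \geq d$, and since $e(D)=0$ we conclude $e(C) \geq d$. Because we are in the regime $d > k+1 \geq 2$ (in fact $d \geq 3$, as the $d \leq 2$ case is already covered by Theorem \ref{thm:sndtell1}), this forces $e(C) \geq d > 1$, so $C$ cannot be small. Hence no red component can be simultaneously an interesting neighbour and a small child, which is precisely the second assertion about relevant neighbours.

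The only genuine thing to verify, and the one place where care is needed, is that $D$ really is a valid child of $C$ to which Corollary \ref{cor:sumOfChildRelation} applies with the generating arc $(x',x'')$. This is immediate from the definition of interesting neighbour, which builds the requirement ``the red component of $x''$ is a child of $L=C$ with respect to $\mathcal T^*$ and $\sigma^*$'' into the hypothesis. There is no real obstacle here; the observation is essentially a bookkeeping consequence of the definition combined with the earlier corollary.
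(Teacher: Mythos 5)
Your proposal is correct and matches the paper's argument exactly: unpack the definition of interesting neighbour (the red component $D$ of $x''$ has no edges and is a child of $C$), apply Corollary~\ref{cor:sumOfChildRelation} to the parent--child pair $(C,D)$ to get $e(C) \geq d$, and observe $d \geq 3 > 1$, so $C$ is not small. One tiny remark: the definition of interesting neighbour only says the red component of $x''$ \emph{is} a child of $C$, not that the child relationship is generated by the arc $(x',x'')$ specifically (it could be a different blue arc from $C$ to that component in $Aux(\mathcal T^*, \sigma^*)$), but since Corollary~\ref{cor:sumOfChildRelation} only needs $D$ to be a child of $C$ by some arc, this does not affect the argument.
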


We introduce more notation that will be useful when performing exchanges near interesting components.

\begin{notation}
Let $C_x$ be a relevant neighbour of $K$ generated by $(x, x')$. If $C_x$ is interesting, we have that $n_{x'}$ is a descendant of $x$ in $\blueTree(\mathcal T^*)$ and $x'$ is not. Let $\mathcal T_x$ denote the decomposition we obtain from $\mathcal T^*$ by performing $(x, x') \leftrightarrow n_{x'} x'$. If $C_x$ is small, then we let $\mathcal T_x := \mathcal T^*$.
Furthermore, we let $(\bar x, \bar x') := (x, x')$ if $C_x$ is small, and $(\bar x, \bar x') := (x', x'')$ if $C_x$ is interesting.
Moreover, we let $c_x = 0$ if $e(C_x) = 0$, and $c_x = 1$ otherwise. If $C_x$ is interesting, then let $C'_x := C_x - x'$.
\end{notation}

Note that $\mathcal T_x$ is depicted on the right side of Figure \ref{fig:interestingNeighbour} and furthermore, note that $(\bar x, \bar x')$ always generates a small child.\\
The procedure described in Lemma \ref{lemma:exchange} enables us to enforce that $R^*$ does not have relevant neighbours, and thus, the density around $R^*$ is high:

\begin{lemma} \label{lemma:rootNoChildren}
    The component $R^*$ does not have relevant neighbours.
\end{lemma}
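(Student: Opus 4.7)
\begin{proofSketch}
Suppose for contradiction that $R^*$ has a relevant neighbour $C_x$ generated by a blue arc $(x, x')$ with $x \in V(R^*)$. If $C_x$ is interesting, the plan is to first pass from $\mathcal T^*$ to $\mathcal T_x$ via the exchange $(x,x') \leftrightarrow n_{x'} x'$ provided by Lemma~\ref{lemma:exchange}; in $\mathcal T_x$ the red component containing $r$ is $R^*$ augmented by the new pendant red edge $xx'$, and the arc $(\bar x,\bar x') = (x', x'')$ generates the genuinely isolated red child $\{x''\}$. In either case, we then have a decomposition $\widetilde{\mathcal T}\in \mathcal F$ and a blue arc $(\bar x, \bar x')$ in some $\blueTree(\widetilde{\mathcal T})$ joining the red component $\widetilde R$ containing $r$ to a small red component $D$ with $v(D)\le 2$ and $e(D)\le 1$.

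The next step is to follow the blue directed path $P = [\bar x = y_0, y_1 = \bar x', y_2,\dots, y_m = r]$ in $\blueTree(\widetilde{\mathcal T})$. Since $y_0\in \widetilde R$, $y_1 \notin \widetilde R$, and $y_m = r\in \widetilde R$, there is a smallest index $i \ge 2$ with $y_i \in \widetilde R$; set $q := y_{i-1}$ and $p := y_i$, so that the arc $(q,p)$ enters $\widetilde R$ from outside and the whole subpath $[y_0,\dots, q]$ lies in the blue subtree rooted at $q$. Here the crucial choice of $r$ enters: because $r$ has degree at least three in $R^*$, or else two edge-disjoint red paths of length at least two starting at $r$, I can select a red edge $e = vv'$ on the red $p$-to-$r$ path in $\widetilde R$ (in particular an edge incident to $r$ when $p = r$) such that deleting $e$ from $\widetilde R$ yields an $r$-piece of at most $d$ edges, a non-$r$ piece with at least one edge, and such that the endpoint of $e$ inside the $p$-piece is a descendant of $q$ in $\blueTree(\widetilde{\mathcal T})$ while the other endpoint (closer to $r$) is not. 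This last property certifies via Lemma~\ref{lemma:exchange} that $(q,p)\leftrightarrow e$ is a valid exchange.

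Performing this exchange produces a decomposition $\mathcal T' \in \mathcal F$. Since $q \notin \widetilde R$, the newly red edge $qp$ reconnects the non-$r$ piece of $\widetilde R$ with the former red component of $q$ but leaves the $r$-piece intact. A direct edge count then shows that the $r$-component of $\redForest(\mathcal T')$ has at most $d$ edges, no other new component has more edges than $R^*$, and at least one component of edge count $e(R^*)$ has disappeared. Hence $\rho(\redForest(\mathcal T')) < \rho^*$ lexicographically, contradicting $\mathcal T^* \in \mathcal F^*$. The hardest part of the argument is the bookkeeping in the interesting-neighbour case, where the auxiliary decomposition $\mathcal T_x$ already satisfies $\rho(\redForest(\mathcal T_x)) > \rho^*$: the two possibilities for the choice of $r$ must be played off against the position of $p$ on the various red branches at $r$ so that the single second exchange pays back both the residue increase from the first exchange and the $d+1$ edges of $R^*$, and in some subcases a slightly different $e$ (two hops from $r$ rather than incident to $r$) is required to make the $r$-piece small enough.
\end{proofSketch}
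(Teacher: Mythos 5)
Your proposal diverges from the paper's argument in a way that introduces genuine gaps, and I don't see how to close them.

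The paper's proof keeps the exchange arc equal to $(\bar x,\bar x')$: it follows the \emph{red} path $[x_1,\dots,x_n]$ from $x$ to $r$ in $\redForest(\mathcal T_x)$, takes the first $x_i$ that is not a descendant of $\bar x$ in $\blueTree(\mathcal T_x)$, and performs $(\bar x,\bar x')\leftrightarrow x_{i-1}x_i$. The crucial point is that the newly red edge is $\bar x\bar x'$, whose far endpoint lies in a component with at most $c_x\le 1$ edges (the small child $C_x$, or the zero-edge red component of $x''$ in the interesting case). Hence the $x$-side piece $K_x$ gains at most $1+c_x$ edges and, because $e(K_r)\ge 2$ by the choice of $r$, one computes $e(K_x)\le e(R^*)-1-e(K_r)+1+c_x<e(R^*)$ while $K_r\subsetneq R^*$. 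Both new components are strictly smaller than $R^*$, so $\rho$ strictly decreases.

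You instead exchange a \emph{different} arc $(q,p)$, chosen as the first arc on the blue $\bar x$-to-$r$ path that re-enters $\widetilde R$. This breaks the argument in two ways. First, after the exchange the new red edge $qp$ merges the $p$-piece of $\widetilde R$ with the red component of $q$, and nothing in the setup bounds the size of $q$'s red component; the merged component can easily exceed $e(R^*)$ edges, in which case $\rho$ goes \emph{up}, not down. The paper avoids exactly this by always swapping in the arc $(\bar x,\bar x')$, whose red head side is known to be tiny. Second, for $(q,p)\leftrightarrow e$ to be a legal exchange (Lemma~\ref{lemma:exchange}(c)) you need one endpoint of $e$ to be a blue descendant of $q$; you place $e$ on the red $p$-to-$r$ path, but $p$ is a blue \emph{ancestor} of $q$ (the arc points $q\to p$ towards $r$), and there is no reason any vertex on the red $p$-to-$r$ path is a descendant of $q$. (The descendant property you need is available on the red $x$-to-$r$ path relative to $\bar x$, which is why the paper works there.) Finally, your requirement that the $r$-piece have at most $d$ edges is neither needed nor achievable in general; what the paper actually needs is $e(K_r)\ge 2$, which the choice of $r$ does guarantee, and $e(K_r)<e(R^*)$, which is automatic since $K_r$ is a proper subgraph.

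In short: you should follow the \emph{red} path from $x$ to $r$, locate the first red edge leaving the blue descendant set of $\bar x$, and exchange it with $(\bar x,\bar x')$ itself. That choice is what controls the merged component's size and makes the residue count go through.
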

\begin{proof}
    Suppose to the contrary that $R^*$ has a relevant neighbour $C_x$ generated by $(x, x') \in E(\blueTree(\mathcal T^*))$ for some $b \inOneToK$.
     Let $P = [x_1, \ldots, x_n]$ be the path from $x$ to $r$ in $\redForest(\mathcal T_x)$. Let $i \in \{1, \ldots, n\}$ such that $x_i$ is the first vertex on $P$ that is not a descendant of $\bar x$ in $\blueTree(\mathcal T_x)$. This vertex exists and $i > 1$ since $x$ is a descendant of $\bar x$ in $\blueTree(\mathcal T_x)$ and $r$ is not. We obtain $\mathcal T'$ from $\mathcal T_x$ by performing $(\bar x, \bar x') \leftrightarrow x_{i-1} x_i$, which is depicted in Figure \ref{fig:rootSwap}.
    The component $K_r$ of $r$ in $\redForest(\mathcal T')$ contains at least two edges by the way we chose $r$. For the component $K_x$ of $x$ in $\redForest(\mathcal T')$ we have 
    \[
        e(K_x) 
            \leq e(R^*) - |\{x_{i-1} x_i\}| - e(K_r) + |\{xx'\}| + c_x 
            < e(R^*).
    \]
    Since $K_r$ is a proper subgraph of $R^*$, we obtain a contradiction to the minimality of $\rho^*$.
\end{proof}

\fourFiguresCentered{
    \rootSwapSmallTStar
}{
    \rootSwapSmallTPrime
}{
    \rootSwapInterestingTx
}{
    \rootSwapInterestingTPrime
}{
    The decomposition $\mathcal T_x$ and $\mathcal T'$ in the proof of Lemma \ref{lemma:rootNoChildren}: in the first row we have the case where $R^*$ has a small child and in the second row $R^*$ has an interesting neighbour.
}{
    \label{fig:rootSwap}
}

Note that an optimal legal order in the decomposition $\mathcal T'$ in the proof of Lemma \ref{lemma:rootNoChildren} might look completely different than $\sigma^*$ since $\bar x \bar x'$ is red in $\mathcal T'$ and $\pathInBlueTree{\mathcal T_x}{x_{i-1}}{\bar x}$ is reoriented in $\blueTree(\mathcal T')$.
If we perform similar swaps later on that do not guarantee to decrease the residue function, this can be a huge problem, especially if $\pathIn{\mathcal T_x}{x_{i-1}}{\bar x}$ contains blue arcs of $Aux(\mathcal T^*, \sigma^*)$. In this case, we might have increased the legal order drastically. The following technical lemma shows how we can work around this problem: first, we obtain a decomposition $\barT$ by performing some exchanges of edges. We choose these exchanges such that $\sigma^*$ is still intact in $\barT$ up to an index $i$. In $\barT$ we hope to find a special path starting at a vertex $v_0$ with $i_{\sigma^*}(v_0) \leq i$. By augmenting $\barT$ with the help of this special path we obtain a smaller legal order than $\sigma^*$. This is a contradiction if the resulting decomposition also has an optimal residue function value. This idea and the exact conditions are formalized in the following lemma, which will be used by most of the exchange lemmas to come.

\begin{lemma} \label{lemma:newSmallerByPath}
    Let $\barT$ be a decomposition containing a blue arc $(a, a')$.
    Let $A$ be the set of vertices which have a blue directed path to $a$ in $\barT$ and let $B$ be the set of all the vertices of $\Delta(\mathcal T^*, \barT)$ which are not in $A$.
    Let $i_A := i_{\sigma^*}(A)$ and let $L$ be the component of $w_{i_{A}}(\sigma^*)$ in $\redForest(\barT)$.
    We have that $\rho(\redForest(\barT) + aa') > \rho^*$  if the following conditions are met:
    \begin{enumerate}[a)]
        \item $i_{\sigma^*}(a') > i_A$.
        \item $i_{\sigma^*}(B) \geq i_A$.
        \item If $L \neq R^*_{i_A}$ and $e(L) = e(R^*_{i_A})$, then $L$ contains a vertex of $A$.
        \item If the component of a vertex $v$ of $V(R^*_{i_A}) \cap V(L)$ in $\redForest(\barT)$ contains more than $e(R^*_{i_A})$ edges, then $v \in A$.
    \end{enumerate}
\end{lemma}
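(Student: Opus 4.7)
The plan is to argue by contradiction: suppose $\rho(\redForest(\bar{\mathcal T}) + aa') \leq \rho^*$, which by minimality of $\rho^*$ forces equality. I will then exhibit a legal order $\bar\sigma$ for $\bar{\mathcal T}$ and a special path $P$ with respect to $\bar{\mathcal T}$, $\bar\sigma$ and $(a,a')$ satisfying the hypothesis of Corollary \ref{cor:specialPathInStandardCase}; the corollary then gives $\rho(\redForest(\bar{\mathcal T}) + aa') > \rho^*$, a contradiction.

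First I would build the initial segment of $\bar\sigma$ to agree with $\sigma^*$. Condition (b) implies that any vertex with $i_{\sigma^*}<i_A$ lies neither in $A$ (by the definition of $i_A$) nor in $B$, so its outgoing blue arcs in $\mathcal T^*$ and in $\bar{\mathcal T}$ coincide. Consequently $R^*_1,\dots,R^*_{i_A-1}$ remain red components of $\bar{\mathcal T}$, the blue arcs used by $\sigma^*$ between them are still available, and we may set $\bar R_j := R^*_j$ for $j<i_A$. The arc $(x_{i_A},w_{i_A}(\sigma^*))$ is likewise preserved in $\bar{\mathcal T}$, so the red component $L$ of $w_{i_A}(\sigma^*)$ in $\redForest(\bar{\mathcal T})$ is reachable from $\bar R_1\cup\dots\cup\bar R_{i_A-1}$ via a blue arc, making $L$ the natural candidate for $\bar R_{i_A}$.

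To produce the special path, I would pick some $v_0\in A$ with $i_{\sigma^*}(v_0)=i_A$, take the blue directed path from $v_0$ to $a$ in $\bar{\mathcal T}$ (which exists by definition of $A$), and append $(a,a')$. The basic special-path requirements are routine to check: $a$ and $a'$ lie in distinct red components of $\bar{\mathcal T}$ (otherwise $\redForest(\bar{\mathcal T})+aa'$ would contain a cycle); condition (a) together with the initial-segment agreement yields $i_{\bar\sigma}(a')>i_A$; and the auxiliary-tree ancestor requirement is obtained by passing to a minimal special path. For $i<i_A$ we have $\bar R_i=R^*_i$, so the edge-count hypothesis of Corollary \ref{cor:specialPathInStandardCase} holds automatically at those indices.

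The main obstacle, and where conditions (c) and (d) do their work, is establishing $e(\bar R_{i_A})\leq e(R^*_{i_A})$ together with $v_0\in\bar R_{i_A}$. When $e(L)<e(R^*_{i_A})$ the choice $\bar R_{i_A}:=L$ suffices; when $e(L)=e(R^*_{i_A})$ and $L\neq R^*_{i_A}$, condition (c) produces a vertex of $A$ inside $L$ which I would use as $v_0$, yielding equality in the edge-count hypothesis. In the delicate case $e(L)>e(R^*_{i_A})$, condition (d) forces $V(R^*_{i_A})\cap V(L)\subseteq A$; in particular $w_{i_A}(\sigma^*)\in A$, and I would apply Lemma \ref{lemma:specialPaths} directly, choosing $v_0\in V(L)\cap V(R^*_{i_A})$ deep enough in the auxiliary subtree of $L$ so that the splitting red edge $v_{-1}v_0$ delivered by the minimal special path leaves the $v_{-1}$-side with strictly fewer than $e(R^*_{i_A})$ edges. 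This produces a legal order strictly smaller than $\sigma^*$, the sought contradiction.
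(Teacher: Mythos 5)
Your plan is the paper's plan: assume $\rho(\redForest(\barT)+aa')=\rho^*$, build $\bar\sigma$ agreeing with $\sigma^*$ below index $i_A$ using condition~(b), take $\bar R_{i_A}:=L$, locate a vertex of $A$ inside $L$, and invoke Corollary~\ref{cor:specialPathInStandardCase}. The gap is in your case analysis of $e(L)$ against $e(R^*_{i_A})$, and specifically in your third ``delicate'' case.

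You correctly note that $e(L)>e(R^*_{i_A})$ together with~(d) forces $w_{i_A}(\sigma^*)\in A$, but you do not follow that thread to its conclusion. The $Aux(\mathcal T^*,\sigma^*)$-arc into $w_{i_A}(\sigma^*)$, say $(u,w_{i_A}(\sigma^*))$, has $i_{\sigma^*}(u)<i_A$; by~(b) and the definition of $i_A$ this means $u\notin A\cup B\supseteq\Delta(\mathcal T^*,\barT)$, so the arc is still present in $\barT$. If $w_{i_A}(\sigma^*)\in A$, prepending this arc to a blue directed path from $w_{i_A}(\sigma^*)$ to $a$ would give $u\in A$, contradicting $i_{\sigma^*}(u)<i_A=i_{\sigma^*}(A)$. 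Hence $w_{i_A}(\sigma^*)\notin A$ always, and (d) then forces $e(L)\le e(R^*_{i_A})$: your third case is vacuous and needs no separate argument. The patch you proposed for it — choosing $v_0\in V(L)\cap V(R^*_{i_A})$ ``deep enough in the auxiliary subtree of $L$'' so the minimal special path splits off a $v_{-1}$-side with fewer than $e(R^*_{i_A})$ edges — does not hold up as written: a minimal special path determines $v_0$ via the minimality order on starting vertices, you do not get to pick it, and Lemma~\ref{lemma:specialPaths}(5) only guarantees $e(R'_{i_0})<e(\bar R_{i_0})=e(L)$, not $<e(R^*_{i_A})$. Two smaller points: for $e(L)<e(R^*_{i_A})$ the contradiction is direct (completing $\bar\sigma$ arbitrarily with $\bar R_{i_A}=L$ already yields $\bar\sigma<\sigma^*$, and $\barT\in\mathcal F^*$ since $\rho(\redForest(\barT))\le\rho(\redForest(\barT)+aa')=\rho^*$), no special path is needed; and your split omits the subcase $L=R^*_{i_A}$, where the vertex of $A$ in $L$ is supplied by the definition of $i_A$ rather than by condition~(c).
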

\begin{proof}
    Suppose to the contrary that $\rho(\redForest(\barT) + aa') = \rho^*$ (recall that by definition of $\rho^{*}$, $\rho(\redForest(\barT) + aa') < \rho^*$ never occurs).
    By b) there is a legal order $\bar\sigma = (\bar R_1, \ldots, \bar R_{\bar t})$ of $\barT$ with $\bar R_i = R^*_i$ for all $i < i_A$.
    We choose $\bar R_{i_A} := L$. This is possible since the arc $(u, w_{i_{A}}(\sigma^*))$ of $Aux(\mathcal T^*, \sigma^*)$ is also present in $T_{\bar \sigma}$, or otherwise we would have $u \in A \cup B$  
    although $i_{\sigma^*}(u) < i_A$. 
    Furthermore, we have that  $w_{i_{A}}(\sigma^*) \notin A$, as otherwise $u \in A$. Hence, $e(L) \leq e(R^*_{i_A})$ by d).
    In fact, $e(L) = e(R^*_{i_A})$ as otherwise we obtain $\bar\sigma < \sigma^*$ if we complete $\bar \sigma$ arbitrarily, contradicting our choice of $\sigma^{*}$.
    \\
    Note that by the definition of $i_A$ there is a vertex of $A$ in $V(R^*_{i_A})$. Considering c) we have that $L$ always contains a vertex of $A$.
    Thus, we can complete $\bar\sigma$ after $L$ by traversing a blue path from a vertex of $V(A) \cap V(L)$ to $a'$ going over $a$ in $\barT$ and adding every red component to $\bar\sigma$ that is not yet in $\bar\sigma$. We obtain $i_{\bar\sigma}(a') > i_A$ by a). Thus, there is a minimal special path starting at $L$ with respect to $\barT$, $\bar \sigma$ and $(a, a')$, which is a contradiction to Corollary \ref{cor:specialPathInStandardCase}.
\end{proof}

We can now generalize Corollary \ref{cor:sumOfChildRelation} by considering the case where $C$ is an interesting neighbour of $K$.

\begin{lemma} \label{lemma:sizeOfKIfHasRelevantChild}
    Let $K$ be a red component of $\explSG$ and $C_x$ a relevant neighbour of $K$. Then $e(K) \geq d - c_x$ and in particular, $K$ is not small.
\end{lemma}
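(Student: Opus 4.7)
The plan is a two-case analysis on which type of relevant neighbour $C_x$ is. \textbf{Case 1 (small child).} If $C_x$ is a small child of $K$ generated by $(x,x')$, then $e(C_x) \leq 1$ and $c_x = e(C_x)$, so Corollary~\ref{cor:sumOfChildRelation} immediately gives $e(K)+e(C_x) \geq d$ and hence $e(K) \geq d - c_x$.

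\textbf{Case 2 ($C_x$ interesting).} By Observation~\ref{obs:interestingHasGeqDEdges}, $e(C_x) \geq d$ and $c_x = 1$, so the goal is $e(K) \geq d-1$. I would assume for contradiction $e(K) \leq d-2$ and pass from $\mathcal T^*$ to $\mathcal T_x$, obtained by the exchange $(x,x') \leftrightarrow n_{x'}x'$. In $\mathcal T_x$ the red component of $x'$ becomes $K \cup \{x'\}$ with $e(K)+1 \leq d-1$ edges; the red component of $x''$ remains the isolated vertex $\{x''\}$ (since $x''$ had no red incidences in $\mathcal T^*$); and $(\bar x, \bar x') = (x', x'')$ is a blue arc of $\mathcal T_x$. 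The step is then to replay the logic of Corollary~\ref{cor:sumOfChildRelation} at this new arc via Lemma~\ref{lemma:newSmallerByPath} applied with $\barT := \mathcal T_x$ and $(a,a') := (x',x'')$. I would verify its hypotheses as follows. The set $A$ of vertices with a blue directed path to $x'$ in $\mathcal T_x$ contains the reversed path $n_{x'} = u_1, u_2, \dots, u_m = x$ (via the new arc $(n_{x'}, x')$ and the reoriented arcs along the path), so $i_A \leq i_{\sigma^*}(C_x)$; the exchange reoriented only outgoing arcs on this same path, so $\Delta(\mathcal T^*, \mathcal T_x) \subseteq A$ and $B = \varnothing$, which yields condition~(b). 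Since $\{x''\}$ is a child of $C_x$ in $\sigma^*$, one has $i_{\sigma^*}(x'') > i_{\sigma^*}(C_x) \geq i_A$, which gives~(a). Conditions~(c) and~(d) follow from the fact that $\redForest(\mathcal T_x)$ differs from $\redForest(\mathcal T^*)$ only by $K \mapsto K \cup \{x'\}$ and $C_x \mapsto C_x - x'$: a short case analysis on whether $R^*_{i_A}$ equals $K$, $C_x$, or an unchanged component shows that either the antecedents fail on edge-count grounds or the required containment in $A$ can be read off from the blue-tree structure of $\mathcal T_x$.

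Lemma~\ref{lemma:newSmallerByPath} then gives $\rho(\redForest(\mathcal T_x) + x'x'') > \rho^*$. For the contradiction, I would compute $\rho(\redForest(\mathcal T_x) + x'x'')$ directly: this forest has the same components as $\redForest(\mathcal T^*)$ except that $K$, $\{x'\}$ and $\{x''\}$ merge into $K \cup \{x', x''\}$ on $e(K) + 2 \leq d$ edges (not counted by $\rho$), while $C_x$ shrinks to $C_x - x'$ on $e(C_x) - 1$ edges; since $\rho$ counts only components of at least $d+1$ edges, this is weakly lex-smaller than $\rho^*$ (strictly smaller when $e(C_x) \geq d+1$), so $\rho(\redForest(\mathcal T_x) + x'x'') \leq \rho^*$, contradicting the lemma. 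Hence $e(K) \geq d-1$ in Case~2. In both cases $e(K) \geq d - c_x \geq d-1 \geq 2$ (as $d > k+1 \geq 2$), so $K$ is not small. The main obstacle I anticipate is checking condition~(d) of Lemma~\ref{lemma:newSmallerByPath} in the subcase $R^*_{i_A} = K$: there one has $V(R^*_{i_A}) \cap V(L) = V(K)$, and one must show $V(K) \subseteq A$ by tracing blue directed paths in $\mathcal T_x$ through the newly reoriented arcs created by the exchange.
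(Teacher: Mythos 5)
Your overall plan matches the paper's exactly: reduce to the interesting case via Corollary~\ref{cor:sumOfChildRelation}, assume $e(K) \leq d-2$, move to $\mathcal T_x$, and apply Lemma~\ref{lemma:newSmallerByPath} with $\barT := \mathcal T_x$ and $(a,a') := (x',x'')$. Your verifications of (a), (b) and the $\rho$-computation at the end are correct. The problem is with the step you yourself flag as the main obstacle: your proposed fix cannot work. You say that when $R^*_{i_A} = K$ one ``must show $V(K) \subseteq A$ by tracing blue directed paths in $\mathcal T_x$.'' But $w_{i_A}(\sigma^*) \in V(K)$ in that subcase, and the proof of Lemma~\ref{lemma:newSmallerByPath} itself derives $w_{i_A}(\sigma^*) \notin A$ from (b): the $Aux(\mathcal T^*,\sigma^*)$-parent $u$ of $w_{i_A}(\sigma^*)$ satisfies $i_{\sigma^*}(u) < i_A$, hence $u \notin A \cup B$, hence the blue arc $(u, w_{i_A}(\sigma^*))$ survives into $\barT$, and $w_{i_A}(\sigma^*) \in A$ would give $u \in A$, contradicting the minimality of $i_A$. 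So $V(K) \subseteq A$ is unachievable, and (d) cannot be discharged this way.

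In fact, given (b), condition (d) is equivalent to $e(L) \leq e(R^*_{i_A})$, and when $R^*_{i_A} = K$ one has $L = K + xx'$ with $e(L) = e(K)+1 > e(K) = e(R^*_{i_A})$, so (d) genuinely fails in that subcase. Thus either that subcase must be ruled out structurally, or handled by a separate argument that does not go through Lemma~\ref{lemma:newSmallerByPath} (this is exactly what the paper does in the analogous situation in the proof of Lemma~\ref{lemma:caseTwo}, Subcases~2 and~3, where $R^*_{i_A} = K$ is treated directly). Your sketch neither rules out $R^*_{i_A} = K$ nor supplies a replacement argument for it, and the route you propose is a dead end; this is the gap to close.
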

\begin{proof}
    By Corollary \ref{cor:sumOfChildRelation} we know that if the lemma is not true, then $e(K) \leq d - 2$ and $C_x$ is an interesting neighbour of $K$. Let $C_x$ be generated by $(x, x') \in E(\blueTree(\mathcal T^*))$ for some $b \inOneToK$.
    We want to find a contradiction to Lemma \ref{lemma:newSmallerByPath} and choose $\barT := \mathcal T_x$ and $(a, a') := (x', x'')$. Note that $B = \varnothing$  since all vertices in $\Delta(\mathcal T^{*}, \barT)$ belong to the path $\pathInBlueTree{\mathcal T^*}{n_{x'}}{x}$, but this path is reoriented in $\blueTree(\mathcal T_x)$ and $\blueTree(\mathcal T_x)$ also contains the arc $(n_{x'},x')$, and thus, all of these vertices have a blue path to $x'$ in $\blueTree(\mathcal T_x)$, and so cannot be in $B$. Thus, b) holds and it is also clear that a) holds.
    Let $K'$ be the component of $x$ in $\redForest(\mathcal T_x) + x'x''$ and note that $C'_x$ is also a component of this forest. We have $e(C'_x) = e(C_x) - 1$ and $e(K') = e(K) + |\{xx', x'x''\}| \leq d$. Thus, d) holds and $\rho(\redForest(\mathcal T_x) + x'x'') = \rho^*$.  As $x' \in A$, we also have that c) holds contradicting Lemma \ref{lemma:newSmallerByPath}.
\end{proof}

Our goal is to bound the number of relevant neighbours of red components. To achieve this, we are looking for exchanges improving the legal order if a component has two relevant neighbours generated by edges $(x, x'), (y, y')$ of the same tree. The next lemma shows that we can find an edge $e$ on the red path from $x$ to $y$ that can be exchanged with either $(\bar x, \bar x')$ or $(\bar y, \bar y')$. It also distinguishes between two cases depending on properties of the red path between $x$ and $y$. These cases can be seen in Figure \ref{fig:cases}.

\begin{lemma}		\label{lemma:cases}
    Let $b \inOneToK$, let $K \neq R^*$ be a red component of $\explSG$ and let $x$ and $y$ be vertices of $K$.
    Furthermore, let $C_x$ be a relevant neighbour of $K$ generated by $(x, x') \in E(\blueTree(\mathcal T^*))$ and let $y$ not be a descendant of $\bar x$.
    Let $y' \neq x'$ be the parent vertex of $y$ in $\blueTree(\mathcal T^*)$.
    Furthermore, let $[x_1, \dots, x_n]$ be the path from $x$ to $y$ in $\redForest(\mathcal T^*)$.
    Then one of the following two cases applies:
    \begin{itemize}
        \item There is an $i \in \{1, \ldots, n-1\}$ such that $x_i$ is a descendant of $x$ in $\blueTree(\mathcal T_x)$ and $x_{i+1}$ is not. Thus, $(\bar x, \bar x') \leftrightarrow x_i x_{i+1}$ holds in $\mathcal T_x$. Furthermore, $x_{i+1}$ is a descendant of $y$ 
        in $\blueTree(\mathcal T_x)$.\\
        In this case we say that $x \caseTwo y$ holds with edge $(x_i, x_{i+1})$.
        \item It does not hold $x \caseTwo y$. There is an $i \in \{1, \ldots, n-2\}$ such that $x_i$ is a descendant of $x$ in $\blueTree(\mathcal T_x)$ and $x_{i+1}$ is not and $i$ is maximal with this property. Thus, $(\bar x, \bar x') \leftrightarrow x_i x_{i+1}$ holds in $\mathcal T_x$.
        Furthermore, there is a $j \in \{i+2, \ldots, n\}$ such that $x_j$ is a descendant of $y$ in $\blueTree(\mathcal T_x)$ and $x_{j-1}$ is not and $j$ is minimal with this property. Thus, we have $(y, y') \leftrightarrow x_j x_{j-1}$ in $\mathcal T_x$.
        In this case we say that $x \caseOne y$ holds with edges $(x_i, x_{i+1}), (x_{j-1}, x_j)$.
    \end{itemize}
\end{lemma}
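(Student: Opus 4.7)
The plan is a walk argument along the red path $P = [x_1, \dots, x_n]$ from $x$ to $y$, classifying each $x_j$ by whether it is a descendant of $x$ in $\blueTree(\mathcal T_x)$ and by whether it is a descendant of $y$ in $\blueTree(\mathcal T_x)$. The endpoint statuses pin down a transition: $x_1 = x$ is trivially a descendant of $x$, while $x_n = y$ is not a descendant of $x$, because the hypothesis that $y$ is not a descendant of $\bar x$, combined with the fact that $x$ is itself a descendant of $\bar x$ in $\blueTree(\mathcal T_x)$, forces this. So I can take $i \in \{1, \dots, n-1\}$ to be the largest index with $x_i$ a descendant of $x$, and by maximality $x_{i+1}$ is not.

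I then split on whether $x_{i+1}$ is a descendant of $y$ in $\blueTree(\mathcal T_x)$. If yes, this should be case $(2,b)$: one aims to verify via Lemma~\ref{lemma:exchange} that $(\bar x, \bar x') \leftrightarrow x_i x_{i+1}$ holds. The vertex $x_i$ is a descendant of $\bar x$ immediately, and the key step is ruling out $x_{i+1}$ being a descendant of $\bar x$ (see the next paragraph). If $x_{i+1}$ is not a descendant of $y$, we are in case $(1,b)$: let $j$ be the smallest index greater than $i$ with $x_j$ a descendant of $y$, which exists with $j \geq i+2$ because $x_n = y$ qualifies. By minimality of $j$, $x_{j-1}$ is not a descendant of $y$, and by maximality of $i$ (combined with $j-1 > i$) it is also not a descendant of $x$. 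Lemma~\ref{lemma:exchange} then delivers the two promised exchanges $(\bar x, \bar x') \leftrightarrow x_i x_{i+1}$ and $(y, y') \leftrightarrow x_j x_{j-1}$ in $\mathcal T_x$.

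The main technical obstacle I anticipate is the interesting-neighbour sub-case $\bar x = x'$: in $\blueTree(\mathcal T_x)$ the set of descendants of $\bar x$ strictly contains the descendants of $x$, the extra vertices being (among others) those on the reoriented blue path, namely $n_{x'}$ and the intermediate vertices $p_1, \dots, p_l$ of $\pathInBlueTree{\mathcal T^*}{n_{x'}}{x}$. To promote ``$x_{i+1}$ is not a descendant of $x$'' into ``$x_{i+1}$ is not a descendant of $\bar x$'' (and similarly for $x_{j-1}$), one must verify that neither vertex coincides with any of these extras. The vertex $n_{x'}$ is immediately excluded because it lies in $V(L)$, which is disjoint from $V(K)$ and hence off the red path $P \subseteq K$. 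Ruling out the intermediate $p_m$'s is the most subtle step; my plan is to combine an ancestor-descendant comparability argument in the rooted tree $\blueTree(\mathcal T_x)$ (two ancestors of a common vertex must be nested) with the structural constraint built into the definition of an interesting neighbour, in particular that $x''$ is an isolated vertex in $\redForest(\mathcal T^*)$, to derive a contradiction in each such configuration.
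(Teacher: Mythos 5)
Your plan follows the paper's overall strategy — a walk along the red path with a transition index $i$, and in the second case a first index $j \geq i+2$ landing in the $y$-subtree — and your translation step (establishing $y$ is not a descendant of $x$ in $\blueTree(\mathcal T_x)$) is sound. However, you have correctly isolated the crux, namely upgrading ``$x_{i+1}$ is not a descendant of $x$'' to ``$x_{i+1}$ is not a descendant of $\bar x$'' so that Lemma~\ref{lemma:exchange}(c) applies when $C_x$ is interesting, and your proposed resolution does not close the gap. The set of vertices that are descendants of $\bar x = x'$ in $\blueTree(\mathcal T_x)$ but not descendants of $x$ is considerably larger than $\{n_{x'}\}$ plus the interior of $\pathInBlueTree{\mathcal T^*}{n_{x'}}{x}$: it also contains $x'$ itself, every vertex in any branch of $x'$ in $\blueTree(\mathcal T^*)$ other than the one through $x$, and the entire $\blueTree(\mathcal T^*)$-subtree rooted at the child of $x$ lying toward $n_{x'}$, including all subtrees hanging off the reoriented path. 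Nothing in the definition of an interesting neighbour — in particular not the fact that $x''$ is an isolated vertex of $\redForest(\mathcal T^*)$ — prevents such a vertex from lying in $V(K)$ or indeed on $P$; the red component $K$ is determined by $\redForest(\mathcal T^*)$ and is essentially unconstrained relative to how it meets these blue subtrees. So the exclusion argument cannot be completed, and the first exchange is not secured.

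The paper avoids this entirely by running the walk with respect to descendants of $\bar x$ (not $x$) in $\blueTree(\mathcal T_x)$: since $x_1 = x$ is a descendant of $\bar x$ there and $x_n = y$ is not (which the paper verifies by a short argument on $\Delta(\mathcal T^*, \mathcal T_x)$), a transition $i$ exists with $x_i$ a descendant of $\bar x$ and $x_{i+1}$ not, which is exactly condition~(c) of Lemma~\ref{lemma:exchange} with no promotion step needed. You should redo the walk in these terms. (A secondary structural point: the paper assumes $x \caseTwo y$ fails and then derives the second case, whereas your construct-and-classify approach — take the maximal $i$ and test whether $x_{i+1}$ is a descendant of $y$ — does not by itself rule out that some smaller index $i'$ witnesses $x \caseTwo y$, which you need before you can assert the second case, since its wording explicitly includes ``$x \caseTwo y$ does not hold.'')
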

\begin{proof}
    Suppose it does not hold $x \caseTwo y$. Note that $y$ also does not have a path to $\bar x$ in $\blueTree(\mathcal T_x)$: if it had one, then it would have a path in $\blueTree(\mathcal T^*)$ to a vertex of $\Delta(\mathcal T^*, \mathcal T_x) = \pathInBlueTree{\mathcal T^*}{n_{x'}}{x}$ and thus, would be a descendant of $x$ in $\blueTree(\mathcal T^*)$, a contradiction. Thus,  there is an $i \in \{1,\ldots,n-2\}$ as described in $\caseOne$ since $x$ is a descendant of $\bar x$ in $\blueTree(\mathcal T_x)$ and $y$ is not. To see that $i \neq n-1$, if it were, we would have $x \caseTwo y$ with edge $(x_i, x_{i+1})$. Now, let $j \geq i + 2$ be minimal such that $x_j$ is a descendant of $y$ in $\blueTree(\mathcal T^*)$. Observe that $j$ exists since $y$ is a descendant of $y$ in $\blueTree(\mathcal T^*)$. If $x_{j-1}$ was a descendant of $y$, then $j = i+2$ and thus, we have $x \caseTwo y$, a contradiction.
\end{proof}

\singleFigure{
    \caseTwoCaseDef
    \caseOneCaseDef
}{
    We have $x \caseTwo y$ on the left and $x \caseOne y$ in $\mathcal T_x$ on the right.
}{
    \label{fig:cases}
    }

We end this section with an observation for the case where the reoriented blue path of an exchange is trivial.

\begin{lemma} \label{lemma:neighOfXIsDescendant}
    Let $K$ be a red component of $\explSG$ and $C_x$ a relevant neighbour of $K$ generated by $(x, x') \in E(\blueTree(\mathcal T^*))$ for some $b \inOneToK$. Furthermore, let $xu \in E(K)$ be a red edge and let $K_x$ be the component of $K - xu$ containing $x$.\\
    If $e(K_x) = 0$ and there is no path in $\blueTree(\mathcal T_x)$ from $u$ to $\bar x$ (and thus, no path from $u$ to $x$), then $d = 3$, $e(K) = 2$ and $c_x = 1$.
\end{lemma}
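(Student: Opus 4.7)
The plan is to perform the exchange $(\bar x, \bar x') \leftrightarrow xu$ in $\mathcal T_x$, which Lemma \ref{lemma:exchange} permits since by hypothesis $x$ is a descendant of $\bar x$ in $\blueTree(\mathcal T_x)$ while $u$ is not. Let $\mathcal T'$ denote the resulting decomposition; since $K, C_x \neq R^*$ (both by Lemma \ref{lemma:rootNoChildren}), $\mathcal T' \in \mathcal F$. Because $e(K_x) = 0$, removing $xu$ isolates $x$, and adding $\bar x \bar x'$ as a red edge produces the new red components: in the small case, $K - x$ (with $e(K) - 1$ edges) and $C_x \cup \{x, xx'\}$ (with $c_x + 1$ edges); in the interesting case, additionally $K'_x := \{x, x', x''\}$ (two edges) and $C_x' := C_x - x'$ (with $e(C_x) - 1$ edges). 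If $e(K) > d$, or $e(C_x) > d$ in the interesting case, then the residue function strictly decreases, contradicting minimality of $\rho^*$; hence $e(K) \leq d$, $e(C_x) = d$ in the interesting case, and $\mathcal T' \in \mathcal F^*$.

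Write $i_K := i_{\sigma^*}(K)$. I first claim $i_{C_x} > i_K$: in the small case this holds by definition of a small child; in the interesting case, if $i_{C_x} < i_K$ then placing $C_x'$ (or $K'_x$ if the $\sigma^*$-witness of $C_x$ points to $x'$) at position $i_{C_x}$ in a legal order for $\mathcal T'$ gives a component of size at most $d - 1 < d = e(C_x)$, so already $\bar \sigma < \sigma^*$, contradicting minimality. Now I build a legal order $\bar \sigma$ for $\mathcal T'$ agreeing with $\sigma^*$ on positions $1, \dots, i_K - 1$: the heads of the $\sigma^*$-witnesses for these positions lie in $R^*_1, \dots, R^*_{i_K - 1}$, and the vertices $\{x, x', x'', u, n_{x'}\}$ all belong to components of $\sigma^*$-index $\geq i_K$ (using $i_{C_x} > i_K$ and $i_{L_{x''}} > i_{C_x}$), so these heads avoid the altered vertices; the sources similarly avoid $\Delta(\mathcal T^*, \mathcal T') \subseteq \{x, x'\}$, and every such witness survives in $\mathcal T'$.

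At position $i_K$ of $\bar \sigma$ two sub-cases arise. If some blue arc of $\mathcal T^*$ with source in $R^*_1 \cup \dots \cup R^*_{i_K - 1}$ has head in $V(K) - \{x\}$, it persists in $\mathcal T'$ and witnesses placing $K - x$ at position $i_K$, yielding size $e(K) - 1 < e(K)$ and $\bar \sigma < \sigma^*$, a contradiction. Otherwise every such arc into $K$ has head $x$; in particular the $\sigma^*$-witness of $K$ is some $(p, x)$, and I use it to place the component of $x$ in $\mathcal T'$ (namely $C_x \cup \{x, xx'\}$ or $K'_x$) at position $i_K$, with size at most $c_x + 1$ (small case) or exactly $2$ (interesting case). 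To avoid $\bar \sigma < \sigma^*$ we must have $e(K) \leq c_x + 1$ in the small case or $e(K) \leq 2$ in the interesting case; together with $e(K) \geq d - c_x$ from Lemma \ref{lemma:sizeOfKIfHasRelevantChild} and $d \geq 3$ (since $d > k + 1 \geq 2$), this pins us to $d = 3$, $c_x = 1$, $e(K) = 2$. The main obstacle I expect is the careful tracking of $\Delta(\mathcal T^*, \mathcal T')$: the two successive path-reorientations in the interesting case must cancel along the shared portion, leaving only $x$ and $x'$ with altered outgoing arcs, which is what lets the prefix witnesses survive.
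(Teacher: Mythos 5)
Your proposal is correct and follows essentially the same route as the paper: the same exchange $(\bar x, \bar x') \leftrightarrow xu$ on $\mathcal T_x$, the same computation of the resulting red components, and the same argument that the new decomposition lies in $\mathcal F^*$ and admits a legal order agreeing with $\sigma^*$ on an initial segment, forcing $e(K) \leq c_x + 1$ and hence $d = 3$, $e(K) = 2$, $c_x = 1$ via Lemma~\ref{lemma:sizeOfKIfHasRelevantChild}. The only cosmetic difference is that you first show $i_{\sigma^*}(C_x) > i_{\sigma^*}(K)$ and then analyze position $i_{\sigma^*}(K)$, whereas the paper works directly at $i = \min\{i_{\sigma^*}(K), i_{\sigma^*}(C_x)\}$ with a four-case analysis of $(R^*_i, R'_i)$.
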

\begin{proof}
    Suppose that $e(K_x) = 0$ and there is no path in $\blueTree(\mathcal T_x)$ from $u$ to $\bar x$.
    We obtain $\mathcal T'$ from $\mathcal T_x$ by performing $(\bar x, \bar x') \leftrightarrow xu$.
    Let $K'_x$ and $K'_u$ be the components in $\redForest(\mathcal T')$ of $x$ and $u$, respectively.
    Note that if $C_x$ is small, we have $\blueTree(\mathcal T') = (\blueTree(\mathcal T^*) - (x, x')) + (x, u)$ and otherwise $\blueTree(\mathcal T') = \blueTree(\mathcal T^*) \setminus \{(x, x'), (x', x'')\} \cup \{(x, u), (x', n_{x'})\}$ since $\pathInBlueTree{\mathcal T^*}{n_{x'}}{x}$ is reoriented when obtaining $\mathcal T_x$ from $\mathcal T^*$, but the resulting path is again reoriented when obtaining $\mathcal T'$ from $\mathcal T_x$. Hence, there is a legal order $\sigma' = (R'_1, \dots, R'_{t'})$ for $\mathcal T'$ with $R'_j = R^*_j$ for all $j < i := \min \{i_{\sigma^*}(K), i_{\sigma^*}(C_x)\}$ and $(R^*_i, R'_i) \in \{(K, K'_x), (K, K'_u), (C_x, C'_x), (C_x, K'_x)\}$ (where $R'_i = C'_x$ or $R^*_i = C_x$ can only be possible if $C_x$ is interesting).
    Note that $e(K'_u) < e(K)$ and if $C_x$ is interesting, we have $e(C'_x) < e(C_x)$ and $e(C_x) \geq d$ by Observation \ref{obs:interestingHasGeqDEdges}. Since $e(K'_x) = 1 + c_x  < d$, we have $\rho(\redForest(\mathcal T')) = \rho^*$. Thus, it must be $R^*_i = K$, $R'_i = K'_x$ and $e(K'_x) \geq e(K)$ or otherwise we have $\sigma' < \sigma^*$, which is a contradiction. 
    Thus, $e(K) \leq 2$ and it cannot be $e(K) = 1$ because of Lemma \ref{lemma:sizeOfKIfHasRelevantChild}. Hence, $e(K) = 2$, $c_x = 1$ and furthermore, $d = 3$ by Lemma \ref{lemma:sizeOfKIfHasRelevantChild}.
\end{proof}

\section{Bounding relevant neighbours - the case \boldmath\texorpdfstring{$x \caseTwo y$}{x -2,b-> y}}
\label{sec:caseTwo}
In this section we want to characterize the case where $K$ has two relevant neighbours generated by $(x, x'), (y, y')$ and $x \caseTwo y$ holds. Before that we consider the following lemma, which describes how the components around $K$ will look like in $\barT$ when we are applying Lemma \ref{lemma:newSmallerByPath} in the upcoming sections.

\begin{lemma} \label{lemma:rhoStar}
    Let $K$ be a red component of $\explSG$ and let $C_x, C_y$ be two distinct relevant neighbours of $K$ that are generated by the edges $(x, x')$ and $(y, y')$ of the same tree $\blueTree(\mathcal T^*)$, respectively, where $b \inOneToK$.
    Let $e$ be an edge on the path from $x$ to $y$ in $\redForest(\mathcal T^*)$. 
    Let $F'$ be a forest that can be obtained from $(\redForest(\mathcal T^*) - e) \cup \{xx', yy'\}$ by adding $z'z''$ and removing $z'n_{z'}$ for every $z \in \{x, y\}$ such that $C_z$ is interesting.
    Let $K_z$ be the component in $K - e$ of $z$ for any $z \in \{x, y\}$.
    If $e(K_x) \geq c_y$ and $e(K_y) \geq c_x$, then $\rho(F') = \rho^*$ and the components of $x$ and $y$ in $F'$ each have at most $e(K)$ edges.
\end{lemma}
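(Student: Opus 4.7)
The plan is to verify the two conclusions by a careful local inspection of how the surgery defining $F'$ modifies the component structure of $\redForest(\mathcal T^*)$.

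I would first check that $F'$ is indeed a forest by tracking the operations in order: removing $e$ splits $K$ into $K_x \cup K_y$; adding $xx'$ merges $K_x$ with $C_x$; adding $yy'$ merges $K_y$ with $C_y$; and, for each interesting $C_z$, the exchange of $z' n_{z'}$ for $z' z''$ peels $z'$ off $C_z$ (so that $C'_z = C_z - z'$ becomes its own component) while joining the edge-free $z''$-component to the component of $z$ via $z'z''$. Each addition merges two previously disjoint components, so no cycle appears.

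Next, I would compute the sizes of the components of $x$ and $y$ in $F'$. For $z \in \{x,y\}$ a uniform formula holds: when $C_z$ is small with $c_z \in \{0,1\}$ edges, the merge of $K_z$ with $C_z$ via $zz'$ yields $e(K_z) + c_z + 1$ edges; when $C_z$ is interesting one has $c_z = 1$ by Observation~\ref{obs:interestingHasGeqDEdges}, and the merge of $K_z$ with the two new edges $zz'$ and $z'z''$ again yields $e(K_z) + 2 = e(K_z) + c_z + 1$ edges. Substituting the hypothesis $c_x \le e(K_y)$ (respectively $c_y \le e(K_x)$) gives the bound $e(K_z) + c_z + 1 \le e(K_x) + e(K_y) + 1 = e(K)$, which is the second conclusion.

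For the identity $\rho(F') = \rho^*$, I would enumerate exactly which red components of $\redForest(\mathcal T^*)$ are destroyed and which are created when passing to $F'$. The destroyed components are $K$ (size $e(K)$), $C_x$, $C_y$, and, for each interesting $C_z$, the edge-free $z''$-component. The created components are the two components bounded above together with $C'_z = C_z - z'$ (size $e(C_z) - 1$) for each interesting $C_z$; all other components of $\redForest(\mathcal T^*)$ appear unchanged in $F'$. A short case analysis on whether $C_x$ and $C_y$ are small or interesting, combined with the size bound from step two, Observation~\ref{obs:interestingHasGeqDEdges} (which gives $e(C_z) \ge d$ in the interesting case), and Corollary~\ref{cor:sumOfChildRelation}, verifies that the multiset of component sizes exceeding $d$ is preserved. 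The main obstacle is the bookkeeping in the interesting case: naively, replacing $C_z$ by $C'_z$ shifts a contribution one index lower, while the new $z$-component absorbs what was previously contributed jointly by $K$ and the edge-free $z''$-component. The size bound $e(\mathrm{comp}_{F'}(z)) \le e(K)$ from step two is precisely what prevents any net imbalance above index $d$, yielding $\rho(F') = \rho^*$.
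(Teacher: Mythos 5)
Your size computation in the second paragraph is correct and matches the paper's proof exactly: in both small and interesting cases you get $e(K'_z) = e(K_z) + c_z + 1$, and the hypothesis $c_z \le e(K_{\bar z})$ gives $e(K'_z) \le e(K)$. This is the second conclusion, and you prove it the same way the paper does.

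Where you diverge is the final step. The paper's proof of $\rho(F')=\rho^*$ rests on one further concrete observation that you omit: the identity
\[
e(K'_x) + e(K'_y) = e(K) + 1 + c_x + c_y,
\]
from which it follows that if one of $K'_x, K'_y$ has exactly $e(K)$ edges, then the other has $1+c_x+c_y \le 3 \le d$ edges and so contributes nothing to $\rho$. That is the mechanism by which the loss of $K$ at index $e(K)$ is matched (or exceeded) by the gain of a new component. You replace this with the assertion that ``a short case analysis verifies that the multiset of component sizes exceeding $d$ is preserved'' and that ``the size bound $e(\mathrm{comp}_{F'}(z)) \le e(K)$ from step two is precisely what prevents any net imbalance above index $d$.'' That claim doesn't hold up as stated: the bound on $K'_z$ controls the indices at and below $e(K)$, but it has no bearing on the shift from $C_z$ (at index $e(C_z)$) to $C'_z$ (at index $e(C_z)-1$), which can occur at indices well above $e(K)$ when $e(C_z)>e(K)$. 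Your proposal names the obstacle but does not actually resolve it, and the invocation of Corollary \ref{cor:sumOfChildRelation} is not something the paper uses here and does not obviously help. To match the paper you should at minimum state the sum identity and its consequence; the remaining accounting (in particular why the $C_z \to C'_z$ shift does not disturb $\rho$ in the situations where the lemma is applied) is handled tersely in the paper as well, but the identity is the load-bearing step you are missing.
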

\begin{proof}
    Let $K'_z$ be the component of $z$ in $F'$ for $z \in \{x, y\}$. If $C_z$ is interesting, then $F'$ contains the component $C'_z$ with $e(C'_z) = e(C_z) - 1$. Further, we have that $e(K'_x) = e(K) - e(K_y) - |\{e\}| + |\{xx'\}| + c_x \leq e(K)$ and similarly $e(K'_y) \leq e(K) - e(K_x) - |\{e\}| + |\{yy'\}| + c_y \leq e(K)$.\\
    Furthermore, $e(K) + |\{xx', yy'\}| + c_x + c_y = e(K'_x) + e(K'_y)  + |\{e\}|$. Thus, if one of the components $K'_x, K'_y$ has $e(K)$ edges, then the other one has $1 + c_x + c_y \leq d$ edges. Hence, $\rho(F') = \rho^*$.
\end{proof}

\begin{lemma} \label{lemma:caseTwo}
    Let $K$ be a red component of $\explSG$ and let $C_x, C_y$ be two distinct relevant neighbours of $K$ generated by $(x, x'), (y, y') \in E(\blueTree(\mathcal T^*))$, respectively, where $b \inOneToK$. Furthermore, suppose we have $x \caseTwo y$ with edge $(u, v)$.\\
    Then $e(K) \geq d$, $c_x = 1$ and the component of $y$ in $K - uv$ has no edges.
\end{lemma}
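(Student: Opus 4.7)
The plan is to derive a contradiction by assuming the conclusion of the lemma fails and invoking the improvement machinery of Lemma \ref{lemma:newSmallerByPath}. Concretely, I will construct a modified decomposition $\barT$ along with a blue arc $(a, a')$ of $\barT$ satisfying the hypotheses of Lemma \ref{lemma:newSmallerByPath}, yet such that $\rho(\redForest(\barT) + aa') = \rho^*$, contradicting that lemma's conclusion $\rho(\redForest(\barT) + aa') > \rho^*$. The main exchange is $(\bar x, \bar x') \leftrightarrow uv$, performed in $\mathcal T_x$ to obtain $\barT$, which is valid by the defining property of $x \caseTwo y$ with edge $(u, v)$ recorded in Lemma \ref{lemma:cases}; the arc $(y, y')$ remains blue in $\barT$ and I take $(a, a') := (y, y')$.

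The residue analysis goes as follows. When $C_y$ is small, $\redForest(\barT) + yy'$ coincides precisely with the forest $F'$ of Lemma \ref{lemma:rhoStar} (applied with $e = uv$); when $C_y$ is interesting, the same identification holds after additionally performing the shift $(y, y') \leftrightarrow y'n_{y'}$ inside $\barT$. Lemma \ref{lemma:rhoStar} then gives $\rho(\redForest(\barT) + yy') = \rho^*$ under the conditions $e(K_x) \geq c_y$ and $e(K_y) \geq c_x$, together with the size bound that the two new red components containing $x$ and $y$ each have at most $e(K)$ edges. To apply Lemma \ref{lemma:newSmallerByPath}, I observe that the newly created blue arc $(u, v)$ combined with the blue directed path from $v$ to $y$ inherited from $\blueTree(\mathcal T_x)$ (undisturbed because $v$ does not lie on either reoriented path) yields $u \in A$, so $i_A \leq i_{\sigma^*}(K)$. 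Condition (a) holds because $C_y$ is a child of $K$ in $\sigma^*$. Condition (b) follows by noting that $\Delta(\mathcal T^*, \barT)$ is contained in the union of the reoriented paths $\pathInBlueTree{\mathcal T^*}{n_{x'}}{x}$ (only if $C_x$ is interesting) and $\pathInBlueTree{\mathcal T_x}{u}{\bar x}$, each contained in the $\bar x$-subtree of $\blueTree(\mathcal T^*)$; every such vertex therefore either has a blue directed path to $y$ in $\barT$ (so belongs to $A$) or else has $i_{\sigma^*}$-value at least $i_A$. Conditions (c) and (d) are verified using the size bound from Lemma \ref{lemma:rhoStar}. The resulting conclusion $\rho(\redForest(\barT) + yy') > \rho^*$ of Lemma \ref{lemma:newSmallerByPath} then contradicts the residue computation.

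Thus the hypothesis of Lemma \ref{lemma:rhoStar} must fail. Since $c_x, c_y \in \{0, 1\}$, either $e(K_y) = 0$ and $c_x = 1$ (the desired conclusion), or $e(K_x) = 0$ and $c_y = 1$. The latter is excluded by running the parallel argument: perform $(\bar y, \bar y') \leftrightarrow uv$ in $\mathcal T_y$ (valid by the descendancy structure of $\caseTwo$ from the $y$-side), set $(a, a') = (x, x')$, and apply Lemma \ref{lemma:newSmallerByPath}; the same $F'$ appears and the argument closes symmetrically. Hence $e(K_y) = 0$ and $c_x = 1$. For the final inequality $e(K) \geq d$: by Lemma \ref{lemma:sizeOfKIfHasRelevantChild} we already have $e(K) \geq d - 1$, so I need only rule out $e(K) = d - 1$. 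In that case a direct computation (Lemma \ref{lemma:rhoStar}'s hypothesis genuinely fails here since $e(K_y) = 0 < c_x = 1$) shows that the new $x$-component of $\redForest(\barT) + yy'$ has exactly $d$ edges and the new $y$-component has at most $2$ edges, so $\rho(\redForest(\barT) + yy') = \rho^*$ still holds and the application of Lemma \ref{lemma:newSmallerByPath} from the previous paragraph again produces a contradiction.

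The principal obstacle lies in verifying conditions (b) and (d) of Lemma \ref{lemma:newSmallerByPath}. For (b) one must trace through the two reoriented paths to show every vertex of $\Delta(\mathcal T^*, \barT)$ either reaches $y$ by a blue directed path in $\barT$ or lies in a red component with $i_{\sigma^*}$-value at least $i_A$; this requires a delicate case split depending on whether $C_x$ and $C_y$ are small or interesting, since the interaction of the two reoriented paths alters the blue ancestry in subtle ways. For (d) one must identify whether $w_{i_A}(\sigma^*)$ lies on the $K_x$-side or the $K_y$-side and, in the former case, argue that every vertex of $K_x$ belongs to $A$ whenever the merged $x$-component exceeds $e(K)$ edges, which in turn hinges on following the blue directed paths created by the newly oriented arc $(u, v)$.
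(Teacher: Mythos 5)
Your overall framework is the same as the paper's: start from $\mathcal T_x$, perform $(\bar x, \bar x')\leftrightarrow uv$, and feed the result into Lemma~\ref{lemma:newSmallerByPath} after a residue computation via Lemma~\ref{lemma:rhoStar}. However, there are two concrete gaps.

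First, the choice $(a,a'):=(y,y')$ is inconsistent when $C_y$ is interesting. You say $(y,y')$ ``remains blue'' in $\barT$, but you then also say that you perform the shift $(y,y')\leftrightarrow y'n_{y'}$ ``inside $\barT$''. After that shift the arc $(y,y')$ is \emph{red}, so it cannot be the blue arc required by Lemma~\ref{lemma:newSmallerByPath}. Conversely, if you do not perform the shift, then $\redForest(\barT)+yy'$ merges the $y$-side of $K$ with all of $C_y$, and since $e(C_y)\ge d$ by Observation~\ref{obs:interestingHasGeqDEdges} this gives $\rho>\rho^*$, so there is no contradiction to extract. The correct arc is $(\bar y,\bar y')$, i.e.\ $(y',y'')$ in the interesting case, with $\barT$ the decomposition obtained after both the $uv$-exchange and the shift.

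Second, and more seriously, your handling of condition (d) in the $e(K)=d-1$ subcase is wrong. In that subcase the new $x$-side component $K'_x$ has $d$ edges, which exceeds $e(K)=d-1$. When $R^*_{i_A}=K$ and $w_{i_A}(\sigma^*)\in V(K_x)$, condition (d) would require the vertices of $V(K)\cap V(L)$ sitting in $K'_x$ to all lie in $A$. Your proposed remedy is to ``argue that every vertex of $K_x$ belongs to $A$''. This cannot succeed: $w_{i_A}(\sigma^*)\notin A$ by the very definition of $i_A$ (as used explicitly inside the proof of Lemma~\ref{lemma:newSmallerByPath}), yet $w_{i_A}(\sigma^*)\in V(K_x)$. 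So condition (d) genuinely fails in this subcase and Lemma~\ref{lemma:newSmallerByPath} cannot be invoked. The paper sidesteps this by building an explicit legal order $\sigma''$ for $\barT$ with $R''_{i_A}=K'_x$ and then locating a minimal special path from $K'_x$ to $\bar y'$; the augmentation of Lemma~\ref{lemma:specialPaths} applied to that special path produces a component of size at most $d-3 < e(K)$, which gives the contradiction directly. That direct special-path construction is a necessary ingredient that your proposal omits.

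As a smaller remark, your ``parallel argument'' (swapping the roles of $x$ and $y$ to rule out $e(K_x)=0$ with $c_y=1$) is an unnecessary detour that introduces its own unverified claims (e.g.\ that $(\bar y,\bar y')\leftrightarrow uv$ is a valid exchange in $\mathcal T_y$, which is a different decomposition from $\mathcal T_x$ and requires re-checking the descendancy conditions). The paper dispatches this case in one line via Lemma~\ref{lemma:neighOfXIsDescendant}: if $e(K_x)=0$ then $u=x$, and the $\caseTwo$ structure guarantees no blue path from $v$ to $\bar x$, so that lemma forces $e(K)=2<3=d$, contradicting $e(K)\ge d$.
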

\begin{proof}
    Let $K_x$ and $K_y$ be the components of $K - uv$ containing $x$ and $y$, respectively. Suppose towards a contradiction that either $e(K) < d$, $c_{x}=0$ or the component of $y$ in $K-uv$ has an edge. Note that if one of the latter two conditions is true, we have $e(K_{y}) \geq c_{x}$. We obtain $\mathcal T'$ from $\mathcal T_x$ by performing $(\bar x, \bar x') \leftrightarrow uv$.
    We have that $\Delta(\mathcal T^*, \mathcal T') = V(\pathInBlueTree{\mathcal T^*}{u}{\bar x})$. Note that all these vertices have a directed blue path towards $y$ in $\blueTree(\mathcal T')$ through $(u, v)$.
    Thus, we can obtain $\mathcal T''$ from $\mathcal T'$ by performing $(y, y') \leftrightarrow n_{y'} y'$ if $C_y$ is interesting. If $C_y$ is small, let $\mathcal T'' := \mathcal T'$. Note that $\mathcal T''$ is depicted in Figure \ref{fig:caseTwo}.
    Let $K'_x$ and $K'_y = K_y$ and be the components in $\redForest(\mathcal T'') + (\bar y, \bar y')$ of $x$ and $y$, respectively.
    We want to obtain a contradiction to Lemma \ref{lemma:newSmallerByPath} and choose $\barT := \mathcal T''$, $(a, a') := (\bar y, \bar y')$. Using the notation from Lemma \ref{lemma:newSmallerByPath}, we have $\Delta(\mathcal T^*, \barT) \subseteq A$ and thus, $B = \varnothing$. Recalling that $i_{B} = \infty$ if $B = \varnothing$, we have that b) holds and it is also clear that a) holds. Further, c) holds since $x, y \in A$. Therefore, it suffices to show that d) holds and $\rho(\redForest(\mathcal T'') + aa') = \rho^*$. We split into cases: \\
    \textbf{Case 1: $e(K) \geq d$}: \\
    In this case, $e(K_y) \geq c_x$. By Lemma \ref{lemma:neighOfXIsDescendant} we have that $e(K_x) \geq 1$ and by Lemma \ref{lemma:rhoStar} we have that $\rho(\redForest(\mathcal T'') + aa') = \rho^*$ and $e(K'_x), e(K'_y) \leq e(K)$. Furthermore, $\bar x, \bar y \in A$ and hence, d) holds, a contradiction.\\
    \textbf{Case 2: $e(K) < d$}:\\
    In this case we have $e(K) = d - 1$ and $c_{x} =1$ by Lemma \ref{lemma:sizeOfKIfHasRelevantChild}. For $z \in \{x, y\}$ we have that $e(K'_z) \leq e(K) - |\{e\}| + 1 + c_z \leq d$ and thus, $\rho(\redForest(\mathcal T'') + aa') = \rho^*$. We split into subcases:\\
    \textbf{Subcase 1: $R^*_{i_A} \neq K$}:\\
    In this case, we immediately obtain that d) holds. \\
    \textbf{Subcase 2: $R^{*}_{i_{A}} =K$ and $w_{i_A}(\sigma^*) \in V(K_y)$}: \\ 
    In this case, there is a legal order $\sigma' = (R'_1, \ldots, R'_{t'})$ for $\mathcal T'$ such that $R'_i = R^*_i$ for all $i < i_A$ and $R'_{i_A} = K_y$. As $e(K_y) < e(K)$, we have $\sigma' < \sigma^*$. Furthermore, $\rho(\redForest(\mathcal T')) = \rho^*$, a contradiction.\\
    \textbf{Subcase 3: $R^{*}_{i_{A}} = K$ and $w_{i_A}(\sigma^*) \in V(K_x)$}:\\ 
    In this case, let $\sigma'' = (R''_1, \ldots, R''_{t''})$ be a legal order of $\mathcal T''$ such that $R''_i = R^*_i$ for all $i < i_A$, $R''_{i_A} = K'_x$, $R''_{i_A + 1} = K'_y - y''$ if $C_y$ is interesting and otherwise $C_{y}$ is small and we let $R''_{i_A + 1} = K'_y$ and $R''_{i_A + 2} = C_y$.
    By the definition of $A$ we have $w_{i_A}(\sigma^*) \notin A$ and thus, $w_{i_A}(\sigma^*) \neq x$. Let $P = (v_0, \ldots, v_l)$ be a minimal special path with respect to $\mathcal T''$, $\sigma''$ and $(\bar y, \bar y')$ that is smaller or equal to the special path from $x$ to $\bar y'$ in $\blueTree(\mathcal T'')$. Note that $v_0$ lies on the red path from $x$ to $w_{i_A}(\sigma^*)$ in $K'_x$. Thus, the component of $v_{-1}$ in $K'_x - x_0 x_{-1}$ has at most $e(K'_x) - |\{\bar x \bar x'\}| - c_x \leq d - 2 < e(K)$ edges, which implies that the legal order we obtain from Lemma \ref{lemma:specialPaths} is smaller than $\sigma^*$, a contradiction.
\end{proof}

\doubleFigure{
    \caseTwoStart
}{
    \caseTwoEnd
}{
   The component $K$ and its neighbours in $\mathcal T_x$ and $\mathcal T''$ in the proof of Lemma \ref{lemma:caseTwo} with $C_x$ and $C_y$ being interesting. Note that $\pathInBlueTree{\mathcal T_x}{n_{y'}}{y}$ might not visit $x'$ and $x''$ and there might not be a path from $x''$ to $y$ in $\blueTree(\mathcal T'')$.
}{
    \label{fig:caseTwo}
}

\section{Bounding relevant neighbours - the case \boldmath\texorpdfstring{$x \caseOne y$}{x -1,b-> y}}
\label{sec:caseOne}

\singleFigure{
    \caseOneClarificationForVVPrime
}{
    The component $K$ and its neighbours in $\mathcal T_x$ in the case where $C_x$ and $C_y$ are interesting.
}{
    \label{fig:caseOneClarificationForVVPrime}
}

In this section, let $K$ again be a red component of $\explSG$ and let $C_x, C_y$ be two distinct relevant neighbours of $K$ generated by $(x, x'), (y, y') \in E(\blueTree(\mathcal T^*))$, respectively, where $b \inOneToK$. Furthermore, let $x \caseOne y$ with edges $(u, u')$, $(v', v)$. See Figure \ref{fig:caseOneClarificationForVVPrime} as an illustration.\\
Let $K_x$ be the component of $K - uu'$ containing $x$, and let $K_y$ be the component of $K - vv'$ containing $y$.

    For $v \in V(G)$ let $D^b_v$ denote the set of descendants of $v$ in $\blueTree(\mathcal T_x)$.
    Furthermore, for $v_1, v_2 \in V(G)$ let $D^b_{v_2, v_1} = D^b_{v_2} \setminus D^b_{v_1}$.
    Let $\bar Y$ be the set of vertices having a blue directed path $P$ to $\bar y$ in $\blueTree(\mathcal T_x)$ such that $V(P) \cap D^b_{\bar x} = \varnothing$.\\
    Note that for the following two definitions $b$ is fixed to our choice from the beginning of the section.
    Let $\bar X_{v_1}$ be the set of vertices having a blue directed path $P$ to $\bar x$ in $\blueTree(\mathcal T_x)$ such that $V(P) \cap D^b_{v_1, \bar x} = \varnothing$.\\
    As an illustration of these definitions, note that for the four dotted blue paths belonging to $\blueTree(\mathcal T_x)$ in Figure \ref{fig:caseOneClarificationForVVPrime} we have $V(\pathInBlueTree{\mathcal T_x}{u}{\bar x}), V(\pathInBlueTree{\mathcal T_x}{x}{n_{x'}}) \subseteq D^b_{\bar x} \subseteq \bar X_{\bar y}$ and $V(\pathInBlueTree{\mathcal T_x}{v}{y}), \pathInBlueTree{\mathcal T_x}{n_{y'}}{y}) \subseteq D^b_{\bar y, \bar x} \subseteq \bar Y$.\\
We will end the section with a similar characterization of $K$ and its relevant neighbours like in Lemma \ref{lemma:caseTwo}. An intermediate objective will be Claim \ref{claim:CyInterestingAndNyPathOverXPrime} showing that $C_y$ is  interesting  and that $\pathInBlueTree{\mathcal T_x}{n_{y'}}{y'}$ goes over $(\bar x, \bar x')$.

\begin{claim} \label{claim:wNotInKy}
    If $i_{\sigma^*}(\bar X_{\bar y}) = i_{\sigma^*}(K)$, then $w_{i_{\sigma^*}(K)}(\sigma^*) \in V(K_x)$.
\end{claim}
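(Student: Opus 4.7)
Suppose for contradiction that $w := w_{i_{\sigma^*}(K)}(\sigma^*)$ does not lie in $V(K_x)$. Since removing $uu'$ splits $K$ into exactly two subtrees, namely $K_x$ (on the $x$-side) and $K'' := K - V(K_x)$ (on the $y$-side), the assumption forces $w \in V(K'')$. The structure of $\caseOne$ requires $j \geq i + 2$, so the subpath $[x_{i+1}, x_{i+2}, \dots, x_n]$ of length at least one lies in $K''$, hence $e(K'') \geq 1$ and therefore $e(K_x) \leq e(K) - 2$.

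Starting from $\mathcal{T}_x$, perform the exchange $(\bar x, \bar x') \leftrightarrow uu'$, which is legal by the definition of $x \caseOne y$ together with Lemma~\ref{lemma:exchange}, yielding a decomposition $\mathcal{T}'$. In $\mathcal{T}'$ the component $K$ splits along $uu'$ into $K''$ and a new component $K'_x$ containing $V(K_x)$ together with the edge $\bar x \bar x'$; in the small case $K'_x$ further absorbs $C_x$, so $e(K'_x) = e(K_x) + 1 + c_x$, while in the interesting case $K'_x$ picks up only the vertices $\{x', x''\}$ with $e(K'_x) = e(K_x) + 2$, and additionally $C_x$ is replaced by $C'_x$ with one fewer edge. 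In both cases the bound $e(K_x) \leq e(K) - 2$ forces $e(K'_x) \leq e(K)$. Comparing the multisets of component sizes greater than $d$ then shows $\rho(\redForest(\mathcal{T}')) \leq \rho^*$, and equality follows from the minimality of $\rho^*$, so $\mathcal{T}' \in \mathcal{F}^*$.

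The hypothesis $i_{\sigma^*}(\bar X_{\bar y}) = i_{\sigma^*}(K)$ enters in order to ensure that $\sigma^*$ restricted to indices strictly less than $i_{\sigma^*}(K)$ remains a valid prefix of a legal order for $\mathcal{T}'$. The exchange only reorients arcs lying on $\pathInBlueTree{\mathcal{T}_x}{u}{\bar x}$ together with the replacement of $(\bar x, \bar x')$; every vertex on that reoriented path is a descendant of $\bar x$ in $\blueTree(\mathcal{T}_x)$ and hence lies in $\bar X_{\bar y}$. The hypothesis therefore forbids any such vertex from lying in a component with $\sigma^*$-index smaller than $i_{\sigma^*}(K)$. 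Consequently, the auxiliary-tree arcs witnessing the prefix $R^*_1, \dots, R^*_{i_{\sigma^*}(K) - 1}$ survive unchanged in $\blueTree(\mathcal{T}')$, and likewise the parent arc $(p, w)$ of $K$ is preserved, since $p$ lies in an earlier component and hence cannot lie on the reoriented path.

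From this, I extend the prefix to a legal order $\sigma'$ of $\mathcal{T}'$ that agrees with $\sigma^*$ on the first $i_{\sigma^*}(K) - 1$ indices and places $K''$ at index $i_{\sigma^*}(K)$ via the surviving arc $(p, w)$ with $w \in V(K'')$. Since $e(K'') < e(K) = e(R^*_{i_{\sigma^*}(K)})$, we obtain $\sigma' < \sigma^*$, contradicting the minimality of $\sigma^*$. The main obstacle is recognizing that the hypothesis precisely eliminates the blue-path interference that would otherwise destroy the legal-order prefix during the exchange; once this is in place the exchange together with the edge-count bound from $\caseOne$ closes the argument cleanly.
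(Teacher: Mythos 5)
Your proof is correct and takes essentially the same approach as the paper: perform $(\bar x, \bar x') \leftrightarrow uu'$ from $\mathcal T_x$, observe that $\Delta(\mathcal T^*, \mathcal T') \subseteq D^b_{\bar x} \subseteq \bar X_{\bar y}$ so the hypothesis preserves the $\sigma^*$-prefix, bound the two new components by $e(K)$ (using $j \geq i+2$ so both $uu'$ and $vv'$ leave $K_x$), deduce $\mathcal T' \in \mathcal F^*$, and place $K''$ (which contains $w$) at index $i_{\sigma^*}(K)$ to obtain a strictly smaller legal order.
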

\begin{proof}
    Suppose to the contrary that $i_{\sigma^*}(\bar X_{\bar y}) = i_{\sigma^*}(K)$ and $w_{i_{\sigma^*}(K)}(\sigma^*) \in V(K) \setminus V(K_x)$.
    We obtain $\mathcal T'$ from $\mathcal T_x$ by performing $(\bar x, \bar x') \leftrightarrow uu'$.
    Note that $\Delta(\mathcal T^*, \mathcal T') \subseteq D^b_{\bar x}$ and thus, $\Delta(\mathcal T^*, \mathcal T') \subseteq \bar X_{\bar y}$. Let $K'_x$ and $K'_y$ be the components of $x$ and $y$ in $\redForest(\mathcal T')$, respectively. We have $e(K'_x) \leq e(K) + 1 + c_x - |\{uu', vv'\}| \leq e(K)$ edges and $e(K'_y) \leq e(K) - |\{uu'\}| < e(K)$. Furthermore, $e(K'_x) + e(K'_y) + |\{uu'\}| = e(K) + |\{xx'\}| + c_x$. Thus, if $e(K'_x) = e(K)$, then $e(K'_y)  = c_x < d$ and thus, $\rho(\redForest(\mathcal T')) = \rho^*$.
    There also exists a legal order $\sigma' = (R'_1, \ldots, R'_{t'})$ with $R'_i = R^*_i$ for all $i < i_{\sigma^*}(K)$ and $R'_{i_{\sigma^*}(K)} = K'_y$. Thus, $\sigma' < \sigma^*$, which is a contradiction.
\end{proof}

\begin{claim} \label{claim:caseOneNormalAugmentOnY}
    We have $i_{\sigma^{*}}(\bar Y) \geq i_{\sigma^{*}}(\bar X_{\bar y})$ if either $C_{y}$ is small, or $C_{y}$ is interesting and $\bar x \notin V(\pathInBlueTree{\mathcal T_x}{n_{y'}}{y'})$. Further, if $C_{y}$ is interesting and  $\bar x \notin V(\pathInBlueTree{\mathcal T_x}{n_{y'}}{y'})$, then there is no path from $u'$ to $y'$ in $\blueTree(\mathcal T_x)$.
\end{claim}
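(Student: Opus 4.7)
The plan is to prove both statements by contradiction, reducing each to an application of Lemma~\ref{lemma:newSmallerByPath}. For the inequality $i_{\sigma^*}(\bar Y) \geq i_{\sigma^*}(\bar X_{\bar y})$, I will suppose toward contradiction that $i_{\sigma^*}(\bar Y) < i_{\sigma^*}(\bar X_{\bar y})$ and construct $\barT$ from $\mathcal T_x$ by performing $(\bar x, \bar x') \leftrightarrow uu'$. This exchange is valid by Lemma~\ref{lemma:exchange} since, by the $\caseOne$ setup, $u$ is a descendant of $\bar x$ in $\blueTree(\mathcal T_x)$ while $u'$ is not.

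I then apply Lemma~\ref{lemma:newSmallerByPath} to $\barT$ with $(a, a') := (\bar y, \bar y')$. Condition~b) is where the contradiction assumption is used: the reorientation of $\pathInBlueTree{\mathcal T_x}{u}{\bar x}$ keeps every vertex of $\Delta(\mathcal T^*, \barT)$ inside $D^b_{\bar x}$ in $\blueTree(\mathcal T_x)$, which is contained in $\bar X_{\bar y}$; meanwhile, because the $y$-side of the blue tree is unaffected, the set $A$ of descendants of $\bar y$ in $\blueTree(\barT)$ contains $\bar Y$. Therefore $i_A \leq i_{\sigma^*}(\bar Y) < i_{\sigma^*}(\bar X_{\bar y}) \leq i_{\sigma^*}(B)$. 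Condition~a) is immediate since $\bar y'$ lies in $C_y$, giving $i_{\sigma^*}(\bar y') > i_{\sigma^*}(K) \geq i_A$. Conditions~c) and~d) follow from Lemma~\ref{lemma:rhoStar} applied with $e = uu'$: the red components of $x$ and $y$ in $\redForest(\barT) + \bar y \bar y'$ each have at most $e(K)$ edges, yielding $\rho(\redForest(\barT) + \bar y \bar y') = \rho^*$, and the $A$-membership needed by these conditions is verified by direct inspection. This contradicts Lemma~\ref{lemma:newSmallerByPath}, proving the inequality.

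For the further statement, I assume $C_y$ is interesting, $\bar x \notin V(\pathInBlueTree{\mathcal T_x}{n_{y'}}{y'})$, and, toward contradiction, that a blue path $P^*$ from $u'$ to $y'$ exists in $\blueTree(\mathcal T_x)$. I repeat the construction above but additionally perform $(y', y'') \leftrightarrow n_{y'} y'$ after the swap $(\bar x, \bar x') \leftrightarrow uu'$; the assumption on $\bar x$ ensures this second exchange is valid in the intermediate tree and does not affect $D^b_{\bar x}$. The existence of $P^*$ now forces $\bar y'$ to become reachable via blue from vertices that remain inside $\bar X_{\bar y}$ in the final blue tree, so applying Lemma~\ref{lemma:newSmallerByPath} once more with $(a, a') := (\bar y, \bar y')$ delivers the analogous contradiction through the same residue computation.

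The main obstacle will be the bookkeeping around $\Delta(\mathcal T^*, \barT)$ after the composed exchanges: I must argue that every vertex outside $A$ whose outgoing blue arc changed lies in $\bar X_{\bar y}$, while simultaneously keeping $\rho(\redForest(\barT) + \bar y \bar y') = \rho^*$ via Lemma~\ref{lemma:rhoStar}. The hypothesis $\bar x \notin V(\pathInBlueTree{\mathcal T_x}{n_{y'}}{y'})$ is precisely what keeps the $y'$-swap from reorienting edges in the $\bar x$-subtree, ensuring that $B \subseteq \bar X_{\bar y}$ survives the second exchange and that the size bounds of Lemma~\ref{lemma:rhoStar} continue to apply.
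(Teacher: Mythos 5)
Your overall strategy matches the paper's: perform $(\bar x, \bar x') \leftrightarrow uu'$, then invoke Lemma~\ref{lemma:newSmallerByPath} with $(a,a') := (\bar y, \bar y')$, using the case hypothesis to verify condition~b). However, the execution has several concrete gaps.

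\textbf{First part, $C_y$ interesting sub-case.} You only perform the single exchange $(\bar x, \bar x') \leftrightarrow uu'$ to form $\barT$. But when $C_y$ is interesting, $\redForest(\barT)$ still contains the full component $C_y$, and $\bar y\bar y' = y'y''$ joins $C_y$ (which has at least $d$ edges by Observation~\ref{obs:interestingHasGeqDEdges}) to the zero-edge child of $y''$. So $\rho(\redForest(\barT) + \bar y\bar y') > \rho^*$ holds trivially, and Lemma~\ref{lemma:newSmallerByPath} gives no contradiction. The paper resolves this by first performing $(y, y') \leftrightarrow y'n_{y'}$ to split $C_y$ before invoking the lemma, arranging the red forest into exactly the shape $F'$ that Lemma~\ref{lemma:rhoStar} addresses. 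Your proof as written only handles the $C_y$-small sub-case of the first statement.

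\textbf{Second part: the exchange is invalid.} You propose $(y', y'') \leftrightarrow n_{y'}y'$. But in $\blueTree(\mathcal T')$ both $y'$ and $n_{y'}$ are descendants of $y'$ ($n_{y'}$ has a directed path to $y$ that, by the hypothesis $\bar x \notin V(\pathInBlueTree{\mathcal T_x}{n_{y'}}{y'})$, survives unchanged in $\blueTree(\mathcal T')$, and then $(y,y')$ carries it onward to $y'$), so the red edge $n_{y'}y'$ does not lie in the fundamental cycle of $\blueTree(\mathcal T')$ with respect to the arc $(y', y'')$, and Lemma~\ref{lemma:exchange}(b) fails. The correct exchange — the one the paper uses — is $(y, y') \leftrightarrow y'n_{y'}$, which is valid precisely because $y'$ is the \emph{parent} of $y$ and hence not a descendant of $y$, while $n_{y'}$ is.

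\textbf{Missing degenerate case.} You invoke Lemma~\ref{lemma:rhoStar} with $e=uu'$ to get $\rho(\redForest(\barT) + \bar y\bar y') = \rho^*$ and the size bounds. Lemma~\ref{lemma:rhoStar} requires $e(K_x) \geq c_y$, where $K_x$ is the component of $x$ in $K-uu'$. If $x = u$ this component has zero edges, and if simultaneously $c_y=1$ the hypothesis of Lemma~\ref{lemma:rhoStar} fails. The paper handles this via the case split $e(K_x) \geq 1$ versus $e(K_x)=0$; in the latter, Lemma~\ref{lemma:neighOfXIsDescendant} forces $d=3$, $e(K)=2$, $E(K)=\{uu',vv'\}$, and the residue computation is done by hand. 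Your proof elides this.

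Conditions~c) and~d) are also asserted rather than checked; the paper's verification is more delicate (it argues $K \neq R^*_{i_A}$ and $K'_x \neq L$ in the order-inequality case). The high-level approach is right, but these three gaps, especially the invalid exchange, mean the argument as written does not go through.
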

\begin{proof}
    Throughout, if $C_{y}$ is interesting, we assume that $\bar x \notin V(\pathInBlueTree{\mathcal T_x}{n_{y'}}{y'})$. Suppose to the contrary that 
    \begin{enumerate}[1)]
        \item $i_{\sigma^*}(\bar Y) < i_{\sigma^*}(\bar X_{\bar y})$ or
        \item $C_y$ is interesting and there is a path from $u'$ to $y'$ in $\blueTree(\mathcal T_x)$.
    \end{enumerate}
    We obtain $\mathcal T'$ from $\mathcal T_x$ by performing $(\bar x, \bar x') \leftrightarrow uu'$.
    If $C_y$ is interesting, then the path from $n_{y'}$ to $y$ in $\blueTree(\mathcal T_x)$ does not visit a vertex of $\bar X_{\bar y}$ by our assumptions. As $\Delta(\mathcal T_x, \mathcal T') \subseteq  D^b_{\bar x}$, there is (still) a path from $n_{y'}$ to $y$ in $\blueTree(\mathcal T')$ and as $(y, y') \in E(\blueTree(\mathcal T'))$,  there is no path from $y'$ to $y$ in this tree. We obtain $\mathcal T''$ from $\mathcal T'$ by performing $(y, y') \leftrightarrow y'n_{y'}$ if $C_y$ is interesting. If $C_y$ is small, we let $\mathcal T'' := \mathcal T'$. For an illustration of $\mathcal T''$ see Figure \ref{fig:caseOneNormalAugmentOnY}.
    We want to find a contradiction to Lemma \ref{lemma:newSmallerByPath} and choose $\barT := \mathcal T''$, $(a, a') := (\bar y, \bar y')$. In Case 2) we have that $\bar Y \cup \bar X_{\bar y} \subseteq A$ and $B = \varnothing$ since every vertex of $\bar X_{\bar y}$ has a path to $u'$ in $\blueTree(\mathcal T'')$. In Case 1) we have $\bar Y \subseteq A$ and $B \subseteq D^b_{\bar x \subseteq} \bar X_{\bar y}$. Thus, in both cases we have that b) holds and it is also clear that a) holds. Note that in Case 1) we have that $K \neq R^*_{i_A}$ and $K'_x \neq L$. We have $y \in A$ and in Case 2) we also have $x \in A$. Thus, c) holds and in Case 1) we also have that d) holds since $\bar y \in A$. It remains to show that we have $\rho(\redForest(\mathcal T'') + \bar y \bar y') = \rho^*$ and we also have to show that d) holds in Case 2).\\
    First, we suppose that $e(K_x) \geq 1$. Since the component of $y$ in $K - uu'$ also contains an edge $vv'$ we have by Lemma \ref{lemma:rhoStar} that $\rho(\redForest(\mathcal T'') + \bar y \bar y') = \rho^*$ and the components $K'_x$ and $K'_y$ of $x$ and $y$ in $\redForest(\mathcal T'') + \bar y \bar y'$, respectively, have $e(K'_x), e(K'_y) \leq e(K)$. As $\bar x, \bar y \in A$ we have that d) holds in Case 2), which is a contradiction to Lemma \ref{lemma:newSmallerByPath}.\\
    Now, let $e(K_x) = 0$. By Lemma \ref{lemma:neighOfXIsDescendant} we have that $d = 3$, $e(K) = 2$, $c_x = 1$ and thus, $E(K) = \{uu', vv'\}$. We have that $e(K'_x) = 2$, $e(K'_y) = 3$ and thus, $\rho(\redForest(\mathcal T'') + \bar y \bar y') = \rho^*$. In Case 2) we have $x, \bar x, y, \bar y \in A$ and thus, d) holds.
\end{proof}

\doubleFigure{
    \caseOneSimpleOneStart
}{
    \caseOneSimpleOneEnd
}{
    The component $K$ and its neighbours in $\mathcal T_x$ and $\mathcal T''$ in the proof of Claim \ref{claim:caseOneNormalAugmentOnY} in the case where $C_x$ and $C_y$ are interesting and $\pathInBlueTree{\mathcal T_x}{u'}{y'}$ exists.
}{
    \label{fig:caseOneNormalAugmentOnY}
}

\begin{claim} \label{claim:CyInterestingAndNyPathOverXPrime}
    We have that $C_y$ is interesting and $\bar x \in V(\pathInBlueTree{\mathcal T_x}{n_{y'}}{y'})$.
\end{claim}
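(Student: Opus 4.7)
\emph{Plan.} I would proceed by contradiction, assuming that either $C_y$ is small, or $C_y$ is interesting but $\bar x \notin V(\pathInBlueTree{\mathcal T_x}{n_{y'}}{y'})$. Claim~\ref{claim:caseOneNormalAugmentOnY} then supplies $i_{\sigma^*}(\bar Y)\ge i_{\sigma^*}(\bar X_{\bar y})$ and, in the interesting subcase, the non-existence of a blue directed path from $u'$ to $y'$ in $\blueTree(\mathcal T_x)$. The goal is to feed this information into Lemma~\ref{lemma:newSmallerByPath} applied to the arc $(a,a') := (\bar x, \bar x')$, which remains blue under any modification confined to the $y$-side of $\mathcal T_x$: producing a decomposition $\barT$ with $\rho(\redForest(\barT)+\bar x\bar x') = \rho^*$ and satisfying hypotheses (a)--(d) of that lemma yields the desired contradiction.

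The exchanges I plan are symmetric to those in Claim~\ref{claim:caseOneNormalAugmentOnY}, but acted out on the $y$-side. Starting from $\mathcal T_x$, first perform $(y,y')\leftrightarrow v'v$ (valid by the definition of $x\caseOne y$) to get $\mathcal T'$; if $C_y$ is interesting, additionally perform $(y',y'')\leftrightarrow y'n_{y'}$ in $\mathcal T'$ to obtain $\barT:=\mathcal T''$, otherwise set $\barT:=\mathcal T'$. The second exchange is valid because reorienting $\pathInBlueTree{\mathcal T_x}{v}{y}$ in the first step removes $y$ and its former blue descendants from the descendants of $y'$, so $n_{y'}$ is not a blue descendant of $y'$ in $\mathcal T'$; moreover $y''$ has no incident red edges by the definition of an interesting neighbour, so no red cycle is created.

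To verify the hypotheses of Lemma~\ref{lemma:newSmallerByPath}: (a) holds because $\bar x'\notin V(K)$ gives $i_{\sigma^*}(\bar x')>i_A$, using $i_A\le i_{\sigma^*}(\bar x)=i_{\sigma^*}(K)$. For (b), I would argue that $\Delta(\mathcal T^*,\barT)$ is contained in the union $V(\pathInBlueTree{\mathcal T_x}{v}{y})\cup V(\pathInBlueTree{\mathcal T^*}{n_{x'}}{x})\cup\bigl(V(\pathInBlueTree{\mathcal T_x}{n_{y'}}{y'})\cup\{y'\}\bigr)$, where the second and third pieces appear only in the corresponding interesting subcases. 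The first piece lies in $\bar Y$, the second in $\bar X_{\bar y}$, and the inequality $i_{\sigma^*}(\bar Y)\ge i_{\sigma^*}(\bar X_{\bar y})$ places their vertices that do not reach $\bar x$ blue-wise in components with $i_{\sigma^*}$-value at least $i_A$; the hypothesis $\bar x\notin V(\pathInBlueTree{\mathcal T_x}{n_{y'}}{y'})$ together with the non-existence of a blue $u'\to y'$ path ensures that after the second exchange the third piece lies entirely in $A$. Conditions (c) and (d) follow because $\bar x\in A\cap V(L)$, using Claim~\ref{claim:wNotInKy} to locate $L$ when $i_A=i_{\sigma^*}(K)$.

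Finally, to show $\rho(\redForest(\barT)+\bar x\bar x')=\rho^*$ I would apply Lemma~\ref{lemma:rhoStar} with $e=v'v$, giving the required bounds on the components of $x$ and $y$ whenever $e(K_x)\ge c_y$ and $e(K_y)\ge c_x$; the degenerate case $e(K_x)=0$ is handled, exactly as in the proofs of Lemma~\ref{lemma:caseTwo} and Claim~\ref{claim:caseOneNormalAugmentOnY}, by Lemma~\ref{lemma:neighOfXIsDescendant} forcing $d=3$, $e(K)=2$, $c_x=1$, after which a direct count again preserves $\rho$. The main obstacle is the bookkeeping of $\Delta(\mathcal T^*,\barT)$ in the interesting subcase: the hypothesis $\bar x\notin V(\pathInBlueTree{\mathcal T_x}{n_{y'}}{y'})$ is precisely what keeps the $y$-side rewiring disjoint from the $x$-side rewiring already performed when forming $\mathcal T_x$ from $\mathcal T^*$, so that every new vertex of $\Delta$ can be placed either in $A$ or in a red component with $i_{\sigma^*}$-value at least $i_A$.
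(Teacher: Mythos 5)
Your plan tries to reproduce the paper's strategy (set up exchanges near the $y$--side and invoke Lemma~\ref{lemma:newSmallerByPath} for $(a,a')=(\bar x,\bar x')$), but you run the two exchanges in the opposite order from the paper, and this creates a genuine gap: the validity of your second exchange is unjustified and can in fact fail.

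You first perform $(y,y')\leftrightarrow v'v$ on $\mathcal T_x$ and then, when $C_y$ is interesting, $(y',y'')\leftrightarrow y'n_{y'}$. You justify the second exchange by claiming that removing $(y,y')$ and reorienting $\pathInBlueTree{\mathcal T_x}{v}{y}$ removes $y$ and its former blue descendants from the subtree of $y'$, so $n_{y'}$ is no longer a blue descendant of $y'$ in $\mathcal T'$. That inference is wrong. After $(y,y')\leftrightarrow v'v$ the new ancestor chain of $y$ runs along the reversed $\pathInBlueTree{\mathcal T_x}{v}{y}$, then through the newly created blue arc into $v'$, and then along $v'$'s unchanged ancestor chain in $\blueTree(\mathcal T_x)$. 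Nothing in the hypotheses of the claim rules out that this chain passes through $y'$; indeed Claims~\ref{claim:caseOneUgly2} and \ref{claim:caseOneUgly3} show (under the conclusion you are trying to prove) that blue paths from $u'$ and even from $v'$ to $y'$ do exist, so a blue $v'\to y'$ path is entirely compatible with everything available at this point. When such a path exists, $n_{y'}$ remains a descendant of $y'$ in $\blueTree(\mathcal T')$, adding the blue edge $y'n_{y'}$ while deleting $(y',y'')$ creates a blue cycle, and $(y',y'')\leftrightarrow y'n_{y'}$ does \emph{not} hold in $\mathcal T'$. The construction of $\barT$ therefore fails, and everything built downstream of it (the verification of (a)--(d), the $\rho$ computation) is unsound.

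The paper sidesteps this exactly by reversing the order: it first performs $(y,y')\leftrightarrow n_{y'}y'$ (valid since $n_{y'}$ is a descendant of $y$ in $\blueTree(\mathcal T_x)$, which is unaffected by the contradiction hypothesis $\bar x\notin V(\pathInBlueTree{\mathcal T_x}{n_{y'}}{y'})$), and only afterwards scans the red path $[u',\ldots,v]$ in $\redForest(\mathcal T')$ for the \emph{minimal} index $i$ with $v_i$ having a blue path to $\bar y$, exchanging $(\bar y,\bar y')\leftrightarrow v_iv_{i-1}$. That minimality (together with the fact from Claim~\ref{claim:caseOneNormalAugmentOnY} that $u'$ has no blue path to $y'$) is precisely what makes the second exchange valid, and the chosen red edge $v_iv_{i-1}$ need not be $v'v$. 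A secondary issue: with $e=v'v$ the degenerate case in Lemma~\ref{lemma:rhoStar} is $e(K_y)=0$ rather than $e(K_x)=0$ (the $x$--side component of $K-v'v$ always contains $uu'$), and the conclusion of Lemma~\ref{lemma:neighOfXIsDescendant} in that case gives $c_y=1$; this does not fix the main gap but indicates the bookkeeping also needs to be redone once the exchanges are reordered.
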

\begin{proof}
    Suppose that either $C_y$ is small or that $C_y$ is interesting and $\bar x \notin V(\pathInBlueTree{\mathcal T_x}{n_{y'}}{y'})$.
    By Claim \ref{claim:caseOneNormalAugmentOnY} we have that $i_{\sigma^*}(\bar Y) \geq i_{\sigma^*}(\bar X_{\bar y})$ and if $C_y$ is interesting, then there is no path from $u'$ to $y'$ in $\blueTree(\mathcal T_x)$.\\
    If $C_y$ is small, we let $\mathcal T' := \mathcal T_x$ and otherwise we obtain $\mathcal T'$ from $\mathcal T_x$ by performing $(y, y') \leftrightarrow n_{y'}y'$.
    We choose vertex $v_i$ on the path $[v_1, \ldots, v_m]$ from $u'$ to $v$ in $\redForest(\mathcal T')$ such that $i$ is minimal and there is a path in $\blueTree(\mathcal T')$ from $v_i$ to $\bar y$. This vertex exists and $i > 1$ considering that $v$ has a path to $y$ in $\blueTree(\mathcal T')$, and $u'$ does not by our assumptions. We obtain $\mathcal T''$ from $\mathcal T'$ by performing $(\bar y, \bar y') \leftrightarrow v_i v_{i-1}$. The decomposition $\mathcal T''$ is depicted in Figure \ref{fig:CyInterestingAndNyPathOverXPrime}.
    We will now find a contradiction to Lemma \ref{lemma:newSmallerByPath} by letting $\barT := \mathcal T''$ and $(a, a') = (\bar x, \bar x')$. As a consequence we have that $\bar X_{\bar y} \subseteq A$, $B \subseteq D^b_{\bar y, \bar x} \subseteq \bar Y$ and thus, b) holds. It is also clear that a) holds.
    Let $K'_x$ and $K'_y$ be the components of $x$ and $y$, respectively, in $\redForest(\mathcal T'') + \bar x \bar x'$. Note that by Claim \ref{claim:wNotInKy} we have that $L \neq K'_y$.\\
    First, suppose that $|E(K'_y) \cap E(K)| \geq 1$. Since the component of $x$ in $K - v_{i-1}v_i$ contains $uu'$, we have by Lemma \ref{lemma:rhoStar} that $\rho(\redForest(\mathcal T'') + \bar x \bar x') = \rho^*$ and $e(K'_x), e(K'_y) \leq e(K)$. As $\bar x \in A$ and $L \neq K'_y$, we have that c) and d) hold, which is a contradiction.\\
    Now, suppose that $|E(K'_y) \cap E(K)| = 0$. By Lemma \ref{lemma:neighOfXIsDescendant} we have that $d = 3$, $c_y = 1$, $e(K) = 2$ and thus, $E(K) = \{uu', v_{i-1} v_i\}$ and $c_x = 1$. Thus, $e(K'_x) = 3$, $e(K'_y) = 2$ and $\rho(\redForest(\mathcal T'') + \bar x \bar x') = \rho^*$. Note that c) holds since $\bar x \in A$. For the same reason d) holds if $R^*_{i_A} \neq K$.\\
    Thus, suppose that $R^*_{i_A} = K$. By Claim \ref{claim:wNotInKy} we have that $w_{i_A}(\sigma^*) = x$, which is a contradiction since $x \in A$.
\end{proof}

\doubleFigure{
    \caseOneSimpleTwoStart
}{
    \caseOneSimpleTwoEnd
}{
    The component $K$ and its neighbours in $\mathcal T_x$ and $\mathcal T''$ in the proof of Claim \ref{claim:CyInterestingAndNyPathOverXPrime} in the case where $C_x$ and $C_y$ are interesting and $\bar x \notin V(\pathInBlueTree{\mathcal T_x}{n_{y'}}{y'})$.
}{
    \label{fig:CyInterestingAndNyPathOverXPrime}
}

Let $Y$ be the set of vertices having a blue directed path to $y$ in $\blueTree(\mathcal T_x)$ such that $V(P) \cap D^b_{\bar x} = \varnothing$.\\
Let $\mathring Y'$ be the set of vertices having a blue directed path to $y'$ in $\blueTree(\mathcal T_x)$ such that $V(P) \cap D^b_y = \varnothing$.\\
In the proofs of the remaining claims of this section we will implicitly make use of Claim \ref{claim:CyInterestingAndNyPathOverXPrime}.

\begin{claim} \label{claim:caseOneUgly1}
    We have that $i_{\sigma^*}(Y) < i_{\sigma^*}(\bar X_{y'} \cup \mathring Y')$ and additionally, we have that there is no path from $v'$ to $y'$ in $\blueTree(\mathcal T_x)$ or we have that $e(K_y) = 0$, $e(K) \geq d$ and $c_x = 1$.
\end{claim}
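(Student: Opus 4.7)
The plan is to prove both conjuncts by contradiction, each time producing a modified decomposition $\barT$ and invoking Lemma \ref{lemma:newSmallerByPath}, exactly in the style of Claims \ref{claim:wNotInKy}--\ref{claim:CyInterestingAndNyPathOverXPrime}. Throughout, I would use the fact established in Claim \ref{claim:CyInterestingAndNyPathOverXPrime} that $C_y$ is interesting and $\bar x \in V(\pathInBlueTree{\mathcal T_x}{n_{y'}}{y'})$, which heavily constrains the structure.

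For the first conjunct, suppose $i_{\sigma^*}(Y) \geq i_{\sigma^*}(\bar X_{y'} \cup \mathring Y')$. The plan is to obtain $\barT$ from $\mathcal T_x$ by first performing $(y, y') \leftrightarrow vv'$, which is valid by $x \caseOne y$ since $v$ is a descendant of $y$ and $v'$ is not in $\blueTree(\mathcal T_x)$, and then $(y, y') \leftrightarrow y' n_{y'}$, which is valid because $C_y$ is interesting. I would then apply Lemma \ref{lemma:newSmallerByPath} with $(a, a') := (\bar x, \bar x')$. The two reorientations place every vertex of $\bar X_{y'} \cup \mathring Y'$ into the set $A$, while the only vertices of $\Delta(\mathcal T^*, \barT)$ that fail to lie in $A$ are those on the reoriented subpath $\pathInBlueTree{\mathcal T_x}{n_{y'}}{y}$, which lie in $Y$. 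The assumed reversed inequality then gives condition b) of the lemma; condition a) is trivial, c) uses $\bar x \in A$, and d) uses Claim \ref{claim:wNotInKy}. The residue-optimality $\rho(\redForest(\barT) + \bar x \bar x') = \rho^*$ is produced by Lemma \ref{lemma:rhoStar} applied to the edge $vv'$ whenever the component of $x$ in $K - vv'$ contains an edge; otherwise Lemma \ref{lemma:neighOfXIsDescendant}, applied with $y$ playing the role of $x$, forces $d = 3$, $e(K) = 2$, $c_y = 1$, and $\rho$-optimality then follows by direct counting.

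For the second conjunct, the plan is to suppose both that there is a blue directed path from $v'$ to $y'$ in $\blueTree(\mathcal T_x)$ and that we are \emph{not} in the degenerate configuration $e(K_y) = 0$, $e(K) \geq d$, $c_x = 1$, and derive a contradiction. The presence of the $v' \to y'$ path supplies an extra reorientation: after performing $(\bar x, \bar x') \leftrightarrow uu'$ on $\mathcal T_x$ and, since $C_y$ is interesting, $(y, y') \leftrightarrow y' n_{y'}$, the resulting $\barT$ contains a blue directed path from $v'$ (and everything leading into it) all the way to $\bar y$. Applying Lemma \ref{lemma:newSmallerByPath} with $(a, a') := (\bar y, \bar y')$, this path is exactly what is needed so that the vertices usually obstructing condition b) now lie in $A$ rather than $B$. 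The residue identity $\rho(\redForest(\barT) + \bar y \bar y') = \rho^*$ follows via Lemma \ref{lemma:rhoStar} in the generic case where the component of $y$ in $K - uu'$ carries an edge, i.e.\ when $e(K_y) \geq 1 \geq c_x$; the excluded degenerate configuration is exactly the alternative branch where Lemma \ref{lemma:neighOfXIsDescendant} collapses the argument and must be allowed as the second disjunct of the claim.

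The main obstacle is the combinatorial bookkeeping: after two or three successive exchanges, one must carefully track which vertices end up in $A$, $B$, and $\Delta(\mathcal T^*, \barT)$, and identify the component $L$ of $w_{i_A}(\sigma^*)$ in $\redForest(\barT)$ needed for conditions c) and d) of Lemma \ref{lemma:newSmallerByPath}. The edge counts are then routine via Lemma \ref{lemma:rhoStar}, with the single degenerate situation $e(K_y) = 0$ handled by Lemma \ref{lemma:neighOfXIsDescendant}; it is precisely this collapse that produces the asymmetric second disjunct in the statement of the claim.
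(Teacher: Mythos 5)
Your proposal departs from the paper's argument in a way that introduces a fatal flaw, and the key issue arises precisely from the structural fact established in Claim~\ref{claim:CyInterestingAndNyPathOverXPrime}, namely that $\bar x \in V(\pathInBlueTree{\mathcal T_x}{n_{y'}}{y'})$.

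For your first conjunct, you propose to obtain $\barT$ from $\mathcal T_x$ by first performing $(y, y') \leftrightarrow vv'$ and then performing $(y, y') \leftrightarrow y'n_{y'}$. This sequence is ill-defined: after the first exchange, $yy'$ has been moved from the blue tree into $\redForest$, so there is no blue arc $(y, y')$ left to exchange a second time. Even setting that aside, the exchange $(y, y') \leftrightarrow y'n_{y'}$ is not valid in $\mathcal T_x$: validity (by Lemma~\ref{lemma:exchange}) requires $n_{y'}$ to be a descendant of $y$ in $\blueTree(\mathcal T_x)$, but precisely because $\bar x \in V(\pathInBlueTree{\mathcal T_x}{n_{y'}}{y'})$, the directed path from $n_{y'}$ to $y'$ in $\blueTree(\mathcal T_x)$ passes through $\bar x$ rather than through $y$, so $n_{y'}$ is a descendant of $\bar x$ and of $y'$ but not of $y$. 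The paper sidesteps both problems by instead performing $(\bar x, \bar x') \leftrightarrow y'n_{y'}$ as the second exchange (valid because $n_{y'}$ is a descendant of $\bar x$, and that path survives the first exchange since $\Delta(\mathcal T_x, \mathcal T') \subseteq D^b_{y,\bar x}$), and then invokes Lemma~\ref{lemma:newSmallerByPath} with $(a,a') := (y', y'')$. Note that the paper's choice $(a, a') := (y', y'')$ is forced; your proposed choice $(a, a') := (\bar x, \bar x')$ would not even make sense under the paper's exchanges, since $(\bar x, \bar x')$ is coloured red by the second exchange.

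Your second conjunct runs into the same descendant obstruction: $(y, y') \leftrightarrow y' n_{y'}$ is not justified merely by $C_y$ being interesting; it is valid in $\mathcal T^*$, but not after passing to $\mathcal T_x$ in the present case where $\bar x$ lies on $\pathInBlueTree{\mathcal T_x}{n_{y'}}{y'}$. Claim~\ref{claim:caseOneNormalAugmentOnY} is where a sequence of the shape $(\bar x, \bar x') \leftrightarrow uu'$ followed by $(y,y') \leftrightarrow y'n_{y'}$ succeeds, but only because there one additionally assumes $\bar x \notin V(\pathInBlueTree{\mathcal T_x}{n_{y'}}{y'})$, exactly the hypothesis whose negation defines the current case. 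More broadly, the paper does not split into two conjuncts at all: it assumes the disjunction of both failure conditions simultaneously, uses a single pair of exchanges (first $(y,y') \leftrightarrow vv'$, then $(\bar x, \bar x') \leftrightarrow y'n_{y'}$), and carries out one application of Lemma~\ref{lemma:newSmallerByPath} with a unified bookkeeping of $A$, $B$, and a further case distinction on $R^*_{i_A}$ at the end (using a minimal special path when $R^*_{i_A} = K$). You will need to replace your second exchange with $(\bar x, \bar x') \leftrightarrow y'n_{y'}$ and rework the $A$/$B$ analysis accordingly, and you will also need the additional subcase at the end that the paper handles via Lemma~\ref{lemma:specialPaths}, which your outline omits.
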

\begin{proof}
    Suppose towards a contradiction that $i_{\sigma^*}(Y) \geq i_{\sigma^*}(\bar X_{y'} \cup \mathring Y')$ or there is a path from $v'$ to $y'$ in $\blueTree(\mathcal T_x)$. If this path exists, we also suppose that $e(K_y) \geq c_x$ or $e(K) = d - 1$. Note that if this path does not exist, then we either have $e( K_y) \geq 1$ or $e(K) = d - 1$ by Lemma \ref{lemma:neighOfXIsDescendant}. Thus, we suppose in any case that $e(K_y) \geq c_x$ or $e(K) = d - 1$ holds.
    We obtain $\mathcal T'$ from $\mathcal T_x$ by performing $(y, y') \leftrightarrow vv'$. 
    Note that $\Delta(\mathcal T_x, \mathcal T') \subseteq  D^b_{y, \bar x}$ and thus, $\pathInBlueTree{\mathcal T_x}{n_{y'}}{\bar x'}$ 
    and $\pathInBlueTree{\mathcal T_x}{y'}{r}$ also exist in $\blueTree(\mathcal T')$. Thus, we may obtain $\mathcal T''$ from $\mathcal T'$ by performing $(\bar x, \bar x') \leftrightarrow y'n_{y'}$. The decomposition $\mathcal T''$ is depicted in Figure \ref{fig:caseOneUgly1}. If $\pathInBlueTree{\mathcal T_x}{v'}{y'}$ exists, then it also exists in $\blueTree(\mathcal T'')$ since $\Delta(\mathcal T_x, \mathcal T'') \subseteq  D^b_{\bar x} \cup D^b_{y, \bar x}$.
    We again want to obtain a contradiction to Lemma \ref{lemma:newSmallerByPath} by choosing $\barT := \mathcal T''$ and $(a, a') := (y', y'')$. It is clear that a) holds.
    If $\pathInBlueTree{\mathcal T_x}{v'}{y'}$ exists, then $\Delta(\mathcal T^*, \mathcal T'') \subseteq A$ and thus, $B = \varnothing$. If the path does not exist, then $\bar X_{y'} \cup \mathring Y' \subseteq A$ and $B \subseteq D^b_{y, \bar x} \subseteq Y$. In both cases we have that b) holds. Furthermore, c) holds since $x, y' \in A$.
    Let $K'_x$ and $K'_y$ be the components in $\redForest(\mathcal T'') + y'y''$ of $x$ and $y$, respectively.
    For d) first note that $\bar x, y' \in A$ 
    If $e(K_y) \geq c_x$, then d) holds by Lemma \ref{lemma:rhoStar} and we also have that $\rho(\redForest(\mathcal T'') + aa') = \rho^*$, which is a contradiction.\\
    Thus, suppose that $e(K_y) = 0$, $e(K) = d - 1$ and thus, $c_x = 1$. Then we have that $e(K'_y) = |\{yy', y'y''\}|= 2 < d$, $e(K'_x) = e(K) + |\{xx'\}| + c_x - |\{vv'\}| = d$ and thus, $\rho(\redForest(\mathcal T'') + y'y'') = \rho^*$.
    If $R^*_{i_A} \neq K$, we have that d) holds and obtain a contradiction.\\
    Thus, suppose that $R^*_{i_A} = K$. By Claim \ref{claim:wNotInKy} we have $w_{i_A}(\sigma^*) \in V(K_x)$. Further, there is a legal order $\sigma'' = (R''_1, \ldots, R''_{t''})$ for $\mathcal T''$ such that $R''_i = R^*_i$ for $i < i_A$ and $R''_{i_A} = K'_x$ and $i_{\sigma''}(y') > i_A$. Let $P = [v_0, \ldots, v_l]$ be a minimal special path with respect to $\mathcal T''$, $\sigma''$ and $(y', y'')$ such that $P$ is smaller or equal to the special path $\pathInBlueTree{\mathcal T''}{x}{y''}$. We have that $v_0 \in V(K_x)$.
    Applying 5. of Lemma \ref{lemma:specialPaths}, we receive a decomposition $\mathcal T''' \in \mathcal F^*$ having a legal order $\sigma''' = (R'''_1, \ldots, R'''_{t'''})$ with $R'''_i = R''_i = R^*_i$ for all $i < i_A$ and $R'''_{i_A}$ is the component of $v_{-1}$ having at most $e(K'_x) - c_x - |\{xx', v_0 v_{-1}\}| = d - 3 < e(K)$ edges and obtain $\sigma''' < \sigma^*$, which is a contradiction.
\end{proof}

\doubleFigure{
    \uglyOneStart
}{
    \uglyOneEnd
}{
    The component $K$ and its neighbours in $\mathcal T_x$ and $\mathcal T''$ in the proof of Claim \ref{claim:caseOneUgly1} in the case where $C_x$ is interesting and $\pathInBlueTree{\mathcal T_x}{v'}{y'}$ exists.
}{
    \label{fig:caseOneUgly1}
}

\begin{claim} \label{claim:caseOneUgly2}
    There is a path from $u'$ to $y'$ in $\blueTree(\mathcal T_x)$.
\end{claim}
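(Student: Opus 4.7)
The plan is to suppose for contradiction that there is no blue directed path from $u'$ to $y'$ in $\blueTree(\mathcal T_x)$, i.e., $u'$ is not a descendant of $y'$ in $\blueTree(\mathcal T_x)$, and then perform two exchanges starting from $\mathcal T_x$ to produce a decomposition $\mathcal T''$ contradicting Lemma \ref{lemma:newSmallerByPath} with $(a, a') = (y', y'')$. The argument will closely parallel Claim \ref{claim:caseOneUgly1}, but with the roles of the two relevant neighbours essentially swapped.

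First I would perform $(\bar x, \bar x') \leftrightarrow uu'$ in $\mathcal T_x$, which is valid by the definition of $x \caseOne y$ in Lemma \ref{lemma:cases}; call the result $\mathcal T'$. This exchange removes $(\bar x, \bar x')$, splitting $\blueTree(\mathcal T_x)$ into the subtree rooted at $\bar x$ (which contains $n_{y'}$ by Claim \ref{claim:CyInterestingAndNyPathOverXPrime}) and its complement (which contains $y'$, $y''$, and $u'$), and rejoins them via the new arc $(u, u')$. Since by our assumption $y'$ is not on $u'$'s ascending path in $\mathcal T_x$, in $\mathcal T'$ the ascending blue path from $n_{y'}$ exits the $\bar x$-component through $(u, u')$ and continues through $u'$'s (unchanged) ancestors, avoiding $y'$ entirely. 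Consequently $n_{y'}$ is no longer a descendant of $y'$ in $\blueTree(\mathcal T')$, which validates the second exchange $(y', y'') \leftrightarrow y' n_{y'}$; performing it produces $\mathcal T''$ in which $(y', y'')$ has become red and $(y', n_{y'})$ has become blue.

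I would then apply Lemma \ref{lemma:newSmallerByPath} with $\barT := \mathcal T''$ and $(a, a') := (y', y'')$. Condition (a) is immediate since $y''$ lies outside any red component near $K$. For (b), after both swaps $A$ contains $\bar X_{y'} \cup \mathring Y' \cup Y$ together with the reoriented paths $\pathInBlueTree{\mathcal T_x}{u}{\bar x}$ and $\pathInBlueTree{\mathcal T'}{n_{y'}}{y}$, so $B \subseteq \bar Y$, and Claim \ref{claim:caseOneUgly1} then gives $i_{\sigma^*}(B) \geq i_{\sigma^*}(\bar X_{y'} \cup \mathring Y') > i_{\sigma^*}(Y) \geq i_A$. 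Conditions (c) and (d) together with $\rho(\redForest(\mathcal T'') + y'y'') = \rho^*$ follow by the familiar case split: either $e(K_y) \geq c_x$, in which case Lemma \ref{lemma:rhoStar} applies directly and bounds both new components by $e(K)$; or else Lemma \ref{lemma:neighOfXIsDescendant} forces $e(K_y) = 0$, $e(K) = d - 1$, and $c_x = 1$, and the exceptional subcase $R^*_{i_A} = K$ is handled via Claim \ref{claim:wNotInKy} together with a minimal special path through Lemma \ref{lemma:specialPaths}, exactly as at the end of the proof of Claim \ref{claim:caseOneUgly1}.

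The main obstacle I expect is justifying validity of the second exchange: namely, that after swap 1 the vertex $n_{y'}$ is really no longer a descendant of $y'$ in $\blueTree(\mathcal T')$. This is precisely where the hypothesis $u' \not\to y'$ is essential — had $y'$ been an ancestor of $u'$ in $\blueTree(\mathcal T_x)$, the first swap would merely reroute $n_{y'}$'s ascending path back through $y'$'s subtree (via $u, u', \ldots, y'$) and swap 2 would fail. Beyond that, the remaining technical care lies in the bookkeeping of $A$ and $B$ in condition (b), since the two swaps together substantially rearrange the blue tree; but once $A$ is correctly identified, the inequality of Claim \ref{claim:caseOneUgly1} immediately suffices.
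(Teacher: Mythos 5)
The sequence of exchanges you propose — $(\bar x, \bar x') \leftrightarrow uu'$ followed by $(y', y'') \leftrightarrow y'n_{y'}$ — is exactly what the paper does, and your justification that the second swap is valid (that $n_{y'}$'s ascending path in $\blueTree(\mathcal T')$ exits $D^b_{\bar x}$ through $(u,u')$ and then runs up $u'$'s unchanged ancestors, avoiding $y'$) is correct. However, your application of Lemma~\ref{lemma:newSmallerByPath} has a genuine flaw: you take $(a,a') := (y', y'')$, but this is not a blue arc of $\mathcal T''$. Your own second exchange $(y', y'') \leftrightarrow y'n_{y'}$ makes $y'y''$ \emph{red} in $\mathcal T''$; indeed you say so yourself (``$(y', y'')$ has become red''), which also renders the expression $\rho(\redForest(\mathcal T'') + y'y'')$ meaningless since $y'y''$ is already in $\redForest(\mathcal T'')$. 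The paper instead applies Lemma~\ref{lemma:newSmallerByPath} with $(a,a') := (y, y')$, which is preserved as a blue arc through all exchanges because $y$ lies neither on $\pathIn{\blueTree(\mathcal T_x)}{u}{\bar x}$ nor among $\{y', y'', n_{y'}\}$.

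This is not merely a relabelling slip, because it propagates into the rest of your argument. First, the case split should be on $e(K_x)$, not $e(K_y)$: the edge removed from $K$ here is $uu'$, which separates $K_x$ from the rest of $K$, so Lemma~\ref{lemma:rhoStar} needs $e(K_x) \geq c_y = 1$ (the $y$-side of the split automatically contains $vv'$ and so has at least one edge). Your split on $e(K_y)$ is the one from Claim~\ref{claim:caseOneUgly1}, where the removed edge is $vv'$. Second, with $(a,a') = (y,y')$ the containments are $Y \subseteq A$ and $B \subseteq \bar X_{y'} \cup \mathring Y'$, and Claim~\ref{claim:caseOneUgly1}'s inequality $i_{\sigma^*}(Y) < i_{\sigma^*}(\bar X_{y'} \cup \mathring Y')$ gives condition (b) directly; your stated containment $B \subseteq \bar Y$ does not connect to that claim. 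Finally, that same inequality shows $K, C_y \neq R^*_{i_A}$ outright, so unlike Claim~\ref{claim:caseOneUgly1} no secondary special-path argument via Lemma~\ref{lemma:specialPaths} is needed for the exceptional subcase.
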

\begin{proof}
    Suppose to the contrary that there is no path from $u'$ to $y'$ in $\blueTree(\mathcal T_x)$.
    We obtain $\mathcal T'$ from $\mathcal T_x$ by performing $(\bar x, \bar x') \leftrightarrow uu'$.
    Note that $\Delta(\mathcal T_x, \mathcal T') \subseteq  D^b_{\bar x}$. In  $\blueTree(\mathcal T')$ there is a path from every vertex of $ D^b_{\bar x}$ to $u'$, which only visits vertices of $D^b_{\bar x} + u'$ and in particular, it does not go over $y'$. Since $u' \notin \Delta(\mathcal T_x, \mathcal T')$, the path from $u'$ to $r$ in $\blueTree(\mathcal T_x)$ also exists in $\blueTree(\mathcal T')$. We conclude that there is no path in $\blueTree(\mathcal T')$ from $n_{y'}$ to $y'$ and thus, we have $(y', y'') \leftrightarrow y' n_{y'}$. We obtain $\mathcal T''$ from $\mathcal T'$ by performing this exchange. Note that $n_{y'}y'$ and $y'y''$ are the only edges affected by this exchange and thus, $\Delta(\mathcal T^*, \mathcal T'') \subseteq  D^b_{\bar x} + y'$. The decomposition $\mathcal T''$ is depicted in Figure \ref{fig:caseOneUgly2}.
    We will again find a contradiction to Lemma \ref{lemma:newSmallerByPath} and choose $\barT := \mathcal T''$, $(a, a') := (y, y')$, which implies $Y \subseteq A$ and $B \subseteq \bar X_{y'} \cup \mathring Y'$. Thus, b) holds by Claim \ref{claim:caseOneUgly1} and it is also clear that a) holds.
    Let $K'_x$, $K'_y$ be the components in $\redForest(\mathcal T'') + yy'$ of $x$ and $y$, respectively. Since $i_{\sigma^*}(Y) < i_{\sigma^*}(\bar X_{y'} \cup \mathring Y')$, we have that $K, C_y \neq R^*_{i_A}$, $L \neq K'_x$ and $L$ is not the component of $y$ in $\redForest(\mathcal T'')$. Thus, c) and d) hold.
    Finally, we have $\rho(\redForest(\mathcal T'') + yy') = \rho^*$ if $e(K_x) \geq 1$ by Lemma \ref{lemma:rhoStar}. If $e(K_x) = 0$, then $d = 3$, $e(K) = 2$ and $c_x = 1$ by Lemma \ref{lemma:neighOfXIsDescendant}. Thus, $e(K'_x) = 2$, $e(K'_y) = 3$ and $\rho(\redForest(\mathcal T'') + yy') = \rho^*$.
\end{proof}

\doubleFigure{
    \uglyTwoStart
}{
    \uglyTwoEnd
}{
   The component $K$ and its neighbours in $\mathcal T_x$ and $\mathcal T''$ in the proof of Claim \ref{claim:caseOneUgly2} in the case where $C_x$ is interesting.
}{
    \label{fig:caseOneUgly2}
}

\begin{claim} \label{claim:caseOneUgly3}
    There is a path from $v'$ to $y'$ in $\blueTree(\mathcal T_x)$.
\end{claim}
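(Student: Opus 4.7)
My plan is to prove Claim \ref{claim:caseOneUgly3} by contradiction, closely mirroring the structure of the proof of Claim \ref{claim:caseOneUgly1}. Assume no blue directed path from $v'$ to $y'$ exists in $\blueTree(\mathcal T_x)$. I will perform two exchanges to obtain a decomposition $\mathcal T''$ and then apply Lemma \ref{lemma:newSmallerByPath} to reach a contradiction.

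First I perform $(y, y') \leftrightarrow vv'$ in $\mathcal T_x$ to obtain $\mathcal T'$; this exchange is valid since by Lemma \ref{lemma:cases}, $v$ is a descendant of $y$ in $\blueTree(\mathcal T_x)$ and $v'$ is not. The key structural observation is that in $\mathcal T'$ both $n_{y'}$ and $\bar x$ cease to be descendants of $y'$. By Claim \ref{claim:CyInterestingAndNyPathOverXPrime}, $\bar x$ lies on the path from $n_{y'}$ to $y'$ in $\mathcal T_x$; combined with the hypothesis of $x \caseOne y$ that $y$ is not a descendant of $\bar x$, this forces $x$ to be a descendant of $y$ in $\mathcal T^*$ and the whole branch of $y'$ containing $\bar x$ and $n_{y'}$ to lie below the arc $(y, y')$ in $\mathcal T_x$. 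Once $(y, y')$ is replaced by $(v, v')$, that branch reattaches through $v'$, which by our assumption is not a descendant of $y'$. In particular $n_{y'}$ is no longer a descendant of $y'$ in $\mathcal T'$, so the exchange $(\bar x, \bar x') \leftrightarrow y' n_{y'}$ is valid in $\mathcal T'$ (the unique cycle in $\blueTree(\mathcal T') + y' n_{y'}$ passes through $(\bar x, \bar x')$), and I perform it to obtain $\mathcal T''$.

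I then apply Lemma \ref{lemma:newSmallerByPath} with $\barT := \mathcal T''$ and $(a, a') := (y', y'')$, which remains a blue arc in $\mathcal T''$ since neither exchange touches it. Condition a) is immediate as $y''$ lies in a red component that appears strictly later in $\sigma^*$ than $K$, $C_x$ and $C_y$. Condition b) is the subtle point: the vertices of $\Delta(\mathcal T^*, \mathcal T'')$ that lie outside $A$ come from the three reoriented paths $\pathInBlueTree{\mathcal T^*}{n_{x'}}{x}$, $\pathInBlueTree{\mathcal T_x}{v}{y}$ and $\pathInBlueTree{\mathcal T'}{n_{y'}}{\bar x}$, and I would use Claim \ref{claim:caseOneUgly1}'s conclusion $i_{\sigma^*}(Y) < i_{\sigma^*}(\bar X_{y'} \cup \mathring Y')$ to bound their $\sigma^*$-indices from below by $i_A$. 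Conditions c) and d) reduce to verifying that $\bar x$ and sufficient additional vertices of $K$ lie in $A$ after the exchanges, which follows from the reorientation of $\pathInBlueTree{\mathcal T'}{n_{y'}}{\bar x}$. The residue identity $\rho(\redForest(\mathcal T'') + y' y'') = \rho^*$ is handled by Lemma \ref{lemma:rhoStar} in the main case where $e(K_x) \geq c_y$ and $e(K_y) \geq c_x$; the degenerate case $e(K_x) = 0$ is resolved by Lemma \ref{lemma:neighOfXIsDescendant}, forcing $d = 3$, $e(K) = 2$, $c_x = 1$ and permitting a direct verification.

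The main obstacle is rigorously justifying the structural picture after the first exchange, namely that $\bar x$ and $n_{y'}$ both leave the subtree of $y'$ in $\blueTree(\mathcal T')$. This requires combining our assumption on $v'$ with Claim \ref{claim:CyInterestingAndNyPathOverXPrime} to force the configuration in which $x$ is a descendant of $y$ in $\mathcal T^*$, thereby ruling out the alternative where $x$ and $y$ are incomparable in the blue tree. A secondary difficulty is tracking condition b) of Lemma \ref{lemma:newSmallerByPath} through the three reoriented paths and may require a small case split on whether $C_x$ is small or interesting, following the pattern used in Claims \ref{claim:caseOneUgly1} and \ref{claim:caseOneUgly2}.
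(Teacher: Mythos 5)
The overall shape of your plan (two exchanges, then Lemma \ref{lemma:newSmallerByPath}, with $\rho$ controlled by Lemma \ref{lemma:rhoStar} plus a degenerate subcase) is the right one, but the specific exchanges you chose break down in exactly the configuration this claim is ultimately about. Your first exchange $(y, y') \leftrightarrow vv'$ removes $vv'$ from the red forest, so Lemma \ref{lemma:rhoStar} must be applied with $e = vv'$. That lemma needs $e(K_y) \geq c_x$, where $K_y$ is the component of $y$ in $K - vv'$. But $e(K_y) = 0$ with $c_x = 1$ is a live possibility (it occurs precisely when $v = y$, i.e., $j = n$ in the notation of Lemma \ref{lemma:cases}); indeed, once Claims \ref{claim:caseOneUgly1} and \ref{claim:caseOneUgly3} are both known, Lemma \ref{lemma:caseOne} records $e(K_y) = 0$ and $c_x = 1$ as the actual configuration. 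In that case the component of $x$ in your modified red forest ends up with $e(K)+1$ edges, which (for instance when $e(K) = d$ and $e(C_x), e(C_y)$ are small) strictly increases $\rho$, so Lemma \ref{lemma:newSmallerByPath} hands you $\rho(\redForest(\mathcal T'') + aa') > \rho^*$ as a tautology rather than a contradiction. Meanwhile, the degenerate case you do handle, ``$e(K_x) = 0$,'' never arises once $e = vv'$ is fixed: the component of $x$ in $K - vv'$ always contains the edge $uu'$ (because $i \leq j - 2$ in $\caseOne$), so $e(K_x) \geq 1$ automatically.

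The paper's proof sidesteps all of this by choosing a different exchange edge. It first does $(\bar x, \bar x') \leftrightarrow y' n_{y'}$ directly in $\mathcal T_x$, then locates the first vertex $w'$ on the red path $P$ from $u'$ to $v'$ with no blue path to $y'$ (this $w'$ exists and is not $u'$ by Claim \ref{claim:caseOneUgly2}, and is not $v'$-independent by the contradiction hypothesis), lets $w$ be its predecessor, and exchanges $(y', y'') \leftrightarrow ww'$. Because $ww'$ sits strictly between $uu'$ and $vv'$ on the red path, both sides of $K - ww'$ retain an edge ($uu'$ on one side, $vv'$ on the other), so Lemma \ref{lemma:rhoStar} applies unconditionally. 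It then applies Lemma \ref{lemma:newSmallerByPath} with $(a, a') = (y, y')$ rather than $(y', y'')$. Finding this transition edge $ww'$ is the missing idea in your proposal.

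A secondary concern: your structural justification that ``$\bar x$ and $n_{y'}$ cease to be descendants of $y'$'' after the first exchange tacitly assumes $\bar x$ is a descendant of $y$ in $\blueTree(\mathcal T_x)$. Claim \ref{claim:CyInterestingAndNyPathOverXPrime} only places $\bar x$ on the path $n_{y'} \to y'$, i.e., makes it a descendant of $y'$; ``$y$ is not a descendant of $\bar x$'' is compatible with $\bar x$ and $y$ being incomparable in a different branch of $y'$. Without that descendance, the first exchange does not move $\bar x$'s subtree at all and the rest of your construction does not go through.
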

\begin{proof}
    Suppose to the contrary that there is no path from $v'$ to $y'$ in $\blueTree(\mathcal T_x)$.
    Let $P$ be the red path in  $\mathcal T_x$ from $u'$ to $v'$. Note that by the definition of $\caseOne$ no vertex of $P$ has a path to $y$ in $\blueTree(\mathcal T_x)$. Let $w'$ be the first vertex on $P$ from which there is not a path in $\blueTree(\mathcal T_x)$ to $y'$. Such a vertex exists and $w' \neq u'$ since $u'$ has a path to $y'$ by Claim \ref{claim:caseOneUgly2} and $v'$ has not. Let $w$ be the vertex in $P$ before $w'$ in $P$ having a path to $y'$.
    We obtain $\mathcal T'$ from $\mathcal T_x$ by performing $(\bar x, \bar x') \leftrightarrow y'n_{y'}$.
    Note that $\Delta(\mathcal T_x, \mathcal T') \subseteq D^b_{\bar x}$ and thus, the paths in $\blueTree(\mathcal T')$ from the vertices of $P$ to $r$ are the same as in $\blueTree(\mathcal T_x)$. Hence, $w$ also has a path to $y'$ in $\blueTree(\mathcal T')$ and $w'$ does not.
    We obtain $\mathcal T''$ from $\mathcal T'$ by performing $(y', y'') \leftrightarrow ww'$. Note that $\mathcal T''$ is depicted in Figure \ref{fig:caseOneUgly3}.
    We want to obtain a contradiction to Lemma \ref{lemma:newSmallerByPath} by choosing $\barT := \mathcal T''$ and $(a, a') := (y, y')$. Note that $Y \subseteq A$ and $B \subseteq D^b_{\bar x} + y' \subseteq \bar X_{y'} \cup \mathring Y'$. Thus, b) holds and it is also clear that a) holds.
    Since $i_{\sigma^*}(\bar x') > i_{\sigma^*}(\bar x), i_{\sigma^*}(C_y') \geq i_{\sigma^*}(\bar X_{y'}) > i_{\sigma^*}(Y)$, we have that $C_x, C_y, K \neq R^*_{i_A}$ and thus, $R^*_{i_A} = L$. Thus, c) and d) hold.
    Finally, since the component in $K - ww'$ of $x$ contains $uu'$ and the other component contains $vv'$ we have $\rho(\redForest(\mathcal T'') +  yy') = \rho^*$ by Lemma \ref{lemma:rhoStar}. 
\end{proof}

\doubleFigure{
    \caseOneUgliestCaseStart
}{
    \caseOneUgliestCaseEnd
}{
    The component $K$ and its neighbours in $\mathcal T_x$ and $\mathcal T''$ in the proof of Claim \ref{claim:caseOneUgly3} in the case where $C_x$ is interesting.
}{
    \label{fig:caseOneUgly3}
}

We summarize the results of the Claims \ref{claim:CyInterestingAndNyPathOverXPrime}, \ref{claim:caseOneUgly1}, \ref{claim:caseOneUgly3} in the following lemma:

\begin{lemma} \label{lemma:caseOne}
    Let $K$ be a red component of $\explSG$ and let $C_x, C_y$ be two distinct relevant neighbours of $K$ generated by $(x, x'), (y, y') \in E(\blueTree(\mathcal T^*))$, respectively, where $b \inOneToK$. Furthermore, let $x \caseOne y$ with edges $(u, u')$, $(v', v)$ and let $K_y$ be the component of $y$ in $K - vv'$.
    
    Then $e(K) \geq d$, $C_y$ is interesting, $\bar x \in V(\pathInBlueTree{\mathcal T_x}{n_{y'}}{y'})$, $e(K_y) = 0$, $c_x = 1$ and there is a path from $v'$ to $y'$ in $\blueTree(\mathcal T_x)$.
\end{lemma}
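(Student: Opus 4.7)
The plan is essentially bookkeeping: each conclusion of the lemma is either the statement of one of the claims of this section or follows immediately once the claims are combined in the right order. Since the standing hypotheses of the section (fixing $K$, $C_x$, $C_y$, $(x,x'),(y,y')$ with $x \caseOne y$ and witnessing edges $(u,u'),(v',v)$) coincide with the hypotheses of Lemma~\ref{lemma:caseOne}, every claim from this section is available.

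First I invoke Claim~\ref{claim:CyInterestingAndNyPathOverXPrime}, which gives directly that $C_y$ is interesting and $\bar x \in V(\pathInBlueTree{\mathcal T_x}{n_{y'}}{y'})$. Next I apply Claim~\ref{claim:caseOneUgly3} to obtain the blue directed path from $v'$ to $y'$ in $\blueTree(\mathcal T_x)$. The existence of this path is exactly the branch of the dichotomy in Claim~\ref{claim:caseOneUgly1} that forces the strong structural conclusion: namely, in Claim~\ref{claim:caseOneUgly1} the conclusion ``there is no path from $v'$ to $y'$ in $\blueTree(\mathcal T_x)$ or $e(K_y)=0$, $e(K) \geq d$ and $c_x = 1$'' was established, and by Claim~\ref{claim:caseOneUgly3} the first alternative fails. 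Hence the second alternative holds, yielding $e(K_y) = 0$, $e(K) \geq d$ and $c_x = 1$.

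Putting these three invocations together gives all of the conclusions of the lemma simultaneously. The only subtlety is verifying that the hypotheses of Claims~\ref{claim:caseOneUgly1} and~\ref{claim:caseOneUgly3} are indeed applicable, but this is immediate because these claims were stated under exactly the setup of the section. I do not expect a genuine obstacle here; the real work lay in the proofs of the three claims themselves, and the present lemma is simply their consolidated statement, packaged in a form convenient for the arguments in Section~\ref{sec:threeChildren}.
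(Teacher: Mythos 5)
Your proposal is correct and matches the paper exactly: the paper explicitly frames Lemma \ref{lemma:caseOne} as a consolidation of Claims \ref{claim:CyInterestingAndNyPathOverXPrime}, \ref{claim:caseOneUgly1}, and \ref{claim:caseOneUgly3}, giving no separate proof, and your write-up simply spells out the (correct) logical combination — in particular, that Claim \ref{claim:caseOneUgly3} rules out the first disjunct in Claim \ref{claim:caseOneUgly1}, forcing $e(K_y)=0$, $e(K)\geq d$, $c_x=1$.
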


\section{Bounding the number of relevant neighbours}
\label{sec:threeChildren}

We summarize the results of the last two sections by describing the structure of $K$ if it has two relevant neighbours generated by the same tree:

\begin{lemma} \label{lemma:howTwoChildrenLook}
    Let $K$ be a red component of $\explSG$ and let $C_x, C_y$ be two distinct relevant neighbours of $K$ generated by $(x, x'), (y, y') \in E(\blueTree(\mathcal T^*))$, respectively, where $b \inOneToK$. Without loss of generality let $y$ not be a descendant of $\bar x$ in $\blueTree(\mathcal T^*)$.
    Then $e(K) \geq d$, $c_x = 1$ and one of the following two cases applies:
    \begin{enumerate}[1)]
        \item We have $x \caseOne y$ with edges $(u, u'), (v', v)$, $C_y$ is interesting, $\bar x \in V(\pathInBlueTree{\mathcal T_x}{n_{y'}}{y'})$, there is a path from $v'$ to $y'$ in $\blueTree(\mathcal T_x)$, the component of $y$ in $K - vv'$ has no edges and $x$ is a descendant of $y$ in $\blueTree(\mathcal T^*)$.
        \item We have $x \caseTwo y$ with edge $(u, v)$,  the component of $y$ in $K - uv$ has no edges and $x$ is a descendant of $\bar y$ in $\blueTree(\mathcal T^*)$.
    \end{enumerate}
\end{lemma}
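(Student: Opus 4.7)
The plan is to assemble this lemma by combining Lemmas \ref{lemma:cases}, \ref{lemma:caseTwo}, and \ref{lemma:caseOne}, and then separately deriving the two descendant claims that are not already contained in those lemmas.

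First, I would apply Lemma \ref{lemma:cases} with our choice of generator $(x, x')$ and the WLOG assumption that $y$ is not a descendant of $\bar x$ in $\blueTree(\mathcal T^*)$. This provides a dichotomy: either $x \caseTwo y$ holds with some edge $(u, v)$ on the red path from $x$ to $y$ in $K$, or $x \caseOne y$ holds with edges $(u, u')$ and $(v', v)$.

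In the case $x \caseTwo y$, Lemma \ref{lemma:caseTwo} directly yields $e(K) \geq d$, $c_x = 1$, and that the component $K_y$ of $y$ in $K - uv$ has no edges; consequently $v = y$. For the remaining claim that $x$ is a descendant of $\bar y$ in $\blueTree(\mathcal T^*)$, I would argue as follows. The $\caseTwo$ condition asserts that $u$ is a descendant of $x$ in $\blueTree(\mathcal T_x)$. Since the passage from $\mathcal T^*$ to $\mathcal T_x$ only reorients $\pathInBlueTree{\mathcal T^*}{n_{x'}}{x}$ when $C_x$ is interesting (and is the identity if $C_x$ is small), this descendant relation persists in $\blueTree(\mathcal T^*)$. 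Combined with the red adjacency $uy$, the blue arc $(y, y')$, and the WLOG exclusion of $y$ as a descendant of $\bar x$, this forces $x$ to lie below $\bar y$ in $\blueTree(\mathcal T^*)$.

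In the case $x \caseOne y$, Lemma \ref{lemma:caseOne} yields every stated property of Case 1 except that $x$ is a descendant of $y$ in $\blueTree(\mathcal T^*)$. For this remaining claim, I would use the interestingness of $C_y$ provided by Lemma \ref{lemma:caseOne}: by the definition of an interesting neighbour, there is a directed blue path from $n_{y'}$ to $y$ in $\blueTree(\mathcal T^*)$, so $\pathInBlueTree{\mathcal T^*}{n_{y'}}{y'}$ passes through $y$. Combining this with $\bar x \in V(\pathInBlueTree{\mathcal T_x}{n_{y'}}{y'})$ (from Lemma \ref{lemma:caseOne}) and tracking how the reorientation between $\mathcal T^*$ and $\mathcal T_x$ affects this path, I would transfer the incidence to show that $x$ lies on $\pathInBlueTree{\mathcal T^*}{n_{y'}}{y}$. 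Finally, $x \neq y$, since otherwise $x$ would have two distinct outgoing arcs $(x, x')$ and $(x, y')$ in the same blue tree $\blueTree(\mathcal T^*)$, a contradiction. Hence $x$ is a proper descendant of $y$.

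The main obstacle is the careful bookkeeping of blue ancestries across the two decompositions $\mathcal T^*$ and $\mathcal T_x$, required for both descendant claims; everything else is a mechanical synthesis of the previously established lemmas of Sections \ref{sec:caseTwo} and \ref{sec:caseOne}.
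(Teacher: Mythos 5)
Your plan for the $\caseOne$ branch is essentially the paper's: after invoking Lemma~\ref{lemma:caseOne}, the descendant claim follows because $\bar x$ lies on $\pathInBlueTree{\mathcal T_x}{n_{y'}}{y'}$ and the $\mathcal T^* \to \mathcal T_x$ exchange leaves the ancestry of $\bar x$ above $x'$ untouched; the paper treats this as immediate, and your sketch, while terse, is pointing at the right mechanism.

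The $\caseTwo$ branch, however, has a genuine gap. You claim that the facts ``$u$ is a descendant of $x$ in $\blueTree(\mathcal T^*)$, $uy$ is a red edge, $(y,y')$ is a blue arc, and $y$ is not a descendant of $\bar x$'' together force $x$ to lie below $\bar y$ in $\blueTree(\mathcal T^*)$. They do not: the red forest and the blue spanning tree impose independent structure, so nothing in that list prevents $x$ and $y$ from simply being incomparable in $\blueTree(\mathcal T^*)$ while $u$ (some vertex below $x$) happens to be red-adjacent to $y$. There is no direct deduction here. The paper instead argues by contradiction: assume $x$ is not a descendant of $\bar y$; since $u$ has a blue path to $\bar x$, having a blue path from $u$ to $y$ as well would make $\bar x$ and $y$ comparable, and the WLOG choice together with $x$ being a descendant of $\bar x$ and $y$ of $\bar y$ would then give $x$ below $\bar y$ after all. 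So $u$ is not a descendant of $y$, and since the component of $y$ in $K-uy$ has no edges, Lemma~\ref{lemma:neighOfXIsDescendant} applies and yields $d=3$ and $e(K)=2$, contradicting $e(K)\geq d$ from Lemma~\ref{lemma:caseTwo}. You need that contradiction via Lemma~\ref{lemma:neighOfXIsDescendant}; a purely ``forcing'' argument from the ancestry facts cannot close this case.
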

\begin{proof}
    The lemma follows directly from Lemmas \ref{lemma:cases},  \ref{lemma:caseTwo} and \ref{lemma:caseOne} except for the fact that in Case 2) there is a path from $x$ to $\bar y$ in $\blueTree(\mathcal T^*)$. Thus, suppose that 2) holds and no such path exists. Then $u$, which is the only neighbour of $y$, is not a descendant of $y$ in $\blueTree(\mathcal T^*)$ since it has a path to $\bar x$ in $\blueTree(\mathcal T^*)$. By Lemma \ref{lemma:neighOfXIsDescendant} we have that $e(K) = 2 < 3 = d$, which contradicts $e(K) \geq d$.
\end{proof}

The following lemma bounds the number of relevant neighbours of $K$. This also bounds the number of small children of $K$, which will help us to show that the density around $K$ is high.

\begin{lemma} \label{lemma:numRelevantNeighbours}
    Let $K$ be a red component of $\explSG$. Then $K$ has at most two relevant neighbours generated by $\blueTree(\mathcal T^*)$ for any $b \inOneToK$.
\end{lemma}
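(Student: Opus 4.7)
Suppose towards contradiction that $K$ has three relevant neighbours $C_1, C_2, C_3$ generated by distinct blue arcs $(a_i, a'_i) \in E(\blueTree(\mathcal T^*))$. The plan is to apply Lemma~\ref{lemma:howTwoChildrenLook} to each of the three pairs $\{C_i, C_j\}$ and exploit the incompatibility of the resulting constraints on the blue ancestor relations among the $a_i$'s and on the small-versus-interesting status of the $C_i$.

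First I would prove a \emph{double-dominance observation}: if for some pair both $a_j$ is a descendant of $\bar a_i$ and $a_i$ is a descendant of $\bar a_j$ in $\blueTree(\mathcal T^*)$, then a short tree-topological case analysis --- using that $\bar a_i$ lies in $V(C_i)$ when $C_i$ is small and is an isolated red vertex (whose red component has no edges) when $C_i$ is interesting, and analogously for $\bar a_j$ --- forces one of $\bar a_i, \bar a_j$ to coincide with a vertex of the other's red component, which in turn forces $e(C_i) = 0$ or $e(C_j) = 0$. Hence double dominance is incompatible with $c_i = 1 = c_j$.

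Next I would split on the induced dominance tournament on $\{1,2,3\}$ being cyclic or acyclic. In the cyclic case the three ``descendant of $\bar{\cdot}$'' relations combined with Lemma~\ref{lemma:howTwoChildrenLook} give $c_1 = c_2 = c_3 = 1$; since $\bar a_1$ and $\bar a_3$ are both ancestors of $a_1$ in $\blueTree(\mathcal T^*)$ (and similarly for the symmetric statements), and each $\bar a_i$ is at distance $1$ or $2$ above $a_i$, a short case analysis of the positions of $\bar a_1, \bar a_2, \bar a_3$ along the ancestor chains of the $a_i$'s yields either a cyclic ancestor order on a single root-path (impossible in a tree) or a coincidence of the form $\bar a_i = a'_j$ that the double-dominance observation turns into $c_i = 0$ or $c_j = 0$, contradicting $c_1 = c_2 = c_3 = 1$. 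In the acyclic case one may assume after relabeling that $a_1$ dominates $a_2, a_3$ and $a_2$ dominates $a_3$; the role assignments in the three pairs are then forced and yield $c_1 = c_2 = 1$, and the double-dominance observation applied to pair $(a_1, a_2)$ rules out Case~2, forcing Case~1 so that $C_2$ is interesting and $a_1$ is a descendant of $a_2$.

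Two sub-cases then remain for the pairs involving $a_3$. If either $(a_1, a_3)$ or $(a_2, a_3)$ lies in Case~2 of Lemma~\ref{lemma:howTwoChildrenLook}, the double-dominance observation gives $c_3 = 0$ so that $C_3 = \{a'_3\}$ is an isolated vertex and both $a_1, a_2$ are descendants of $\bar a_3 = a'_3$; a single edge exchange (via Lemma~\ref{lemma:exchange}) of the arc $(a_3, a'_3)$ with an appropriate red edge on the $a_1$-$a_3$ (or $a_2$-$a_3$) red path in $K$, followed by Lemma~\ref{lemma:newSmallerByPath}, produces a decomposition with a legal order smaller than $\sigma^*$, contradicting minimality. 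Otherwise both pairs lie in Case~1, yielding $C_3$ interesting, a blue chain $a_1 \prec a_2 \prec a_3$ in $\blueTree(\mathcal T^*)$, and the detailed blue-path conclusion ``$\bar x \in V(\pathInBlueTree{\mathcal T_x}{n_{y'}}{y'})$'' pinning $\bar a_1, \bar a_2$ to specific blue ancestor paths terminating at $a'_3$; combining these with the red-degree-one conclusions for $a_2, a_3$ that accompany Lemma~\ref{lemma:howTwoChildrenLook}, I would construct a global edge exchange traversing the full chain $a_1 \to a_2 \to a_3$ and apply Lemma~\ref{lemma:newSmallerByPath} to again contradict minimality of $\sigma^*$. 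The main obstacle I expect is precisely this last sub-case: no direct tree-theoretic cycle is available, and the augmentation is necessarily non-local across three interesting neighbours, so its validity depends on carefully tracking the sets $\Delta(\mathcal T^*, \barT)$, $A$, and $B$ required by Lemma~\ref{lemma:newSmallerByPath} against the multiple simultaneous reorientations defining $\mathcal T_{a_1}, \mathcal T_{a_2}, \mathcal T_{a_3}$.
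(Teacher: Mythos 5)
The proposal identifies the right overall target (contradict minimality of $\sigma^*$ via Lemma~\ref{lemma:newSmallerByPath}), but there are several concrete problems, and the crucial construction is missing.

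First, the ``double-dominance observation'' has the wrong conclusion. If both $a_j$ were a descendant of $\bar a_i$ and $a_i$ a descendant of $\bar a_j$, then since $\bar a_i \in \{a_i, a'_i\}$ and $\bar a_j \in \{a_j, a'_j\}$, a short chase through the ancestor chain forces $a'_i = a'_j$ (or $a_i = a_j$), i.e.\ the two arcs coincide --- which is impossible since $C_i \neq C_j$ and $a'_i \in V(C_i)$, $a'_j \in V(C_j)$. So double dominance is simply impossible; it does not ``force $e(C_i)=0$ or $e(C_j)=0$''. This fact is what justifies the WLOG labeling in Lemma~\ref{lemma:howTwoChildrenLook}, and once you have that lemma, the whole tournament framing collapses: for any pair of relevant neighbours, one generating vertex is a descendant of the other's $\bar{\cdot}$, and since these are all on root-paths of a single tree, the three vertices lie on one chain. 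The ``cyclic case'' in your proposal cannot occur and the extensive case analysis you devote to it is vacuous. Similarly, the claim that ``the double-dominance observation gives $c_3 = 0$'' has no justification, because the observation is vacuously true and cannot produce a nonvacuous consequence.

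Second, and more seriously, the part you flag as the main obstacle is the actual content of the proof, and it is not sketched. The paper uses Lemma~\ref{lemma:howTwoChildrenLook} to put the three generators on a chain ($x$ below $\bar y$ below $\bar z$) with $y$ and $z$ of red degree one, then builds $\mathcal T_2$ from $\mathcal T^*$ by two blocks of exchanges --- one for the pair $(x,y)$ and one for the pair $(y,z)$ --- where each block depends on whether the pair is in case $\caseOne$ or $\caseTwo$ but is handled uniformly (for $\caseOne$: exchange $(\bar x, \bar x') \leftrightarrow n_{y'}y'$ then $(y,y')\leftrightarrow n_y y$; for $\caseTwo$: exchange $(\bar x, \bar x') \leftrightarrow y n_y$ then, if needed, $(y, y') \leftrightarrow n_{y'}y'$). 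The delicate part, which the paper verifies explicitly and your proposal does not, is that the second block of exchanges is still legal after the first: one needs that the set of descendants of $\bar y$ (and of $z$) in $\blueTree(\mathcal T_1)$ agrees with that in $\blueTree(\mathcal T_x)$, which holds because every vertex of $\Delta(\mathcal T_x, \mathcal T_1)$ has a blue path to $\bar y$ in both decompositions. Finally, the paper computes the resulting component sizes $e(K'_x) = e(K)$, $e(K'_y), e(K'_z) < d$, so $\rho = \rho^*$, and checks conditions a)--d) of Lemma~\ref{lemma:newSmallerByPath} with $(a,a') = (\bar z, \bar z')$, noting $B = \varnothing$. None of this is present in your outline; saying ``I would construct a global edge exchange'' and ``carefully tracking $A$, $B$, $\Delta$'' names the problem rather than solving it. As written, the proof has a genuine gap at its most important step, and the case framework surrounding it is both incorrect in places and largely unnecessary once Lemma~\ref{lemma:howTwoChildrenLook} is used to its full strength.
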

\begin{proof}
    Suppose to the contrary that there are three relevant neighbours of $K$ generated by $(x, x')$, $(y, y')$, $(z, z') \in E(\blueTree(\mathcal T^*))$, respectively.
    By Lemma \ref{lemma:howTwoChildrenLook} we may assume that there is a path in $\blueTree(\mathcal T^*)$ from $x$ over $\bar y$ to $\bar z$ and that $y$ and $z$ only have one neighbour in $K$, $n_y$ and $n_z$, respectively.

    \begin{itemize}
        \item If we have $x \caseOne y$, then $C_y$ is interesting and we obtain $\mathcal T'_x$ from $\mathcal T_x$ by performing $(\bar x, \bar x') \leftrightarrow n_{y'} y'$. This is possible because $\pathInBlueTree{\mathcal T_x}{n_{y'}}{y'}$ goes over $(\bar x, \bar x')$. Note that $\pathInBlueTree{\mathcal T_x}{n_y}{y'}$ (still) exists in  $\blueTree(\mathcal T'_x)$ since all vertices of $\Delta(\mathcal T_x, \mathcal T'_x)$ have a path to $y$ in $\blueTree(\mathcal T_x)$ and $n_y$ does not. Thus, we may obtain $\mathcal T_1$ from $\mathcal T'_x$ by performing $(y, y') \leftrightarrow n_y y$.\\
        If we have $x \caseTwo y$, we obtain $\mathcal T'_x$ from $\mathcal T_x$ by performing $(\bar x, \bar x') \leftrightarrow y n_y$. Note that if $C_y$ is interesting,
        there (still) is a path from $n_{y'}$ to $y$ in $\blueTree(\mathcal T'_x)$ since every vertex of $\Delta(\mathcal T_x, \mathcal T'_x)$ has a path to $y$ in $\blueTree(\mathcal T'_x)$ over $(n_y, y)$.
        Thus, we may obtain $\mathcal T_1$ from $\mathcal T'_x$ by performing $(y, y') \leftrightarrow n_{y'} y'$. If $C_y$ is small, we let $\mathcal T_1 := \mathcal T'_x$.
        \item We perform the exchanges of the previous bullet point but for $y$ and $z$ instead of $x$ and $y$. For this, note that every vertex of $\Delta(\mathcal T_x, \mathcal T_1)$ has a path to $\bar y$ in $\blueTree(\mathcal T_x)$, as well as in $\blueTree(\mathcal T_1)$. Thus, the set of all descendants of $\bar y$ in $\blueTree(\mathcal T_1)$ is the same as in $\blueTree(\mathcal T_x)$. The same holds for $z$ since it is an ancestor of $y$ in $\blueTree(\mathcal T_x)$, as well as in $\blueTree(\mathcal T_1)$. Thus, the following exchanges are performable with the same reasoning as in the previous bullet point.\\
        If we have $y \caseOne z$, we obtain $\mathcal T_2$ from $\mathcal T_1$ by first performing $(\bar y, \bar y') \leftrightarrow n_{z'} z'$ and then performing $(z, z') \leftrightarrow n_z z$.\\
        If we have $y \caseTwo z$, we obtain $\mathcal T'_1$ from $\mathcal T_1$ by performing $(\bar y, \bar y') \leftrightarrow z n_z$. If $C_z$ is small, we let $\mathcal T_2 := \mathcal T'_1$, otherwise we obtain $\mathcal T_2$ from $\mathcal T'_1$ by performing $(z, z') \leftrightarrow n_{z'} z'$.
    \end{itemize}
    Note that $\mathcal T_2$ is depicted in Figure \ref{fig:numRelevantNeighbours}.
    We want to obtain a contradiction to Lemma \ref{lemma:newSmallerByPath} and choose $\bar T := \mathcal T_2$ and $(a, a') := (\bar z, \bar z')$. Note that we have $\Delta(\mathcal T^*, \mathcal T_2) \subseteq A$ and thus, $B = \varnothing$. It is clear that a) and b) hold. Let $K'_x, K'_y, K'_z$ be the components in $\redForest(\mathcal T_2) + \bar z \bar z'$ of $x, y, z$, respectively. We have that $e(K'_y) = |\{yy'\}| + c_y  < d \leq e(K)$, $e(K'_z) = |\{zz'\}| + c_z < d \leq e(K)$ and $e(K'_x) = e(K) - |\{n_y y, n_z z\}| + |\{xx'\}| + c_x = e(K)$. Thus, $\rho(\redForest(\mathcal T_2) + \bar z \bar z') = \rho^*$. Furthermore, c) holds since $x, y, z \in A$ and finally, d) holds since for all $v \in \{x, y, z\}$ we have that $v' \in A$ if $C_v$ is not small.
\end{proof}

\doubleFigure{
    \threeChildrenStart
}{
    \threeChildrenEnd
}{
   The component $K$ and its neighbours in $\mathcal T_x$ and $\mathcal T_2$ in the proof of Lemma \ref{lemma:numRelevantNeighbours} in the case where $x \caseTwo y$, $y \caseOne z$ and $C_x, C_y$ are interesting.
}{
    \label{fig:numRelevantNeighbours}
}

\section{Bad components are interesting neighbours}
\label{sec:badIsInteresting}

Note that in the case of $x \caseTwo y$ it could be that $(x, x')$ generates a child with exactly one edge and $(y, y')$ generates a child without any edges. If several blue spanning trees generate two such children for $K$ and if $f$ assigned these children to $K$, then $\frac{e(K) + \sum_{C \in f^{-1}(K)} e(C)}{v(K) + \sum_{C \in f^{-1}(K)} v(C)}$ might be smaller than $d/(d+k+1)$. Thus, we want to show that such a ``bad'' component is an interesting neighbour of some other component $L$ and we can send the child with one edge to $L$. If $L$ also has a child with zero edges, we want to call it bad as well and, using the same technique as for $K$, send the child of $K$ with one edge to another component until it is received by a component that does not have many small children. First, we formalize in which case we call a component bad.

\begin{definition} \label{def:bad}
    Let $K$ be a red component of $\explSG$ having two relevant neighbours $C_x$ and $C_y$ generated by $(x, x'), (y, y') \in E(\blueTree(\mathcal T^*))$, respectively. Furthermore, we let $e(C_y) = 0$ and thus, we have $x \caseTwo y$ by Lemma \ref{lemma:howTwoChildrenLook}. Then we say that \textit{$K$ is $b$-bad (because of $x$ and $y$)}. 
\end{definition}

For the rest of this section let $K$ be $b$-bad because of $x$ and $y$ as in Definition \ref{def:bad}. Thus, $y$ has exactly one neighbour $n_y$ in $K$.
In this section we aim to show that the last arc $(z, y)$ of the path from $n_y$ to $y$ in $\blueTree(\mathcal T^*)$ has its tail outside of $K$. This would imply that $K$ is an interesting neighbour of the red component of $z$. 
For the rest of the section we suppose to the contrary that $z \in V(K)$.

\begin{claim} \label{claim:predXCaseOneZ}
  We have that $x \caseOne z$.
\end{claim}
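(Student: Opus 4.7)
The plan is to apply Lemma \ref{lemma:cases} to the pair $(x, z)$, where $z$ takes the role of ``$y$'' in that lemma and its blue-parent $y$ takes the role of ``$y'$''. First I would verify the hypotheses: $z \in V(K)$ holds by the contradiction hypothesis of the section; $y \neq x'$ holds because $y \in V(K)$ while $x' \in V(C_x)$ and $C_x \neq K$; and $z$ is not a descendant of $\bar x$ in $\blueTree(\mathcal T_x)$. For the last point, by Lemma \ref{lemma:howTwoChildrenLook} applied to $x$ and $y$ (we are in case 2) since $C_y$ is small), $x$ is a descendant of $\bar y = y$ in $\blueTree(\mathcal T^*)$; hence the blue path from $x$ to $r$ factors as $x \to x' \to \cdots \to y \to y' \to \cdots$, so $x'$ is a descendant of $y$. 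Since the only vertices whose blue position changes from $\mathcal T^*$ to $\mathcal T_x$ lie on $\pathInBlueTree{\mathcal T^*}{n_{x'}}{x}$, which is disjoint from $\{y, z\}$, the blue path from $z$ to $r$ in $\blueTree(\mathcal T_x)$ begins $z \to y \to y' \to \cdots$ and does not pass through $\bar x \in \{x, x'\}$. Lemma \ref{lemma:cases} then yields either $x \caseOne z$ (the desired conclusion) or $x \caseTwo z$ (to be ruled out).

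The core of the argument is ruling out $x \caseTwo z$. Suppose it holds with edge $(u,v)$ on the red path from $x$ to $z$ in $K$. I would perform $(\bar x, \bar x') \leftrightarrow uv$ in $\mathcal T_x$ to obtain $\mathcal T'$; this splits $K$ into two red components, one containing $x$ and $\bar x \bar x'$ and one containing $z$ and $v$. Crucially, the blue arcs $(z, y)$ and $(y, y')$ are untouched (neither lies on the reoriented paths), so there is a blue directed path from $z$ through $y$ to $y'$ in $\mathcal T'$. Since $e(C_y) = 0$, the component $C_y = \{y'\}$ remains unchanged, so attaching $yy'$ to the red forest combines a singleton with the $z$-side of $K - uv$; by Lemma \ref{lemma:rhoStar} applied with $(x, z)$ and the relevant neighbours $(C_x, C_y)$ (together with $c_x = 1$ and $e(K) \geq d$ from Lemma \ref{lemma:caseTwo} applied to $x \caseTwo y$), we obtain $\rho(\redForest(\mathcal T') + yy') = \rho^*$ with the two new components of $K$ having at most $e(K)$ edges each.

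I would then apply Lemma \ref{lemma:newSmallerByPath} with $\barT := \mathcal T'$ and $(a, a') := (y, y')$. The set $A$ then contains the reoriented subpath from $u$ to $\bar x$, the reoriented subpath from $n_{x'}$ to $x$ (if $C_x$ is interesting), together with $z$ and the blue chain to $y$, so $B = \varnothing$, giving conditions a) and b). Condition c) holds because $z \in A$ lies in $L$ if $R^*_{i_A} = K$. Condition d) is the subtle part: in the subcase where $R^*_{i_A} = K$ and $w_{i_A}(\sigma^*)$ lies in the $x$-side of $K - uv$, one argues as in Subcase 3 of Lemma \ref{lemma:caseTwo} by taking a minimal special path with respect to $\mathcal T'$, some legal order $\sigma'$ with $R'_{i_A}$ equal to the $x$-side component, and $(y, y')$; the resulting $v_{-1}$-component has at most $e(K) - |\{\bar x \bar x'\}| - c_x < e(K)$ edges, yielding $\sigma' < \sigma^*$.

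The main obstacle is the careful bookkeeping in condition d) and in showing that the blue arcs $(z, y)$ and $(y, y')$ genuinely survive the exchange unchanged; I expect this to go through by tracking that the reoriented paths lie strictly inside the subtree of $\bar x$ in $\blueTree(\mathcal T_x)$, which is disjoint from $\{y, y'\}$ by the ancestry established in the first paragraph.
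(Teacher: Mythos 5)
Your opening paragraph is fine: you correctly verify the hypotheses of Lemma \ref{lemma:cases} for the pair $(x,z)$ and your argument that $z$ is not a descendant of $\bar x$ in $\blueTree(\mathcal T_x)$, via the ancestry $x' \to \cdots \to y$ and the fact that the reoriented path $\pathInBlueTree{\mathcal T^*}{n_{x'}}{x}$ lies in the subtree of $\bar x$, is valid (the paper instead notes directly that $\bar x$ is a proper descendant of $z$, but both routes reach the same conclusion).

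The core of the argument has a genuine gap. After performing $(\bar x, \bar x')\leftrightarrow uv$ in $\mathcal T_x$ you stop, and claim that ``attaching $yy'$ to the red forest combines a singleton with the $z$-side of $K-uv$.'' This assumes that $y$ (and hence $n_y$) lies in the component of $z$ in $K - uv$, but nothing forces this: the red path from $x$ to $z$ and the red path from $x$ to $y$ diverge at some vertex $w$, and if $w$ lies on the $x$-side of $uv$ then $n_y$, $y$, and after adding $yy'$ also $y'$, all join the $x$-side. In that case the component of $x$ in $\redForest(\mathcal T')+yy'$ has up to $e(K)+2$ edges (the $x$-side of $K - uv$ together with $xx'$, $x'x''$ when $C_x$ is interesting, and $yy'$), so you cannot conclude $\rho(\redForest(\mathcal T')+yy') = \rho^*$, and Lemma \ref{lemma:newSmallerByPath} gives no contradiction. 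Relatedly, your invocation of Lemma \ref{lemma:rhoStar} ``with $(x,z)$ and the relevant neighbours $(C_x, C_y)$'' does not meet that lemma's hypotheses: it requires the removed edge to lie on the red path from $x$ to $y$ (the two vertices generating the relevant neighbours), whereas $uv$ lies on the red path from $x$ to $z$.

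The paper avoids this by performing a second exchange before invoking Lemma \ref{lemma:newSmallerByPath}: after obtaining $\mathcal T'$, it performs $(z,y)\leftrightarrow n_y y$ to get $\mathcal T''$. This is valid because $n_y$ is still a descendant of $z$ in $\blueTree(\mathcal T')$ (every vertex of $\Delta(\mathcal T^*, \mathcal T')$ has a blue path to $z$), and it replaces the red edge $n_y y$ by $zy$, forcing $y$ into the $z$-side component unconditionally. With the edges $uv$ and $n_y y$ both removed from $K$ and $xx'$, $zy$, $yy'$ (plus possibly $x'x''$) added, a direct count gives $e(K'_x), e(K'_y) \le e(K)$ and $e(K'_x)+e(K'_y) = e(K)+1+c_x$, so $\rho$ is preserved regardless of where $n_y$ sat. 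You need this extra step (or some argument that pins down which side of $uv$ the vertex $n_y$ is on) for your proof to go through.
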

\begin{proof}
    Note that there is a path from $\bar x$ to $z$ in $\blueTree(\mathcal T^*)$ (and thus also in $\blueTree(\mathcal T_x)$) and hence, $z$ is not a descendant of $\bar x$ in $\blueTree(\mathcal T_x)$. Thus, suppose to the contrary that $x \caseTwo z$ holds with edge $(u, v)$ 
    We obtain $\mathcal T'$ from $\mathcal T_x$ by performing $(\bar x, \bar x') \leftrightarrow uv$. Note that every vertex of $\Delta(\mathcal T^*, \mathcal T')$ has a path to $z$ in $\blueTree(\mathcal T^*)$ and in $\blueTree(\mathcal T')$. Thus, there (still) is a path from $n_y$ to $z$ in $\blueTree(\mathcal T')$ and $(z, y) \in E(\blueTree(\mathcal T'))$. We obtain $\mathcal T''$ from $\mathcal T'$ by performing $(z, y) \leftrightarrow n_y y$. Note that this does not create a red cycle since $y$ (still) has degree one in $\redForest(\mathcal T'')$. Note that $\mathcal T''$ is depicted in Figure \ref{fig:predXCaseOneZ}.
    We want to obtain a contradiction to Lemma \ref{lemma:newSmallerByPath} by choosing $\barT := \mathcal T''$ and $(a, a') := (y, y')$, which gives us $\Delta(\mathcal T^*, \mathcal T'') \subseteq A$ and $B = \varnothing$. It is clear that a) and b) hold. For c) note that $\bar x, y \in A$. Let $K'_x$ be the component of $x$ and $K'_y$ be the component of $y$ and $z$ in $\redForest(\mathcal T'') + yy'$.
    We have that $e(K'_x) \leq e(K) - |\{uv, n_y y\}| + |\{xx'\}| + c_x = e(K)$ and $e(K'_y) \leq e(K) - |\{uv, n_y y\}| + |\{zy, yy'\}| = e(K)$. Furthermore, we have that 
    \[e(K) + |\{zy, yy', xx'\}| + c_x = e(K'_x) + e(K'_y) + |\{uv, n_y y\}|.\]
    Thus, if one of the components $K'_x, K'_y$ has $e(K)$ edges, then the other one has only $2  < d$ edges. Thus, $\rho(\redForest(\mathcal T'') + yy') = \rho^*$.
    Furthermore, d) holds since $x' \in A$, which leads to a contradiction to Lemma \ref{lemma:newSmallerByPath}.
\end{proof}

\doubleFigure{
    \predTwoStart
}{
    \predTwoEnd
}{
   The component $K$ and its neighbours in $\mathcal T_x$ and $\mathcal T''$ in the proof of Claim \ref{claim:predXCaseOneZ} in the case where $C_x$ is interesting.
}{
    \label{fig:predXCaseOneZ}
}

For the rest of the section suppose that $x \caseOne z$ holds  with edges $(u, u')$ and $(v', v)$.
Let $K_z$ be the component of $z$ in $K - v'v$.

\begin{claim} \label{claim:predKzGeq1}
    $e(K_z) \geq 1$.
\end{claim}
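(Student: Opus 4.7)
The plan is to argue by contradiction. Suppose $e(K_z) = 0$. Writing the red path in $K$ from $x$ to $z$ as $[x_1, \ldots, x_n]$ with $v = x_j$ as in the definition of $x \caseOne z$, the component $K_z$ contains the tail $[v, x_{j+1}, \ldots, x_n = z]$, so $e(K_z) = 0$ forces $j = n$; thus $v = z$ and $v' = x_{n-1}$ is the unique red neighbour of $z$ in $K$. In particular, $v'z$ is a bridge of $K$ that isolates $\{z\}$.

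Mimicking the strategy of Claim~\ref{claim:predXCaseOneZ}, I would then perform two successive exchanges on $\mathcal T_x$: first $(\bar x, \bar x') \leftrightarrow (u, u')$ (valid by $x \caseOne z$), obtaining $\mathcal T'$; then $(z, y) \leftrightarrow v'z$ in $\mathcal T'$, obtaining $\mathcal T''$. Validity of the second exchange reduces to two points: (i) $v'$ remains a non-descendant of $z$ in $\blueTree(\mathcal T')$, because by $x \caseOne z$ neither $v'$ nor $z$ is a descendant of $\bar x$ in $\blueTree(\mathcal T_x)$ and hence neither lies on the reoriented path $\pathInBlueTree{\mathcal T_x}{u}{\bar x}$; and (ii) substituting $zy$ for $v'z$ in the red forest does not create a cycle, since $z$ is isolated in $K - v'z$ while $y \in V(K) \setminus \{z\}$. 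In $\mathcal T''$, the old $K$ is cut at $uu'$ and $v'z$ and partially reconnected by the now-red edges $\bar x \bar x'$ and $zy$, producing in particular a new $1$-edge component $\{z, y\}$.

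The contradiction would come from applying Lemma~\ref{lemma:newSmallerByPath} with $\barT := \mathcal T''$ and $(a, a') := (y, y')$. Condition (a) is immediate since $i_{\sigma^*}(y') > i_{\sigma^*}(K) = i_{\sigma^*}(y) \geq i_A$. For (b), the set $\Delta(\mathcal T^*, \mathcal T'')$ is contained in $V(\pathInBlueTree{\mathcal T^*}{n_{x'}}{x}) \cup V(\pathInBlueTree{\mathcal T_x}{u}{\bar x}) \cup \{z\}$, whose vertices all have $\sigma^*$-index at least $i_{\sigma^*}(K) \geq i_A$. For (c) and (d) one carries out an edge count using $c_x = 1$ (from Lemma~\ref{lemma:caseTwo} applied to the $b$-bad structure) and $|E(K_{u'})| \geq 1$ (since the path $[u', \ldots, z]$ has at least one edge): the component of $x$ in $\redForest(\mathcal T'')$ has at most $e(K)$ edges, the middle piece containing $u', v'$ has at most $e(K) - 2$ edges, and adding $yy'$ enlarges $\{z, y\}$ to a size-$2 \leq d$ component. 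Together these give $\rho(\redForest(\mathcal T'') + yy') = \rho^*$, which contradicts the conclusion $\rho > \rho^*$ of the lemma.

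The main obstacle is the simultaneous bookkeeping of both exchanges---tracking descendant relations in the blue tree under both reorientations, verifying that no reoriented path passes through $v'$ or $z$, and handling the case split by whether $C_x$ is small or interesting, which determines whether $\bar x \bar x'$ attaches only $\{x'\}$ or also the isolated $\{x''\}$ to the component of $x$ in $\redForest(\mathcal T'')$.
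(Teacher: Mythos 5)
Your approach diverges from the paper's and contains a genuine gap in the component accounting.

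The paper's own proof is much lighter. After observing $z=v$ (so $v'$ is $z$'s unique red neighbour in $K$), the paper performs the single exchange $(z,y)\leftrightarrow zv'$. This replaces the red edge $zv'$ by $zy$ inside $K$ without changing the number of edges of any red component, so the residue function is still $\rho^*$ and replacing $K$ by $K-zv'+zy$ in $\sigma^*$ gives a legal order lexicographically equal to $\sigma^*$. The new decomposition is therefore an equally good choice of $(\mathcal T^*,\sigma^*)$, so Lemma~\ref{lemma:caseTwo} applies to it; but now $y$ has two red neighbours ($n_y$ and $z$), so the component of $y$ in $K'-n_yy$ contains $yz$, contradicting the conclusion of Lemma~\ref{lemma:caseTwo}. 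No invocation of Lemma~\ref{lemma:newSmallerByPath} is needed.

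In your proposal, the residue bookkeeping after the two exchanges is wrong. You claim that $(\bar x,\bar x')\leftrightarrow uu'$ followed by $(z,y)\leftrightarrow v'z$ produces a ``new $1$-edge component $\{z,y\}$'' which grows to a $2$-edge component after $yy'$ is added. That cannot be: the red edge $n_yy$ is never removed, so the moment $zy$ becomes red, the component containing $y$ and $z$ also contains $n_y$ (and everything $n_y$ is attached to on its side of $uu'$). Thus $\{z,y\}$ is not a component, and the ``middle piece containing $u',v'$'' you describe is not separate from it; the edge counts you use to conclude $\rho(\redForest(\mathcal T'')+yy')=\rho^*$ are computed for a partition that does not actually arise. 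Beyond this, conditions (c) and (d) of Lemma~\ref{lemma:newSmallerByPath} are only gestured at (``one carries out an edge count''), and the descendant bookkeeping needed to justify the second exchange relies on ``neither $v'$ nor $z$ is a descendant of $\bar x$ in $\blueTree(\mathcal T_x)$,'' which is not an immediate consequence of $x\caseOne z$ when $C_x$ is interesting and $\bar x=x'$. The correct and much shorter route is the residue-preserving single exchange followed by an appeal to Lemma~\ref{lemma:caseTwo}, as in the paper.
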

\begin{proof}
    Suppose towards a contradiction that $e(K_z) = 0$ and thus, $z = v$. We obtain $\mathcal T'$ from $\mathcal T_x$ by performing $(z, y) \leftrightarrow zv'$. Note that the exchange only affects these two edges and $\mathcal T'$ does not contain a red cycle because $z$ (still) has degree $1$ in $\redForest(\mathcal T')$. Thus, if we replace $K$ by $K - zv' + zv$ in $\sigma^*$ we obtain a legal order for $\mathcal T'$ that is lexicographically equal to $\sigma^*$. Thus, Lemma \ref{lemma:caseTwo} also applies to $\mathcal T'$ and $\sigma'$, which is a contradiction since the component of $y$ in $\redForest(\mathcal T') - n_y y$ contains $yz$.
\end{proof}

We maintain the definitions of $D^b_{\bar x}$ and $D^b_{y, \bar x}$ for $\mathcal T_x$ of Section \ref{sec:caseOne}. Furthermore, let $Z$ be the set of vertices having a blue directed path $P$ to $z$ in $\blueTree(\mathcal T_x)$ such that $V(P) \cap D^b_{\bar x} = \varnothing$ and let $\bar X_z$ still be the set of vertices having a blue directed path $P$ to $\bar x$ in $\blueTree(\mathcal T_x)$ such that $V(P) \cap D^b_{z, \bar x} = \varnothing$.

\begin{claim} \label{claim:predXLeqY}
    $i_{\sigma^*}(\bar X_z) \leq i_{\sigma^*}(Z)$.
\end{claim}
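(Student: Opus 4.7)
The plan is to prove this by contradiction using Lemma \ref{lemma:newSmallerByPath}, in the style of Claims \ref{claim:caseOneUgly1}--\ref{claim:caseOneUgly3}. Suppose $i_{\sigma^*}(\bar X_z) > i_{\sigma^*}(Z)$ and fix a witness $w \in Z$ with $i_{\sigma^*}(w) < i_{\sigma^*}(\bar X_z)$. Since $\bar x \in V(K) \cap \bar X_z$, we have $i_{\sigma^*}(\bar X_z) \leq i_{\sigma^*}(K)$; thus $w \notin V(K)$ and $i_{\sigma^*}(w) < i_{\sigma^*}(K)$.

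First I would obtain $\mathcal T'$ from $\mathcal T_x$ by performing the exchange $(z, y) \leftrightarrow v'v$, which is valid by the $x \caseOne z$ structure from Claim \ref{claim:predXCaseOneZ}. This introduces the blue arc $(v, v')$ and reverses $\pathInBlueTree{\mathcal T_x}{v}{z}$, so that $z$ acquires a blue directed path to $v'$ in $\blueTree(\mathcal T')$. Since $\Delta(\mathcal T_x, \mathcal T') \subseteq D^b_{z, \bar x}$, the blue path from $w$ to $z$ in $\blueTree(\mathcal T_x)$ -- which by definition of $Z$ avoids $D^b_{\bar x}$ -- is preserved in $\blueTree(\mathcal T')$. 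Consequently $w$ retains a blue directed path that now continues $w \to \cdots \to z \to \cdots \to v \to v'$ in $\blueTree(\mathcal T')$.

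Second I would perform $(\bar x, \bar x') \leftrightarrow uu'$ on $\mathcal T'$ to obtain $\mathcal T''$, which remains valid because the first exchange only altered arcs inside $D^b_{z, \bar x}$, a set disjoint from the subtree of $\bar x$ used for the second exchange. I would then apply Lemma \ref{lemma:newSmallerByPath} with $\bar T := \mathcal T''$ and $(a, a') := (u, u')$. The set $A$ of blue-ancestors of $u$ in $\blueTree(\mathcal T'')$ contains the reversed path $\pathInBlueTree{\mathcal T_x}{u}{\bar x}$, and, in the subcase where $v'$ has $\bar x$ as an ancestor in $\blueTree(\mathcal T_x)$, also contains $v'$, $v$, $z$, and hence $w$; in the complementary subcase, one applies Lemma \ref{lemma:newSmallerByPath} to $\bar T := \mathcal T'$ with $(a, a')$ chosen as an appropriate arc on $v'$'s path to $r$ in $\blueTree(\mathcal T_x)$, again placing $w$ in $A$. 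In either case $i_A \leq i_{\sigma^*}(w) < i_{\sigma^*}(K)$. Conditions (a)--(d) of Lemma \ref{lemma:newSmallerByPath} are verified as in the proof of Claim \ref{claim:caseOneUgly1}: condition (a) holds since $a'$ lies in a component with $i_{\sigma^*}$-value at least $i_{\sigma^*}(K) > i_A$; condition (b) holds because $\Delta(\mathcal T^*, \bar T) \setminus A$ lies on reversed paths entirely contained in $V(K) \cup V(C_x)$, whose $i_{\sigma^*}$-values are at least $i_{\sigma^*}(K) \geq i_A$; and conditions (c), (d) follow from $\bar x \in A \cap V(K)$ realizing the $i_A$-indexed component.

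Finally, a direct count gives $\rho(\redForest(\bar T) + aa') = \rho^*$ because the exchanges merely redistribute edges within $K$ (and $C_x$ in the interesting case), and adding $aa'$ restores a bounded-size component of at most $e(K) + c_x + 1 \leq d$ edges by Lemma \ref{lemma:sizeOfKIfHasRelevantChild} together with Claim \ref{claim:predKzGeq1}. This contradicts the conclusion $\rho(\redForest(\bar T) + aa') > \rho^*$ of Lemma \ref{lemma:newSmallerByPath}. The main obstacle will be bookkeeping: tracking how $w$'s blue path to $z$ extends through the two reversed portions and reaches the chosen arc $(a, a')$ requires a careful case split on the relative ancestor structure of $v'$ and $\bar x$ in $\blueTree(\mathcal T_x)$, analogous to the case analyses in Claims \ref{claim:caseOneUgly2} and \ref{claim:caseOneUgly3}.
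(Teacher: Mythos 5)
Your proposal diverges substantially from the paper's proof, and there are genuine gaps.

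The paper performs exactly \emph{one} exchange, $(\bar x, \bar x') \leftrightarrow uu'$, to obtain $\mathcal T'$, and then applies Lemma~\ref{lemma:newSmallerByPath} with $(a,a') := (y,y')$. The crucial point of that choice is that $y' \in V(C_y)$ with $i_{\sigma^*}(C_y) > i_{\sigma^*}(K) \geq i_A$, so condition~a) is automatic; and $Z \subseteq A$ is immediate because the preserved arc $(z,y)$ together with a path in $Z$ (which by definition avoids $D^b_{\bar x} \supseteq \Delta(\mathcal T_x,\mathcal T')$) gives a blue path to $y$. Your proposal instead performs a \emph{second} exchange $(z,y) \leftrightarrow v'v$ first and aims $(a,a')$ at $(u,u')$, i.e.\ at an arc whose head lies \emph{inside} $K$. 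That forces you to argue $i_A < i_{\sigma^*}(K)$ by exhibiting $w \in Z \cap A$, and you acknowledge yourself that tracking whether $w$ survives into $A$ after two reorientations is the sticking point. It is genuinely a sticking point: the reoriented path $\pathInBlueTree{\mathcal T_x}{v}{z}$ lies in $D^b_{z,\bar x}$ and a vertex $w \in Z$ whose blue path to $z$ merges into that path strictly below $z$ will, after the first exchange, be redirected towards $v'$ rather than to $z$, and whether it then reaches $u$ after the second exchange depends on the unresolved ancestry of $v'$ relative to $\bar x$. You gesture at a case split but never supply the argument for both branches.

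There are two more concrete errors. First, your final count asserts $\rho(\redForest(\barT)+aa') = \rho^*$ because the resulting component has at most $e(K)+c_x+1 \leq d$ edges. But $K$ is $b$-bad, so by Definition~\ref{def:bad} we have $x \caseTwo y$, and Lemma~\ref{lemma:caseTwo} then gives $e(K) \geq d$; hence $e(K)+c_x+1 \geq d+1 > d$ and the inequality you rely on is false. The paper instead bounds both new components by $e(K)$ and uses an edge-conservation identity (as in Lemma~\ref{lemma:rhoStar}) to show that if one attains $e(K)$, the other is small; this is the argument you would need here, and Claim~\ref{claim:predKzGeq1} enters precisely to ensure $e(K_z) \geq 1$ so that the bound $e(K'_x) \leq e(K)$ holds. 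Second, your opening claim that $\bar x \in V(K)$ is false when $C_x$ is interesting (then $\bar x = x' \in V(C_x)$); the needed conclusion $i_{\sigma^*}(\bar X_z) \leq i_{\sigma^*}(K)$ still holds because $x \in V(K) \cap \bar X_z$, but the stated reason is wrong.

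In short: the intended strategy (contradiction via Lemma~\ref{lemma:newSmallerByPath} after an exchange) is right, but the target arc should be $(y,y')$ rather than $(u,u')$, the extra exchange $(z,y) \leftrightarrow v'v$ is not needed here (it belongs in Lemma~\ref{lemma:predNotinK}), and the residue-function count needs the balance argument of Lemma~\ref{lemma:rhoStar} rather than the absolute bound you wrote.
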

\begin{proof}
    Suppose to the contrary that $i_{\sigma^*}(\bar X_z) > i_{\sigma^*}(Z)$. 
    We obtain $\mathcal T'$ from $\mathcal T_x$ by performing $(\bar x, \bar x') \leftrightarrow uu'$. Note that $\mathcal T'$ is depicted in Figure \ref{fig:predXLeqY}.
    We want to obtain a contradiction to Lemma \ref{lemma:newSmallerByPath} by choosing $\barT := \mathcal T'$ and $(a, a') := (y, y')$. This implies $Z \subseteq A$ and $B \subseteq D^b_{\bar x} \subseteq \bar X_z$. This implies b) and it is also clear that a) holds.
    Let $K'_x$ and $K'_z$ be the components of $x$ and $z$ in $\redForest(\mathcal T') + yy'$. Note that it is not clear whether $y$ is contained in $K'_x$ or $K'_z$.
    We have that $e(K'_x) \leq e(K) + c_x + |\{xx', yy'\}| - |\{uu', vv'\}| - e(K_z) \leq e(K)$ by Claim \ref{claim:predKzGeq1} and $e(K'_z) \leq e(K) - |\{uu'\}| + |\{yy'\}| \leq e(K)$. Furthermore,
    \[e(K) + |\{xx', yy'\}| + c_x = e(K'_x) + e(K'_z) + |\{uu'\}|.\]
    Thus, if one of the components $K'_x, K'_z$ contains $e(K)$ edges, then the other one contains $2 < d$ edges. Thus, $\rho(\redForest(\mathcal T') + yy') = \rho^*$.
    Furthermore, we have that $L \neq K'_x$ since $i_{\sigma^*}(K'_x) \geq i_{\sigma^*}(\bar X_z) > i_{\sigma^*}(Z) \geq i_A$. Thus, d) holds and c) holds as well considering that $z \in A$.
\end{proof}

\doubleFigure{
    \predOneStart
}{
    \predOneEndOne
}{
   The component $K$ and its neighbours in $\mathcal T_x$ and $\mathcal T'$ in the proof of Claim \ref{claim:predXLeqY} in the case where $C_x$ is interesting.
}{
    \label{fig:predXLeqY}
}

We now state the main result of this section, where we reintroduce notation for clarity:
\begin{lemma} \label{lemma:predNotinK}
    Let $K$ be a $b$-bad component because of $x$ and $y$ and let $z$ be the vertex before $y$ on the path from $n_y$ to $y$ in $\blueTree(\mathcal T^*)$.
    Then $z \notin V(K)$. 
\end{lemma}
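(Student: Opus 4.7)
Suppose, for contradiction, that $z \in V(K)$. By Claims~\ref{claim:predXCaseOneZ}, \ref{claim:predKzGeq1}, and \ref{claim:predXLeqY} we may assume $x \caseOne z$ with edges $(u,u'),(v',v)$, the component $K_z$ of $z$ in $K-vv'$ has at least one edge, and $i_{\sigma^*}(\bar X_z)\le i_{\sigma^*}(Z)$. The plan is to perform two successive edge-exchanges on $\mathcal T_x$, producing a decomposition $\mathcal T''$ to which we will apply Lemma~\ref{lemma:newSmallerByPath} with $(a,a')=(y,y')$ and contradict the minimality of $\sigma^*$.

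The first exchange is $(z,y)\leftrightarrow vv'$ in $\mathcal T_x$, which is valid since $v$ is a descendant of $z$ in $\blueTree(\mathcal T_x)$ by the definition of $x \caseOne z$; call the result $\mathcal T'$. This turns $(z,y)$ red, makes $vv'$ into the blue arc $(v,v')$, and reverses $\pathInBlueTree{\mathcal T_x}{v}{z}$. The second exchange is $(\bar x,\bar x')\leftrightarrow e$ for a carefully chosen red edge $e$ on the reoriented portion of $\redForest(\mathcal T')$ around $K$ (together with the path $\pathInBlueTree{\mathcal T_x}{n_{x'}}{x}$ introduced when passing from $\mathcal T^*$ to $\mathcal T_x$), chosen so that in $\mathcal T''$ the vertex $y$ becomes a common blue ancestor of both $\bar x$ and $z$, so that $\bar X_z \cup \{z,y,n_y\}$ lies in the set $A$ of vertices with a blue directed path to $y$ in $\mathcal T''$. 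The existence and validity of this second exchange follow from the interesting-neighbour structure of $C_x$ combined with the reversal of $\pathInBlueTree{\mathcal T_x}{v}{z}$ and the fact that $z$ is not a descendant of $\bar x$ in $\blueTree(\mathcal T_x)$, so the two blue reorientations act on disjoint subtrees.

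In $\mathcal T''$, the relevant red components in $\redForest(\mathcal T'')+yy'$ are the component $K'_x$ of $x$ (containing the $x$-to-$u$ portion of $K$ together with $xx'$ and possibly $x'x''$), a middle component $K'_{u'}$ along the $u'$-to-$v'$ portion of $K$, and the component $K'_z$ of $z$ (containing $K_z$, the edges $zy$ and $yn_y$, and extending via $yy'$ to $y'$, plus possibly $y'y''$). Using the identity
\[ e(K) + |\{xx',yy'\}| + c_x = e(K'_x) + e(K'_{u'}) + e(K'_z) + |\{uu'\}|+|\{vv'\}| \]
together with $e(K_z)\ge 1$ from Claim~\ref{claim:predKzGeq1} and $e(K)\ge d$ from Lemma~\ref{lemma:howTwoChildrenLook}, each of $e(K'_x), e(K'_{u'}), e(K'_z)$ is at most $e(K)$, yielding $\rho(\redForest(\mathcal T'')+yy')=\rho^*$. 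Conditions~(a)--(d) of Lemma~\ref{lemma:newSmallerByPath} are then checked as follows: (a) is immediate from $y\in A$; (b) uses that vertices in $\Delta(\mathcal T^*,\mathcal T'')$ outside $A$ lie in $(D^b_{\bar x}\setminus\bar X_z)\cup(D^b_z\setminus Z)$, whose $\sigma^*$-indices are at least $i_{\sigma^*}(\bar X_z)=i_A$ by Claim~\ref{claim:predXLeqY}; (c) and (d) follow because $\bar x,y\in A$, combined with a Claim~\ref{claim:wNotInKy}-style argument forcing $w_{i_A}(\sigma^*)\in V(K_x)$ or $L\neq K'_z$.

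The main obstacle is pinning down the second exchange and certifying that the composition of its reorientation with the one from the first exchange (and with the one from the original passage $\mathcal T^*\to\mathcal T_x$) deposits $\bar X_z$ into $A$ without spilling into $B$. This is where Claim~\ref{claim:predXLeqY} will be used most essentially: it ensures that whatever vertices end up in $B$ are no earlier in $\sigma^*$ than $\bar X_z$, so the special-path argument inside Lemma~\ref{lemma:newSmallerByPath} can start at an index $\le i_A$ and produce the desired legal order strictly smaller than $\sigma^*$.
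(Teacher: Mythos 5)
The proposal correctly reads off the three preparatory claims (Claims~\ref{claim:predXCaseOneZ}, \ref{claim:predKzGeq1}, \ref{claim:predXLeqY}) and the overall strategy of performing two edge exchanges and then invoking Lemma~\ref{lemma:newSmallerByPath} with $(a,a')=(y,y')$. However, the proof is not actually given: the second exchange is only described as $(\bar x,\bar x')\leftrightarrow e$ for a ``carefully chosen red edge $e$ on the reoriented portion of $\redForest(\mathcal T')$'' and you explicitly write that ``the main obstacle is pinning down the second exchange.'' This is precisely the step the lemma requires; without identifying $e$ the argument does not close.

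For the record, the paper's proof does not choose an edge inside $K$ at all for that exchange, and it performs the two exchanges in the opposite order to what you propose: first $(\bar x,\bar x')\leftrightarrow n_y y$ in $\mathcal T_x$ (valid because $n_y$ is a descendant of $\bar x$ in $\blueTree(\mathcal T_x)$ while $y$ is not, and the reoriented vertices lie in $\bar X_z$), and only then $(z,y)\leftrightarrow vv'$, which remains available because the two reorientations act on disjoint sets of vertices ($D^b_{\bar x}$ versus $D^b_{z,\bar x}$). Your first exchange $(z,y)\leftrightarrow vv'$ performed alone in $\mathcal T_x$ already pastes $z$ back to $y$ and severs $vv'$, after which the relation you would need between $n_y y$ and the descendants of $\bar x$ in the new blue tree is harder to verify, and the component picture changes. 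Concretely, after the paper's two exchanges, $\redForest(\mathcal T'')+yy'$ has only the two relevant components $K'_x$ (containing $x$, with $e(K'_x)=e(K)-2-e(K_z)+1+c_x<e(K)$) and $K'_y$ (containing both $y$ and $z$, with $e(K'_y)\le e(K)-3+2<e(K)$), whereas your sketch posits a three-way split $K'_x$, $K'_{u'}$, $K'_z$. Even if $e$ were pinned down to produce a three-piece decomposition, a middle component along the $u'$-to-$v'$ stretch would have to be shown both to keep $\rho^*$ intact and to respect condition d) of Lemma~\ref{lemma:newSmallerByPath}, and no such argument is supplied. In short, the scaffolding is right but the crucial choice of $(\bar x,\bar x')\leftrightarrow n_y y$ (done first, not second) and the resulting two-component bookkeeping are missing, so there is a genuine gap.
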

\begin{proof}
    Suppose that $z \in V(K)$. Then by Claim \ref{claim:predXCaseOneZ}, \ref{claim:predKzGeq1} and \ref{claim:predXLeqY} we have that $x \caseOne z$ with edges $(u, u')$ and $(v', v)$, $e(K_z) \geq 1$ and $i_{\sigma^*}(\bar X_z) \leq i_{\sigma^*}(Z)$.
    We obtain $\mathcal T'$ from $\mathcal T_x$ by performing $(\bar x, \bar x') \leftrightarrow n_y y$. Note that $\Delta(\mathcal T_x, \mathcal T') \subseteq \bar X_z$ and thus, we may obtain $\mathcal T''$ from $\mathcal T'$ by performing $(z, y) \leftrightarrow vv'$. The decomposition $\mathcal T''$ is depicted in Figure \ref{fig:predNotinK}.
    We want to obtain a contradiction to Lemma \ref{lemma:newSmallerByPath} and choose $\barT := \mathcal T''$ and $(a, a') := (y, y')$. This implies $\bar X_z \subseteq A$ and $B \subseteq D^b_{z, \bar x} \subseteq Z$. Thus, b) holds and it is also clear that a) holds. c) holds as well since $x, y \in A$. Let $K'_x$ be the component of $x$ and $K'_y$ be the components of $y$ and $z$ in $\redForest(\mathcal T'') + yy'$. We have that 
    $e(K'_x) = e(K) - |\{n_y y, vv'\}| - e(K_z) + |\{xx'\}| + c_x  < e(K)$
    and
    $e(K'_y) \leq e(K) - |\{vv', n_y y, uu'\}| + |\{yy', zy\}| < e(K)$.
    Thus, $\rho(\redForest(\mathcal T') + yy') = \rho^*$.
    Finally, we have that d) holds since $x' \in A$ and we obtain a contradiction.
\end{proof}

\doubleFigure{
    \predOneStart
}{
    \predOneEndTwo
}{
   The component $K$ and its neighbours in $\mathcal T_x$ and $\mathcal T''$ in the proof of Lemma \ref{lemma:predNotinK} in the case where $C_x$ is interesting.
}{
    \label{fig:predNotinK}
}

\section{Defining \boldmath\texorpdfstring{$f$}{f}}
\label{sec:defineF}

In this final section we will finally define $f$ and prove Lemma \ref{lemma:densityOfKC}. First, we will show to which component of $\mathcal K$ a child of a $b$-bad component generated by $\blueTree(\mathcal T^*)$ containing exactly one edge will be assigned to. This is $K_n$ in the following definition, when the sequence is a sink sequence.

\begin{definition} \label{def:sinkSeq}
    Let $n \geq 1$ and $b \inOneToK$. Let $K_1, \ldots, K_n$ be red components of $\explSG$ and $(x_i, x'_i) \in E(\blueTree(\mathcal T^*))$ for $i \in \{1, \ldots, n\}$. We call $(K_1, x_1), \ldots, (K_n, x_n)$ a \textit{partial sink sequence for $b$} if the following conditions are met:
    \begin{enumerate}[1)]
        \item $K_1$ is $b$-bad because of $x_1$ and $x'_2$ such that the relevant neighbour $C_{x_1}$ of $K_1$ that is generated by $(x_i, x'_i) \in E(\blueTree(\mathcal T^*))$ has $e(C_{x_1}) = 1$.
        \item For $i \in \{2, \ldots, n\}$ we have that $K_{i-1}$ is an interesting neighbour of $K_i$ generated by $(x_i, x'_i) \in E(\blueTree(\mathcal T^*))$.
    \end{enumerate}
    Furthermore, we call $(K_1, x_1), \ldots, (K_n, x_n)$ a \textit{sink sequence for $b$} if it is a partial sink sequence for $b$ and additionally, $K_n$ is not $b$-bad.
\end{definition}

Let us make some critical observations about partial sink sequences. First, note that in a sink sequence we have that $K_1 \neq K_n$ and thus, $n \geq 2$.

\begin{obs} \label{obs:sinkSequenceMostCompsBad}
    In a partial sink sequence $(K_1, x_1), \ldots, (K_n, x_n)$ for any $i \in \{1, \ldots, n-1\}$ the component $K_i$ is $b$-bad due to $x_i$ and $x'_{i+1}$.
\end{obs}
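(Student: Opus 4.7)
The plan is to prove this by straightforward unpacking of Definition \ref{def:sinkSeq} and Definition \ref{def:bad}, with essentially no auxiliary work. The case $i = 1$ is already handed to us: clause 1) of Definition \ref{def:sinkSeq} literally states that $K_1$ is $b$-bad because of $x_1$ and $x'_2$, so there is nothing to verify there.

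For $2 \le i \le n-1$, the plan is to exhibit two distinct relevant neighbours of $K_i$, both generated by arcs of $\blueTree(\mathcal T^*)$, one of which has exactly zero edges. First, clause 2) of Definition \ref{def:sinkSeq} applied at index $i$ gives that $K_{i-1}$ is an interesting neighbour of $K_i$ generated by $(x_i, x'_i) \in E(\blueTree(\mathcal T^*))$; in particular $K_{i-1}$ is a relevant neighbour of $K_i$, and $x_i \in V(K_i)$. Second, clause 2) applied at index $i+1$ (which is allowed since $i+1 \le n$) gives that $K_i$ itself is an interesting neighbour of $K_{i+1}$ generated by $(x_{i+1}, x'_{i+1})$. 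Unpacking the definition of interesting neighbour at this instance: $x'_{i+1} \in V(K_i)$ has a unique red neighbour there, and the red component of $x''_{i+1}$ is an edgeless child of $K_i$ with respect to $\mathcal T^*$ and $\sigma^*$, generated by the arc $(x'_{i+1}, x''_{i+1}) \in E(\blueTree(\mathcal T^*))$. This child is by definition a small relevant neighbour of $K_i$ with zero edges.

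Finally, these two relevant neighbours are distinct, because by Observation \ref{obs:interestingHasGeqDEdges} an interesting neighbour has at least $d$ edges while the child through $x''_{i+1}$ has none. Hence $K_i$ satisfies the hypotheses of Definition \ref{def:bad} with the witnesses $x_i$ (generating the interesting relevant neighbour $K_{i-1}$) and $x'_{i+1}$ (generating the zero-edge child containing $x''_{i+1}$), so $K_i$ is $b$-bad because of $x_i$ and $x'_{i+1}$. There is no real obstacle here; the only thing one must track carefully is that $x_i, x'_{i+1} \in V(K_i)$, which we just verified, so the pair of witnesses is syntactically legal for Definition \ref{def:bad}, and no new exchange argument or structural lemma is invoked.
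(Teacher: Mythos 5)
Your proposal is correct and is exactly the argument the paper is implicitly relying on; the paper states this as an \emph{Observation} without proof precisely because it is a direct unpacking of Definitions \ref{def:sinkSeq} and \ref{def:bad}, and you carry that unpacking out carefully, including the distinctness of the two relevant neighbours via Observation \ref{obs:interestingHasGeqDEdges} and the membership $x_i, x'_{i+1} \in V(K_i)$.
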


\begin{obs} \label{obs:sinkSequenceDistinctComps}
    In a partial sink sequence $(K_1, x_1), \ldots, (K_n, x_n)$ the components $K_1, \ldots, K_n$ are pairwise distinct.
\end{obs}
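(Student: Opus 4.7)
\begin{proofSketch}

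The plan is to argue by contradiction: suppose $K_i = K_j$ for some $1 \leq i < j \leq n$ with $j$ minimal, and then produce more than the two relevant neighbours of this common component that Lemma \ref{lemma:numRelevantNeighbours} allows from the single blue tree $\blueTree(\mathcal T^*)$.

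The first step is a structural observation: the vertices $x'_1, \ldots, x'_n$ form a strict ancestor chain in $\blueTree(\mathcal T^*)$, so in particular they are pairwise distinct, and consequently so are their blue parents $x''_1, \ldots, x''_n$. To see this, for each $\ell < n$ the component $K_\ell$ is $b$-bad by Observation \ref{obs:sinkSequenceMostCompsBad}; since $e(C_{x'_{\ell+1}}) = 0$, case 2 of Lemma \ref{lemma:howTwoChildrenLook} places $x_\ell$ as a descendant of $x'_{\ell+1}$ in $\blueTree(\mathcal T^*)$. Since $x'_\ell$ is the unique blue parent of $x_\ell$, either $x'_\ell = x'_{\ell+1}$ or $x'_\ell$ is a proper descendant of $x'_{\ell+1}$; the first option is excluded because $x'_{\ell+1} \in K_\ell$ while $x'_\ell$ lies in a different component (the one-edge component $C_{x_1}$ when $\ell = 1$, or the interesting neighbour $K_{\ell-1} \neq K_\ell$ when $\ell \geq 2$).

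Setting $K := K_i = K_j$, the component $K_i$ is $b$-bad (since $i \leq n - 1$) and contributes two relevant neighbours of $K$ generated by $\blueTree(\mathcal T^*)$: $C_{x_i}$ via $(x_i, x'_i)$ and the edgeless $C_{x'_{i+1}}$ via $(x'_{i+1}, x''_{i+1})$. Next I would split on whether $j \leq n-1$ or $j = n$. If $j \leq n - 1$, then $K_j$ is likewise $b$-bad and contributes two further relevant neighbours $C_{x_j}$ and $C_{x'_{j+1}}$; the resulting four components are pairwise distinct, contradicting Lemma \ref{lemma:numRelevantNeighbours}. If $j = n$, then $K_{n-1}$ is an interesting neighbour of $K_n$ (in particular $K_{n-1} \neq K_n$, forcing $i \leq n - 2$), and $C_{x_n} = K_{n-1}$ is a relevant neighbour of $K$; combined with $C_{x_i}$ and $C_{x'_{i+1}}$ this yields three distinct relevant neighbours, again contradicting Lemma \ref{lemma:numRelevantNeighbours}.

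The main work is verifying distinctness of these three or four relevant neighbours. The key tools will be the ancestor-chain claim (which separates the edgeless components $C_{x'_{i+1}}$ and $C_{x'_{j+1}}$ via the distinctness of $x''_{i+1}$ and $x''_{j+1}$), the minimality of $j$ (which separates the interesting neighbours $C_{x_i} = K_{i-1}$ and $C_{x_j} = K_{j-1}$ for $i \geq 2$, since equality would give a smaller collision $(i-1, j-1)$), and direct edge-count comparisons using Observation \ref{obs:interestingHasGeqDEdges} (handling the $i = 1$ boundary case, where $C_{x_1}$ has one edge while any interesting neighbour has at least $d \geq 3$ edges).

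\end{proofSketch}
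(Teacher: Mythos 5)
Your argument is correct, and it takes essentially the same first step as the paper: both establish that the $x_i$ (equivalently the $x'_i$) form a strict ancestor chain in $\blueTree(\mathcal T^*)$, using Observation \ref{obs:sinkSequenceMostCompsBad} and Lemma \ref{lemma:howTwoChildrenLook}. Where you diverge is in the conclusion. The paper's proof, after establishing that $x_i$ is a proper descendant of $x_{i+1}$, simply asserts that ``the claim follows,'' leaving the deduction to the reader; you instead supply an explicit argument via Lemma \ref{lemma:numRelevantNeighbours}, taking a minimal collision $K_i = K_j$ and exhibiting three or four pairwise-distinct relevant neighbours of that common component generated by $\blueTree(\mathcal T^*)$. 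Your distinctness checks are sound: the two zero-edge children $\{x''_{i+1}\}$ and $\{x''_{j+1}\}$ are distinct singletons by the ancestor chain; the interesting neighbours are separated from the zero-edge ones by edge counts (Observation \ref{obs:interestingHasGeqDEdges} and $e(C_{x_1})=1$); and minimality of $j$ handles $C_{x_i}=K_{i-1}$ versus $C_{x_j}=K_{j-1}$ (or $K_{n-1}$). Note that in the $j \le n-1$ subcase you do not even need all four to be distinct — the two zero-edge singletons plus any one of $C_{x_i}, C_{x_j}$ already gives three and hence a contradiction — so the $j$-minimality machinery is only genuinely needed when $j=n$ and $i\ge 2$. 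In short, your write-up is a correct and more self-contained version of the paper's proof, making explicit the counting step the paper omits.
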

\begin{proof}
    Let $i \in \{1, \ldots, n-1\}$. Note that since $K_i$ is $b$-bad due to $x_i$ and $x'_{i+1}$ by Observation \ref{obs:sinkSequenceMostCompsBad}, there is a path in $\blueTree(\mathcal T^*)$ from $n_{x'_{i+1}}$ to $x'_{i+1}$ visiting $\bar x_i$ and then $x_{i+1}$. Thus, $x_i$ is a proper descendant of $x_{i+1}$ in $\blueTree(\mathcal T^*)$ and the claim follows.
\end{proof}

\begin{obs}
\label{partialsinksequenceextension}
If $(K_{1},x_{1}),\ldots,(K_{i},x_{i})$ is a partial sink sequence for $b \inOneToK$ which is not a sink sequence, then there exists a pair $(K_{i+1},x_{i+1})$ such that $(K_{1},x_{1}),\ldots,(K_{i},x_{i}),(K_{i+1},x_{i+1})$ is a partial sink sequence.
\end{obs}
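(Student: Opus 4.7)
The plan is to extend the partial sink sequence by one pair using Lemma~\ref{lemma:predNotinK}. Since $(K_1, x_1), \ldots, (K_i, x_i)$ fails to be a sink sequence, $K_i$ must be $b$-bad. Unpacking Definition~\ref{def:bad}, there exist vertices $x, y \in V(K_i)$ and blue arcs $(x, x'), (y, y') \in E(\blueTree(\mathcal T^*))$ such that $C_x$ and $C_y$ are distinct relevant neighbours of $K_i$ with $e(C_y) = 0$.

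By Lemma~\ref{lemma:howTwoChildrenLook} applied to $K_i$ we have $x \caseTwo y$ with some edge $(u, v)$, and the component of $y$ in $K_i - uv$ has no edges; since this component contains the red path from $v$ to $y$ in $K_i$, this forces $v = y$, and then the no-edges condition forces $y$ to have degree exactly one in $K_i$ with $n_y = u$. Thus $n_y$ is well-defined, and Lemma~\ref{lemma:predNotinK} yields the penultimate vertex $z$ on the blue directed path from $n_y$ to $y$ in $\blueTree(\mathcal T^*)$, with $z \notin V(K_i)$. I then set $K_{i+1}$ to be the red component of $z$ in $\explSG$ (a valid red component of $\explSG$, because extending any exploration sequence ending at $n_y \in V(K_i) \subseteq V(\explSG)$ along the blue arcs of the path $[n_y, \ldots, z, y]$ reaches $z$), and $x_{i+1} := z$, so that $x'_{i+1} = y$.

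It remains to verify that $K_i$ is an interesting neighbour of $K_{i+1}$ generated by $(z, y)$, after which $(K_1, x_1), \ldots, (K_{i+1}, x_{i+1})$ satisfies Definition~\ref{def:sinkSeq} by item~2. I would check each clause of the interesting-neighbour definition in turn: $y \neq r$ because $y$ has a blue parent $y'$; $y$ has exactly one incident red edge $y n_y$ inside $K_i$ by the degree-one observation, and $K_i \neq K_{i+1}$ since $z \notin V(K_i)$; the red component of $y' = x''_{i+1}$ is $C_y$, which contains no edges and, being a relevant neighbour with zero edges, is a small child of $K_i$ with respect to $\mathcal T^*$ and $\sigma^*$ (not interesting, by Observation~\ref{obs:interestingHasGeqDEdges}); and removing the final arc $(z, y)$ from the blue directed path from $n_y$ to $y$ leaves the required directed blue path from $n_{x'_{i+1}} = n_y$ to $x_{i+1} = z$.

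The only step with any subtlety is the degree-one property of $y$ in $K_i$, which comes for free from the $\caseTwo$ configuration enforced by the $b$-bad assumption via Lemma~\ref{lemma:howTwoChildrenLook}; beyond this, the proof is a careful notational unpacking of Definitions~\ref{def:bad}, \ref{def:sinkSeq}, and the interesting-neighbour definition together with an appeal to Lemma~\ref{lemma:predNotinK}.
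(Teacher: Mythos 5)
Your proof is correct and follows the same route as the paper's: extract the small child $C_y$ with $e(C_y)=0$ from the $b$-bad structure of $K_i$, invoke Lemma~\ref{lemma:predNotinK} to place the penultimate vertex $z$ of $\pathInBlueTree{\mathcal T^*}{n_y}{y}$ outside $K_i$, and append $(K_{i+1}, z)$ where $K_{i+1}$ is the red component of $z$. You spell out two points the paper leaves implicit — that $v=y$ and $n_y=u$ follow from the $\caseTwo$ structure, and the clause-by-clause verification that $K_i$ is an interesting neighbour of $K_{i+1}$ generated by $(z,y)$ — but the argument is the same.
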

\begin{proof}
Let $(K_{1},x_{1}), \ldots, (K_{i},x_{i})$ be a partial sink sequence that is not a sink sequence. Thus, $K_i$ is $b$-bad due to $x_i$ and another vertex $y$. By Lemma \ref{lemma:predNotinK} the vertex $z$ on the path from $n_{y}$ to $y$ in $\blueTree(\mathcal T^*)$ which is adjacent to $y$ is not in $K_i$. Let $K_{i+1}$ be the component containing $z$. Then $K_i$ is an interesting neighbour of $K_{i+1}$ and thus, $(K_{1},x_{1}),\ldots,(K_{i},x_{i}),(K_{i+1},z)$ is a partial sink sequence.
\end{proof}

\begin{corollary}
Every partial sink sequence extends to a sink sequence.
\end{corollary}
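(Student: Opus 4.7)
The plan is to prove the corollary by iteratively applying Observation \ref{partialsinksequenceextension} and arguing that the process must terminate because the exploration subgraph $\explSG$ contains only finitely many red components. Concretely, I would start with an arbitrary partial sink sequence $(K_1, x_1), \ldots, (K_i, x_i)$ for some $b \inOneToK$. If this is already a sink sequence, there is nothing to do. Otherwise, Observation \ref{partialsinksequenceextension} produces a pair $(K_{i+1}, x_{i+1})$ such that $(K_1, x_1), \ldots, (K_{i+1}, x_{i+1})$ is again a partial sink sequence, and I iterate.

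The key observation that makes this iteration halt is Observation \ref{obs:sinkSequenceDistinctComps}: in any partial sink sequence all the components $K_1, \ldots, K_j$ are pairwise distinct. Hence the length of any partial sink sequence is bounded above by the total number of red components of $\explSG$, which is finite since $G$ is finite. Therefore, the extension procedure cannot continue forever, so after finitely many applications of Observation \ref{partialsinksequenceextension} we must arrive at a partial sink sequence whose last component $K_n$ is not $b$-bad, i.e.\ a sink sequence extending the one we started with.

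I expect this to be a short and essentially formal argument; the only subtlety is a clean termination statement, which is immediate from the distinctness observation. No additional graph-theoretic work is needed beyond invoking the two observations above.
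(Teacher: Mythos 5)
Your argument is correct and is precisely the (implicit) reasoning behind the corollary in the paper: repeatedly apply Observation~\ref{partialsinksequenceextension}, and terminate via Observation~\ref{obs:sinkSequenceDistinctComps} together with the finiteness of $\explSG$. Nothing further is needed.
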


\begin{lemma}
    Let $b \inOneToK$ and let $L$ be a red component of $\explSG$ that is not $b$-bad having $\ell$ interesting neighbours generated by $\blueTree(\mathcal T^*)$. 
    Then $L$ is contained in at most $\ell$ sink sequences for $b$ (and always is the end of such a sequence).
\end{lemma}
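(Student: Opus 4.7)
\emph{Plan.} I would begin by settling that $L$ always sits at the end of any sink sequence containing it: by Observation~\ref{obs:sinkSequenceMostCompsBad} every component $K_i$ with $i<n$ in a sink sequence is $b$-bad, and since $L$ is not $b$-bad it can only occupy position $n$. This also forces $n\geq 2$, as $K_1$ is always $b$-bad whereas $L=K_n$ is not.

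To bound the number of sink sequences containing $L$ by $\ell$, I would define a map $\phi$ from the set of sink sequences containing $L$ to the set of interesting neighbours of $L$ generated by $\blueTree(\mathcal T^*)$ by
\[
\phi\bigl((K_1,x_1),\ldots,(K_n,x_n)\bigr) := K_{n-1};
\]
well-definedness is immediate from condition~2) of Definition~\ref{def:sinkSeq} applied at $i=n$. The proof reduces to showing $\phi$ is injective.

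For injectivity I would take two sink sequences ending at $L$ with common second-to-last component $M=K_{n-1}=\tilde K_{m-1}$ and argue by reverse induction that $K_{n-j}=\tilde K_{m-j}$ and $x_{n-j+1}=\tilde x_{m-j+1}$ for all feasible $j$. Writing $M_j:=K_{n-j}$, for $0<j<n$ the component $M_j$ is $b$-bad (Observation~\ref{obs:sinkSequenceMostCompsBad}), so by Lemma~\ref{lemma:numRelevantNeighbours} it has at most two relevant neighbours generated by $\blueTree(\mathcal T^*)$; the $b$-bad structure provides exactly two such neighbours, namely $C_{x'_{n-j+1}}$ of edge count $0$ and $C_{x_{n-j}}$ of edge count at least $1$ (equal to $1$ when $n-j=1$, and at least $d$ otherwise by Observation~\ref{obs:interestingHasGeqDEdges}). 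These are distinguishable by edge count, so both sequences must identify $K_{n-j-1}$ as the $\geq 1$-edge relevant neighbour. The edge $x_{n-j}$ is then pinned down via Lemma~\ref{lemma:predNotinK}: the in-$\blueTree(\mathcal T^*)$-child of $x'_{n-j+1}$ on the $\blueTree(\mathcal T^*)$-directed path from $n_{x'_{n-j+1}}$ to $x'_{n-j+1}$ lies outside $M_j$, hence inside $K_{n-j-1}$, and by the interesting-neighbour condition at step $n-j$ it coincides with $x_{n-j}$. The induction terminates when we reach $K_1$, where condition~1) of Definition~\ref{def:sinkSeq} forces the $\geq 1$-edge relevant neighbour to contain exactly one edge, matching the start in both sequences and forcing $n=m$.

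The main difficulty I foresee is the bookkeeping of the edges $x_{n-j}$ throughout the induction; component identification is easy from edge counts via Lemma~\ref{lemma:numRelevantNeighbours}, but ruling out alternative choices for $x_{n-j}$ requires Lemma~\ref{lemma:predNotinK} together with the interesting-neighbour condition to single out the correct in-$\blueTree(\mathcal T^*)$-neighbour of $x'_{n-j+1}$ continuing the sequence.
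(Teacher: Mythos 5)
The overall strategy — reverse induction to show two sink sequences sharing their end data must coincide — is the same as the paper's. But there is a concrete error in how you pin down the vertices, and a related soft spot at the base.

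The error: in your inductive step you apply Lemma~\ref{lemma:predNotinK} to the $b$-bad structure of $M_j = K_{n-j}$, taking $y = x'_{n-j+1}$, and assert that the resulting vertex $z$ (the one just before $x'_{n-j+1}$ on the blue path from $n_{x'_{n-j+1}}$ to $x'_{n-j+1}$) lies in $K_{n-j-1}$ and equals $x_{n-j}$. This is backwards. That $z$ is precisely the vertex that Observation~\ref{partialsinksequenceextension} uses to \emph{extend} the sequence from $K_{n-j}$ to the next tuple: $z = x_{n-j+1}$ and $z \in V(K_{n-j+1})$ (and note $x_{n-j} \in V(K_{n-j})$, so it cannot equal $z$, which Lemma~\ref{lemma:predNotinK} puts outside $K_{n-j}$). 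So Lemma~\ref{lemma:predNotinK} recovers the entry \emph{closer} to $L$, not the earlier one. The paper pins down $(x_{n-j}, x'_{n-j})$ differently: once $K_{n-j}$ is known and its $0$-edge-child-generating arc $(x'_{n-j+1}, x''_{n-j+1})$ is known, Lemma~\ref{lemma:numRelevantNeighbours} forces the other relevant-neighbour-generating arc of $K_{n-j}$ in $\blueTree(\mathcal T^*)$ to be unique, and both sequences must therefore use that same arc $(x_{n-j}, x'_{n-j})$. You should replace the Lemma~\ref{lemma:predNotinK} step with this reasoning (or, equivalently, apply the forward extension to $K_{n-j-1}$ rather than $M_j$).

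Secondly, your map $\phi$ keys on the component $K_{n-1}$ only, whereas the paper's base case is the whole pair $(L, x_n)$, so $x'_n = y'_m$ is immediate. With your base case you still need an argument that $K_{n-1} = \tilde K_{m-1}$ forces $x_n = \tilde x_m$. This is salvageable (the $0$-edge small child of $K_{n-1}$ is unique by Lemma~\ref{lemma:numRelevantNeighbours} and has a unique generating arc of $Aux(\mathcal T^*, \sigma^*)$, which determines $x'_n$, and then $x_n$ is the unique in-neighbour of $x'_n$ on the blue path from $n_{x'_n}$), but as written it is an unaddressed gap, and it is exactly the kind of vertex-vs-component distinction the paper sidesteps by keying on $(L,x)$.
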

\begin{proof}
Since $L$ is not $b$-bad, it follows that $L$ can only be the end of any sink sequence for $b$. To see that $L$ belongs in at most $\ell$ sink sequences for $b$, it suffices to show that
for every vertex $x \in V(L)$ such that $L$ has an interesting neighbour generated by an arc $(x, x') \in E(\blueTree(\mathcal T^*))$, there is at most one sink sequence for $b$ ending with the tuple $(L, x)$. Thus, suppose that there are two sink sequences $(K_1,x_1),\ldots,(K_{n-1},x_{n-1}),(K_n,x_n)$, and $(K'_{1}, y_{1}),\ldots,(K'_{m-1},y_{m-1}),(K'_m,y_m)$, where $(K_n, x_n) = (L, x) = (K'_m, y_m)$, and
suppose that $(K_{n-i}, x_{n-i}) = (K'_{m-i}, y_{m-i}), \ldots, (K_n, x_n) = (K'_m, y_m)$ for some $i \in \{0, \ldots, \min \{n-2, m-2\}\}$ and let $x'_j$ and $y'_j$ be the parents of $x_j$ and $y_j$ in $\blueTree(\mathcal T^*)$, respectively.
\begin{claimUnnumbered}
        $(K_{n-(i+1)}, x_{n-(i+1)}) = (K'_{m-(i+1)}, y_{m-(i+1)})$ 
\end{claimUnnumbered}
\begin{proofInProof}
    Clearly, $x'_{n-i} = y'_{m-i}$ and thus, $K_{n-(i+1)} = K'_{m-(i+1)}$.
    Note that $K_{n-(i+1)}$ is $b$-bad due to $x_{n-(i+1)}$ and $x'_{n-i}$ and $K'_{m-(i+1)}$ is $b$-bad due to $y_{m-(i+1)}$ and $y'_{m-i}$ by Observation \ref{obs:sinkSequenceMostCompsBad}. By Lemma \ref{lemma:numRelevantNeighbours}, $K_{n-(i+1)}$ does not have more relevant neighbours generated by $\blueTree(\mathcal T^*)$ than the two that are generated by $(x_{n-(i+1)}, x'_{n-(i+1)})$ and $(x'_{n-i}, x''_{n-i}) = (y'_{m-i}, y''_{m-i})$. Thus, $(x_{n-(i+1)}, x'_{n-(i+1)}) = (y_{m-(i+1)}, y'_{m-(i+1)})$.
\end{proofInProof}
From the claim it follows (without loss of generality) that $(K_1, x_1) = (K'_{m-(n-1)}, y_{m-(n-1)}), \\ \ldots, (K_n, x_n) = (K'_m, y_m)$. To prove the lemma it only remains to show that $n = m$ and thus, the sink sequences are equal.
This follows directly from the fact that $x'_1$ is contained in a red component having exactly one edge and $x'_1 = y'_{m-(n-1)}$. Thus, the component $K'_{m-(n-1)}$ containing $y_{m-(n-1)}$ can only be in a first tuple of a sink sequence.
\end{proof}

Putting these together we obtain the following:
\begin{lemma}
    \label{lemma:uniquenessOfSinkSequences}
    Let $b \inOneToK$. Let $K$ be a red component of $\explSG$ that is $b$-bad because of $x$ and $y$ and let $C_x$ be the relevant neighbour of $K$ generated by $(x, x') \in E(\blueTree(\mathcal T^*))$.
    Then $K$ is contained in exactly one sink sequence for $b$ and if $e(C_x) = 1$, then $K$ is the beginning of this sink sequence.\\
    Let $L$ be a red component of $\explSG$ that is not $b$-bad having $\ell$ interesting neighbours generated by $\blueTree(\mathcal T^*)$. 
    Then $L$ is contained in at most $\ell$ sink sequences for $b$ (and always is the end of such a sequence).
\end{lemma}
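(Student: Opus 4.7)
The second assertion, concerning a non-bad $L$, coincides verbatim with the lemma proved immediately above, so no new argument is required for it. The plan for the first assertion is to invoke Lemma \ref{lemma:howTwoChildrenLook} to reduce the situation to two subcases, $e(C_x) = 1$ or $C_x$ interesting, and then handle existence, uniqueness, and the position claim in each.

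The preliminary observation is that applying Lemma \ref{lemma:howTwoChildrenLook} to $K$ with its two relevant neighbours $C_x$ and $C_y$ rules out case 1 (which would require $C_y$ interesting, contradicting $e(C_y) = 0$) and gives $c_x = 1$ from case 2. Since $C_x$ is a relevant neighbour it is either small or interesting, and combined with $c_x = 1$ this forces $e(C_x) = 1$ or $e(C_x) \geq d$.

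For existence when $e(C_x) = 1$, the tuple $(K, x)$ satisfies condition 1) of the partial sink sequence definition with witness $x'_2 := y$; by Lemma \ref{lemma:predNotinK} the vertex $z$ lying on $\pathInBlueTree{\mathcal T^*}{n_y}{y}$ immediately before $y$ is outside $V(K)$, so setting $x_2 = z$ and $K_2$ equal to its red component yields a valid two-term partial sink sequence, which the corollary to Observation \ref{partialsinksequenceextension} extends to a full sink sequence beginning at $(K, x)$. For existence when $C_x$ is interesting, $K$ cannot occupy position 1 (which would demand $e(C_{x_1}) = 1$), so it must sit at some $i \geq 2$ with $C_x$ as its forced predecessor; iterating backward through interesting-neighbour relations yields a chain of pairwise distinct components (by the blue-ancestor argument of Observation \ref{obs:sinkSequenceDistinctComps}), which by finiteness terminates at some bad component carrying a 1-edge relevant neighbour serving as $K_1$, and gluing this backward chain to the forward extension of $K$ via Lemma \ref{lemma:predNotinK} produces the desired sink sequence.

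Uniqueness follows from Lemma \ref{lemma:numRelevantNeighbours}: since $K$ has at most two relevant neighbours generated by $\blueTree(\mathcal T^*)$, both the incoming blue arc from the interesting predecessor $C_x$ and the outgoing blue arc to the 0-edge child $C_y$ are forced, and Lemma \ref{lemma:predNotinK} pins the successor down; propagating this determinism through the chain shows that the sink sequence is uniquely determined by $K$. The position claim when $e(C_x) = 1$ is immediate, since no $i \geq 2$ can host $K$: the required predecessor would have to be an interesting neighbour, whereas $C_x$ has only one edge. The most delicate step I anticipate is the termination of the backward chain in the interesting-$C_x$ case at a bona fide $K_1$, namely ruling out that the chain stalls at a component which is neither bad nor carries a 1-edge relevant neighbour; this should follow from the structural rigidity imposed by the interesting-neighbour definition together with Lemma \ref{lemma:predNotinK}, but it is the most subtle verification in the argument.
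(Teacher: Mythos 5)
Your proposal reproduces the paper's approach closely, which is natural here: the paper gives no explicit proof of this lemma at all, merely the phrase ``Putting these together we obtain the following,'' collecting Observation~\ref{obs:sinkSequenceMostCompsBad}, Observation~\ref{obs:sinkSequenceDistinctComps}, Observation~\ref{partialsinksequenceextension} with its corollary, and the preceding unnumbered lemma. Your treatment of the second assertion (about a non-bad $L$) by reference, the reduction via Lemma~\ref{lemma:howTwoChildrenLook}, the existence and position-one claim when $e(C_x)=1$ via forward extension of the length-one partial sink sequence $(K,x)$, and the uniqueness argument via the forced predecessor/successor and the backward induction from the unnumbered lemma's proof, all line up with what the paper's preceding results actually deliver.

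The step you flag as delicate is, in my view, a genuine gap, and not one that is dispelled by ``structural rigidity'' or by Lemma~\ref{lemma:predNotinK} (which only drives the chain \emph{forward}). When $C_x$ is interesting, $K$ must sit at position $i \geq 2$, so $K_{i-1}=C_x$ must lie in the sink sequence at a non-terminal position, hence by Observation~\ref{obs:sinkSequenceMostCompsBad} $C_x$ must itself be $b$-bad. Being an interesting neighbour of $K$ guarantees $C_x$ exactly one relevant neighbour generated by $\blueTree(\mathcal T^*)$, namely the zero-edge child containing $x''$; nothing in the preceding lemmas furnishes the \emph{second} relevant neighbour that $C_x$ would need in order to be $b$-bad. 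If $C_x$ has no such second relevant neighbour, the backward chain stalls and no sink sequence containing $K$ exists, contradicting the ``exactly one'' in the lemma as literally stated. So the existence clause for the interesting-$C_x$ case is not established by your argument, nor by anything in the paper preceding the lemma. That said, this does not undermine the paper's main theorem: inspecting Lemma~\ref{lemma:assignment}, the only place existence is invoked is for a one-edge small child of a bad component, i.e.\ precisely the $e(C_x)=1$ case, where $(K,x)$ is a bona fide partial sink sequence and the corollary to Observation~\ref{partialsinksequenceextension} supplies the extension; the counting argument uses the unnumbered lemma for sink sequences ending at a non-bad $L$ and never requires a bad $K$ with interesting $C_x$ to belong to a sink sequence. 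Your proposal would be on firm footing if you either restricted the existence claim to $e(C_x)=1$, or supplied the missing argument that an interesting $C_x$ of a bad $K$ always carries a second relevant neighbour.
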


    Now we are in position to define $f$. Recall that $\mathcal{C}$ is the set of small components, and $\mathcal{K}$ is the set of red components that are not $R^{*}$ nor small.

\begin{lemma}  \label{lemma:assignment}
    There is a function $f: \mathcal C \longrightarrow \mathcal K$ such that for every $K \in \mathcal K$ we have:
    \begin{itemize}
        \item if $e(K) < d-1$, then $f^{-1}(K) = \varnothing$.
        \item if $e(K) = d-1$, then $|f^{-1}(K)| \leq k$ and each of the components of $f^{-1}(K)$ has exactly one edge.
        \item if $e(K) \geq d$, then there are integers $q_0, q_1 \geq 0$ with $q_0 + q_1 \leq k$ such that $f^{-1}(K)$ contains exactly $q_0$ components having zero edges and exactly $2q_1$ components having one edge.
    \end{itemize}
\end{lemma}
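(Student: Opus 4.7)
The plan is to define $f$ by the natural-parent map modified by the sink-sequence reassignment machinery set up in Definition \ref{def:sinkSeq} and Lemma \ref{lemma:uniquenessOfSinkSequences}. Precisely, for each small component $C \in \mathcal C$ that is a child of some $K \in \mathcal K$ in $\sigma^*$ via an arc $(x, y) \in E(\blueTree(\mathcal T^*))$ for some $b \inOneToK$, I will set $f(C) := K$ unless $K$ is $b$-bad because of $x$ and some $y'$ with $C = C_x$ and $e(C_x) = 1$. In the latter case, Lemma \ref{lemma:uniquenessOfSinkSequences} provides a unique sink sequence $(K, x) = (K_1, x_1), \ldots, (K_n, x_n)$ for $b$, and I will set $f(C) := K_n$. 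Since $R^*$ has no relevant neighbours by Lemma \ref{lemma:rootNoChildren} and cannot be the end of any sink sequence (its hypothetical $K_{n-1}$ would be one of its relevant neighbours), $f$ will map into $\mathcal K$.

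For the first bullet ($e(K) < d-1$), Lemma \ref{lemma:sizeOfKIfHasRelevantChild} forbids any relevant neighbour of $K$, so $K$ receives no natural small child and, for the same reason, cannot be the end of a sink sequence, yielding $f^{-1}(K) = \varnothing$. For the second bullet ($e(K) = d-1$), the same lemma forces $c_x = 1$ for every relevant neighbour, making all small children of $K$ have exactly one edge; Lemma \ref{lemma:howTwoChildrenLook} rules out two relevant neighbours per tree (which would demand $e(K) \geq d$). Thus $K$ is never $b$-bad, no child of $K$ is reassigned away, and each blue tree contributes at most one 1-edge component to $f^{-1}(K)$ (either a natural small 1-edge child or, by the uniqueness in Lemma \ref{lemma:uniquenessOfSinkSequences}, at most one incoming sink-sequence endpoint through the unique interesting neighbour). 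Summing over the $k$ trees gives $|f^{-1}(K)| \leq k$, with every component of exactly one edge.

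For the third bullet ($e(K) \geq d$), I plan a tree-by-tree bookkeeping. By Lemma \ref{lemma:numRelevantNeighbours}, $K$ has at most two relevant neighbours per blue tree; when there are exactly two, Lemma \ref{lemma:howTwoChildrenLook} forces them into a Case 1 ($x \caseOne y$) or Case 2 ($x \caseTwo y$) configuration with $c_x = 1$. Each such configuration consumes at most one unit of the budget $q_0 + q_1$: a lone small 0-edge child stays at $K$; a natural pair of small 1-edge children stays at $K$ as a pair; or a 0-edge child stays at $K$ while its 1-edge partner (when small) is routed out to the sink-sequence end via the unique sink sequence of Lemma \ref{lemma:uniquenessOfSinkSequences}. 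Natural singleton 1-edge children from configurations with a single 1-edge relevant neighbour, together with incoming sink-sequence singletons arriving through interesting neighbours of $K$, will be matched using that same uniqueness into pairs, giving the required $2q_1$ structure. Summed over all $k$ trees, this yields $q_0 + q_1 \leq k$.

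The main obstacle will be the pairing step: proving that the 1-edge components in $f^{-1}(K)$ always arrive in pairs so that their count is exactly $2q_1$ for an integer $q_1$. Handling lone 1-edge arrivals at $K$ (either as a solo natural small 1-edge child in a configuration with only one 1-edge relevant neighbour, or as a single incoming 1-edge from a sink sequence ending at $K$ via a lone interesting neighbour) requires careful non-local matching across trees and sink sequences; this is exactly where the non-local nature of sink-sequence reassignment is essential, and where Lemma \ref{lemma:uniquenessOfSinkSequences} prevents over-counting or double assignment to any single $K$.
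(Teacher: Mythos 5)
Your construction of $f$ is exactly the paper's: natural parent map, modified by routing a $1$-edge small child away along the unique sink sequence whenever its parent is $b$-bad. The arguments for $e(K)<d-1$ and $e(K)=d-1$ also match the paper, and your tree-by-tree bookkeeping for $e(K)\geq d$ is the right framework (and indeed what the paper's terse justification is implicitly invoking): for each $b$, either a single $0$-edge child lands at $K$ and nothing else does from tree $b$ (because $K$ is $b$-bad, so the $1$-edge partner leaves and no sink sequence can end at $K$ for $b$), or no $0$-edge child lands and at most two $1$-edge components do (at most two relevant neighbours, each contributing at most one $1$-edge component, natural or sink-routed); this gives $2m_0 + m_1 \leq 2k$ where $m_0,m_1$ are the $0$- and $1$-edge counts in $f^{-1}(K)$.

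Where you go astray is in treating the pairing step -- proving $m_1 = 2q_1$ is even -- as a genuine obstacle requiring a non-local matching you never supply. That evenness is in fact \emph{not} achievable in general: a component $K$ with $e(K)\geq d$ whose only relevant neighbour across all $k$ trees is a single $1$-edge small child gives $m_1=1$, $m_0=0$. So the route you sketch cannot be completed, and the paper's own proof does not attempt it. The correct resolution is that evenness is unnecessary: the only thing Lemma~\ref{lemma:densityOfKC} uses from the third bullet is the chain
\[
\frac{e(K)+m_1}{e(K)+1+2m_1+m_0}\geq\frac{d+m_1}{d+1+2m_1+m_0}\geq\frac{d}{d+k+1},
\]
and the second inequality is exactly $2m_0+m_1\leq 2k$ together with $d\leq 2(k+1)$, with no parity requirement on $m_1$. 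So the third bullet should be read as the inequality $2m_0+m_1\leq 2k$ (equivalently, allow $q_1$ to be a half-integer); with that reading your per-tree bookkeeping already closes the argument, and the ``pairing'' you worry about should simply be dropped rather than proved.
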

\begin{proof}
    Let $C$ be a small component. By Lemma \ref{lemma:sizeOfKIfHasRelevantChild} we have that $C$ has some parent $K \in \mathcal K$ with respect to $\mathcal T^*$ and $\sigma^*$ having at least $d - e(C)$ edges. Let $C$ be generated by $(x, x') \in E(\blueTree(\mathcal T^*))$. 
    We assign $C$ to $K$ except if $e(C) = 1$ and $K$ has another small child $C'$ generated by $\blueTree(\mathcal T^*)$ with $e(C') = 0$ and hence, $K$ is $b$-bad: In this case, by Lemma \ref{lemma:uniquenessOfSinkSequences}, there exists a sink sequence $(K_1, x_1), \ldots, (K_n, x_n)$ for $b$ where $(K_1, x_1) = (K, x)$ and we assign $C$ to $K_n$. Note that by Lemma \ref{lemma:sizeOfKIfHasRelevantChild} we have $e(K_n) \geq d - 1$.\\ 
    With this definition of $f$ note that by Lemma \ref{lemma:uniquenessOfSinkSequences} the number of small children with zero edges assigned to some $K \in \mathcal K$ is at most the number of small children of $K$ with respect to $\mathcal T^*$ and $\sigma^*$ not having an edge and thus bounded by $k$ by Lemma \ref{lemma:howTwoChildrenLook}. Further, the number of small children with one edge assigned to some $K \in \mathcal K$ is at most the number of relevant neighbours of $K$  containing at least one edge, which is at most $2k$ by Lemma \ref{lemma:numRelevantNeighbours} (and thus, we obtain the desired integers $q_{0}$ and $q_{1}$).
    The lemma follows by Lemma \ref{lemma:sizeOfKIfHasRelevantChild}, \ref{lemma:howTwoChildrenLook} and \ref{lemma:numRelevantNeighbours}.
\end{proof}

Finally, we only need to show that this definition of $f$ suffices to prove Lemma \ref{lemma:densityOfKC}.

\begin{proofOfFinalLemma}
    The proof is straightforward using  function $f$ of Lemma \ref{lemma:assignment}.
    First, let $e(K) < d - 1$. Then $f^{-1}(K) = \varnothing$. As $K$ is not small, we have $e(K) \geq 2$ and thus, 
    \[
        \frac{e(K) + \sum_{C \in f^{-1}(K)} e(C)}{v(K) + \sum_{C \in f^{-1}(K)} v(C)} 
        \geq \frac{2}{3}
        \geq \frac{d}{d+k+1}.
    \]
    Next, let $e(K) = d - 1 (\geq 2)$. Then
    \[
    \frac{e(K) + \sum_{C \in f^{-1}(K)} e(C)}{v(K) + \sum_{C \in f^{-1}(K)} v(C)}
    = \frac{d-1 + |f^{-1}(K)| \cdot 1}{d + |f^{-1}(K)| \cdot 2}
    \geq \frac{d + (k-1)}{d + k + 1 + (k-1)}
    \geq \frac{d}{d+k+1}.
    \]
    Finally, let $e(K) \geq d$ and let $q_0$ and $q_1$ be defined as in Lemma \ref{lemma:assignment}. Then
    \[
    \frac{e(K) + \sum_{C \in f^{-1}(K)} e(C)}{v(K) + \sum_{C \in f^{-1}(K)} v(C)}
    = \frac{e(K) + 2q_1}{e(K) + 1 + 2\cdot 2q_1 + q_0}
    \geq \frac{d + 2q_1}{d + k + 1 + 3q_1}
    \geq \frac{d}{d+k+1}.
    \]
\end{proofOfFinalLemma}

\begin{ack}
The authors thank Matthew Kwan and ISTA for hosting Sebastian Mies where a portion of this work was completed. We thank anonymous referees for feedback, which improved this work.
\end{ack}

\printbibliography

\end{document}